\definecolor{forestgreen(traditional)}{rgb}{0.0, 0.27, 0.13}
\definecolor{forestgreen(web)}{rgb}{0.13, 0.55, 0.13}
\definecolor{airforceblue}{rgb}{0.36, 0.54, 0.66}
\newtheorem{theorem}{Theorem}[section]
\newenvironment{subproof}[1][\proofname]{%
  \begin{proof}[#1]%
}{%
  \end{proof}%
}
\newtheorem{lemma}[theorem]{Lemma}
\newtheorem{proposition}[theorem]{Proposition}
\theoremstyle{definition}
\newtheorem{claim}[theorem]{Claim}
\theoremstyle{remark}
\newcommand{\rnk}{\mathrm{rank}}
\newcommand{\GF}{\operatorname{GF}}
\title{A chain theorem for sequentially $3$-rank-connected graphs with respect to vertex-minors}
\author[2,1,3]{Duksang Lee\thanks{Supported by the Institute for Basic Science (IBS-R029-C1), the National Research Foundation of Korea(NRF) grant funded by the Korea government (NRF-2022M3J6A1063021),
and the KAIST Starting Fund (KAIST-G04220016).}}
\author[1,2]{Sang-il Oum\thanks{Supported by the Institute for Basic Science (IBS-R029-C1).}}
\affil[1]{Discrete Mathematics Group,
Institute for Basic Science (IBS),
Daejeon,~South~Korea}
\affil[2]{Department of Mathematical Sciences, KAIST, Daejeon, South~Korea}
\affil[3]{Department of Industrial and Systems Engineering, KAIST, Daejeon, South~Korea}
\affil[ ]{Email: \texttt{duksang@kaist.ac.kr}, \texttt{sangil@ibs.re.kr}}
\date{\today}							%
\begin{document}
%\linenumbers
\maketitle
\begin{abstract}
Tutte (1961) proved the chain theorem for simple $3$-connected graphs with respect to minors, which states that every simple $3$-connected graph $G$ has a simple $3$-connected minor with one edge fewer than $G$, unless $G$ is a wheel graph. Bouchet (1987) proved an analog for prime graphs with respect to vertex-minors. 
We present a chain theorem for higher connectivity with respect to vertex-minors, showing that
every sequentially $3$-rank-connected graph $G$ has a sequentially $3$-rank-connected vertex-minor with one vertex fewer than $G$, unless $|V(G)|\leq 12$.
\end{abstract}
\section{Introduction}\label{sec:intro}
Tutte~\cite{Tutte1961} proved the chain theorem for simple $3$-connected graphs with respect to minors, which states that every simple $3$-connected graph $G$ has a simple $3$-connected minor with one edge fewer than $G$, unless $G$ is a wheel graph. We will present a chain theorem for vertex-minors.

For a vertex $v$ of a graph $G$, the \emph{local complementation} at $v$ is an operation obtaining a new graph $G*v$ from $G$ by replacing the subgraph induced by the neighbors of $v$ with its complement graph. A graph $H$ is a \emph{vertex-minor} of $G$ if $H$ can be obtained from $G$ by a sequence of local complementations and vertex deletions.

For a graph $G$, the \emph{cut-rank} function $\rho_{G}$ is a function which maps a set $X$ of vertices of $G$ to the rank of a matrix over the binary field whose rows are labeled by $X$ and columns are labeled by $V(G)-X$, where the $(i,j)$-entry is $1$ if $i$ and $j$ are adjacent in $G$ and $0$ otherwise. A graph $G$ is \emph{prime} if there is no set $X$ of vertices of $G$ such that $|X|\geq 2$, $|V(G)-X|\geq 2$, and $\rho_{G}(X)\leq 1$.
Bouchet proved the following chain theorem for prime graphs with respect to vertex-minors. Later, Allys~\cite{Allys1994} proved a stronger theorem.

\begin{theorem}[Bouchet~{\cite[Theorem 3.2]{Bouchet1985}}]
\label{thm:bouchet}
Every prime graph $G$ has a prime vertex-minor $H$ with $|V(H)|=|V(G)|-1$, unless $|V(G)|\leq 5$.
\end{theorem}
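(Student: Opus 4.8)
The plan is to reduce the statement to a deletion problem and then run an extremal argument on the cut-rank function. The two facts I would use throughout are that local complementation preserves the cut-rank function (so $\rho_{G\ast u}=\rho_G$ for every vertex $u$, and in particular primality is invariant under local equivalence) and that deleting a vertex changes each value by at most one: $\rho_G(X)-1\le \rho_{G\setminus v}(X)\le \rho_G(X)$ whenever $v\notin X$. Since a vertex-minor with one fewer vertex is obtained by performing local complementations, deleting exactly one vertex, and performing further local complementations, and since the last batch does not affect primality, it suffices to find $G'$ locally equivalent to $G$ and a vertex $v$ with $G'\setminus v$ prime. First I would record that, for a fixed $v$, local complementation at any $u\ne v$ commutes with deleting $v$ and hence does not change the local-equivalence class of $G'\setminus v$; only operations involving $v$ matter. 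By the isotropic-system description of vertex-minors, these produce, up to local equivalence, at most three graphs, the three \emph{eliminations} of $v$, namely $G\setminus v$, $(G\ast v)\setminus v$, and $(G\wedge vw)\setminus v$ for a neighbor $w$ of $v$ (pivoting). So the theorem becomes: some elimination of some vertex is prime.

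Next I would translate ``all eliminations fail'' into cut-rank data. Suppose for contradiction that every elimination of every vertex is non-prime; fix a vertex $v$, a failing elimination $H=G'\setminus v$, and a split $(A,B)$ of $H$, so $|A|,|B|\ge 2$ and $\rho_H(A)\le 1$. Using $\rho_{G'}=\rho_G$ and the monotonicity of deletion on both $A$ and its complement $B=V\setminus(A\cup\{v\})$, I get $\rho_G(A)\le 2$ and $\rho_G(A\cup\{v\})\le 2$; since $G$ is prime and both $A$ and $A\cup\{v\}$ have each side of size at least two, both cut-ranks are exactly $2$. Thus each obstruction exhibits a cut-rank-$2$ separation of $G$ that the vertex $v$ can cross without raising the cut-rank, together with a direction (one of the three) along which that separation collapses to rank $1$. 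The goal is then to show that a prime graph on at least six vertices cannot carry enough such crossable $2$-separations to obstruct all $3|V(G)|$ eliminations at once.

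The engine for this is submodularity and uncrossing of the cut-rank function. I would first dispose of the \emph{$3$-rank-connected} case, where $G$ has no cut-rank-$2$ separation with both sides of size at least two: then no obstruction of the above type can exist, so already $G\setminus v$ is prime for every $v$ and we are done. In the remaining case I would choose a cut-rank-$2$ separation $(A,B)$ with $|A|\ge 2$ minimum, uncross the family of all such minimal $2$-separations into a laminar subfamily, and analyze the vertices on a minimal side. For a vertex $v$ deep inside such a minimal side, I expect to show directly that at least one of its three eliminations avoids every crossable $2$-separation, intuitively because the minimality of $A$ leaves $v$ no room to sit in the middle of a collapsing separation in all three directions simultaneously, which produces a prime elimination and the desired contradiction.

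The hard part will be the bookkeeping in the last paragraph: pinning down, for a given crossable cut-rank-$2$ separation, exactly which of the three directions at $v$ collapses it (a linear-algebra lemma about the $\GF(2)$ column space of the bipartite adjacency matrix and the vector carried by $v$), and then combining these local constraints across all minimal $2$-separations into a global count that is violated once $|V(G)|\ge 6$. This is also where the extremal examples on five vertices (such as $C_5$, whose only $4$-vertex vertex-minors are non-prime because no graph on four vertices is prime) must survive as the tight cases that make the threshold exactly $|V(G)|\le 5$, so the case analysis has to be delicate enough to admit them while excluding everything larger.
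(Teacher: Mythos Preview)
The paper does not prove this theorem at all: Theorem~\ref{thm:bouchet} is quoted as Bouchet's result and is used only as a black box (in the proof of Lemma~\ref{lem:fullyclosed}). There is therefore no ``paper's own proof'' to compare your proposal against.

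Taken on its own merits, what you have written is an outline, not a proof, and the part that carries all the content is precisely the part you leave open. Your reduction to the three eliminations $G\setminus v$, $(G\ast v)\setminus v$, $(G\wedge vw)\setminus v$ is correct, and the observation that a failing elimination forces $\rho_G(A)=\rho_G(A\cup\{v\})=2$ for some $A$ with $|A|\ge 2$, $|V(G)\setminus(A\cup\{v\})|\ge 2$ is the right starting point. But from there you only say you would ``uncross the family of all such minimal $2$-separations into a laminar subfamily'' and then ``expect to show'' that some elimination of a vertex deep in a minimal side is prime; you explicitly flag the remaining work as ``the hard part.'' Neither step is carried out. The uncrossing claim in particular needs an argument: submodularity gives $\rho_G(A\cap A')+\rho_G(A\cup A')\le 4$, not that one of the two has cut-rank $\le 2$, so minimal cut-rank-$2$ sides do not obviously form a laminar family, and the linear-algebra lemma you allude to (which elimination collapses which separation) is where Bouchet's actual work lies. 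As written, the proposal stops exactly where the proof begins.

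A smaller point: your disposal of the $3$-rank-connected case is phrased incorrectly. You write that in this case $G$ ``has no cut-rank-$2$ separation with both sides of size at least two,'' but every $2$-subset of a prime graph has cut-rank $2$, so that is false. What $3$-rank-connectedness gives is that every cut-rank-$\le 2$ separation has one side of size $\le 2$; combining the two constraints $\rho_G(A)=2$ and $\rho_G(A\cup\{v\})=2$ then forces $|A|=|B|=2$, hence $|V(G)|=5$, which is the argument you want. The conclusion that $G\setminus v$ is prime for $|V(G)|\ge 6$ is right (and is Lemma~\ref{lem:krank} of the paper), but your stated reason is not.
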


A set $X$ of vertices of $G$ is \emph{sequential} in $G$ if there is an ordering $a_{1},\ldots,a_{k}$ of the vertices in $X$ such that $\rho_{G}(\{a_{1},\ldots,a_{i}\})\leq 2$ for each $1\leq i\leq k$. A graph $G$ is \emph{sequentially $3$-rank-connected} if it is prime and whenever $\rho_{G}(X)\leq 2$ for $X\subseteq V(G)$, either $X$ or $V(G)-X$ is sequential in $G$. 

Here is our chain theorem for sequentially $3$-rank-connected graphs with respect to vertex-minors.

\begin{theorem}
\label{thm:main}
Every sequentially $3$-rank-connected graph $G$ has a sequentially $3$-rank-connected vertex-minor $H$ with $|V(H)|=|V(G)|-1$, unless $|V(G)|\leq 12$.
\end{theorem}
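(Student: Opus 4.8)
The plan is to argue by contradiction: suppose $G$ is sequentially $3$-rank-connected with $|V(G)|\geq 13$, yet no vertex-minor of $G$ with one fewer vertex is sequentially $3$-rank-connected. I would use four standard facts about the cut-rank function: (i) $\rho_{G}$ is symmetric and submodular; (ii) $\rho_{G}$ is invariant under local complementation, so primeness, being sequential, and sequential $3$-rank-connectivity are invariants of the local equivalence class; (iii) if $v\notin X$ then $\rho_{G\setminus v}(X)\le\rho_{G}(X)\le\rho_{G\setminus v}(X)+1$; and (iv) up to local equivalence $G$ has at most three vertex-minors on $V(G)-v$, which I call the \emph{reductions of $G$ at $v$}. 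By (ii) and (iv) it suffices to show that some reduction of $G$ at some vertex is sequentially $3$-rank-connected. One more observation: a set of at most $3$ vertices with cut-rank at most $2$ is automatically sequential, so if a reduction $H$ fails the sequential property this is witnessed by a \emph{bad set} $X\subseteq V(H)$ with $\rho_{H}(X)\le 2$, both $X$ and $V(H)-X$ non-sequential in $H$, and $|X|,|V(H)-X|\ge 4$. To get started I would invoke Bouchet's theorem (Theorem~\ref{thm:bouchet}): since $G$ is prime and $|V(G)|>5$, some reduction $H_{0}$ is prime, and, being prime but not sequentially $3$-rank-connected, it has a bad set $X_{0}$ with $\rho_{H_{0}}(X_{0})=2$.

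The core step is to transfer a bad set to the sequentially $3$-rank-connected ambient graph. Write $H_{0}=G'\setminus v$ with $G'$ locally equivalent to $G$, and view $X_{0}\subseteq V(G')$; by (iii) $\rho_{G'}(X_{0})\in\{2,3\}$. If $\rho_{G'}(X_{0})=2$, then $X_{0}$ or $V(G')-X_{0}$ is sequential in $G'$; since a sequential ordering of $X_{0}$ in $G'$ descends to one in $H_{0}=G'\setminus v$ by (iii), it must be $V(G')-X_{0}$ that is sequential, and then everything hinges on where $v$ sits in such an ordering — if $v$ can be placed last, deleting it from the ordering shows $V(H_{0})-X_{0}$ sequential in $H_{0}$, contradicting badness. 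To force a good placement I would take $X_{0}$ to be a bad set, ranging over all reductions, with $|X_{0}|$ minimum; minimality propagates (if $\rho_{H_{0}}(X_{0}-a)\le 2$ then $X_{0}-a$ is not bad, yet it is non-sequential since otherwise appending $a$ would make $X_{0}$ sequential, so $(V(H_{0})-X_{0})+a$ is sequential in $H_{0}$, and symmetrically on the other side), which rigidly constrains how $v$ interacts with the separation. In parallel I would uncross bad sets: if bad sets $A,B$ of the same reduction cross, then $\rho_{H}(A\cap B)+\rho_{H}(A\cup B)\le 4$, and a short argument yields a strictly smaller bad set or a nested pair, so the relevant bad sets may be taken laminar; a laminar bad family, pinned down by the sequential orderings supplied by $G'$, should force $H_{0}$ and hence $G$ to be small.

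The case $\rho_{G'}(X_{0})=3$ is the genuinely new phenomenon and, I expect, the heart of the matter: deleting $v$ strictly lowers the cut-rank of $X_{0}$, which pins the adjacencies of $v$ across $(X_{0},V(G')-X_{0})$ into a very restricted pattern (essentially, the column of $v$ spans the missing rank). Here I would bring in the other reductions of $G$ at $v$ and the reductions at vertices near $v$, playing the bad sets of one reduction against the sequential structure that sequential $3$-rank-connectivity of $G$ forces on the others; when $G$ is large these constraints should be jointly unsatisfiable. Non-prime failures are treated alongside: if $H=G'\setminus v$ is not prime, a separating set $X$ with $\rho_{H}(X)\le 1$ and $|X|,|V(H)-X|\ge 2$ satisfies, by primeness of $G'$ and (iii), $\rho_{G'}(X)=2$ with $v$ witnessing the drop to $1$, so this too sits near a small configuration and enters the same extremal and uncrossing machinery.

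I expect the main obstacle to be exactly this bookkeeping: controlling the position of the deleted vertex in the various sequential orderings that arise, and managing the interaction among the bad sets of the (up to three) reductions at $v$ and at its neighbours, which local complementation reshuffles. This seems to require a substantial case analysis organized by the sizes $|X_{0}|$ and $|V(G')-X_{0}|$ and by which small "corner" configurations are possible, with the bound $|V(G)|\le 12$ emerging as the largest size for which such a configuration can occupy the whole graph — a finite list of exceptions, in contrast to the infinite family of wheels in Tutte's theorem, precisely because local complementation provides the extra freedom (for example, a cycle can be shortened by one local complementation and one deletion, so cycles are not exceptions).
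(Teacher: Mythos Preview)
Your proposal has a genuine gap: the case you call ``the heart of the matter'' --- where the bad set $X_{0}$ of a reduction $H_{0}=G'\setminus v$ satisfies $\rho_{G'}(X_{0})=3$ --- is not worked out, and this case carries almost all of the difficulty. When $\rho_{G'}(X_{0})=3$, the sequential $3$-rank-connectivity of $G'$ gives you nothing directly, since the hypothesis of that definition is $\rho_{G'}(X)\le 2$. Your plan to ``play the bad sets of one reduction against the sequential structure that sequential $3$-rank-connectivity of $G$ forces on the others'' presupposes there is such structure to play against; if $G$ is $3$-rank-connected, every set of size between $3$ and $|V(G)|-3$ has cut-rank at least $3$, so there are no nontrivial sequential sets in $G$ at all, and your leverage disappears. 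The uncrossing and minimality heuristics you describe do not substitute for this: uncrossing two bad sets $A,B$ in $H$ gives $\rho_{H}(A\cap B)+\rho_{H}(A\cup B)\le 4$, but you have no control over whether $A\cap B$ or its complement is non-sequential, so you do not in general get a smaller bad set. Even your Case~1 ($\rho_{G'}(X_{0})=2$) is incomplete: you correctly deduce that $V(G')\setminus X_{0}$ is sequential in $G'$, but to drop $v$ from that ordering via Lemma~\ref{lem:basic_sequential} you need $\rho_{G'}(X_{0}\cup\{v\})=\rho_{G'}(X_{0})$, which can fail (one can have $\rho_{G'\setminus v}(X_{0})=\rho_{G'}(X_{0})=2$ while $\rho_{G'}(X_{0}\cup\{v\})=3$), and your minimality argument does not close this.

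The paper's route is structurally different and is what makes the argument go through. It splits on the connectivity of $G$ itself rather than on a bad set in a reduction: (a) if $G$ is $3$-rank-connected, one shows that bad sets in a reduction $G\setminus x$ are quads, arranges (via local complementation) a triple of mutually overlapping quads $Q_{1},Q_{2},Q_{3}$ at vertices $t_{1},t_{2},t_{3}$, and then proves directly (Proposition~\ref{prop:key}) that $G*t_{i}\setminus t_{i}$ or $G/t_{i}$ is sequentially $3$-rank-connected; (b) if $G$ is internally but not $3$-rank-connected, one pivots to obtain a triplet $T=\{a,b,c\}$ and shows one of $G\setminus a$, $G\setminus b$, $G\setminus c$ works (Proposition~\ref{prop:internal}); (c) otherwise $G$ has a sequential set of size $\ge 4$, and one deletes inside a maximum (hence fully closed) sequential set using Lemma~\ref{lem:getprime} and Lemma~\ref{lem:gut}. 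In cases (a) and (b) the good vertex is located by the structure (a quad configuration, a triplet), not supplied by Bouchet's theorem at the outset; starting from an arbitrary Bouchet vertex, as you do, does not lead to these configurations. The bound $12$ arises from explicit counting in cases (a) and (b), not from an abstract extremal argument on laminar bad sets.
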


Our theorem is motivated by the following theorem for sequentially $4$-connected matroids, proved by Geelen and Whittle. 
\begin{theorem}[Geelen and Whittle~{\cite[Theorem 1.2]{Geelen2001}}]
\label{thm:seq_matroid}
Every sequentially $4$-connected matroid~$M$ has a sequentially $4$-connected minor $N$ with $|E(N)|=|E(M)|-1$, unless $M$ is a wheel matroid or a whirl matroid.
\end{theorem}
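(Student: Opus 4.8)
The plan is to prove the theorem by the same deletion–contraction strategy that underlies Tutte's wheels-and-whirls theorem, upgraded so as to control sequential $3$-separations rather than merely $3$-connectivity. Write $\lambda_M(X)=r_M(X)+r_M(E-X)-r(M)$ for the connectivity function; a partition $(X,Y)$ is a \emph{$3$-separation} when $\lambda_M(X)\le 2$ and $\min(|X|,|Y|)\ge 3$, a $3$-separating set $X$ is \emph{sequential} when its elements admit an ordering $x_1,\dots,x_k$ with $\lambda_M(\{x_1,\dots,x_i\})\le 2$ for every $i$, and $M$ is \emph{sequentially $4$-connected} when it is $3$-connected and every $3$-separation has a sequential side. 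To prove the theorem it suffices to produce one element $e\in E(M)$ for which $M\setminus e$ or $M/e$ is again sequentially $4$-connected. By Tutte's wheels-and-whirls theorem, since $M$ is neither a wheel nor a whirl, there is $e$ such that $N:=M\setminus e$ or $M/e$ is $3$-connected; replacing $M$ by its dual if necessary, I assume $N=M\setminus e$ is $3$-connected. The entire content of the argument is then to arrange that this $3$-connected $N$ has no \emph{non-sequential} $3$-separation.

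Suppose toward a contradiction that no admissible single-element removal works, so that for every $e$ with $M\setminus e$ or $M/e$ $3$-connected, the resulting $3$-connected matroid $N$ has a $3$-separation $(A,B)$ with neither side sequential. The first tool is the standard lifting fact: since $M$ has no coloops, $\lambda_{M\setminus e}(X)\le\lambda_M(X)\le\lambda_{M\setminus e}(X)+1$ for $X\subseteq E-e$, and consequently one of $(A\cup e,B)$ or $(A,B\cup e)$ is a $3$-separation of $M$. As $M$ is sequentially $4$-connected, this lifted separation has a sequential side, so a witnessing order exists in $M$; the hypothesis that both sides of $(A,B)$ fail to be sequential in $N$ means precisely that deleting $e$ destroys every such order. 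The second tool is \emph{uncrossing}: submodularity $\lambda(X)+\lambda(Y)\ge\lambda(X\cap Y)+\lambda(X\cup Y)$ lets me intersect and union the separations in play, and a careful uncrossing of the non-sequential $3$-separation of $N$ against the sequential lifted $3$-separation of $M$ should confine the obstruction to a small region around $e$.

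That region is a \emph{fan}, a maximal sequence $(f_1,\dots,f_m)$ whose consecutive triples alternate between triangles and triads. A witnessing order $x_1,x_2,\dots$ for a sequential $3$-separation adds at each step an element lying in the closure or the coclosure of the elements already chosen, and following such an order shows that sequential $3$-separating sets are assembled from fans together with \emph{segments} and \emph{cosegments}. The structural lemma to establish is that if deleting $e$ creates a non-sequential $3$-separation, then $e$ lies in the interior of such a fan, and that the two ends of the fan are removable — a deletion at a triangle-end, a contraction at a triad-end — in a way that preserves both $3$-connectivity and, crucially, sequentiality of every $3$-separation. Replacing the obstructive deletion of $e$ by the removal of a fan-end then yields the desired sequentially $4$-connected minor, contradicting the assumption. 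The exception is forced exactly when no fan has a usable end, i.e.\ when the fans wrap around and every element is trapped, and this is precisely when $M$ is a wheel or a whirl.

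The main obstacle will be the uncrossing bookkeeping behind the second and third paragraphs: $N$ may carry several non-sequential $3$-separations at once, and I must show they cannot be spread out but rather concentrate into one fan through $e$, and moreover that removing a fan-end does not merely relocate the obstruction to a fresh non-sequential $3$-separation elsewhere. Controlling this requires tracking how segments, cosegments, and fans overlap and interact, and using sequential $4$-connectivity of $M$ itself to bound their number and length. The payoff is that all of these interactions degenerate cleanly in the wheel/whirl case, so that the argument there reduces to Tutte's theorem and the two stated exceptions emerge naturally.
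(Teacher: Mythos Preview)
This theorem is not proved in the paper; it is only cited from Geelen and Whittle~\cite{Geelen2001} as motivation for the paper's own main result, Theorem~\ref{thm:main}, which is the vertex-minor analogue. There is therefore no proof in the paper against which to compare your proposal.

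Assessed on its own merits, your proposal is a plausible high-level outline of the Geelen--Whittle strategy: starting from Tutte's wheels-and-whirls theorem, lifting $3$-separations from $M\setminus e$ to $M$, uncrossing via submodularity, and analysing fans are indeed the main ingredients of that paper. However, what you have written is a plan, not a proof. You yourself flag the decisive steps as unresolved: the ``structural lemma'' that an obstructive element $e$ lies in the interior of a fan whose ends are safely removable is stated but not proved, and the assertion that all non-sequential $3$-separations of $N$ must ``concentrate into one fan through $e$'' is exactly the hard part of~\cite{Geelen2001}, requiring a lengthy case analysis that your sketch does not begin to carry out. Until those lemmas are actually established, the argument has a genuine gap at its core.
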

Theorem~\ref{thm:seq_matroid} was motivated by the conjecture on the number of inequivalent representations over a fixed prime field. This conjecture was later proved by Geelen and Whittle~\cite{GW2013} by using a stronger version of Theorem~\ref{thm:seq_matroid} due to Oxley, Semple, and Whittle~\cite{OSW2012}. It would be interesting to see if this stronger version also has a vertex-minor analog.

Let us briefly sketch the proof of Theorem~\ref{thm:main}. The proof consists of three parts. In the first part, we prove it for $3$-rank-connected graphs that are prime graphs with no set $X$ such that $\rho_{G}(X)\leq 2$, $|X|>2$, and $|V(G)-X|>2$. 
The second part discusses internally $3$-rank-connected graphs that are not $3$-rank-connected. The last part considers sequentially $3$-rank-connected graphs that are not internally $3$-rank-connected. 

Essentially, the proof is based on the submodularity of the matrix rank function. We will also use Theorem~\ref{thm:bouchet}.
Proof ideas of some lemmas are from Geelen and Whittle~\cite{Geelen2001}. We will also use triplets introduced by Oum~\cite{Oum2020}. 

Our paper is organized as follows. In Section~\ref{sec:prim}, we review vertex-minors and several inequalities for cut-rank functions. In Section~\ref{sec:def_sequential}, we prove elementary lemmas on sequential sets and sequentially $3$-rank-connected graphs. In Section~\ref{sec:3rank}, we prove the main theorem for $3$-rank-connected graphs. In Section~\ref{sec:internal}, we prove our theorem for internally $3$-rank-connected graphs. In Section~\ref{sec:sequential}, we conclude the proof by dealing with sequentially $3$-rank-connected graphs which are not internally $3$-rank-connected.

\section{Preliminaries}
\label{sec:prim}
A graph is \emph{simple} if it has no loops and parallel edges. In this paper, all graphs are finite and simple. For a graph $G$ and a vertex $v$, let $N_{G}(v)$ be the set of vertices adjacent to $v$ in $G$. For a graph $G$ and a subset $X$ of $V(G)$, let $G[X]$ be the subgraph of $G$ induced on $X$. 

\paragraph{Vertex-minors}
For a graph $G$ and a vertex $v$ of $G$, let $G*v$ be the graph obtained by replacing $G[N_{G}(v)]$ with its complement. The operation obtaining $G*v$ from $G$ is called the \emph{local complementation} at $v$. A graph $H$ is \emph{locally equivalent} to $G$ if $H$ can be obtained from $G$ by a sequence of local complementations. A graph $H$ is a \emph{vertex-minor} of a graph $G$ if $H$ can be obtained from~$G$ by applying local complementations and deleting vertices.

For an edge $uv$ of a graph $G$, let $G\wedge uv=G*u*v*u$. Then $G\wedge uv$ is obtained from $G$ by \emph{pivoting} $uv$. The graph $G\wedge uv$ is well defined since $G*u*v*u=G*v*u*v$~{\cite[Corollary 2.2]{Oum2005}}. 

\begin{lemma}[see Oum~{\cite{Oum2005}}]
\label{lem:pivot_eq}
Let $G$ be a graph and $v$ be a vertex of $G$. If $x,y\in N_{G}(v)$, then $(G\wedge vx)\setminus v$ is locally equivalent to $(G\wedge vy)\setminus v$.
\end{lemma}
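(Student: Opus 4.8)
We may assume $x\neq y$, since otherwise there is nothing to prove. The plan hinges on the identity
\[
 G\wedge vx\wedge xy \;=\; G\wedge vy ,
\]
valid for any two distinct vertices $x,y\in N_G(v)$. First note that this makes sense: pivoting $vx$ replaces the neighbourhood of $x$ by $\{v\}\cup\bigl(N_G(v)\setminus\{x\}\bigr)$, so since $y\in N_G(v)\setminus\{x\}$ the pair $xy$ is an edge of $G\wedge vx$; likewise $vy$ is an edge of $G\wedge vy$. Granting the identity, the lemma follows immediately. Deleting the vertex $v$ commutes with each of the local complementations $*x,*y,*x$ that constitute the pivot $\wedge xy$ — local complementation at a vertex $w\neq v$ changes no edge incident with $v$, so $(H*w)\setminus v=(H\setminus v)*w$ — and therefore
\[
 \bigl((G\wedge vx)\setminus v\bigr)\wedge xy \;=\; (G\wedge vx\wedge xy)\setminus v \;=\; (G\wedge vy)\setminus v .
\]
The left-hand side arises from $(G\wedge vx)\setminus v$ by three local complementations, so $(G\wedge vx)\setminus v$ and $(G\wedge vy)\setminus v$ are locally equivalent, as required.

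It therefore remains to prove the identity $G\wedge vx\wedge xy=G\wedge vy$, and this is the one genuine piece of work. I would verify it directly from the combinatorial description of pivoting: classify the vertices of $V(G)\setminus\{v,x,y\}$ by their adjacency to each of $v$, $x$, $y$ (at most eight classes), recall that pivoting an edge $ab$ swaps the set of neighbours private to $a$ with the set of neighbours private to $b$ and complements all edges between distinct classes among ``adjacent to $a$ only'', ``adjacent to $b$ only'', ``adjacent to both'', and then follow the successive effects of $\wedge vx$ and $\wedge xy$ and compare with $\wedge vy$, class by class. Equivalently, the identity says $G\wedge vx\wedge xy\wedge vy=G$, that is, pivoting in turn the three edges $vx$, $xy$, $yv$ of the triangle that the first pivot creates returns the graph to $G$; this is a triangle relation for pivots. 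This computation is the main obstacle, but it is elementary and bounded, using nothing beyond the definition of local complementation, so I expect it to take only a few lines once the class bookkeeping is set up.
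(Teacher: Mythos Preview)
The paper does not supply a proof of this lemma; it simply records the statement and cites Oum~\cite{Oum2005}. So there is no ``paper's proof'' to compare against, and your proposal should be judged on its own.

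Your overall strategy is correct: the pivot identity $G\wedge vx\wedge xy=G\wedge vy$ (equivalently, $G\wedge vx\wedge xy\wedge yv=G$) does hold whenever $x,y\in N_G(v)$, and once it is established the lemma follows exactly as you say. The identity is precisely the ``triangle relation'' for pivots that appears in Oum's paper, and the finite case-check you describe is the standard way to verify it.

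There is, however, a slip in your justification of the commutation step. You write that ``local complementation at a vertex $w\neq v$ changes no edge incident with $v$.'' This is false: if $v\in N_H(w)$, then edges between $v$ and other neighbours of $w$ are toggled by $*w$. In your setting this matters, since $v$ is adjacent to $x$ in $G\wedge vx$, so $*x$ certainly alters edges at $v$. Fortunately the conclusion $(H*w)\setminus v=(H\setminus v)*w$ is still true, for a different reason: local complementation at $w$ only changes edges \emph{among} $N_H(w)$, and for any two vertices $a,b\neq v$ the condition ``$a,b\in N_H(w)$'' is the same as ``$a,b\in N_{H\setminus v}(w)$'', so the induced subgraphs on $V(H)\setminus\{v\}$ agree. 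You should replace the faulty sentence with this argument; otherwise the proof stands.
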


By Lemma~\ref{lem:pivot_eq}, we write $G/v$ to denote $G\wedge uv\setminus v$ for a neighbor $u$ of $v$ in $G$ because we are only interested in graphs up to local equivalence. 

\begin{lemma}[Geelen and Oum~{\cite[Lemma 3.1]{Geelen2009}}]
\label{lem:onetoone}
Let $G$ be a graph and $v$ and $w$ be vertices of~$G$. Then the following hold.
\begin{enumerate}[label=\rm(\arabic*)]
\item If $v\neq w$ and $vw\notin E(G)$, then $(G*w)\setminus v$, $(G*w*v)\setminus v$, and $(G*w)/v$ are locally equivalent to $G\setminus v$, $G*v\setminus v$, and $G/v$ respectively.
\item If $v\neq w$ and $vw\in E(G)$, then $(G*w)\setminus v$, $(G*w*v)\setminus v$, and $(G*w)/v$ are locally equivalent to $G\setminus v$, $G/v$, and $(G*v)\setminus v$ respectively.
\item If $v=w$, then $(G*w)\setminus v$, $(G*w*v)\setminus v$, and $(G*w)/v$ are locally equivalent to $G*v\setminus v$, $G\setminus v$, and $G/v$ respectively.
\end{enumerate}
\end{lemma}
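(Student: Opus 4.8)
The plan is to reduce everything to three algebraic rules for local complementation together with the fact that deletion at $v$ commutes with local complementation at any other vertex. The rules are: local complementation is an involution, so $G*v*v=G$; local complementations at non-adjacent vertices commute, so $G*a*b=G*b*a$ when $ab\notin E(G)$; and the pivot identity $G*a*b*a=G*b*a*b$ holds when $ab\in E(G)$, which is exactly the relation quoted before Lemma~\ref{lem:pivot_eq}. I will also use repeatedly that $(G*a)\setminus b=(G\setminus b)*a$ for $a\neq b$, so local complementation at a surviving vertex is invisible once we only care about local equivalence classes after deleting $v$. Finally, by Lemma~\ref{lem:pivot_eq} I am free to compute $G/v$ using any neighbor $u$ of $v$, and I record the convenient reformulation $G/v=G*u*v*u\setminus v$, which is locally equivalent to $G*u*v\setminus v$ because the trailing $*u$ can be pushed past the deletion of $v$.

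With these tools each of the nine assertions becomes a short rewriting. The three deletion statements are immediate: $(G*w)\setminus v=(G\setminus v)*w$ is locally equivalent to $G\setminus v$ in cases (1) and (2), while in case (3) $(G*w*v)\setminus v=G*v*v\setminus v=G\setminus v$ by the involution. For the ``$*v$-then-delete'' statements I commute $*w$ inward: in case (1) non-adjacency gives $G*w*v=G*v*w$, so $(G*w*v)\setminus v$ is locally equivalent to $G*v\setminus v$; in case (2) I instead pivot along the edge $wv$, writing $G/v=G*w*v*w\setminus v=((G*w*v)\setminus v)*w$, which exhibits $(G*w*v)\setminus v$ as locally equivalent to $G/v$; and case (3) is again the involution. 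The pivot statements are handled the same way: in case (2), pivoting $G*w$ along $wv$ and using $G*w*w=G$ gives $(G*w)/v=G*v*w\setminus v$, locally equivalent to $(G*v)\setminus v$; in case (3), pivoting $G*v$ along an edge $uv$ and applying the pivot identity collapses $G*v*u*v*u$ to $G*u*v$, so $(G*v)/v=G*u*v\setminus v$ is locally equivalent to $G/v$.

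The one genuinely delicate assertion is the pivot statement of case (1): when $vw\notin E(G)$ I must show $(G*w)/v$ is locally equivalent to $G/v$. Choosing a neighbor $u$ of $v$ (which is also a neighbor of $v$ in $G*w$, since $*w$ leaves the neighborhood of $v$ untouched here), both sides reduce as above to $G*w*u*v\setminus v$ and $G*u*v\setminus v$, so everything comes down to commuting the leading $*w$ rightward past the pivot $*u*v$ and checking that it evaporates upon deleting $v$. If some neighbor $u$ of $v$ is non-adjacent to $w$, this is immediate: $*w$ commutes with $*u$, and since $*u$ then does not toggle the non-edge $vw$, it also commutes with $*v$, leaving a trailing $*w$ that disappears after deleting $v$.

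I expect the main obstacle to be the remaining configuration in case (1), where every neighbor of $v$ is also a neighbor of $w$ (so that $N_G(v)\subseteq N_G(w)$ while $vw\notin E(G)$) and the clean commutation above is unavailable for any choice of pivot neighbor. Here I plan to fall back on one extra application of the pivot identity on an edge $uw$, or, failing a slick rewriting, on a direct comparison of the adjacencies of $G*w*u*v\setminus v$ and $G*u*v\setminus v$ vertex by vertex, to confirm that the two graphs differ only by a sequence of local complementations on $V(G)\setminus\{v\}$. This bookkeeping, rather than any conceptual difficulty, is where the real work lies; the remaining eight assertions are purely formal consequences of the involution, commutation, and pivot identities.
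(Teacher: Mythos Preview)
The paper does not prove this lemma; it is quoted verbatim from Geelen and Oum~\cite{Geelen2009}, so there is no in-paper argument to compare against.

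Your rewriting handles eight of the nine assertions cleanly and correctly: the involution $*v*v=\mathrm{id}$, the commutation $*a*b=*b*a$ for $ab\notin E$, the pivot identity $*a*b*a=*b*a*b$ for $ab\in E$, and the rule $(G*a)\setminus b=(G\setminus b)*a$ for $a\neq b$ are exactly the right tools, and you deploy them accurately for all of~(2) and~(3) and for the first two statements of~(1).

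The pivot statement of~(1) in the sub-case $N_G(v)\subseteq N_G(w)$ is not actually proved, only planned. Your first fallback --- one extra pivot identity on the edge $uw$ --- does not close as smoothly as you hope: substituting $G*w*u=G*u*w*u*w$ leaves $(G*u*w*u)*w*v\setminus v$, and you cannot commute the trailing $*w$ past $*v$ because $vw$ \emph{is} an edge of $G*u*w*u=G\wedge uw$ (the pivot on $uw$ moves $v$ from $N(u)\setminus N[w]$ into $N(w)\setminus N[u]$). So a single extra pivot does not suffice; one needs either a longer chain of identities or the direct edge-by-edge verification you mention as the second fallback. That verification does work --- in fact one can check that $(G*w)\wedge uv=(G\wedge uv)*w$ holds outright whenever $uv,uw\in E(G)$ and $vw\notin E(G)$, after which the trailing $*w$ disappears upon deleting $v$ --- but you have not carried it out. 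Calling it ``bookkeeping'' is fair, but as written the proposal has a genuine, if acknowledged, gap at precisely this point.
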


Lemma~\ref{lem:onetoone} implies the following lemma, which was first proved by Bouchet.

\begin{lemma}[Bouchet~{\cite[Corollary 9.2]{Bouchet1988}}]
\label{lem:elt_minor}
Let $H$ be a vertex-minor of a graph $G$ such that $V(H)=V(G)-\{v\}$ for a vertex $v$ of $G$. Then $H$ is locally equivalent to one of $G\setminus v$, $G*v\setminus v$, and $G/v$.
\end{lemma}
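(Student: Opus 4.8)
The plan is to track a single invariant through the whole sequence of operations defining the vertex-minor: the unordered set of three local-equivalence classes of $G\setminus v$, $G*v\setminus v$, and $G/v$. First I would observe that since $V(H)=V(G)-\{v\}$, any sequence of local complementations and vertex deletions producing $H$ from $G$ deletes exactly one vertex, namely $v$, and deletes it exactly once. Writing the sequence out, I would isolate the step at which $v$ is deleted: before it, every operation is a local complementation (possibly at $v$ itself); after it, every operation is a local complementation at a vertex of $V(G)-\{v\}$ and hence changes the graph only within its local-equivalence class. Consequently $H$ is locally equivalent to $G'\setminus v$, where $G'=G*w_{1}*\cdots*w_{k}$ is obtained from $G$ by exactly the local complementations performed before the deletion.

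The heart of the argument is the invariance of this triple under a single local complementation. I would define $S(G)$ to be the set of local-equivalence classes of $G\setminus v$, $G*v\setminus v$, and $G/v$, and then show $S(G*w)=S(G)$ for every vertex $w$ by reading off the three cases of Lemma~\ref{lem:onetoone}. In each case the three graphs $(G*w)\setminus v$, $(G*w*v)\setminus v$, and $(G*w)/v$ are locally equivalent, in some order, to $G\setminus v$, $G*v\setminus v$, and $G/v$, so passing from $G$ to $G*w$ merely permutes the triple; case~(3), where $w=v$, additionally uses $G*v*v=G$.

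With this invariance in hand, I would induct on the length $k$ of the pre-deletion local-complementation sequence. Applying the invariance to the current graph at each step gives $S(G*w_{1}*\cdots*w_{i})=S(G*w_{1}*\cdots*w_{i-1})$, and chaining these equalities yields $S(G')=S(G)$. Since $G'\setminus v$ represents one of the three classes in $S(G')$, its class lies in $S(G)$; and because $H$ is locally equivalent to $G'\setminus v$, the class of $H$ is that of one of $G\setminus v$, $G*v\setminus v$, and $G/v$, which is exactly the claim.

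The only genuine subtlety—requiring care rather than real difficulty—is the reduction to a single terminal deletion: one must verify that no vertex other than $v$ is ever deleted and that the post-deletion local complementations cannot leave the local-equivalence class of $G'\setminus v$. Once that bookkeeping is settled, the substance of the lemma is entirely encoded in Lemma~\ref{lem:onetoone}, whose three cases are precisely what make $S(G)$ invariant; the present lemma is essentially the statement that this invariant survives an arbitrary sequence of local complementations followed by deleting $v$.
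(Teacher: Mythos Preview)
Your proposal is correct and follows exactly the approach the paper indicates: the paper merely asserts that Lemma~\ref{lem:onetoone} implies this result, and you have spelled out the implication by showing that the triple $\{[G\setminus v],[G*v\setminus v],[G/v]\}$ of local-equivalence classes is invariant under each local complementation (via the three cases of Lemma~\ref{lem:onetoone}) and then observing that $H$ is locally equivalent to $G'\setminus v$ for some locally equivalent $G'$. Your bookkeeping about isolating the unique deletion step is the right way to set this up.
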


\paragraph{Cut-rank function and rank-connectivity}
For an $X\times Y$-matrix $A$ and $I\subseteq X$, $J\subseteq Y$, let $A[I,J]$ be an $I\times J$-submatrix of $A$. Let $A_{G}$ be the adjacency matrix of a graph $G$ over the binary field $\GF(2)$. The \emph{cut-rank} $\rho_{G}(X)$ of a subset $X$ of $V(G)$ is defined by
\[
\rho_{G}(X)=\rnk(A_{G}[X,V(G)-X]).
\]
It is trivial to check that $\rho_{G}(X)=\rho_{G}(V(G)-X)$. For disjoint sets $X$, $Y$ of a graph $G$, let $\rho_{G}(X,Y)=\rnk(A_{G}[X,Y])$. 
A graph $G$ is \emph{$k$-rank-connected} if there is no partition $(A,B)$ of $V(G)$ such that $|A|,|B|>\rho_{G}(A)$ and $\rho_{G}(A)<k$. A graph is \emph{prime} if it is $2$-rank-connected. Observe that $1$-rank-connected graphs are connected graphs.

\begin{lemma}
\label{lem:deg3} If $G$ is a $3$-rank-connected graph with at least $6$ vertices, then $\deg_{G}(v)\geq 3$ for each $v\in V(G)$.
\end{lemma}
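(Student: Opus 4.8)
The plan is to argue by contradiction: suppose $G$ is $3$-rank-connected with $|V(G)|\ge 6$ but some vertex $v$ has $\deg_G(v)\le 2$. We split into cases according to $\deg_G(v)$. First, if $\deg_G(v)=0$, then taking $A=\{v\}$ gives $\rho_G(A)=0$, and since $|A|=1>0$ and $|V(G)-A|=|V(G)|-1\ge 5>0=\rho_G(A)$, the partition $(A,V(G)-A)$ witnesses that $G$ is not even $1$-rank-connected, contradicting $3$-rank-connectedness (which implies $1$-rank-connectedness). Next, if $\deg_G(v)=1$, say $N_G(v)=\{u\}$, then the row of $A_G$ indexed by $v$ restricted to $V(G)-\{v\}$ has a single nonzero entry, so $\rho_G(\{v\})=1$; again $|\{v\}|=1\le 1$ fails the requirement $|A|>\rho_G(A)$, so this particular set is not a witness, but we instead take $A=\{v\}$ and observe it still does not help — so here I would push the argument one step further and consider $A=\{v\}$ is too small, hence look at the cut separating $\{v,u\}$ from the rest, or more cleanly note that primeness ($2$-rank-connectedness) already forbids a vertex of degree at most $1$ when $|V(G)|\ge 4$: with $A=\{v\}$ we have $\rho_G(A)\le 1$, $|A|\ge 1>\rho_G(A)$ only if $\rho_G(A)=0$, so for $\deg_G(v)=1$ I instead take the partition with $A=\{v,u\}$ if $\rho_G(\{v,u\})\le 1$, or argue directly that a degree-$1$ vertex contradicts primeness via the standard fact that in a prime graph every vertex has degree at least $2$.

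The substantive case is $\deg_G(v)=2$, say $N_G(v)=\{x,y\}$. Here I would examine $A=\{v,x,y\}$ (or the relevant small set) and compute its cut-rank. The key observation is that the rows of $A_G[A,\,V(G)-A]$ indexed by $x$ and $y$ are determined by the neighbourhoods of $x$ and $y$ outside $A$, while the row indexed by $v$ is identically zero on $V(G)-A$ (since $v$'s only neighbours are $x,y\in A$). Hence $\rho_G(A)=\rnk\big(A_G[\{x,y\},V(G)-A]\big)\le 2$. Since $|A|=3>\rho_G(A)$ whenever $\rho_G(A)\le 2$, and $|V(G)-A|=|V(G)|-3\ge 3>\rho_G(A)$ provided $\rho_G(A)<3$, the partition $(A,V(G)-A)$ would violate $3$-rank-connectedness — the one thing to rule out being $\rho_G(A)=3$, which is impossible because $A_G[A,V(G)-A]$ has a zero row and only three rows, so its rank is at most $2$. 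This contradiction completes the case and hence the proof.

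I expect the main (only) obstacle to be the careful bookkeeping in the degree-$1$ and degree-$0$ subcases to make sure the chosen partition genuinely satisfies \emph{both} inequalities $|A|>\rho_G(A)$ and $|B|>\rho_G(B)$ with the right strict sign and the right threshold $k=3$; the hypothesis $|V(G)|\ge 6$ is used precisely to guarantee $|V(G)-A|\ge 3$ in the degree-$2$ case so that the larger side of the partition is also big enough. Everything else is a one-line rank computation using that a vertex of small degree contributes a (near-)zero row to the relevant submatrix of $A_G$.
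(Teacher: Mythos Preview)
Your argument is correct and is essentially the same as the paper's: take $A = N_G(v)\cup\{v\}$ and observe that the row of $A_G[A,\,V(G)-A]$ indexed by $v$ is zero, so $\rho_G(A)\le |N_G(v)|\le 2$, while $|A|=|N_G(v)|+1>\rho_G(A)$ and $|V(G)-A|\ge 3>\rho_G(A)$, contradicting $3$-rank-connectedness. The paper does this uniformly for all $\deg_G(v)\le 2$ without a case split, which in particular cleans up your degree-$1$ discussion: the set $A=\{v,u\}$ \emph{always} has $\rho_G(A)\le 1$ for the same zero-row reason, so no conditional or appeal to a separate ``standard fact'' is needed.
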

\begin{proof}
Suppose that $\deg_{G}(v)\leq 2$.
Let $X$ be the set of neighbors of $v$. Then $\rho_{G}(X\cup\{v\})\leq |X|\leq 2$. However, $\rho_{G}(X\cup\{v\})<|X\cup\{v\}|$ and $2<|V(G)-(X\cup\{v\})|$, contradicting assumption that $G$ is $3$-rank-connected.
\end{proof}

\begin{lemma}[Oum~{\cite[Proposition 2.4]{Oum2020}}]
\label{lem:krank}
Let $k$ be a positive integer.
If a graph $G$ is $k$-rank-connected and $|V(G)|\geq 2k$, then for each $v\in V(G)$, the graph $G\setminus v$ is $(k-1)$-rank-connected.
\end{lemma}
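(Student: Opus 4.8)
The plan is to prove the statement by contradiction, relying only on the elementary fact that appending a single row or a single column to a binary matrix changes its rank by at most one, together with a counting step that is exactly where the hypothesis $|V(G)|\ge 2k$ does its work. Suppose $G\setminus v$ is not $(k-1)$-rank-connected. Then there is a partition $(A,B)$ of $V(G)\setminus\{v\}$ with $r:=\rho_{G\setminus v}(A)\le k-2$ and $|A|,|B|\ge r+1$. Note that $V(G)\setminus A=B\cup\{v\}$ and $V(G)\setminus(A\cup\{v\})=B$, so both $(A,B\cup\{v\})$ and $(A\cup\{v\},B)$ are partitions of $V(G)$, and these are the two partitions I would test against the $k$-rank-connectivity of $G$.

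Since $A_{G}[A,B\cup\{v\}]$ is $A_{G}[A,B]$ with one extra column and $A_{G}[A\cup\{v\},B]$ is $A_{G}[A,B]$ with one extra row, we get $\rho_{G}(A),\rho_{G}(A\cup\{v\})\in\{r,r+1\}$; in particular both are at most $k-1<k$. First I would handle the case $\rho_{G}(A)=r$: then the partition $(A,B\cup\{v\})$ has both sides of size at least $r+1>r=\rho_{G}(A)$, contradicting that $G$ is $k$-rank-connected. Symmetrically, if $\rho_{G}(A\cup\{v\})=r$, the partition $(A\cup\{v\},B)$ gives the contradiction. So it remains to treat $\rho_{G}(A)=\rho_{G}(A\cup\{v\})=r+1$. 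Here $(A,B\cup\{v\})$ still works whenever $|A|\ge r+2$ and $(A\cup\{v\},B)$ works whenever $|B|\ge r+2$, so the only surviving configuration is $|A|=|B|=r+1$.

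The main obstacle, and the single point at which the size hypothesis is used, is exactly this last configuration: if $|A|=|B|=r+1$, then $|V(G)|=|A|+|B|+1=2r+3\le 2(k-2)+3=2k-1$, contradicting $|V(G)|\ge 2k$. This closes every case, so no bad partition of $V(G)\setminus\{v\}$ exists and $G\setminus v$ is $(k-1)$-rank-connected. The only care needed is in the routine rank-monotonicity bounds and in checking that the two tested partitions really do partition $V(G)$; there is no deeper difficulty beyond the counting step.
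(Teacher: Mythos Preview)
Your proof is correct. The paper does not supply its own proof of this lemma but simply cites \cite[Proposition 2.4]{Oum2020}; your argument is exactly the standard elementary one, using only Lemma~\ref{lem:delrank} and the counting step $|V(G)|=2r+3\le 2k-1$ in the residual case $|A|=|B|=r+1$.
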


\begin{lemma}
\label{lem:kconn}
Let $k$ be a positive integer.
A $k$-rank-connected graph with $|V(G)|\geq 2k$ is $k$-connected.
\end{lemma}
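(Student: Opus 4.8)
The plan is to prove the contrapositive: if $|V(G)|\ge 2k$ and $G$ is not $k$-connected, then $G$ is not $k$-rank-connected. Since $|V(G)|\ge 2k>k$, the failure of $k$-connectivity produces a vertex cut, that is, a set $S\subseteq V(G)$ with $|S|\le k-1$ together with a partition $V(G)-S=C_1\cup C_2$ into nonempty parts having no edge between $C_1$ and $C_2$; this includes the case $S=\emptyset$, where $G$ is disconnected. The idea is then to distribute the vertices of $S$ between the two parts so as to obtain a partition of $V(G)$ of low cut-rank whose two sides are both large.

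Concretely, for any $T\subseteq S$ put $A=C_1\cup T$ and $B=V(G)-A=C_2\cup(S-T)$. Every neighbor in $B$ of a vertex of $C_1$ lies in $S-T$, so the rows of $A_G[A,B]$ indexed by $C_1$ are supported on the $|S-T|$ columns indexed by $S-T$; adjoining the $|T|$ rows indexed by $T$ then gives
\[
\rho_G(A)=\rnk(A_G[A,B])\le |S-T|+|T|=|S|\le k-1<k,
\]
independently of the choice of $T$. It therefore remains to choose $T$ so that $|A|\ge |S|+1$ and $|B|\ge |S|+1$, since then $|A|,|B|>\rho_G(A)$ and $(A,B)$ witnesses that $G$ is not $k$-rank-connected.

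Writing $t=|T|$, the two size requirements read $t\ge |S|+1-|C_1|$ and $t\le |C_2|-1$; an integer $t$ with $0\le t\le |S|$ meeting both exists as soon as $|C_1|+|C_2|\ge |S|+2$, which holds because $|C_1|+|C_2|=|V(G)|-|S|\ge 2k-|S|\ge |S|+2$, using $|S|\le k-1$. Choosing any $T\subseteq S$ of the corresponding size completes the contradiction. I do not expect a genuine obstacle here; the only thing requiring care is the elementary bookkeeping that the admissible range of $t$ is nonempty and contained in $[0,|S|]$, and this is exactly where the hypotheses $|V(G)|\ge 2k$ and $|S|\le k-1$ are used.
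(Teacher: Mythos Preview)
Your proof is correct and self-contained, but it differs from the paper's. The paper argues by induction on $k$: for $k>1$, it uses Lemma~\ref{lem:krank} to conclude that $G\setminus v$ is $(k-1)$-rank-connected for every vertex $v$, applies the induction hypothesis (using $|V(G\setminus v)|\ge 2k-1\ge 2(k-1)$) to get $G\setminus v$ $(k-1)$-connected, and then combines this with the connectedness of $G$ (the $k=1$ case) to deduce $k$-connectivity.

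Your route is more direct and does not rely on Lemma~\ref{lem:krank}. You exhibit, from a vertex cut $S$ of size at most $k-1$, an explicit partition $(A,B)$ with $\rho_G(A)\le |S|$ and both sides of size at least $|S|+1$, by distributing $S$ between the two components. The rank bound $\rho_G(A)\le |S-T|+|T|=|S|$ is clean and makes transparent the underlying reason the lemma holds: a vertex cut of size $s$ forces a cut of cut-rank at most $s$. The counting that an admissible $t$ exists is exactly the place where $|V(G)|\ge 2k$ and $|S|\le k-1$ enter, and you have verified all four inequalities needed for $\max(0,|S|+1-|C_1|)\le\min(|S|,|C_2|-1)$. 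What the paper's inductive proof buys is brevity, since Lemma~\ref{lem:krank} is already available; what your proof buys is an explicit witness and independence from that lemma.
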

\begin{proof}
We use induction on $k$. Let $G$ be a $k$-rank-connected graph with $\lvert V(G)\rvert\ge 2k$. We may assume that $k>1$. 
Let $X$ be a subset of $V(G)$ with $\lvert X\rvert<k$. 
It is enough to prove that $G\setminus X$ is connected.
Since $G$ is $1$-rank-connected, $G$ is connected and therefore we may assume that $X$ is nonempty.
Let $v$ be a vertex in~$X$.
By applying Lemma~\ref{lem:krank} and the induction hypothesis, $G\setminus v$ is $(k-1)$-connected and therefore $(G\setminus v)\setminus (X-\{v\})=G\setminus X$ is connected. 
\end{proof}

The following lemmas give properties of the matrix rank function and the cut-rank function.

\begin{lemma}[see Oum~{\cite[Proposition 2.6]{Oum2005}}]
\label{lem:local}
If a graph $G'$ is locally equivalent to a graph $G$, then $\rho_{G}(X)=\rho_{G'}(X)$ for each $X\subseteq V(G)$.
\end{lemma}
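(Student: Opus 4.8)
The plan is to reduce to the case of a single local complementation and then realize its effect on the relevant submatrix as rank-preserving elementary column operations. Since local equivalence is by definition the transitive closure of single local complementations, an induction on the length of the defining sequence reduces the claim to showing $\rho_{G}(X)=\rho_{G*v}(X)$ for a single vertex $v$ and every $X\subseteq V(G)$. Moreover, because $\rho_{H}(X)=\rho_{H}(V(G)-X)$ holds for every graph $H$ on the vertex set $V(G)$, and in particular for both $G$ and $G*v$, I may assume without loss of generality that $v\notin X$: if $v\in X$, I replace $X$ by its complement, which alters neither $\rho_{G}$ nor $\rho_{G*v}$.

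The heart of the argument is the following matrix computation. Write $Y=V(G)-X$ and $M=A_{G}[X,Y]$, so that $\rho_{G}(X)=\rnk(M)$. Let $n\in\GF(2)^{V(G)}$ be the indicator vector of $N_{G}(v)$, and let $n_{X}$ and $n_{Y}$ denote its restrictions to $X$ and $Y$. By the definition of local complementation, passing from $G$ to $G*v$ toggles the adjacency of every pair of distinct vertices of $N_{G}(v)$ and leaves all other adjacencies unchanged; over $\GF(2)$ this is exactly the rank-one update
\[
A_{G*v}[X,Y]=M+n_{X}\,n_{Y}^{\mathsf{T}}.
\]
The crucial observation is that, since $v\in Y$, the column of $M$ indexed by $v$ equals $n_{X}$ (its $x$-entry is $A_{G}[x,v]$, which is $1$ exactly when $x\in N_{G}(v)$), and since $G$ is simple we have $v\notin N_{G}(v)$, so the $v$-entry of $n_{Y}$ is $0$.

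Consequently the update $M\mapsto M+n_{X}\,n_{Y}^{\mathsf{T}}$ adds the fixed column indexed by $v$ to each column indexed by a vertex of $N_{G}(v)\cap Y$, while leaving the column $v$ itself unchanged. These are elementary column operations of the ``add one column to another'' type, which preserve the column space and hence the rank over any field. Therefore $\rnk(A_{G*v}[X,Y])=\rnk(M)$, that is, $\rho_{G*v}(X)=\rho_{G}(X)$, completing the induction.

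I expect the only subtle point to be the recognition that a rank-one update—which in general can change the rank by one—is rank-preserving here precisely because the added vector $n_{X}$ is itself a column of $M$, namely the one indexed by $v$, so that the update factors as a composition of elementary column operations. This is exactly where the normalization $v\notin X$ (so that $v$ indexes a column of $M$) and the absence of loops (so that the $v$-entry of $n_{Y}$ vanishes, leaving the pivot column fixed) are both used.
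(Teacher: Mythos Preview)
Your argument is correct. The reduction to a single local complementation and the normalization $v\notin X$ via the symmetry $\rho_H(X)=\rho_H(V(G)-X)$ are both valid, and the key identity $A_{G*v}[X,Y]=A_G[X,Y]+n_X n_Y^{\mathsf T}$ holds because on an off-diagonal block $X\times Y$ the ``$i\neq j$'' condition in the definition of local complementation is automatic. Your observation that $n_X$ is exactly the $v$-column of $A_G[X,Y]$ and that $n_Y(v)=0$ (no loops) then cleanly turns the rank-one update into a sequence of ``add column $v$ to column $y$'' operations, which preserve rank over any field.

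There is nothing to compare against in the paper itself: the lemma is quoted from Oum~\cite{Oum2005} without proof. The argument in that reference proceeds via a Schur-complement/principal-pivot identity relating $A_G$ and $A_{G*v}$, from which invariance of the cut-rank follows; your direct column-operation proof is the more elementary route and is entirely standard.
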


\begin{lemma}
\label{lem:delrank}
Let $G$ be a graph and $v$ be a vertex of $G$. For a subset $X$ of $V(G)-\{v\}$, we have
\begin{enumerate}[label=\rm(\roman*)]
\item\label{item:2.4i} $\rho_{G\setminus v}(X)+1\geq\rho_{G}(X)\geq\rho_{G\setminus v}(X)$.
\item\label{item:2.4ii} $\rho_{G\setminus v}(X)+1\geq\rho_{G}(X\cup\{v\})\geq\rho_{G\setminus v}(X)$.
\end{enumerate}
\end{lemma}
\begin{proof}
Observe that removing a row or a column of a matrix decreases the rank by at most~$1$.
\end{proof}

\begin{lemma}[see Truemper~\cite{Truemper1985}]
\label{lem:rank_subeq}
Let $A$ be an $X\times Y$-matrix. For sets $X_{1}, X_{2}\subseteq X$ and $Y_{1}, Y_{2}\subseteq Y$,
\[
\rnk(A[X_{1},Y_{1}])+\rnk(A[X_{2},Y_{2}])\geq\rnk(A[X_{1}\cap X_{2},Y_{1}\cup Y_{2}])+\rnk(A[X_{1}\cup X_{2},Y_{1}\cap Y_{2}]).
\]
\end{lemma}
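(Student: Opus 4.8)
The plan is to reduce the statement to the familiar submodularity of matroid rank by passing to the column matroid of~$A$. First I would recall that for any matrix $A$, the rank of a submatrix $A[X',Y']$ where $X'\subseteq X$ and $Y'\subseteq Y$ can be written as $\rnk(A[X',Y'])=\rnk(A[X,Y'])-\rnk_{\pi}$, where $\pi$ is the projection that kills the coordinates in $X$ outside $X'$; more concretely, if $r_{Y'}$ denotes the rank function of the matroid on the column set $Y$ represented by the full matrix $A[X,Y]$ (over $\GF(2)$), then deleting rows amounts to contracting a coordinate subspace, and one gets $\rnk(A[X',Y'])=r^{X'}(Y')$ where $r^{X'}$ is the rank function of the matroid obtained from the column matroid of $A[X',\,\cdot\,]$. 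This bookkeeping lets one translate the four terms into rank functions of matroids that differ only by which rows are present.

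The cleanest route, which I would take as the main line of the argument, is to first handle the case $X_1=X_2=X$, i.e.\ to prove $\rnk(A[X,Y_1])+\rnk(A[X,Y_2])\geq \rnk(A[X,Y_1\cup Y_2])+\rnk(A[X,Y_1\cap Y_2])$; but this is exactly submodularity of the rank function of the column matroid of $A[X,Y]$ applied to the sets $Y_1,Y_2\subseteq Y$, so it is immediate. Symmetrically, the case $Y_1=Y_2=Y$ is submodularity of the row matroid. The general statement then follows by composing a ``restrict the columns'' step with a ``restrict the rows'' step, but one has to be careful: naive composition gives a weaker inequality. Instead I would argue directly by choosing bases. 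Pick a maximal set $C\subseteq Y_1\cap Y_2$ of columns whose restriction to rows $X_1\cup X_2$ is independent; extend it within $Y_1$ (using only independence over rows $X_1$) and within $Y_2$ (over rows $X_2$). Then track how many of these chosen columns remain independent after intersecting the row sets, using the fact that a column set independent over a larger row set is independent over a smaller one only after possibly discarding some columns, and count carefully.

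Actually, the slickest proof I would write down is the determinantal/linear-algebra one, avoiding matroid language: for finite-dimensional subspaces, $\dim(U+V)+\dim(U\cap V)=\dim U+\dim V$. Let $R_i$ be the row space of $A[X_i,Y_1\cup Y_2]$ viewed inside $\GF(2)^{Y_1\cup Y_2}$, and let $p_{j}$ be the coordinate projection $\GF(2)^{Y_1\cup Y_2}\to\GF(2)^{Y_j}$. Then $\rnk(A[X_i,Y_j])=\dim p_j(R_i)$. Now $\rnk(A[X_1\cup X_2,Y_1\cap Y_2])=\dim p_{Y_1\cap Y_2}(R_1+R_2)\le \dim p_{Y_1}(R_1)+\dim p_{Y_2}(R_2) - \dim\big(p_{Y_1}(R_1)\cap_{\text{(after identifying along }Y_1\cap Y_2)} p_{Y_2}(R_2)\big)$ and one relates the intersection term to $\rnk(A[X_1\cap X_2,Y_1\cup Y_2])$ via a Mayer--Vietoris-type diagram chase on the projections $p_{Y_1}, p_{Y_2}, p_{Y_1\cap Y_2}$ restricted to $R_1, R_2, R_1+R_2, R_1\cap R_2$. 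The main obstacle I anticipate is exactly getting this last inequality clean rather than off by the wrong term: one must verify that $\dim(R_1\cap R_2)$ projected to $Y_1\cup Y_2$ is at most $\rnk(A[X_1\cap X_2,Y_1\cup Y_2])$, which uses that a row vector lying in both $R_1$ and $R_2$ comes from a common $\GF(2)$-linear combination of rows indexed by $X_1\cap X_2$ — and this is the one place where finiteness of the field and the precise definition of ``row space as a subspace of the ambient coordinate space'' genuinely matter. Given the length of the correct diagram chase, in the paper I would likely just cite Truemper as the excerpt already does and relegate the above to a one-line remark that it is the matrix form of submodularity.
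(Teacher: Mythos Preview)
The paper gives no proof of this lemma; it simply cites Truemper, exactly as you suggest in your last sentence. So your bottom line matches the paper.

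That said, your linear-algebra sketch contains a real error. You write that one must verify that $\dim(R_1\cap R_2)\le \rnk(A[X_1\cap X_2,Y_1\cup Y_2])$, justified by the claim that ``a row vector lying in both $R_1$ and $R_2$ comes from a common $\GF(2)$-linear combination of rows indexed by $X_1\cap X_2$.'' This is false. A vector in $R_1\cap R_2$ is expressible as a combination of rows from $X_1$ and also as a combination of rows from $X_2$, but these two expressions need not agree, and the vector need not lie in the span of rows indexed by $X_1\cap X_2$. (Take $X_1=\{1,2\}$, $X_2=\{2,3\}$ with row $1$ equal to row $3$ but not in the span of row $2$.) The inclusion actually goes the other way: the row space of $A[X_1\cap X_2,Y_1\cup Y_2]$ sits inside $R_1\cap R_2$, so $\rnk(A[X_1\cap X_2,Y_1\cup Y_2])\le\dim(R_1\cap R_2)$. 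Fortunately that is the direction the proof needs, not the one you stated.

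A clean way to finish is to work on the column side. View columns in $\GF(2)^{X_1\cup X_2}$, let $V_j$ be the span of the columns indexed by $Y_j$, and let $K_i=\ker(\pi_i)$ where $\pi_i$ is projection onto the $X_i$-coordinates. Then $K_1\cap K_2=0$ and $\ker(\pi_{X_1\cap X_2})=K_1\oplus K_2$. Using $\dim\pi_i(V_i)=\dim V_i-\dim(V_i\cap K_i)$ and $\rnk(A[X_1\cup X_2,Y_1\cap Y_2])\le\dim(V_1\cap V_2)$, the desired inequality reduces to
\[
\dim\bigl((V_1+V_2)\cap(K_1+K_2)\bigr)\ \ge\ \dim(V_1\cap K_1)+\dim(V_2\cap K_2),
\]
which holds because $(V_1\cap K_1)\oplus(V_2\cap K_2)\subseteq (V_1+V_2)\cap(K_1+K_2)$. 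No special feature of $\GF(2)$ is used; the argument works over any field.
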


Lemma~\ref{lem:rank_subeq} implies the following seven lemmas.

\begin{lemma}[see Oum~{\cite[Corollary 4.2]{Oum2005}}]
\label{lem:subeq}
Let $G$ be a graph and let $X$, $Y$ be subsets of $V(G)$. Then,
\[
\rho_{G}(X)+\rho_{G}(Y)\geq\rho_{G}(X\cap Y)+\rho_{G}(X\cup Y).
\]
\end{lemma}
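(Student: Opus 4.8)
The plan is to deduce this submodular inequality directly from the matrix inequality in Lemma~\ref{lem:rank_subeq} by choosing the row and column index sets appropriately. Write $V=V(G)$ and let $A=A_{G}$ be the adjacency matrix of $G$ over $\GF(2)$, regarded as a $V\times V$ matrix. By definition, each cut-rank is the rank of a submatrix of $A$ cut out by a set of rows together with its complementary set of columns: $\rho_{G}(Z)=\rnk(A[Z,V-Z])$ for every $Z\subseteq V$. In particular, the four quantities in the claim are $\rho_{G}(X)=\rnk(A[X,V-X])$, $\rho_{G}(Y)=\rnk(A[Y,V-Y])$, $\rho_{G}(X\cap Y)=\rnk(A[X\cap Y,V-(X\cap Y)])$, and $\rho_{G}(X\cup Y)=\rnk(A[X\cup Y,V-(X\cup Y)])$.

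First I would apply Lemma~\ref{lem:rank_subeq} to $A$ with the row sets $X_{1}=X$, $X_{2}=Y$ and the column sets $Y_{1}=V-X$, $Y_{2}=V-Y$, all of which are subsets of $V$. This yields
\[
\rnk(A[X,V-X])+\rnk(A[Y,V-Y])\geq\rnk(A[X\cap Y,(V-X)\cup(V-Y)])+\rnk(A[X\cup Y,(V-X)\cap(V-Y)]).
\]

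The remaining step is purely set-theoretic: by De Morgan's laws, $(V-X)\cup(V-Y)=V-(X\cap Y)$ and $(V-X)\cap(V-Y)=V-(X\cup Y)$. Substituting these identities into the two terms on the right-hand side converts them into $\rho_{G}(X\cap Y)$ and $\rho_{G}(X\cup Y)$ respectively, while the left-hand side is $\rho_{G}(X)+\rho_{G}(Y)$ by the identifications above, giving exactly the desired inequality.

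I do not expect any genuine obstacle, since the statement is essentially a one-line consequence of Lemma~\ref{lem:rank_subeq}. The only point requiring care is the bookkeeping of complements: one must match the index sets produced by that lemma---where the \emph{intersection} of the chosen rows is paired against the \emph{union} of the chosen columns, and the \emph{union} of the rows against the \emph{intersection} of the columns---to the complemented sets $V-(X\cap Y)$ and $V-(X\cup Y)$ that define the cut-ranks of $X\cap Y$ and $X\cup Y$.
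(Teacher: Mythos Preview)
Your proof is correct and matches the paper's approach exactly: the paper simply remarks that Lemma~\ref{lem:rank_subeq} implies this statement, and your derivation with $X_{1}=X$, $X_{2}=Y$, $Y_{1}=V-X$, $Y_{2}=V-Y$ together with De Morgan's laws is precisely the intended one-line deduction.
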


\begin{lemma}
\label{lem:subeq_minus}
Let $G$ be a graph and $X$ and $Y$ be subsets of $V(G)$. Then,
\[
\rho_{G}(X)+\rho_{G}(Y)\geq\rho_{G}(Y-X)+\rho_{G}(X-Y).
\]
\end{lemma}
\begin{proof}
Apply Lemma~\ref{lem:subeq} with $X$ and $V(G)-Y$.
\end{proof}

\begin{lemma}[Oum~{\cite[Lemma 2.3]{Oum2020}}] 
\label{lem:subtool}
Let $G$ be a graph and $v$ be a vertex of $G$. Let $X$ and $Y$ be subsets of $V(G)-\{v\}$. Then, the following hold.
\begin{enumerate}[label=\rm(S\arabic*)]
\item\label{item:s1} $\rho_{G\setminus v}(X)+\rho_{G}(Y\cup\{v\})\geq\rho_{G\setminus v}(X\cap Y)+\rho_{G}(X\cup Y\cup\{v\})$.
\item\label{item:s2} $\rho_{G\setminus v}(X)+\rho_{G}(Y)\geq\rho_{G}(X\cap Y)+\rho_{G\setminus v}(X\cup Y)$.
\end{enumerate}
\end{lemma}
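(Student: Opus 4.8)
The plan is to deduce both inequalities directly from Truemper's inequality (Lemma~\ref{lem:rank_subeq}) applied to the adjacency matrix $A_{G}$; the only real content is choosing the four index sets so that the column indexed by $v$ is present exactly in the submatrices whose rank is a cut-rank of $G$, and absent from both the rows and the columns of the submatrices whose rank is a cut-rank of $G\setminus v$. Write $V=V(G)$. I would use throughout that $A_{G\setminus v}$ is the submatrix of $A_{G}$ on rows and columns in $V-\{v\}$, so that $A_{G\setminus v}[S,T]=A_{G}[S,T]$ whenever $v\notin S\cup T$; in particular, for $S\subseteq V-\{v\}$ we have $\rho_{G\setminus v}(S)=\rnk(A_{G}[S,\,V-\{v\}-S])$, while $\rho_{G}(S)=\rnk(A_{G}[S,\,V-S])$ and $\rho_{G}(S\cup\{v\})=\rnk(A_{G}[S\cup\{v\},\,V-\{v\}-S])$.

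For \ref{item:s1}, I would apply Lemma~\ref{lem:rank_subeq} to $A=A_{G}$ with $X_{1}=X$, $Y_{1}=V-\{v\}-X$, $X_{2}=Y\cup\{v\}$, and $Y_{2}=V-\{v\}-Y$. Since $v\notin X\cup Y$, a routine computation gives $X_{1}\cap X_{2}=X\cap Y$, $Y_{1}\cup Y_{2}=V-\{v\}-(X\cap Y)$, $X_{1}\cup X_{2}=X\cup Y\cup\{v\}$, and $Y_{1}\cap Y_{2}=V-(X\cup Y\cup\{v\})$. Substituting into the inequality and reading off each rank term via the identities above gives exactly $\rho_{G\setminus v}(X)+\rho_{G}(Y\cup\{v\})\geq\rho_{G\setminus v}(X\cap Y)+\rho_{G}(X\cup Y\cup\{v\})$.

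For \ref{item:s2}, the same strategy works with $X_{1}=X$, $Y_{1}=V-\{v\}-X$, $X_{2}=Y$, and $Y_{2}=V-Y$. Here $v\notin X_{1}\cup X_{2}$ but $v\in Y_{2}$, and using $v\notin X\cup Y$ one checks $X_{1}\cap X_{2}=X\cap Y$, $Y_{1}\cup Y_{2}=V-(X\cap Y)$ (which contains $v$), $X_{1}\cup X_{2}=X\cup Y$, and $Y_{1}\cap Y_{2}=V-\{v\}-(X\cup Y)$. Hence $\rnk(A[X_{2},Y_{2}])=\rho_{G}(Y)$ and $\rnk(A[X_{1}\cap X_{2},Y_{1}\cup Y_{2}])=\rho_{G}(X\cap Y)$, while $\rnk(A[X_{1},Y_{1}])=\rho_{G\setminus v}(X)$ and $\rnk(A[X_{1}\cup X_{2},Y_{1}\cap Y_{2}])=\rho_{G\setminus v}(X\cup Y)$, and Lemma~\ref{lem:rank_subeq} becomes precisely \ref{item:s2}.

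There is essentially no obstacle beyond this index bookkeeping, and the one thing to get right is where $v$ goes. In \ref{item:s1} it must sit in the rows (of $X_{2}$), so that it persists in $X_{1}\cup X_{2}$ and is kept out of $Y_{1}\cap Y_{2}$, matching the two $\rho_{G}$-terms that carry $\cup\{v\}$; in \ref{item:s2} it must instead sit in the columns (of $Y_{2}$), so that it persists in $Y_{1}\cup Y_{2}$, matching the two $\rho_{G}$-terms on subsets of $V-\{v\}$. One also uses the obvious facts $\rho_{G}(Z)=\rho_{G}(V-Z)$ and $\rho_{G\setminus v}(Z)=\rho_{G\setminus v}((V-\{v\})-Z)$ to recognise each submatrix rank as the stated cut-rank; no submodularity of $\rho_{G}$ or $\rho_{G\setminus v}$ itself is invoked, only the matrix inequality of Lemma~\ref{lem:rank_subeq}.
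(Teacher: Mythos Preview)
Your proof is correct and follows exactly the approach the paper indicates: the paper does not spell out a proof but declares that Lemma~\ref{lem:rank_subeq} implies this lemma (among seven), and your index choices are precisely the way to make that implication concrete. The bookkeeping is accurate in both parts, so nothing needs to be added.
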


\begin{lemma}
\label{lem:cor_s2}
Let $G$ be a graph and $v$ be a vertex of $G$. Let $X$, $Y$ be subsets of $V(G\setminus v)$. If $X\subseteq Y$ and $\rho_{G\setminus v}(Y)\geq\rho_{G}(Y)$, then $\rho_{G\setminus v}(X)=\rho_{G}(X)$.
\end{lemma}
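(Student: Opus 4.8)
The plan is to derive everything from the submodular-type inequality \ref{item:s2} of Lemma~\ref{lem:subtool}, applied to the given pair $X,Y$. Since $X\subseteq Y$, we have $X\cap Y=X$ and $X\cup Y=Y$, so \ref{item:s2} collapses to
\[
\rho_{G\setminus v}(X)+\rho_{G}(Y)\ \geq\ \rho_{G}(X)+\rho_{G\setminus v}(Y).
\]
This is really the only nontrivial input; the rest is bookkeeping with the hypotheses.

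Next I would rearrange this to isolate the ``defect'' $\rho_G(Z)-\rho_{G\setminus v}(Z)$ on each side, obtaining
\[
\rho_{G}(X)-\rho_{G\setminus v}(X)\ \leq\ \rho_{G}(Y)-\rho_{G\setminus v}(Y).
\]
Then I would feed in the two sandwich bounds. On the right, the hypothesis $\rho_{G\setminus v}(Y)\geq\rho_{G}(Y)$ says $\rho_{G}(Y)-\rho_{G\setminus v}(Y)\leq 0$. On the left, Lemma~\ref{lem:delrank}\ref{item:2.4i} gives $\rho_{G}(X)\geq\rho_{G\setminus v}(X)$, i.e.\ $\rho_{G}(X)-\rho_{G\setminus v}(X)\geq 0$. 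Chaining these,
\[
0\ \leq\ \rho_{G}(X)-\rho_{G\setminus v}(X)\ \leq\ \rho_{G}(Y)-\rho_{G\setminus v}(Y)\ \leq\ 0,
\]
so $\rho_{G}(X)=\rho_{G\setminus v}(X)$, as desired.

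I do not expect any genuine obstacle here: the statement is a direct consequence of \ref{item:s2} together with the elementary rank bound of Lemma~\ref{lem:delrank}. The only ``choice'' in the argument is recognizing that \ref{item:s2} is the right instance of Lemma~\ref{lem:rank_subeq} to invoke (rather than, say, \ref{item:s1} or plain submodularity of $\rho_{G\setminus v}$), which is dictated by the fact that we need to compare $\rho_{G\setminus v}$ on the small set with $\rho_G$ on the small set while the hypothesis controls the same two functions on the big set.
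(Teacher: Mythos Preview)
Your proof is correct and is essentially identical to the paper's own argument: apply \ref{item:s2} of Lemma~\ref{lem:subtool} with $X\subseteq Y$, rearrange, and sandwich the difference using Lemma~\ref{lem:delrank}\ref{item:2.4i} and the hypothesis on $Y$.
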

\begin{proof}
By \ref{item:s2} of Lemma~\ref{lem:subtool}, 
\[
\rho_{G\setminus v}(X)+\rho_{G}(Y)\geq\rho_{G\setminus v}(Y)+\rho_{G}(X).
\]
Therefore, by Lemma~\ref{lem:delrank}(i), $0\leq\rho_{G}(X)-\rho_{G\setminus v}(X)\leq\rho_{G}(Y)-\rho_{G\setminus v}(Y)\leq0$. So we conclude that $\rho_{G\setminus v}(X)=\rho_{G}(X)$.
\end{proof}

\begin{lemma}
\label{lem:cor_s1}
Let $G$ be a graph and $v$ be a vertex of $G$. Let $X$, $Y$ be subsets of $V(G)$. If $v\in Y\subseteq X$ and $\rho_{G\setminus v}(Y-\{v\})\geq\rho_{G}(Y)$, then $\rho_{G\setminus v}(X-\{v\})=\rho_{G}(X)$.
\end{lemma}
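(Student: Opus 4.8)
The plan is to mirror the proof of Lemma~\ref{lem:cor_s2}, but invoking \ref{item:s1} of Lemma~\ref{lem:subtool} in place of \ref{item:s2}. Set $X' = X - \{v\}$ and $Y' = Y - \{v\}$; both are subsets of $V(G)-\{v\}$, so \ref{item:s1} applies to them. Since $v \in Y \subseteq X$, we have $Y' \subseteq X'$, hence $X' \cap Y' = Y - \{v\}$ and $X' \cup Y' = X - \{v\}$, while $Y' \cup \{v\} = Y$ and $(X' \cup Y') \cup \{v\} = X$. Substituting into \ref{item:s1} yields
\[
\rho_{G\setminus v}(X - \{v\}) + \rho_{G}(Y) \geq \rho_{G\setminus v}(Y - \{v\}) + \rho_{G}(X),
\]
which rearranges to $\rho_{G\setminus v}(X - \{v\}) - \rho_{G}(X) \geq \rho_{G\setminus v}(Y - \{v\}) - \rho_{G}(Y) \geq 0$ by hypothesis.

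For the reverse inequality, I would apply Lemma~\ref{lem:delrank}\ref{item:2.4ii} with the set $X - \{v\}$, giving $\rho_{G}(X) = \rho_{G}((X-\{v\})\cup\{v\}) \geq \rho_{G\setminus v}(X - \{v\})$, so that $\rho_{G\setminus v}(X - \{v\}) - \rho_{G}(X) \leq 0$. Combining the two bounds forces $\rho_{G\setminus v}(X - \{v\}) = \rho_{G}(X)$.

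This is a short, routine consequence of the submodular-type inequality \ref{item:s1}; there is no real obstacle. The only point requiring a moment's care is verifying the set identities above, which rely on the hypothesis $v \in Y \subseteq X$ — in particular that removing $v$ from $X$ and $Y$ and then restoring it to each recovers $X$ and $Y$ exactly, and that the intersection/union of $X'$ and $Y'$ behave as claimed because of the nesting $Y \subseteq X$.
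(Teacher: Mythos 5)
Your proof is correct. It differs from the paper's in a small but genuine way: the paper disposes of this lemma in one line by applying Lemma~\ref{lem:cor_s2} to the complements $V(G)-X$ and $V(G)-Y$ (which lie in $V(G)-\{v\}$ precisely because $v\in Y\subseteq X$, and for which the hypothesis of Lemma~\ref{lem:cor_s2} translates into yours via $\rho_{G}(Z)=\rho_{G}(V(G)-Z)$), whereas you reprove the statement directly by running the argument of Lemma~\ref{lem:cor_s2} with \ref{item:s1} of Lemma~\ref{lem:subtool} in place of \ref{item:s2} and with Lemma~\ref{lem:delrank}\ref{item:2.4ii} in place of Lemma~\ref{lem:delrank}\ref{item:2.4i}. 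Since \ref{item:s1} and \ref{item:s2} are themselves complement-duals of one another, the two arguments are morally identical; the paper's reduction is shorter, while your direct computation makes the set-theoretic bookkeeping (that $X'\cap Y'=Y-\{v\}$, $X'\cup Y'\cup\{v\}=X$, and $Y'\cup\{v\}=Y$ under the nesting $v\in Y\subseteq X$) fully explicit and avoids having to translate the hypothesis through complementation. All of your substitutions and both inequalities check out, so no changes are needed.
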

\begin{proof}
We apply Lemma~\ref{lem:cor_s2} for $V(G)-X$ and $V(G)-Y$.
\end{proof}
\begin{lemma}
\label{lem:subtool_minus}
Let $G$ be a graph and $v$ be a vertex of $G$. Let $X$ and $Y$ be subsets of $V(G)-\{v\}$. Then, 
\[
\rho_{G\setminus v}(X)+\rho_{G}(Y\cup\{v\})\geq\rho_{G\setminus v}(Y-X)+\rho_{G}(X-Y).
\]
\end{lemma}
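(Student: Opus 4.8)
The plan is to deduce this ``subtractive'' inequality from Lemma~\ref{lem:subtool} by the same complementation trick that yields Lemma~\ref{lem:subeq_minus} from Lemma~\ref{lem:subeq}. Write $V'=V(G)-\{v\}=V(G\setminus v)$. I would apply \ref{item:s2} of Lemma~\ref{lem:subtool} to the sets $X$ and $V'-Y$, both of which are subsets of $V(G)-\{v\}$ as required, and then rewrite each of the four resulting terms using $v\notin X\cup Y$ together with the identities $\rho_G(A)=\rho_G(V(G)-A)$ and $\rho_{G\setminus v}(A)=\rho_{G\setminus v}(V'-A)$.

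Concretely, \ref{item:s2} of Lemma~\ref{lem:subtool} gives $\rho_{G\setminus v}(X)+\rho_G(V'-Y)\geq\rho_G(X\cap(V'-Y))+\rho_{G\setminus v}(X\cup(V'-Y))$. On the left, $V'-Y=V(G)-(Y\cup\{v\})$ because $v\notin Y$, so $\rho_G(V'-Y)=\rho_G(Y\cup\{v\})$. On the right, $X\subseteq V'$ gives $X\cap(V'-Y)=X-Y$; and the complement of $X\cup(V'-Y)$ inside $V'$ equals $(V'-X)\cap Y=Y-X$ (using $Y\subseteq V'$), hence $X\cup(V'-Y)=V'-(Y-X)$ and $\rho_{G\setminus v}(X\cup(V'-Y))=\rho_{G\setminus v}(Y-X)$. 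Substituting these equalities into the displayed inequality gives exactly the claimed one.

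I do not expect any genuine obstacle: the proof is essentially a one-line invocation of Lemma~\ref{lem:subtool} followed by routine set bookkeeping, and the only point that needs care is keeping straight whether a given complement is taken in $V(G)$ or in $V(G)-\{v\}$, and invoking $v\notin X\cup Y$ at the right moments. As an alternative route one could instead apply \ref{item:s1} of Lemma~\ref{lem:subtool} to $V'-X$ and $Y$ and simplify in the same way.
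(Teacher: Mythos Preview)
Your proof is correct and essentially matches the paper's approach: the paper applies \ref{item:s1} of Lemma~\ref{lem:subtool} with $V(G)-(X\cup\{v\})$ and $Y$ (precisely the alternative you mention at the end), while your primary route uses \ref{item:s2} with $X$ and $V'-Y$; both are the same complementation trick and yield the inequality in one line.
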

\begin{proof}
Apply~\ref{item:s1} of Lemma~\ref{lem:subtool} with $V(G)-(X\cup\{v\})$ and $Y$. 
\end{proof}

\begin{lemma}[Oum~{\cite[Lemma 2.2]{Oum2020}}]
\label{lem:sub_eq_AB}
Let $G$ be a graph and $a$, $b$ be distinct vertices of $G$. Let $A\subseteq V(G)-\{a\}$ and $B\subseteq V(G)-\{b\}$. Then, the following hold.
\begin{enumerate}[label=\rm(A\arabic*)]
\item\label{item:ab1} If $b\notin A$ and $a\notin B$, then $\rho_{G}(A\cap B)+\rho_{G\setminus a\setminus b}(A\cup B)\leq\rho_{G\setminus a}(A)+\rho_{G\setminus b}(B)$.
\item\label{item:ab2} If $b\in A$ and $a\notin B$, then $\rho_{G\setminus b}(A\cap B)+\rho_{G\setminus a}(A\cup B)\leq\rho_{G\setminus a}(A)+\rho_{G\setminus b}(B)$.
\item\label{item:ab3} If $b\in A$ and $a\in B$, then $\rho_{G\setminus a\setminus b}(A\cap B)+\rho_{G}(A\cup B)\leq\rho_{G\setminus a}(A)+\rho_{G\setminus b}(B)$.
\end{enumerate}
\end{lemma}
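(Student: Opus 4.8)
The plan is to obtain all three inequalities from a single application of Truemper's inequality (Lemma~\ref{lem:rank_subeq}) to the adjacency matrix $A_{G}$, regarded as a $V(G)\times V(G)$-matrix over $\GF(2)$. In every case I would apply Lemma~\ref{lem:rank_subeq} with the row sets $X_{1}=A$, $X_{2}=B$ and the column sets $Y_{1}=V(G)-A-\{a\}$, $Y_{2}=V(G)-B-\{b\}$, which yields
\[
\rnk(A_{G}[A,Y_{1}])+\rnk(A_{G}[B,Y_{2}])\ge\rnk(A_{G}[A\cap B,Y_{1}\cup Y_{2}])+\rnk(A_{G}[A\cup B,Y_{1}\cap Y_{2}]).
\]
Because $a\notin A$ and $b\notin B$, the left-hand side is exactly $\rho_{G\setminus a}(A)+\rho_{G\setminus b}(B)$ by the definition of the cut-rank, independently of the case. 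So the entire argument reduces to identifying, in each of the three cases, the two ranks on the right-hand side with the appropriate cut-ranks.

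For that identification I would simply track whether $a$ and $b$ lie in $A\cap B$ and in $A\cup B$. One always has $X_{1}\cap X_{2}=A\cap B$, $X_{1}\cup X_{2}=A\cup B$, $Y_{1}\cup Y_{2}=V(G)-\bigl((A\cup\{a\})\cap(B\cup\{b\})\bigr)$, and $Y_{1}\cap Y_{2}=V(G)-(A\cup B\cup\{a,b\})$. In case (A1), with $b\notin A$ and $a\notin B$, we have $(A\cup\{a\})\cap(B\cup\{b\})=A\cap B$ and $a,b\notin A\cup B$, so the right-hand side is $\rho_{G}(A\cap B)+\rho_{G\setminus a\setminus b}(A\cup B)$. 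In case (A2), with $b\in A$ and $a\notin B$, the set $(A\cup\{a\})\cap(B\cup\{b\})$ acquires $b$ but not $a$ while $A\cup B$ contains $b$ but not $a$, so the right-hand side is $\rho_{G\setminus b}(A\cap B)+\rho_{G\setminus a}(A\cup B)$. In case (A3), with $b\in A$ and $a\in B$, the set $(A\cup\{a\})\cap(B\cup\{b\})$ acquires both $a$ and $b$ while $A\cup B$ already contains both, so the right-hand side is $\rho_{G\setminus a\setminus b}(A\cap B)+\rho_{G}(A\cup B)$. In each case this is precisely the claimed inequality.

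The only point needing care --- and the main (indeed only) obstacle --- is the set-theoretic bookkeeping: one must verify in each case that $a\notin A\cap B$ and $b\notin A\cap B$, and decide whether $A\cup B$ contains $a$ or $b$, so that the submatrices on the right-hand side genuinely represent the cut-ranks $\rho_{G}$, $\rho_{G\setminus a}$, $\rho_{G\setminus b}$, or $\rho_{G\setminus a\setminus b}$ on the stated vertex sets (in particular that the ground set of each matches, e.g.\ that $A\cup B\subseteq V(G)-\{a,b\}$ in case (A1)). Since $a\neq b$, $A\subseteq V(G)-\{a\}$, and $B\subseteq V(G)-\{b\}$, all these memberships are forced by the hypotheses of each case, so the verification is purely routine.
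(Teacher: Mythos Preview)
Your proof is correct and follows exactly the route the paper indicates: the lemma is listed among the ``seven lemmas'' implied by Lemma~\ref{lem:rank_subeq}, and you carry this out by a single application of Truemper's inequality to $A_{G}$ with $X_{1}=A$, $X_{2}=B$, $Y_{1}=V(G)-A-\{a\}$, $Y_{2}=V(G)-B-\{b\}$, followed by the (routine) case-by-case identification of $Y_{1}\cup Y_{2}$ and $Y_{1}\cap Y_{2}$. The paper itself does not spell out the details (it cites \cite[Lemma~2.2]{Oum2020}), so there is nothing further to compare.
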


\begin{lemma}[Oum~{\cite[Proposition 4.3]{Oum2005}}]
\label{lem:matrix_local}
Let $G$ be a graph and $x$ be a vertex of $G$. For a subset $X$ of $V(G)-\{x\}$, the following hold.
\begin{enumerate}[label=\rm(\arabic*)]
\item $\rho_{G*x\setminus x}(X)=\rnk
\begin{pmatrix}
1 & A_{G}[\{x\},V(G)-(X\cup\{x\})] \\ 
A_{G}[X,\{x\}] & A_{G}[X,V(G)-(X\cup\{x\})]
\end{pmatrix}-1$.
\item$\rho_{G/x}(X)=\rnk
\begin{pmatrix}
0 & A_{G}[\{x\},V(G)-(X\cup\{x\})] \\ 
A_{G}[X,\{x\}] & A_{G}[X,V(G)-(X\cup\{x\})]
\end{pmatrix}-1$.
\end{enumerate}
\end{lemma}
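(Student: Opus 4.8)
The plan is to read both quantities as ranks of explicit submatrices of adjacency matrices over $\GF(2)$ and then relate them to the two displayed bordered matrices by elementary row and column operations. Throughout, write $Z=V(G)-(X\cup\{x\})$, and set $b=A_{G}[X,\{x\}]$, $c^{T}=A_{G}[\{x\},Z]$, and $N=A_{G}[X,Z]$, so that the two matrices in the statement are $M_{1}=\begin{pmatrix}1 & c^{T}\\ b & N\end{pmatrix}$ and $M_{0}=\begin{pmatrix}0 & c^{T}\\ b & N\end{pmatrix}$. The goal is thus to show $\rho_{G*x\setminus x}(X)=\rnk(M_{1})-1$ and $\rho_{G/x}(X)=\rnk(M_{0})-1$.

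For part (1), I would first record the effect of local complementation on the adjacency matrix: for $u\in X$ and $w\in Z$ we have $A_{G*x}[u,w]=A_{G}[u,w]+A_{G}[u,x]A_{G}[x,w]$, since the edge $uw$ is toggled exactly when both $u$ and $w$ are neighbors of $x$. As $u\ne w$, no diagonal entries intervene, and deleting $x$ leaves the $X\times Z$ block unchanged, so $\rho_{G*x\setminus x}(X)=\rnk(N+bc^{T})$. The corner of $M_{1}$ is the invertible $1\times 1$ block $(1)$, so the Schur-complement rank identity (valid over any field) gives $\rnk(M_{1})=1+\rnk\bigl(N-b\cdot 1^{-1}\cdot c^{T}\bigr)=1+\rnk(N+bc^{T})$ over $\GF(2)$; combining the two displays yields the claim.

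For part (2), I may assume $x$ has a neighbor, as otherwise $G/x$ is undefined; and since $\rho$ is symmetric under complementation of the cut and $M_{0}$ merely transposes when $X$ and $Z$ are swapped, I may assume a neighbor $u$ of $x$ lies in $X$. Writing $X'=X-\{u\}$ and splitting $A_{G}$ according to $x,u,X',Z$, I would compute $G\wedge ux$ via its principal pivot transform on the invertible block $A_{G}[\{x,u\},\{x,u\}]=\begin{pmatrix}0 & 1\\ 1 & 0\end{pmatrix}$ (equivalently, by applying the rule from (1) three times to $G*u*x*u$). Two facts fall out: the vertices $x$ and $u$ swap neighborhoods, so after pivoting the row of $u$ into $Z$ becomes $c^{T}$; and the $X'\times Z$ block becomes $A_{G}[X',Z]+\beta q^{T}+p\,c^{T}$, where $\beta=A_{G}[X',\{x\}]$, $p=A_{G}[X',\{u\}]$, and $q=A_{G}[Z,\{u\}]$. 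Deleting $x$ then gives $\rho_{G/x}(X)=\rnk\begin{pmatrix}c^{T}\\ A_{G}[X',Z]+\beta q^{T}+p\,c^{T}\end{pmatrix}$, with rows indexed by $u$ and $X'$.

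It remains to match this rank with $\rnk(M_{0})-1$. The $(u,\text{border})$-entry of $M_{0}$ equals $b_{u}=A_{G}[u,x]=1$, so I would use it as a pivot: adding the $u$-row to the $X'$-rows clears the border column, and adding border-column multiples to the $Z$-columns clears the $Z$-part of the $u$-row, peeling off an independent row--column cross that contributes exactly $1$ and leaving $\rnk\begin{pmatrix}c^{T}\\ A_{G}[X',Z]+\beta q^{T}\end{pmatrix}$. Finally, the extra term $p\,c^{T}$ in the pivoted block is a sum of multiples of the row $c^{T}$, so restoring it is a rank-preserving row operation; hence this last rank equals $\rho_{G/x}(X)$ and $\rnk(M_{0})=1+\rho_{G/x}(X)$. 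I expect the main obstacle to be part (2): correctly extracting the pivot's action on the adjacency matrix — in particular the neighborhood swap that makes $u$'s new row equal to $c^{T}$ — and then recognizing the residual $p\,c^{T}$ discrepancy as a harmless row operation; part (1) is an immediate Schur-complement computation.
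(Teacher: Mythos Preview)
The paper does not supply a proof of this lemma; it is quoted from \cite[Proposition~4.3]{Oum2005} without argument, so there is nothing in the present paper to compare against. Your proof is correct. Part~(1) is the Schur-complement identity applied to the invertible $1\times 1$ corner of~$M_{1}$, once one notes that local complementation at~$x$ replaces the cut matrix~$N$ by~$N+bc^{T}$. For part~(2), your computation of the $X\times Z$ block of $A_{G\wedge ux}$ via the principal pivot transform on the $\{x,u\}$-block is accurate (the $u$-row into~$Z$ becomes~$c^{T}$ and the $X'\times Z$ block becomes $A_{G}[X',Z]+\beta q^{T}+pc^{T}$), the reduction of~$M_{0}$ by pivoting on the entry $b_{u}=1$ correctly peels off a rank-$1$ summand, and the leftover $pc^{T}$ term is indeed absorbed by row operations using the first row~$c^{T}$.
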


From Lemma~\ref{lem:matrix_local}, we deduce the following lemma.

\begin{lemma}
\label{lem:local_or_pivot}
Let $G$ be a graph and $x\in V(G)$. Let $C$ be a subset of $V(G)-\{x\}$ such that $\rho_{G\setminus x}(C)=\rho_{G}(C)$. Then $\rho_{G*x\setminus x}(C)=\rho_{G}(C\cup\{x\})-1$ or $\rho_{G/x}(C)=\rho_{G}(C\cup\{x\})-1$.
\end{lemma}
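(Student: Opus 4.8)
The plan is to translate everything into ranks of $\GF(2)$-matrices via Lemma~\ref{lem:matrix_local} and then produce the required linear dependence explicitly. Put $Z = V(G)\setminus(C\cup\{x\})$ and split the relevant adjacency submatrices into blocks: let $r = A_{G}[\{x\},Z]$, $c = A_{G}[C,\{x\}]$, and $M = A_{G}[C,Z]$. Then $\rho_{G}(C\cup\{x\}) = \rnk N$ where $N = \bigl(\begin{smallmatrix} r \\ M\end{smallmatrix}\bigr)$, while $\rho_{G\setminus x}(C) = \rnk M$ and $\rho_{G}(C) = \rnk\bigl(\begin{smallmatrix} c & M\end{smallmatrix}\bigr)$ directly from the definitions. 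By Lemma~\ref{lem:matrix_local}, $\rho_{G*x\setminus x}(C)$ is the rank of the matrix obtained from $N$ by adjoining the column $\bigl(\begin{smallmatrix} 1 \\ c\end{smallmatrix}\bigr)$, minus one, and $\rho_{G/x}(C)$ is the rank of the matrix obtained from $N$ by adjoining the column $\bigl(\begin{smallmatrix} 0 \\ c\end{smallmatrix}\bigr)$, minus one. Consequently each of $\rho_{G*x\setminus x}(C)$ and $\rho_{G/x}(C)$ equals either $\rho_{G}(C\cup\{x\})-1$ or $\rho_{G}(C\cup\{x\})$, and the larger value is attained exactly when the adjoined column lies in the column space of $N$.

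So it suffices to show that at least one of the columns $\bigl(\begin{smallmatrix} 1 \\ c\end{smallmatrix}\bigr)$, $\bigl(\begin{smallmatrix} 0 \\ c\end{smallmatrix}\bigr)$ lies in the column space of $N$, and this is where the hypothesis is used. The assumption $\rho_{G\setminus x}(C) = \rho_{G}(C)$ says $\rnk M = \rnk\bigl(\begin{smallmatrix} c & M\end{smallmatrix}\bigr)$, that is, $c$ is a $\GF(2)$-sum of columns of $M$: write $c = \sum_{i\in T} A_{G}[C,\{i\}]$ for some $T\subseteq Z$. Summing the columns of $N$ indexed by $T$ then yields the vector $\bigl(\begin{smallmatrix} \varepsilon \\ c\end{smallmatrix}\bigr)$, where $\varepsilon = \sum_{i\in T} A_{G}[\{x\},\{i\}]\in\GF(2)$, and this vector lies in the column space of $N$ by construction. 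If $\varepsilon = 0$ then $\bigl(\begin{smallmatrix} 0 \\ c\end{smallmatrix}\bigr)$ lies in the column space of $N$, so $\rho_{G/x}(C) = \rho_{G}(C\cup\{x\})-1$; if $\varepsilon = 1$ then $\bigl(\begin{smallmatrix} 1 \\ c\end{smallmatrix}\bigr)$ lies in the column space of $N$, so $\rho_{G*x\setminus x}(C) = \rho_{G}(C\cup\{x\})-1$. Either way the desired disjunction holds.

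I do not expect a genuine obstacle; the argument is essentially Lemma~\ref{lem:matrix_local} combined with the identity $\bigl(\begin{smallmatrix} 1 \\ c\end{smallmatrix}\bigr) + \bigl(\begin{smallmatrix} 0 \\ c\end{smallmatrix}\bigr) = \bigl(\begin{smallmatrix} 1 \\ 0\end{smallmatrix}\bigr)$ over $\GF(2)$ and the hypothesis $c\in\operatorname{colsp} M$. The only things to be careful about are the block bookkeeping---verifying that the two matrices produced by Lemma~\ref{lem:matrix_local} are indeed $N$ with one extra column whose lower part is exactly $c$---and checking that the degenerate case $C=\varnothing$ (where one may take $T=\varnothing$ and $\varepsilon=0$) causes no problem.
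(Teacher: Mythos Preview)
Your argument is correct and follows essentially the same route as the paper: use Lemma~\ref{lem:matrix_local} to write $\rho_{G*x\setminus x}(C)$ and $\rho_{G/x}(C)$ as the ranks of $N$ augmented by $\bigl(\begin{smallmatrix}1\\c\end{smallmatrix}\bigr)$ and $\bigl(\begin{smallmatrix}0\\c\end{smallmatrix}\bigr)$ respectively (minus one), use the hypothesis to put $c$ in the column space of $M$, and observe that the same linear combination lifted to $N$ produces one of the two augmented columns. The paper's proof is simply a terser version of exactly this.

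One slip to fix: you wrote that ``the larger value is attained exactly when the adjoined column lies in the column space of $N$,'' but it is the \emph{smaller} value $\rho_{G}(C\cup\{x\})-1$ that occurs precisely when the extra column is already in $\operatorname{colsp}(N)$ (adjoining a dependent column leaves the rank unchanged, and you then subtract one). Your next paragraph proceeds correctly under the right reading, so this is just a wording error, not a gap in the logic.
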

\begin{proof}
Let $D=V(G)-(C\cup\{x\})$. Since $\rho_{G\setminus x}(C)=\rho_{G}(C)$, a column vector $A_{G}[C,\{x\}]$ is in the column space of $A_{G}[C,D]$. Then let $A'$ and $A''$ be matrices over $\GF(2)$ such that
\[
A'=\begin{pmatrix}
1 & A_{G}[\{x\},D] \\ 
A_{G}[C,\{x\}] & A_{G}[C,D]
\end{pmatrix}
\text{ and }
A''=\begin{pmatrix}
0 & A_{G}[\{x\},D] \\ 
A_{G}[C,\{x\}] & A_{G}[C,D]
\end{pmatrix}.
\]
Then $\rnk(A')=\rho_{G}(C\cup\{x\})$ or $\rnk(A'')=\rho_{G}(C\cup\{x\})$ and therefore, by Lemma~\ref{lem:matrix_local}, we have 
$\rho_{G*x\setminus x}(C)=\rnk(A')-1=\rho_{G}(C\cup\{x\})-1$ or $\rho_{G/x}(C)=\rnk(A'')-1=\rho_{G}(C\cup\{x\})-1$.
\end{proof}

\begin{lemma}[Oum~{\cite[Lemma 4.4]{Oum2005}}]
\label{lem:pivot_subeq}
Let $G$ be a graph and $x$ be a vertex of $G$. Let $(X_{1},Y_{1})$ and $(X_{2},Y_{2})$ be partitions of $V(G)-\{x\}$. Then the following hold:
\begin{enumerate}[label=\rm(P\arabic*)]
\item\label{item:p1} $\rho_{G\setminus x}(X_{1})+\rho_{G*x\setminus x}(X_{2})\geq\rho_{G}(X_{1}\cap X_{2})+\rho_{G}(Y_{1}\cap Y_{2})-1$.
\item\label{item:p2} $\rho_{G\setminus x}(X_{1})+\rho_{G/x}(X_{2})\geq\rho_{G}(X_{1}\cap X_{2})+\rho_{G}(Y_{1}\cap Y_{2})-1$.
\end{enumerate}
\end{lemma}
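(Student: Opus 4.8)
The plan is to derive both (P1) and (P2) from Truemper's inequality (Lemma~\ref{lem:rank_subeq}), after using Lemma~\ref{lem:matrix_local} to rewrite the ``local'' and ``pivot'' cut-rank terms as ranks of submatrices of a single $V(G)\times V(G)$ matrix.

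Consider (P1). By Lemma~\ref{lem:matrix_local}(1), $\rho_{G*x\setminus x}(X_2)+1$ equals the rank of the matrix $A'$ with rows $\{x\}\cup X_2$ and columns $\{x\}\cup Y_2$ that agrees with $A_G$ on every entry except the $(x,x)$-entry, which is $1$ (here we use that $(X_2,Y_2)$ partitions $V(G)-\{x\}$, so $V(G)-(X_2\cup\{x\})=Y_2$). So let $A^{*}$ be the $V(G)\times V(G)$ matrix obtained from $A_G$ by changing only its $(x,x)$-entry from $0$ to $1$; then $A'=A^{*}[\{x\}\cup X_2,\{x\}\cup Y_2]$, while $\rho_{G\setminus x}(X_1)=\rnk A_G[X_1,Y_1]=\rnk A^{*}[X_1,Y_1]$ since $x\notin X_1\cup Y_1$. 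I would then apply Lemma~\ref{lem:rank_subeq} to $A^{*}$ with row sets $X_1$ and $\{x\}\cup X_2$ and column sets $Y_1$ and $\{x\}\cup Y_2$. Because $(X_1,Y_1)$ and $(X_2,Y_2)$ are partitions of $V(G)-\{x\}$, the intersections and unions produced by Lemma~\ref{lem:rank_subeq} simplify to $X_1\cap(\{x\}\cup X_2)=X_1\cap X_2$, $\;Y_1\cup\{x\}\cup Y_2=V(G)-(X_1\cap X_2)$, $\;X_1\cup\{x\}\cup X_2=V(G)-(Y_1\cap Y_2)$, and $Y_1\cap(\{x\}\cup Y_2)=Y_1\cap Y_2$. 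Moreover, since $x\notin X_1\cap X_2$ and $x\notin Y_1\cap Y_2$, the modified $(x,x)$-entry is absent from each of the two submatrices appearing on the right-hand side, so their ranks are exactly $\rho_G(X_1\cap X_2)$ and $\rho_G(Y_1\cap Y_2)$. Substituting everything into Lemma~\ref{lem:rank_subeq} yields $\rho_{G\setminus x}(X_1)+\rho_{G*x\setminus x}(X_2)+1\geq\rho_G(X_1\cap X_2)+\rho_G(Y_1\cap Y_2)$, which is (P1). For (P2) the argument is identical, except that Lemma~\ref{lem:matrix_local}(2) leaves the corner entry equal to $0$, so one applies Lemma~\ref{lem:rank_subeq} directly to $A_G$ instead of to a perturbed matrix; the same set computations then give (P2).

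The only step requiring real thought is the first one: recognizing that the stray $1$ in the corner of the matrix in Lemma~\ref{lem:matrix_local}(1) can be absorbed by modifying the single entry $A_G[\{x\},\{x\}]$, which lets both left-hand terms be realized as ranks of submatrices of one common matrix, so that Truemper's inequality applies verbatim. Everything after that is bookkeeping with the partition identities together with the observation that the modified $(x,x)$-entry never lies inside the submatrices on the right-hand side.
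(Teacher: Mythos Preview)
Your proof is correct. The paper does not supply its own proof of this lemma; it merely cites it from \cite[Lemma~4.4]{Oum2005}. Your argument---absorbing the corner~$1$ from Lemma~\ref{lem:matrix_local}(1) into a perturbed matrix~$A^{*}$ so that both left-hand terms become ranks of submatrices of a single matrix, then applying Lemma~\ref{lem:rank_subeq} and checking that the $(x,x)$ entry is never present in the right-hand submatrices because $x\notin X_1\cap X_2$ and $x\notin Y_1\cap Y_2$---is exactly the natural route suggested by the paper's ordering of lemmas (Lemma~\ref{lem:matrix_local} is \cite[Proposition~4.3]{Oum2005}, immediately preceding the cited Lemma~4.4), and it is almost certainly the proof given there.
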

The following lemma is an easy consequence of Lemmas~\ref{lem:local} and~\ref{lem:pivot_subeq}.

\begin{lemma}
\label{lem:pivot_subeq2}
Let $G$ be a graph and $x$ be a vertex of $G$. Let $(X_{1},Y_{1})$ and $(X_{2},Y_{2})$ be partitions of $V(G)-\{x\}$. Then,
\[
\rho_{G*x\setminus x}(X_{1})+\rho_{G/x}(X_{2})\geq\rho_{G}(X_{1}\cap X_{2})+\rho_{G}(Y_{1}\cap Y_{2})-1.
\]
\end{lemma}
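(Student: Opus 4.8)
The plan is to apply Lemma~\ref{lem:pivot_subeq}\ref{item:p2} not to $G$ itself but to the locally equivalent graph $G':=G*x$, using the same two partitions $(X_1,Y_1)$ and $(X_2,Y_2)$ of $V(G')-\{x\}=V(G)-\{x\}$. This immediately yields
\[
\rho_{G'\setminus x}(X_1)+\rho_{G'/x}(X_2)\geq\rho_{G'}(X_1\cap X_2)+\rho_{G'}(Y_1\cap Y_2)-1,
\]
and the remaining work is just to rewrite each of the four terms in terms of $G$. The first term is already what we want, since $G'\setminus x=G*x\setminus x$. For the right-hand side, $G'=G*x$ is locally equivalent to $G$, so Lemma~\ref{lem:local} gives $\rho_{G'}(X_1\cap X_2)=\rho_{G}(X_1\cap X_2)$ and $\rho_{G'}(Y_1\cap Y_2)=\rho_{G}(Y_1\cap Y_2)$, turning the right-hand side into exactly $\rho_{G}(X_1\cap X_2)+\rho_{G}(Y_1\cap Y_2)-1$.

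The one point that needs a little care — and essentially the only content of the proof — is the second term $\rho_{G'/x}(X_2)=\rho_{(G*x)/x}(X_2)$. Here I would invoke Lemma~\ref{lem:onetoone}(3) with $v=w=x$, which says that $(G*x)/x$ is locally equivalent to $G/x$; combined with Lemma~\ref{lem:local} this gives $\rho_{(G*x)/x}(X_2)=\rho_{G/x}(X_2)$. (If one prefers to avoid citing Lemma~\ref{lem:onetoone}, this can be checked by hand: picking a neighbor $u$ of $x$, and noting $N_{G*x}(x)=N_{G}(x)$ together with the pivot identity $G*u*x*u=G*x*u*x$, one gets $G/x=G*x*u*x\setminus x$ and $(G*x)/x=G*x*u*x*u\setminus x=(G/x)*u$, using that $u\neq x$ so deleting $x$ commutes with local complementation at $u$.) Substituting these three rewrites into the displayed inequality gives precisely the claimed bound.

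I do not expect a genuine obstacle here: once one observes that it suffices to run the asymmetric inequality \ref{item:p2} on $G*x$ rather than on $G$ (note that running \ref{item:p1} on $G*x$ just reproduces \ref{item:p1} for $G$ with the roles of the partitions swapped, so \ref{item:p2} is the right one to use), the rest is bookkeeping. The only thing to keep straight is that $G*x$ fixes the neighborhood of $x$ and, after deletion of $x$, commutes with local complementation at other vertices, so that $(G*x)/x$ and $G/x$ indeed have the same cut-rank function.
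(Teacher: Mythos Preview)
Your argument is correct and matches the paper's intended proof: the paper states only that the lemma ``is an easy consequence of Lemmas~\ref{lem:local} and~\ref{lem:pivot_subeq}'', and applying \ref{item:p2} to $G*x$ together with Lemma~\ref{lem:local} is precisely how one unpacks that sentence. Your extra step of invoking Lemma~\ref{lem:onetoone}(3) (or the direct pivot computation you sketch) to identify $\rho_{(G*x)/x}$ with $\rho_{G/x}$ is the one detail the paper leaves implicit, and you handle it correctly.
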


\section{Sequentially 3-rank-connected graphs}
\label{sec:def_sequential}
Let us recall the definition of sequentially $3$-rank-connected graphs introduced in Section~\ref{sec:intro}.
A subset $A$ of $V(G)$ is \emph{sequential} in a graph $G$ if there is an ordering $a_{1},\ldots, a_{|A|}$ of the elements of $A$ such that $\rho_{G}(\{a_{1},\ldots, a_{i}\})\leq 2$ for each $1\leq i\leq |A|$.
A graph $G$ is \emph{sequentially $3$-rank-connected} if it is prime and for each subset $X$ of $V(G)$ with $\rho_{G}(X)\leq 2$, we have that $X$ or $V(G)-X$ is sequential in $G$.

We now present basic lemmas on sequential sets and sequentially $3$-rank-connected graphs.
\begin{lemma}
\label{lem:basic_sequential}
Let $G$ be a graph and $A$ be a subset of $V(G)$. Let $t$ be a vertex of $G$ such that $\rho_{G}(A\cup\{t\})=\rho_{G}(A)$. Then $A\cup\{t\}$ is sequential in $G$ if and only if $A$ is sequential in $G$.
\end{lemma}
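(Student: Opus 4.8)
The plan is to treat the two implications separately. We may assume $t\notin A$, since if $t\in A$ then $A\cup\{t\}=A$ and the statement is vacuous (and the hypothesis $\rho_{G}(A\cup\{t\})=\rho_{G}(A)$ is automatic). Note also that in both directions the witnessing ordering of a sequential set ends with the whole set, so sequentiality of a set $B$ in particular forces $\rho_{G}(B)\leq2$; I will use this freely.

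For the forward implication, suppose $A$ is sequential, with witnessing ordering $a_{1},\ldots,a_{k}$. I would simply append $t$ at the end and claim $a_{1},\ldots,a_{k},t$ witnesses that $A\cup\{t\}$ is sequential: every proper prefix $\{a_{1},\ldots,a_{i}\}$ is a prefix of the original ordering, hence has cut-rank at most $2$, and the full set satisfies $\rho_{G}(A\cup\{t\})=\rho_{G}(A)\leq2$ by the hypothesis together with $\rho_{G}(A)\leq2$.

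For the backward implication, suppose $A\cup\{t\}$ is sequential, with witnessing ordering $c_{1},\ldots,c_{k+1}$, and say $c_{j}=t$. I would delete $c_{j}$ to obtain the ordering $c_{1},\ldots,c_{j-1},c_{j+1},\ldots,c_{k+1}$ of $A$. A prefix of this ordering not containing $t$ is a prefix of the original ordering, so there is nothing to check. For a prefix $S=\{c_{1},\ldots,c_{m}\}$ with $t\in S$, the set $S\setminus\{t\}$ is the corresponding prefix of the new ordering; since $S\subseteq A\cup\{t\}$ and $t\notin A$, we have $S\cap A=S\setminus\{t\}$ and $S\cup A=A\cup\{t\}$, so submodularity of the cut-rank (Lemma~\ref{lem:subeq}) applied to $S$ and $A$ gives
\[
\rho_{G}(S)+\rho_{G}(A)\geq\rho_{G}(S\setminus\{t\})+\rho_{G}(A\cup\{t\}).
\]
Using $\rho_{G}(A\cup\{t\})=\rho_{G}(A)$, this yields $\rho_{G}(S\setminus\{t\})\leq\rho_{G}(S)\leq2$, so the new ordering witnesses that $A$ is sequential.

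The only real idea here is recognizing that submodularity with the pair $(S,A)$ collapses, under the hypothesis, to the monotonicity statement $\rho_{G}(S\setminus\{t\})\leq\rho_{G}(S)$ for the relevant prefixes $S$; the rest is bookkeeping about inserting or deleting $t$ from an ordering. I do not anticipate any genuine obstacle.
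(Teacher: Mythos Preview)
Your argument is correct and matches the paper's proof essentially line for line: the easy direction appends $t$ to the witnessing ordering, and the other deletes $t$ and applies submodularity (Lemma~\ref{lem:subeq}) to the pair $(S,A)$, using the hypothesis $\rho_{G}(A\cup\{t\})=\rho_{G}(A)$ to obtain $\rho_{G}(S\setminus\{t\})\le\rho_{G}(S)\le 2$ for each prefix $S$ containing $t$. The only quibble is that your labels ``forward'' and ``backward'' are swapped relative to the standard left-to-right reading of the biconditional as stated, but this is purely cosmetic.
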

\begin{proof}
We may assume that $t\notin A$.
The backward direction is obvious. So it is enough to show the forward direction. 

Since $A\cup\{t\}$ is sequential in $G$, there is an ordering $a_{1},\ldots,a_{m}$ of the elements of $A\cup\{t\}$ such that $m=|A\cup\{t\}|$ and $\rho_{G}(\{a_{1},\ldots,a_{i}\})\leq 2$ for each $1\leq i\leq m$. Let $1\leq j\leq m$ be an index such that $a_{j}=t$. Then for each $j+1\leq i\leq m$, by Lemma~\ref{lem:subeq}, we have
\[
\rho_{G}(\{a_{1},\ldots,a_{i}\})+\rho_{G}(A)\geq\rho_{G}(A\cup\{t\})+\rho_{G}(\{a_{1},\ldots,a_{i}\}-\{t\}),
\]
and therefore $\rho_{G}(\{a_{1},\ldots,a_{i}\}-\{t\})\leq\rho_{G}(\{a_{1},\ldots,a_{i}\})$.
For each $1\leq i\leq m-1$, let
\[
a_{i}'=
\begin{cases}
a_{i} & \text{if $i<j$,} \\
a_{i+1} & \text{if $i\geq j$.}
\end{cases}
\]
Hence, by above inequality, $A$ is sequential in $G$ because $a_{1}',\ldots,a_{m-1}'$ is a desired ordering of the elements of $A$.
\end{proof}

\begin{lemma}
\label{lem:contain_triple}
Let $G$ be a prime graph that is not sequentially $3$-rank-connected and let $T_{1}, \ldots, T_{n}$ be pairwise disjoint $3$-element subsets of $V(G)$ such that $\rho_{G}(T_{i})=2$ for each $1\leq i\leq n$. Then there exists a subset $A$ of $V(G)$ such that $\rho_{G}(A)\leq 2$, neither $A$ nor $V(G)-A$ is sequential in~$G$, 
and for each $1\leq i\leq n$, we have that $T_{i}\subseteq A$ or $T_{i}\subseteq V(G)-A$.
\end{lemma}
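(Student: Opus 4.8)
The plan is to begin with an arbitrary witness to the failure of sequential $3$-rank-connectedness and then to \emph{uncross} it with the triples $T_{1},\dots,T_{n}$ one at a time. Since $G$ is prime but not sequentially $3$-rank-connected, the collection
\[
\mathcal{W}=\bigl\{A\subseteq V(G): \rho_{G}(A)\le 2,\ \text{neither $A$ nor $V(G)-A$ is sequential in $G$}\bigr\}
\]
is nonempty. Say that $A$ \emph{splits} $T_{i}$ if $T_{i}\cap A\ne\emptyset$ and $T_{i}\setminus A\ne\emptyset$. I would pick $A\in\mathcal{W}$ minimizing the number of indices $i$ with $A$ splitting $T_{i}$, and prove that this number is $0$; the chosen $A$ is then exactly the set we want.

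Suppose for contradiction that $A$ splits some $T_{i}$. A set of size at most $2$ is automatically sequential (its length-$1$ prefix has cut-rank at most $1$, and a $2$-element set has cut-rank at most $2$), so $|A|\ge 3$ and $|V(G)-A|\ge 3$, and hence $|V(G)|\ge 6$. Exactly one of $T_{i}\cap A$ and $T_{i}\setminus A$ is a singleton; after replacing $A$ by $V(G)-A$ if necessary---an operation that changes neither membership in $\mathcal{W}$ nor the number of split triples nor minimality---I may assume $T_{i}\cap A=\{x\}$ and $T_{i}\setminus A=\{y,z\}$. Because $G$ is prime and each of the sets in question has at least two vertices inside and at least two vertices outside, $\rho_{G}(\{y,z\})=\rho_{G}(A)=\rho_{G}(V(G)-A)=2$.

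Now put $A'=A-\{x\}$. Applying Lemma~\ref{lem:subeq_minus} to $A$ and $T_{i}$ yields $\rho_{G}(A)+\rho_{G}(T_{i})\ge\rho_{G}(T_{i}-A)+\rho_{G}(A-T_{i})$; since $T_{i}-A=\{y,z\}$ has cut-rank $2$, since $\rho_G(T_i)=2$, and since $A-T_{i}=A-\{x\}=A'$ (as $y,z\notin A$), we get $\rho_{G}(A')\le 2$, and therefore $\rho_{G}(A')=2$ by primeness, because $(A',V(G)-A')$ partitions $V(G)$ into two parts of size at least two. Since $A'\cup\{x\}=A$ with $\rho_{G}(A)=2=\rho_{G}(A')$, and $V(G)-A'=(V(G)-A)\cup\{x\}$ with $\rho_{G}((V(G)-A)\cup\{x\})=\rho_{G}(A')=2=\rho_{G}(V(G)-A)$, two applications of Lemma~\ref{lem:basic_sequential} show that $A'$ is sequential if and only if $A$ is, and that $V(G)-A'$ is sequential if and only if $V(G)-A$ is; hence $A'\in\mathcal{W}$. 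Finally, the triples are pairwise disjoint, so $x\notin T_{j}$ for $j\ne i$, whence $A'$ splits $T_{j}$ exactly when $A$ does for each $j\ne i$, while $T_{i}\subseteq V(G)-A'$ is not split by $A'$. Thus $A'\in\mathcal{W}$ splits strictly fewer triples than $A$, contradicting minimality.

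I do not expect a real obstacle here: the uncrossing step is forced by submodularity, and the pairwise disjointness of the $T_{j}$---the one place that hypothesis enters---is exactly what prevents uncrossing $T_{i}$ from re-splitting another triple. The only delicate point is the bookkeeping with primeness, ensuring that each cut-rank occurring in the argument equals $2$ rather than dropping to $1$ (otherwise Lemma~\ref{lem:basic_sequential} could not be applied), and it is the trivial observation that sets of size at most $2$ are sequential that disposes of all the degenerate configurations.
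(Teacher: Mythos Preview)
Your argument is correct and is essentially the paper's own proof, repackaged as an extremal (minimize the number of split triples) argument in place of induction on $n$; the paper, working inductively, assumes $|A'\cap T_{n}|=2$ and \emph{adds} the stray vertex to $A'$, whereas you assume $|A\cap T_{i}|=1$ and \emph{remove} it, but the submodular step and the two invocations of Lemma~\ref{lem:basic_sequential} are identical.
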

\begin{proof}
We proceed by induction on $n$.
Since $G$ is prime and not sequentially $3$-rank-connected, there is a subset $A$ of $V(G)$ such that $\rho_{G}(A)\leq 2$, and neither $A$ nor $V(G)-A$ is sequential in~$G$. So we can assume that $n\geq 1$. By the induction hypothesis, there is a subset $A'$ of $V(G)$ such that $\rho_{G}(A')\leq 2$, and neither $A'$ nor $V(G)-A'$ is sequential in $G$, and for each $1\leq i\leq n-1$, either $T_{i}\subseteq A'$ or $T_{i}\subseteq V(G)-A'$. Let $B'=V(G)-A'$. We may assume that $A'\cap T_{n}\neq\emptyset$ and $B'\cap T_{n}\neq\emptyset$. Then, by symmetry, we can assume that $|A'\cap T_{n}|=2$ and let $x$ be the element of $B'\cap T_{n}$. Since $|T_{n}-\{x\}|=2$ and $G$ is prime, we have $\rho_{G}(T_{n}-\{x\})=2=\rho_{G}(T_{n})$. Then, by Lemma~\ref{lem:subeq},
\[
\rho_{G}(A')+2=\rho_{G}(A')+\rho_{G}(T_{n})\geq\rho_{G}(A'\cup\{x\})+\rho_{G}(T_{n}-\{x\})=\rho_{G}(A'\cup\{x\})+2.
\] 

Hence $\rho_{G}(A'\cup\{x\})\leq\rho_{G}(A')\leq 2$. Since $V(G)-A'$ is not sequential in $G$, 
$|V(G)-A'|\geq 4$ and so $|V(G)-(A'\cup\{x\})|\geq 3$. Hence $\rho_{G}(A')=\rho_{G}(A'\cup\{x\})=2$ because $G$ is prime. Hence, by Lemma~\ref{lem:basic_sequential}, neither $A'\cup\{x\}$ nor $V(G)-(A'\cup\{x\})$ is sequential in $G$.  

For each $1\leq i\leq n-1$, we have $x\notin T_{i}$ because $T_{n}$ and $T_{i}$ are disjoint. Therefore, $T_{i}\subseteq A'\cup\{x\}$ or $T_{i}\subseteq V(G)-(A'\cup\{x\})$ for each $1\leq i\leq n$.
\end{proof}

\section{Treating $3$-rank-connected graphs}
\label{sec:3rank}
In this section, we prove Theorem~\ref{thm:main} for $3$-rank-connected graphs.

The following lemma shows that every vertex-minor of a $3$-rank-connected graph $G$ with one vertex fewer than~$G$ is prime.

\begin{lemma}
\label{lem:3toprime}
Let $G$ be a $3$-rank-connected graph with $|V(G)|\geq6$ and $x$ be a vertex of $G$. Then all of $G\setminus x$, $G*x\setminus x$, and $G/x$ are prime.
\end{lemma}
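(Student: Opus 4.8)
The goal is to show that if $G$ is $3$-rank-connected with at least $6$ vertices, then each of $G\setminus x$, $G*x\setminus x$, and $G/x$ is prime, i.e., $2$-rank-connected. By Lemma~\ref{lem:local}, cut-rank is a local-equivalence invariant, so it suffices to argue that any graph obtained from $G$ by deleting a single vertex (possibly after a local complementation) has no partition into two parts, each of size at least $2$, with cut-rank at most $1$. I would work with the three ``elementary'' reductions $G\setminus x$, $G*x\setminus x$, $G/x$ directly, since every vertex-minor with $|V(G)|-1$ vertices is locally equivalent to one of these by Lemma~\ref{lem:elt_minor}, but here we only need the three named graphs.

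**Main argument.** Suppose for contradiction that one of the three graphs, call it $G'$ on vertex set $V(G)-\{x\}$, is not prime; then there is a partition $(A,B)$ of $V(G)-\{x\}$ with $|A|,|B|\geq 2$ and $\rho_{G'}(A)\leq 1$. I would split into cases according to which reduction $G'$ is. For $G'=G\setminus x$: by Lemma~\ref{lem:delrank}\ref{item:2.4i}, $\rho_{G}(A)\leq \rho_{G\setminus x}(A)+1\leq 2$, and likewise $\rho_G(A\cup\{x\})\leq\rho_{G\setminus x}(A)+1\le 2$ by Lemma~\ref{lem:delrank}\ref{item:2.4ii}. Now $(A,B\cup\{x\})$ and $(A\cup\{x\},B)$ are both partitions of $V(G)$ with cut-rank at most $2$ on one side; since $|A|\geq 2$ and $|B|\geq 2$, at least one of these partitions is ``nontrivial'' for $3$-rank-connectivity unless $|A|=2$ and $|B|=2$, forcing $|V(G)|=5<6$, a contradiction. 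For $G'=G*x\setminus x$ or $G'=G/x$: I would use Lemma~\ref{lem:matrix_local} to relate $\rho_{G'}(A)$ to the rank of the bordered matrix, from which $\rho_{G}(A\cup\{x\})\leq \rho_{G'}(A)+1\leq 2$; similarly $\rho_G(A)\leq\rho_{G'}(A)+1\le 2$ (deleting the $x$-row and $x$-column). Then the same dichotomy on $(A,B\cup\{x\})$ versus $(A\cup\{x\},B)$ applies, and $|V(G)|\geq 6$ rules it out.

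**Obstacle.** The delicate point is handling the degenerate sizes: when $|A|$ or $|B|$ equals $2$, one of the two candidate partitions of $V(G)$ may have a side of size exactly $\rho_G(\cdot)$, hence not witness a violation of $3$-rank-connectivity. The resolution is that in such a boundary case, adding $x$ to that small side turns it into a set of size $3$ whose cut-rank is still at most $2$, and with $|V(G)|\geq 6$ the complementary side has size at least $3>2$, so the partition $(A\cup\{x\},B)$ (or $(A,B\cup\{x\})$) does violate $3$-rank-connectivity. A clean way to package this is: from $\rho_{G'}(A)\le 1$ deduce $\rho_G(A)\le 2$ and $\rho_G(A\cup\{x\})\le 2$; since $G$ is $3$-rank-connected we must have $\min(|A|,|B\cup\{x\}|)\le 2$ and $\min(|A\cup\{x\}|,|B|)\le 2$, and a short case check using $|A|,|B|\ge 2$ and $|A|+|B|=|V(G)|-1\ge 5$ yields a contradiction. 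I would also invoke Lemma~\ref{lem:deg3} if convenient, but it should not be strictly necessary. The whole proof is a page of routine submodularity bookkeeping; the only genuine content is the size bound $6$, which matches the exceptional bound $5$ in Bouchet's Theorem~\ref{thm:bouchet} shifted by the loss of one vertex.
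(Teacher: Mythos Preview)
Your argument is correct, but it is considerably more laborious than the paper's two-line proof. The paper observes that $G*x\setminus x$ and $G/x$ are obtained by deleting $x$ from $G*x$ and $G\wedge ux$ respectively, and that by Lemma~\ref{lem:local} these ambient graphs are themselves $3$-rank-connected with at least $6$ vertices; hence all three cases reduce to the single assertion that $H\setminus x$ is prime whenever $H$ is $3$-rank-connected with $|V(H)|\geq 6$. That assertion is exactly Lemma~\ref{lem:krank} with $k=3$. You state this reduction in your plan but then abandon it, choosing instead to treat $G*x\setminus x$ and $G/x$ separately via the bordered-matrix formula of Lemma~\ref{lem:matrix_local}. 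That works, and your size bookkeeping in the ``Obstacle'' paragraph is the right way to finish, but it re-derives a special case of Lemma~\ref{lem:krank} by hand and then duplicates the work twice more. The paper's route buys brevity; yours is self-contained in the sense that it does not invoke Lemma~\ref{lem:krank}, at the cost of a page of case analysis that the paper avoids entirely.
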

\begin{proof}
By Lemma~\ref{lem:local}, it is enough to show that $G\setminus x$ is prime. This is implied by Lemma~\ref{lem:krank}.
\end{proof}

A graph $G$ is \emph{weakly $3$-rank-connected} if $G$ is prime and $V(G)$ has no subset $X$ such that $|X|\geq 5$, $|V(G)-X|\geq 5$, and $\rho_{G}(X)\leq 2$.
The following lemma can be deduced easily from {\cite[Proposition 2.6]{Oum2020}} and Lemma~\ref{lem:onetoone}. 
\begin{lemma}[Oum~\cite{Oum2020}]
\label{lem:weakly}
Let $G$ be a $3$-rank-connected graph with $|V(G)|\geq 6$ and $x$ be a vertex of~$G$. Then at least two of $G\setminus x$, $G*x\setminus x$, and $G/x$ are weakly $3$-rank-connected. 
\end{lemma}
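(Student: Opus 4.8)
The plan is to prove Lemma~\ref{lem:weakly} by showing that if two of the three single-element vertex-minors fail to be weakly $3$-rank-connected, we can combine the witnessing partitions to contradict $3$-rank-connectedness of $G$. First I would recall from Lemma~\ref{lem:3toprime} that all of $G\setminus x$, $G*x\setminus x$, and $G/x$ are prime, so the only way one of them can fail to be weakly $3$-rank-connected is that it has a partition $(X,Y)$ of $V(G)-\{x\}$ with $\rho(X)\leq 2$, $|X|\geq 5$, and $|Y|\geq 5$. By Lemma~\ref{lem:onetoone} (together with Lemma~\ref{lem:local}, since cut-rank is a local-equivalence invariant), the three minors $G\setminus x$, $G*x\setminus x$, $G/x$ realize, up to relabeling via local complementation at a neighbor of $x$, all three ``branches'' of the triple $(G\setminus x, G*x\setminus x, G/x)$; so it suffices to assume two specific ones among these three are bad and derive a contradiction. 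I would argue by cases on which pair is bad, but the cases are symmetric enough that the core computation is the same.

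Next, suppose for contradiction that two of the three minors are not weakly $3$-rank-connected; pick bad partitions $(X_{1},Y_{1})$ and $(X_{2},Y_{2})$ of $V(G)-\{x\}$ for the two bad minors, each with $|X_{i}|,|Y_{i}|\geq 5$ and cut-rank (in the respective minor) at most $2$. The key step is to feed these into the crossing inequalities already set up in the preliminaries: depending on which two minors are bad we apply one of Lemma~\ref{lem:pivot_subeq} \ref{item:p1}, Lemma~\ref{lem:pivot_subeq} \ref{item:p2}, or Lemma~\ref{lem:pivot_subeq2}, possibly after swapping $X_{i}$ with $Y_{i}$. Each of these gives
\[
\rho_{G}(Z)+\rho_{G}(W)\;\leq\;\rho_{\text{(minor 1)}}(\cdot)+\rho_{\text{(minor 2)}}(\cdot)+1\;\leq\;2+2+1=5,
\]
where $(Z,W)$ is the partition of $V(G)-\{x\}$ into the two ``corners'' $X_{1}\cap X_{2}$ and $Y_{1}\cap Y_{2}$ (after the appropriate swap). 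Hence $\min(\rho_{G}(Z),\rho_{G}(W))\leq 2$. Since $G$ is $3$-rank-connected, a set of cut-rank at most $2$ must have size at most $2$ on one side; so one of the four corners, say $X_{1}\cap X_{2}$, has at most $2$ vertices, and then I would look at $X_{1}\cap Y_{2}$, etc. The counting then forces a contradiction with $|X_{i}|,|Y_{i}|\geq 5$: the two sides of each bad partition are partitioned into two corners, and if three or four of the four corners are tiny, at least one of $X_{1},Y_{1},X_{2},Y_{2}$ becomes too small.

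To make the counting clean I would instead directly bound: from $\rho_{G}(X_{1}\cap X_{2})+\rho_{G}(Y_{1}\cap Y_{2})\leq 5$ and $\rho_{G}(X_{1}\cap Y_{2})+\rho_{G}(Y_{1}\cap X_{2})\leq 5$ (the second obtained by swapping the roles of $X_{2},Y_{2}$ in the same lemma), so at least one corner in each of these two complementary pairs has cut-rank $\leq 2$, hence size $\leq 2$ by $3$-rank-connectedness of $G$ (here I should be slightly careful: if a corner is empty or a singleton its cut-rank in $G$ is at most $1\le 2$ automatically, and if $G$ has a set of size $3$ or $4$ with cut-rank $\le2$ that already contradicts $3$-rank-connectedness provided the complement is large, which it is since $|V(G)|\ge 6$ and the complement contains one of the large $Y_i$ or $X_i$ plus $x$). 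Now $X_{1}=(X_{1}\cap X_{2})\cup(X_{1}\cap Y_{2})$ and $Y_{1}=(Y_{1}\cap X_{2})\cup(Y_{1}\cap Y_{2})$, so if in each of the two complementary pairs the small corner sits in the \emph{same} $X_{1}$-or-$Y_{1}$ block we get a block of size $\le 4<5$; otherwise the two small corners are $X_{1}\cap X_{2}$ and $Y_{1}\cap X_{2}$ (or the symmetric pair), forcing $|X_{2}|\le 4$, again a contradiction. Either way we contradict the choice of bad partitions, completing the proof.

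The main obstacle I expect is bookkeeping: correctly matching each of the three possible ``bad pairs'' $\{G\setminus x,\;G*x\setminus x\}$, $\{G\setminus x,\;G/x\}$, $\{G*x\setminus x,\;G/x\}$ to the right inequality among \ref{item:p1}, \ref{item:p2}, and Lemma~\ref{lem:pivot_subeq2}, and tracking which of $X_{i}$, $Y_{i}$ plays the role of ``$X$'' in each inequality so that the corner whose size we bound is genuinely forced to be small on the correct side. There is also a boundary subtlety when a corner has size $3$ or $4$: I need $|V(G)|\ge 6$ (the hypothesis) to guarantee the complement of such a corner is large enough that cut-rank $\le 2$ really contradicts $3$-rank-connectedness, which is exactly why the hypothesis $|V(G)|\ge 6$ appears. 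Since the paper states this lemma is an easy consequence of \cite[Proposition 2.6]{Oum2020} and Lemma~\ref{lem:onetoone}, most of this crossing argument is presumably already packaged in that cited proposition, so in the write-up I would lean on it and only spell out the translation through Lemma~\ref{lem:onetoone}.
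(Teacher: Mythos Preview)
Your proposal is correct and follows essentially the same route as the paper, which simply cites \cite[Proposition~2.6]{Oum2020} together with Lemma~\ref{lem:onetoone}; the crossing argument you spell out with Lemmas~\ref{lem:pivot_subeq} and~\ref{lem:pivot_subeq2} is precisely what that cited proposition packages. One small clarification: the complement of any corner $C=X_1\cap X_2$ contains $Y_1\cup\{x\}$, so $|V(G)-C|\ge 6$, which is why $\rho_G(C)\le 2$ forces $|C|\le 2$ directly from $3$-rank-connectedness---the hypothesis $|V(G)|\ge 6$ is in fact only needed for Lemma~\ref{lem:3toprime}, not for this size bound.
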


\begin{lemma}
\label{lem:weak_conn}
Let $G$ be a $3$-rank-connected graph with $|V(G)|\geq 6$ and let $S=\{v_{1},\cdots,v_{t}\}$ be the set of all vertices~$x$ of $G$ such that $G\setminus x$ is not weakly $3$-rank-connected. Let $G'=G*v_{1}*\cdots*v_{t}$. Then $G'\setminus v$ is weakly $3$-rank-connected for every vertex $v$ of $G'$.
\end{lemma}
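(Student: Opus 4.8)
The plan is to show that for each vertex $v$ of $G'$, the deletion $G' \setminus v$ is weakly $3$-rank-connected, by exploiting the fact that $G'$ is locally equivalent to $G$ and that the ``bad'' vertices of $G$ have been flipped. Since $G'$ is locally equivalent to $G$, by Lemma~\ref{lem:local} we have $\rho_{G'} = \rho_G$, so in particular $G'$ is still $3$-rank-connected (and prime), and $|V(G')| = |V(G)| \geq 6$. Fix a vertex $v \in V(G') = V(G)$. We want to rule out the existence of a subset $X \subseteq V(G')-\{v\}$ with $|X| \geq 5$, $|V(G')-\{v\}-X| \geq 5$, and $\rho_{G'\setminus v}(X) \leq 2$.

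First I would unwind the local complementations. Write $G' = G * v_1 * \cdots * v_t$. The key observation is that for a fixed vertex $v$, whether $G' \setminus v$ is weakly $3$-rank-connected depends only on $G'$ up to local equivalence restricted to $V(G')-\{v\}$, and we should separate two cases according to whether $v \in S$ or not. If $v \notin S$, then applying Lemma~\ref{lem:onetoone} repeatedly, $G' \setminus v = (G * v_1 * \cdots * v_t)\setminus v$ is locally equivalent to one of the graphs obtained from $G \setminus v$ by a sequence of local complementations (each $v_i \neq v$, and each local complementation at a neighbor/non-neighbor of $v$ contributes a local complementation on $V(G)-\{v\}$ by parts (1) and (2) of Lemma~\ref{lem:onetoone}, possibly interchanging the roles of $G\setminus v$, $G*v\setminus v$, $G/v$ — but never producing a graph that is not locally equivalent to one of these three). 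Since $v \notin S$ means $G \setminus v$ is already weakly $3$-rank-connected, and weak $3$-rank-connectivity is preserved under local equivalence by Lemma~\ref{lem:local}, we would be done in this case only if $G'\setminus v$ is locally equivalent to $G \setminus v$ specifically; in general it might be locally equivalent to $G * v \setminus v$ or $G / v$ instead, so this needs more care and is where the real argument lies.

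So the substantive case — and the main obstacle — is a vertex $v$ for which $G' \setminus v$ ends up locally equivalent to $G * v \setminus v$ or $G / v$ rather than to $G \setminus v$, together with the case $v \in S$. Here the strategy is to use Lemma~\ref{lem:weakly}: at least two of $G \setminus v$, $G*v\setminus v$, $G/v$ are weakly $3$-rank-connected, so the set $S$ of bad vertices is precisely the set of $v$ for which exactly one of the three is bad, and that bad one is $G\setminus v$. The point of forming $G' = G * v_1 * \cdots * v_t$ is that local complementation at $v_i \in S$ swaps, from $v_i$'s own perspective, the roles of $G \setminus v_i$ and $G * v_i \setminus v_i$ (this is exactly part (3) of Lemma~\ref{lem:onetoone} with $v = w = v_i$), turning the one bad deletion $G\setminus v_i$ into $G*v_i\setminus v_i$, which is good. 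I would argue that the operations at the other $v_j$ ($j \neq i$), by parts (1) and (2) of Lemma~\ref{lem:onetoone}, permute $\{G'\setminus v_i, G'*v_i\setminus v_i, G'/v_i\}$ among themselves in a way that preserves the multiset of ``good/bad'' labels, so after all flips $G' \setminus v_i$ is locally equivalent to a weakly $3$-rank-connected graph for every $v_i \in S$; and for $v \notin S$ the same bookkeeping shows $G'\setminus v$ remains locally equivalent to one of the (at least two) good deletions among $G\setminus v, G*v\setminus v, G/v$, hence is weakly $3$-rank-connected. The delicate part is verifying that the composite effect of the $t$ local complementations on the triple $\{H\setminus v, H*v\setminus v, H/v\}$ is a well-defined permutation independent of order, which follows because each single local complementation at a vertex $\neq v$ acts as a transposition or identity on this triple (Lemma~\ref{lem:onetoone}(1),(2)) while at $v$ itself it acts as the transposition swapping the first two coordinates (Lemma~\ref{lem:onetoone}(3)), and weak $3$-rank-connectivity of each coordinate is a local-equivalence invariant by Lemmas~\ref{lem:local} and~\ref{lem:weakly}.
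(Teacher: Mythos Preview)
Your approach is essentially the paper's, and the idea is correct, but you make the case $v\notin S$ harder than it is. Since every $v_j\in S$ differs from $v$, deletion of $v$ commutes literally with each local complementation $*v_j$, giving the exact equality $G'\setminus v=(G\setminus v)*v_1*\cdots*v_t$; this is locally equivalent to $G\setminus v$, which is weakly $3$-rank-connected by the definition of $S$. No local complementation at $v$ is ever applied, so your worry that $G'\setminus v$ might instead land on $G*v\setminus v$ or $G/v$ is unfounded here, and your phrase ``one of the (at least two) good deletions'' would be too weak on its own to close the argument, since for $v\notin S$ one of $G*v\setminus v$, $G/v$ may well fail to be weakly $3$-rank-connected.

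For $v=v_i\in S$ your permutation-tracking via Lemma~\ref{lem:onetoone} is exactly the right tool. State it cleanly: parts (1) and (2) of that lemma show that $*w$ with $w\neq v_i$ always fixes the local-equivalence class of the first slot $H\setminus v_i$, while part (3) shows $*v_i$ swaps the first two slots. Hence after all $t$ operations (exactly one of which is $*v_i$) the first slot $G'\setminus v_i$ lies in the class of $G*v_i\setminus v_i$ or $G/v_i$; both are weakly $3$-rank-connected by Lemma~\ref{lem:weakly}. You need not verify any ``well-definedness independent of order'': the composite permutation on the triple does depend on adjacencies along the way, but all you use is that every step except the single $*v_i$ fixes the first coordinate. (The paper's own write-up in fact asserts the equality $G'\setminus v_i=(G*v_i\setminus v_i)*v_1*\cdots*v_{i-1}*v_{i+1}*\cdots*v_t$, which need not hold literally since local complementations do not commute; your Lemma~\ref{lem:onetoone} argument is the honest justification of the intended local equivalence.)
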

\begin{proof}
If $v\notin S$, then $G'\setminus v=(G\setminus v)*v_{1}*\cdots*v_{t}$ and so $G'\setminus v$ is weakly $3$-rank-connected. If $v=v_{i}$ for some $1\leq i\leq t$, then by Lemma~\ref{lem:weakly}, $G*v\setminus v$ is weakly $3$-rank-connected. Since $G'\setminus v=(G*v\setminus v)*v_{1}*\cdots*v_{i-1}*v_{i+1}*\cdots*v_{t}$ is locally equivalent to $G*v\setminus v$, we deduce that $G'\setminus v$ is weakly $3$-rank-connected.
\end{proof}

\begin{lemma}
\label{lem:intersection2}
Let $G$ be a $3$-rank-connected graph and $x$ be a vertex of $G$. Let $P$ be a $4$-element subset of $V(G)-\{x\}$ such that $\rho_{G\setminus x}(P)\leq 2$ and $(A,B)$ be a partition of $V(G)-\{x\}$ such that $|A|,|B|\geq 4$ and $\rho_{H}(A)\leq 2$ for some $H\in\{G*x\setminus x, G/x\}$.
Then $|A\cap P|=|B\cap P|=2$.
\end{lemma}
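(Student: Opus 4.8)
The plan is to prove this purely from the submodular inequality of Lemma~\ref{lem:pivot_subeq} together with the definition of $3$-rank-connectedness, without using any finer structure of $G$. First I would record the crude size bound $|V(G)| = 1 + |A| + |B| \ge 9$, coming from $x \notin A \cup B$ and $A \cup B = V(G)-\{x\}$. This makes everything in sight ``large'': explicitly, for any $S \subseteq V(G)$ with $|S| \ge 3$ and $|V(G)-S| \ge 3$ we have $\rho_G(S) \ge 3$, since otherwise the partition $(S, V(G)-S)$ would have $|S| > \rho_G(S)$, $|V(G)-S| > \rho_G(S)$, and $\rho_G(S) < 3$, contradicting $3$-rank-connectedness. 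Since $P$ is the disjoint union of $A \cap P$ and $B \cap P$ and $|P| = 4$, it suffices to prove $|A \cap P| \ge 2$; then the symmetry between $A$ and $B$ (using that $\rho_H(A) = \rho_H(B)$ because $(A,B)$ partitions $V(H)$) yields $|B \cap P| \ge 2$ as well, and both must then equal $2$.

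So I would assume for contradiction that $|A \cap P| \le 1$ and put $R = B \cap P = P - A$, so that $|R| = 4 - |A \cap P| \ge 3$. The key step is to apply Lemma~\ref{lem:pivot_subeq} --- its part \ref{item:p1} if $H = G*x\setminus x$ and its part \ref{item:p2} if $H = G/x$ --- to the partition $(P,\, (V(G)-\{x\})-P)$ and the partition $(B, A)$ of $V(G)-\{x\}$. This gives
\[
\rho_{G\setminus x}(P) + \rho_H(B) \ge \rho_G(P \cap B) + \rho_G(A - P) - 1 = \rho_G(R) + \rho_G(A - P) - 1 .
\]
The left-hand side is at most $4$, since $\rho_{G\setminus x}(P) \le 2$ by hypothesis and $\rho_H(B) = \rho_H(A) \le 2$. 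Hence $\rho_G(R) + \rho_G(A - P) \le 5$.

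Finally I would extract the contradiction from cardinalities. Because $|R| \ge 3$ and $|V(G) - R| \ge |V(G)| - 4 \ge 5$, the observation from the first paragraph forces $\rho_G(R) \ge 3$, so $\rho_G(A - P) \le 2$. But $|A - P| = |A| - |A \cap P| \ge 3$ and $|V(G) - (A - P)| \ge |B| + 1 \ge 5$, so the partition $(A - P,\, V(G) - (A - P))$ witnesses that $G$ is not $3$-rank-connected --- a contradiction. This settles the case $|A \cap P| \le 1$, and the case $|B \cap P| \le 1$ is handled identically with the roles of $A$ and $B$ (hence of $R$ and $A - P$) interchanged.

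I do not anticipate a real obstacle here. The only genuine choice is to pair $P$ with $B$ rather than $A$ in Lemma~\ref{lem:pivot_subeq}, so that the ``heavy'' term $\rho_G(P \cap B) = \rho_G(R)$ --- which is forced to be at least $3$ precisely because $|B \cap P| \ge 3$ --- appears on the right-hand side; pairing with $A$ instead would only produce the useless bound $\rho_G(P \cap A) \le 1$. The one thing to keep track of is the vertex $x$, which lies outside both $A \cup B$ and $P$, so the complements appearing in the cardinality estimates each gain an extra element; this is harmless given $|V(G)| \ge 9$.
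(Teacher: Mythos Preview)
Your proof is correct and is essentially the same as the paper's: both apply Lemma~\ref{lem:pivot_subeq} to $P$ and the side of $(A,B)$ meeting $P$ in at least three elements, then use $3$-rank-connectedness twice to force a cardinality contradiction. The only cosmetic difference is that the paper assumes $|A\cap P|\ge 3$ and pairs $P$ with $A$, whereas you assume $|A\cap P|\le 1$ and pair $P$ with $B$; after relabelling these are the same argument.
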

\begin{proof}
Suppose that $|A\cap P|\neq |B\cap P|$. We may assume that $|A\cap P|>|B\cap P|$.
Since $\rho_{G\setminus x}(P)\leq 2$ and $\rho_{H}(A)\leq 2$, by~\ref{item:p1} and~\ref{item:p2} of Lemma~\ref{lem:pivot_subeq}, we have 
\[
4\geq\rho_{G\setminus x}(P)+\rho_{H}(A)\geq\rho_{G}(A\cap P)+\rho_{G}(B-P)-1.
\]

Since $|A\cap P|>2$ and $G$ is $3$-rank-connected, $\rho_{G}(A\cap P)>2$. Hence $\rho_{G}(B-P)\leq 2$. Since $G$ is $3$-rank-connected, $|B-P|\leq 2$, which implies that $|B\cap P|\geq 2$, contradicting the fact that $|P|=4$.
\end{proof}

A $4$-element subset $P$ of $V(G)$ is a \emph{quad} of $G$ if $\rho_{G}(P)=2$ and $\rho_{G}(P-\{x\})=3$ for each $x\in P$. 

\begin{lemma}
\label{lem:quad}
Let $G$ be a prime graph and $A$ be a subset of $V(G)$ such that $\rho_{G}(A)=2$ and $|A|\leq 4$. Then $A$ is a quad of $G$ or $A$ is sequential in $G$.
\end{lemma}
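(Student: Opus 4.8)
The plan is to case on $|A|$. First I would note that $\rho_G(A)=2$ forces $|A|\ge 2$, so by hypothesis $|A|\in\{2,3,4\}$; moreover $|V(G)-A|\ge 2$, since otherwise $\rho_G(A)=\rho_G(V(G)-A)\le 1$, and hence $|V(G)|\ge |A|+2$.

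The key elementary observation I would record up front is that \emph{any} set $B\subseteq V(G)$ with $\rho_G(B)=2$ and $|B|\le 3$ is sequential in $G$: taking an arbitrary ordering of $B$, every proper prefix has at most two vertices, so the corresponding adjacency submatrix has at most two rows and its rank (the cut-rank of the prefix) is at most $2$, while the full prefix is $B$ itself with cut-rank $2$. This immediately handles the cases $|A|\in\{2,3\}$.

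For $|A|=4$, I would assume that $A$ is not a quad and build a sequential ordering directly. By the definition of quad there is $x\in A$ with $\rho_G(A-\{x\})\ne 3$. Since $|A-\{x\}|=3$ we trivially get $\rho_G(A-\{x\})\le 3$, and since $G$ is prime with $|A-\{x\}|=3\ge 2$ and $|V(G)-(A-\{x\})|=|V(G)|-3\ge 3\ge 2$ (using $|V(G)|\ge |A|+2=6$), we get $\rho_G(A-\{x\})\ge 2$; hence $\rho_G(A-\{x\})=2$. By the observation above, $A-\{x\}$ is sequential in $G$, say via an ordering $a_1,a_2,a_3$; then $a_1,a_2,a_3,x$ is an ordering of $A$ whose prefixes of length $1,2,3$ have cut-rank at most $2$ and whose length-$4$ prefix equals $A$ with $\rho_G(A)=2$, so $A$ is sequential.

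There is essentially no genuine obstacle here; the proof is a short case analysis resting on the submodularity-free facts that a two-row binary matrix has rank at most $2$ and that primeness forces cut-rank at least $2$ on the relevant sets. The only point requiring a moment's care is checking that $V(G)$ is large enough for that primeness bound to apply to $A-\{x\}$, which is exactly what $|V(G)|\ge |A|+2$ provides.
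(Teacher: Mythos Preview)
Your argument is correct and is essentially the contrapositive of the paper's proof: the paper assumes $A$ is not sequential and deduces that every $3$-subset has cut-rank $3$, while you assume $A$ is not a quad, pick a $3$-subset with cut-rank $\le 2$, and extend it to a sequential ordering. One small remark: the primeness detour to force $\rho_G(A-\{x\})=2$ rather than $\le 2$ is unnecessary, since the sequential condition only asks for cut-rank $\le 2$ on each prefix.
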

\begin{proof}
Suppose that $A$ is not sequential in $G$. Then $|A|=4$ and $\rho_{G}(T)=3$ for each $3$-element subset $T$ of $A$. Therefore, $A$ is a quad of $G$.
\end{proof}

Our key ingredient of this section is Proposition~\ref{prop:key}, which states that it is sufficient to identify a set $\{t_1, t_2, t_3\}$ of three vertices  and a quad $Q_i$ from $G\setminus t_i$ for each $i\in\{1,2,3\}$ that satisfy the following conditions:
\begin{enumerate}
 \item[(1)] $G\setminus t_i$ is weakly $3$-rank-connected for each $i\in\{1,2,3\}$. 
 \item[(2)] $Q_1\cap Q_2=\{t_{3}\}$, $Q_2\cap Q_3=\{t_{1}\}$, and $Q_3\cap Q_1=\{t_{2}\}$.
\end{enumerate}
The remainder of this section will focus on identifying these three vertices and quads.

\begin{proposition}
\label{prop:key}
Let $t_{1}$, $t_{2}$, and $t_{3}$ be distinct vertices of a $3$-rank-connected graph $G$ such that $G\setminus t_{1}$, $G\setminus t_{2}$, and $G\setminus t_{3}$ are weakly $3$-rank-connected. For each $i\in\{1,2,3\}$, let $Q_{i}$ be a quad of $G\setminus t_{i}$. 
If $Q_{1}\cap Q_{2}=\{t_{3}\}$, $Q_{2}\cap Q_{3}=\{t_{1}\}$, and $Q_{3}\cap Q_{1}=\{t_{2}\}$,
then for each $i\in\{1,2,3\}$, either $G*t_{i}\setminus t_{i}$ or $G/t_{i}$ is sequentially $3$-rank-connected.
\end{proposition}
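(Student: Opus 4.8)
The plan is to fix $i$, say $i=1$, and argue by contradiction: suppose that neither $G*t_{1}\setminus t_{1}$ nor $G/t_{1}$ is sequentially $3$-rank-connected. Both are prime by Lemma~\ref{lem:3toprime}, so for each $H\in\{G*t_{1}\setminus t_{1},\,G/t_{1}\}$ there is a witnessing partition: by Lemma~\ref{lem:contain_triple} applied with the pairwise disjoint triples obtained from $Q_{2}$ and $Q_{3}$ (note $Q_{2},Q_{3}\subseteq V(G)-\{t_{1}\}=V(H)$ and $|Q_{2}\cap Q_{3}|=|\{t_{1}\}\cap Q_{3}|$... wait, we need disjoint triples: pick a $3$-element subset of $Q_{2}$ and a $3$-element subset of $Q_{3}$ that are disjoint, which is possible since $Q_{2}\cap Q_{3}=\{t_{1}\}\not\subseteq V(H)$), we get a partition $(A_{H},B_{H})$ of $V(H)$ with $\rho_{H}(A_{H})\leq 2$, with neither side sequential in $H$, and with each chosen triple contained in one side. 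Since neither side is sequential, $|A_{H}|,|B_{H}|\geq 4$, so $\rho_{H}(A_{H})=\rho_{H}(B_{H})=2$ and in fact $A_{H}$ and $B_{H}$ are quads of $H$ by Lemma~\ref{lem:quad} (a non-sequential set of cut-rank $2$ in a prime graph of size $\geq 4$ must have size exactly $4$ and be a quad).

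The first substantive step is to control how these quads $A_{H},B_{H}$ interact with $Q_{2}$ and $Q_{3}$. Since $\rho_{G\setminus t_{1}}(Q_{3})=2$ (as $Q_{3}$ is a quad of $G\setminus t_{1}$... actually $Q_{3}$ is a quad of $G\setminus t_{3}$; but $t_{3}\notin Q_{3}$ since $Q_{3}\cap Q_{1}=\{t_{2}\}$ and $Q_{2}\cap Q_{3}=\{t_{1}\}$ force $t_{3}\notin Q_{3}$, so $Q_{3}\subseteq V(G)-\{t_{1},t_{3}\}$ and $\rho_{G\setminus t_{1}}(Q_{3})\leq\rho_{G\setminus t_{3}}(Q_{3})+\cdots$), we can invoke Lemma~\ref{lem:intersection2} with $P=Q_{3}$ (or $P=Q_{2}$) and the partition $(A_{H},B_{H})$ to conclude $|A_{H}\cap Q_{3}|=|B_{H}\cap Q_{3}|=2$. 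The point of the triple-splitting from Lemma~\ref{lem:contain_triple} is then to derive a contradiction: we arranged that a $3$-subset of $Q_{2}$ (and of $Q_{3}$) lies entirely on one side of $(A_{H},B_{H})$, but $|A_{H}\cap Q_{3}|=2$ says no $3$-subset of $Q_{3}$ can lie on one side. Hence one must instead split via the \emph{other} quad, and we juggle which triples we feed into Lemma~\ref{lem:contain_triple} so that a forbidden configuration is unavoidable. I expect we actually run Lemma~\ref{lem:contain_triple} separately for $H=G*t_{1}\setminus t_{1}$ and $H=G/t_{1}$, obtaining quads $A_{1},A_{2}$ respectively, then combine the two witnessing partitions using the pivot/local submodular inequalities.

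The heart of the argument is the combination step: apply Lemma~\ref{lem:pivot_subeq2} to the two partitions $(A_{1},B_{1})$ of $V(G*t_{1}\setminus t_{1})$ and $(A_{2},B_{2})$ of $V(G/t_{1})$, giving
\[
\rho_{G*t_{1}\setminus t_{1}}(A_{1})+\rho_{G/t_{1}}(A_{2})\geq\rho_{G}(A_{1}\cap A_{2})+\rho_{G}(B_{1}\cap B_{2})-1,
\]
and similarly with $A_{2}$ replaced by $B_{2}$. The left side is at most $4$, so $\rho_{G}(A_{1}\cap A_{2})+\rho_{G}(B_{1}\cap B_{2})\leq 5$ and $\rho_{G}(A_{1}\cap B_{2})+\rho_{G}(B_{1}\cap A_{2})\leq 5$; since $G$ is $3$-rank-connected each term of size $>2$ has cut-rank $\geq 3$, so in each pair one of the two intersections has size $\leq 2$. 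Combining the size bounds $|A_{j}|=|B_{j}|=4$ with the fact that $t_{1}\in V(G)$ is outside all four sets, we get that $A_{1}\cup\{t_{1}\}$ and $A_{2}\cup\{t_{1}\}$ (etc.) are small sets in $G$; chasing the sizes, together with $|A_{j}\cap Q_{3}|=2$ and the containment of the prescribed triples, pins down $A_{1}$ and $A_{2}$ so tightly that they must coincide with (or be complementary to) pieces built from $Q_{2}$ and $Q_{3}$, forcing $t_{3}\in A_{1}\cap A_{2}$ type conclusions, and ultimately producing a set $X$ with $|X|,|V(G)-X|>2$ and $\rho_{G}(X)\leq 2$, contradicting $3$-rank-connectivity of $G$. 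Then by the symmetry of the hypotheses in $t_{1},t_{2},t_{3}$ the same works for $i=2,3$. The main obstacle I anticipate is the bookkeeping in this last paragraph: one must carefully choose which of the (up to four) candidate intersections to bound and track the role of $t_{1}$ relative to the quads $Q_{2},Q_{3}$ and the witnessing quads, using Lemmas~\ref{lem:subtool}, \ref{lem:sub_eq_AB}, and~\ref{lem:intersection2} to upgrade "cut-rank $\leq 2$ and size $\geq 3$'' into an outright contradiction; getting the constant $12$ in the main theorem to come out right is downstream of exactly how much slack this case analysis leaves.
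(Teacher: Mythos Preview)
Your outline has several concrete gaps that would make the argument fail as written.

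\textbf{The triples do not automatically have cut-rank $2$ in $H$.} To invoke Lemma~\ref{lem:contain_triple} in $H\in\{G*t_{1}\setminus t_{1},\,G/t_{1}\}$ with the triples $Q_{2}-\{t_{1}\}=\{t_{3},a_{2},b_{2}\}$ and $Q_{3}-\{t_{1}\}=\{t_{2},a_{3},b_{3}\}$, you must first know that $\rho_{H}$ of each triple is $2$. This is not free: in fact $\rho_{G\setminus t_{1}}(\{t_{2},a_{3},b_{3}\})=3$, not $2$, because $Q_{1}$ is a quad of $G\setminus t_{1}$ (the paper proves this via Lemma~\ref{lem:subeq_minus}). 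The way the paper gets cut-rank $2$ in $H$ is through Lemma~\ref{lem:local_or_pivot}: one first checks $\rho_{G\setminus t_{1}}(\{t_{2},a_{3},b_{3}\})=\rho_{G}(\{t_{2},a_{3},b_{3}\})$ and $\rho_{G}(\{t_{1},t_{2},a_{3},b_{3}\})=3$, and then Lemma~\ref{lem:local_or_pivot} yields cut-rank $2$ in \emph{one} of $G*t_{1}\setminus t_{1}$ or $G/t_{1}$. A separate application of Lemma~\ref{lem:pivot_subeq2} is then needed to force both triples to land in the \emph{same} $H$. Only then can Lemma~\ref{lem:contain_triple} be applied, and only to that single $H$.

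\textbf{The sets $A_{H},B_{H}$ are not quads.} Your claim that each non-sequential side has size exactly $4$ would require $H$ to be weakly $3$-rank-connected, but the hypotheses only give this for $G\setminus t_{i}$, not for $G*t_{1}\setminus t_{1}$ or $G/t_{1}$. In the paper's proof one shows the opposite: since $\{t_{2},a_{3},b_{3}\}\subseteq A$ already has cut-rank $2$ in $H$, if $|A|=4$ then $A$ would be sequential; hence $|A|\geq 5$ and $|B|\geq 5$. Your size-$4$ bookkeeping in the final paragraph therefore cannot go through.

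\textbf{Lemma~\ref{lem:intersection2} is applied to the wrong $P$.} You propose $P=Q_{3}$ (or $Q_{2}$), but $t_{1}\in Q_{2}\cap Q_{3}$, so neither is a subset of $V(G)-\{t_{1}\}$, and there is no reason $\rho_{G\setminus t_{1}}(Q_{3}-\{t_{1}\})\leq 2$. The correct choice is $P=Q_{1}$, which is a quad of $G\setminus t_{1}$; this gives $|A\cap Q_{1}|=|B\cap Q_{1}|=2$ and pins down where $a_{1},b_{1}$ lie.

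\textbf{The final contradiction uses a hypothesis you never invoke.} The paper closes by showing $\rho_{G\setminus t_{3}}(\{a_{1},t_{1},t_{2},a_{3},b_{3}\})\leq 2$, which contradicts the assumption that $G\setminus t_{3}$ is weakly $3$-rank-connected (and $|V(G)|\geq 11$). Your sketch aims instead for a violation of $3$-rank-connectivity of $G$, and never uses the weak $3$-rank-connectedness of $G\setminus t_{2}$ or $G\setminus t_{3}$; without that hypothesis the argument cannot close.
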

\begin{proof}
Since $|V(G)|\geq |Q_{1}\cup Q_{2}|=7$, by Lemma~\ref{lem:3toprime}, all of $G\setminus v$, $G*v\setminus v$, and $G/v$ are prime for each vertex $v$ of $G$.
Observe that $\{t_2,t_3\}\subseteq Q_1$, $\{t_1,t_3\}\subseteq Q_2$, and $\{t_1,t_2\}\subseteq Q_3$. For each $i\in\{1,2,3\}$, let $a_i$ and $b_i$ be two distinct vertices of $Q_{i}-\{t_1,t_2,t_3\}$. 

Suppose that neither $G*t_{1}\setminus t_{1}$ nor $G/t_{1}$ is sequentially $3$-rank-connected. 
Let us first show that $\rho_{G\setminus t_{1}}(\{t_2,a_3,b_3\})=3$. Since $G\setminus t_{1}$ is prime, $\rho_{G\setminus t_{1}}(\{a_{3},b_{3}\})=2=\rho_{G\setminus t_{1}}(\{t_{2},t_{3},a_{1},b_{1}\})$. 
By Lemma~\ref{lem:subeq_minus}, 
\[
\rho_{G\setminus t_{1}}(\{t_{2},a_{3},b_{3}\})+\rho_{G\setminus t_{1}}(\{t_{2},t_{3},a_{1},b_{1}\})\geq\rho_{G\setminus t_{1}}(\{a_{3},b_{3}\})+\rho_{G\setminus t_{1}}(\{t_{3},a_{1},b_{1}\}),
\]
and therefore $\rho_{G\setminus t_{1}}(\{t_{2},a_{3},b_{3}\})\geq\rho_{G\setminus t_{1}}(\{t_{3},a_{1},b_{1}\})$. Since $Q_{1}=\{t_{2},t_{3},a_{1},b_{1}\}$ is a quad of $G\setminus t_{1}$, $\rho_{G\setminus t_{1}}(\{t_{3},a_{1},b_{1}\})=3$. Therefore $\rho_{G\setminus t_{1}}(\{t_{2},a_{3},b_{3}\})=3$ and, by symmetry, $\rho_{G\setminus t_{1}}(\{t_{3},a_{2},b_{2}\})=3$.

Since $3=\rho_{G\setminus t_{1}}(\{t_{2},a_{3},b_{3}\})\leq\rho_{G}(\{t_{2},a_{3},b_{3}\})\leq 3$, we have $\rho_{G}(\{t_{2},a_{3},b_{3}\})=3$. Since $Q_{3}=\{t_{1},t_{2},a_{3},b_{3}\}$ is a quad of $G\setminus t_{3}$ and $G$ is $3$-rank-connected, we observe that $3\leq\rho_{G}(\{t_{1},t_{2},a_{3},b_{3}\})\leq 1+\rho_{G\setminus t_{3}}(\{t_{1},t_{2},a_{3},b_{3}\})=3$ and therefore $\rho_{G}(\{t_{1},t_{2},a_{3},b_{3}\})=3$. Similarly, $\rho_{G}(\{t_{3},a_{2},b_{2}\})=\rho_{G}(\{t_{1},t_{3},a_{2},b_{2}\})=~3$.
Therefore, by Lemma~\ref{lem:local_or_pivot}, the following hold.
\begin{enumerate}[label=\rm(R\arabic*)]
\item\label{item:rk1} $\rho_{G*t_{1}\setminus t_{1}}(\{t_{2},a_{3},b_{3}\})=2$ or $\rho_{G/t_{1}}(\{t_{2},a_{3},b_{3}\})=2$.
\item\label{item:rk2} $\rho_{G*t_{1}\setminus t_{1}}(\{t_{3},a_{2},b_{2}\})=2$ or $\rho_{G/t_{1}}(\{t_{3},a_{2},b_{2}\})=2$.
\end{enumerate}

Since $G$ is $3$-rank connected, $\rho_{G}(\{t_{2},a_{3},b_{3}\}), \rho_{G}(\{t_{3},a_{2},b_{2}\})\geq 3$. So by Lemma~\ref{lem:pivot_subeq2}, 
\[
\rho_{G*t_{1}\setminus t_{1}}(\{t_{2},a_{3},b_{3}\})+\rho_{G/t_{1}}(V(G\setminus t_{1})-\{t_{3},a_{2},b_{2}\})=\rho_{G}(\{t_{2},a_{3},b_{3}\})+\rho_{G}(\{t_{3},a_{2},b_{2}\})-1\geq 5.
\]
Hence, $\rho_{G*t_{1}\setminus t_{1}}(\{t_{2},a_{3},b_{3}\})+\rho_{G/t_{1}}(\{t_{3},a_{2},b_{2}\})\geq 5$ and similarly, 
\[
\rho_{G*t_{1}\setminus t_{1}}(\{t_{3},a_{2},b_{2}\})+\rho_{G/t_{1}}(\{t_{2},a_{3},b_{3}\})\geq~5.
\] 
Therefore, by \ref{item:rk1} and \ref{item:rk2}, either
\begin{enumerate}[label=\rm(\alph*)]
\item\label{item:a} $\rho_{G*t_{1}\setminus t_{1}}(\{t_{2},a_{3},b_{3}\})=\rho_{G*t_{1}\setminus t_{1}}(\{t_{3},a_{2},b_{2}\})=2$, or
\item\label{item:b} $\rho_{G/t_{1}}(\{t_{2},a_{3},b_{3}\})=\rho_{G/t_{1}}(\{t_{3},a_{2},b_{2}\})=2$.
\end{enumerate}

By Lemma~\ref{lem:onetoone}, we may assume \ref{item:a}, because otherwise we can choose a neighbor $y\notin\{t_{2},t_{3}\}$ of $t_{1}$ in $G$ by Lemma~\ref{lem:deg3} and replace $G$ by $G*y$. By Lemma~\ref{lem:contain_triple}, there is a subset $A$ of $V(G*t_{1}\setminus t_{1})$ such that 
\begin{itemize}
\item $\rho_{G*t_{1}\setminus t_{1}}(A)\leq 2$, 
\item neither $A$ nor $V(G*t_{1}\setminus t_{1})-A$ is sequential in $G*t_{1}\setminus t_{1}$, 
\item $\{t_{2},a_{3},b_{3}\}\subseteq A$ or $\{t_{2},a_{3},b_{3}\}\subseteq V(G*t_{1}\setminus t_{1})-A$, and 
\item $\{t_{3},a_{2},b_{2}\}\subseteq A$ or $\{t_{3},a_{2},b_{2}\}\subseteq V(G*t_{1}\setminus t_{1})-A$. 
\end{itemize}
We may assume that $\{t_{2},a_{3},b_{3}\}\subseteq A$ by replacing $A$ with $V(G*t_{1}\setminus t_{1})-A$ if necessary. 
Let $B=V(G*t_{1}\setminus t_{1})-A$. 

Suppose that $\{t_{3},a_{2},b_{2}\}\subseteq A$. Observe that $\rho_{G}(A)\leq\rho_{G*t_{1}\setminus t_{1}}(A)+1\leq 3$.
Since $\{t_{1},t_{2},a_{3},b_{3}\}$ is a quad of $G\setminus t_{3}$, by~\ref{item:s1} of Lemma~\ref{lem:subtool}, 
\begin{align*}
3+2&\geq\rho_{G}(A)+\rho_{G\setminus t_{3}}(\{t_{1},t_{2},a_{3},b_{3}\})=\rho_{G}((A-\{t_{3}\})\cup\{t_{3}\})+\rho_{G\setminus t_{3}}(\{t_{1},t_{2},a_{3},b_{3}\}) \\ 
&\geq\rho_{G\setminus t_{3}}((A-\{t_{3}\})\cap\{t_{1},t_{2},a_{3},b_{3}\})+\rho_{G}((A-\{t_{3}\})\cup\{t_{1},t_{2},t_{3},a_{3},b_{3}\}) \\
&=\rho_{G\setminus t_{3}}(\{t_{2},a_{3},b_{3}\})+\rho_{G}(A\cup\{t_{1}\})\geq 3+\rho_{G}(A\cup\{t_{1}\}).
\end{align*}
Therefore $\rho_{G}(A\cup\{t_{1}\})\leq 2$, contradicting our assumption that $G$ is $3$-rank-connected.
So we deduce that $\{t_{3},a_{2},b_{2}\}\subseteq B$. 

By Lemma~\ref{lem:intersection2}, $|A\cap\{t_{2},t_{3},a_{1},b_{1}\}|=|B\cap\{t_{2},t_{3},a_{1},b_{1}\}|=2$. So $|A\cap\{a_{1},b_{1}\}|=|B\cap\{a_{1},b_{1}\}|=1$ and we can assume that $\{a_{1},t_{2},a_{3},b_{3}\}\subseteq A$ and $\{b_{1},t_{3},a_{2},b_{2}\}\subseteq B$ by swapping $a_{1}$ and $b_{1}$ if necessary. 

If $|A|=4$, then $A$ is sequential in $G*t_{1}\setminus t_{1}$ because $\rho_{G*t_{1}\setminus t_{1}}(\{t_{2},a_{3},b_{3}\})\leq 2$ and $\{t_{2},a_{3},b_{3}\}\subseteq A$, contradicting our assumption on $A$.
Hence $|A|\geq 5$. 

If $|B|=4$, then $B$ is sequential in $G*t_{1}\setminus t_{1}$ because $\rho_{G*t_{1}\setminus t_{1}}(\{t_{3},a_{2},b_{2}\})\leq 2$ and $\{t_{3},a_{2},b_{2}\}\subseteq B$, contradicting our assumption on $B$. So $|B|\geq 5$ and $|V(G)|=|A|+|B|+1\geq 11$.

For each $k\in\{1,2,3\}$, let $P_{k}=Q_{k}\cup\{t_{k}\}=\{t_{1},t_{2},t_{3},a_{k},b_{k}\}$. 
Observe that $\rho_{G}(P_{k})\leq \rho_{G\setminus t_{k}}(Q_{k})+1\leq 3$ for each $1\leq k\leq 3$. Since $G$ is $3$-rank-connected and $|P_{1}\cap P_{3}|=3$, we have $\rho_{G}(P_{1}\cap P_{3})\geq 3$. By Lemma~\ref{lem:subeq}, 
\[
6\geq\rho_{G}(P_{1})+\rho_{G}(P_{3})\geq\rho_{G}(P_{1}\cup P_{3})+\rho_{G}(P_{1}\cap P_{3})\geq\rho_{G}(P_{1}\cup P_{3})+3,
\]
which implies that $\rho_{G}(P_{1}\cup P_{3})\leq 3$. Observe that $| V(G)-(A\cup(P_{1}\cup P_{3}))|\geq|B-\{b_{1},t_{3}\}|\geq 3$. Since $G$ is $3$-rank-connected, $\rho_{G}(A\cup(P_{1}\cup P_{3}))\geq 3$.
By Lemma~\ref{lem:subeq},
\[
3+3\geq\rho_{G}(A)+\rho_{G}(P_{1}\cup P_{3})\geq\rho_{G}(A\cap(P_{1}\cup P_{3}))+\rho_{G}(A\cup(P_{1}\cup P_{3}))\geq\rho_{G}(A\cap(P_{1}\cup P_{3}))+3.
\]
Therefore $\rho_{G}(\{a_{1},t_{2},a_{3},b_{3}\})=\rho_{G}(A\cap(P_{1}\cup P_{3}))\leq 3$. Hence by Lemma~\ref{lem:subeq}, 
\begin{align*}
3+2&\geq \rho_{G\setminus t_{3}}(\{a_{1},t_{2},a_{3},b_{3}\})+\rho_{G\setminus t_{3}}(\{t_{1},t_{2},a_{3},b_{3}\}) \\
&\geq\rho_{G\setminus t_{3}}(\{a_{1},t_{1},t_{2},a_{3},b_{3}\})+\rho_{G\setminus t_{3}}(\{t_{2},a_{3},b_{3}\})=\rho_{G\setminus t_{3}}(\{a_{1},t_{1},t_{2},a_{3},b_{3}\})+3.
\end{align*}
Hence $\rho_{G\setminus t_{3}}(\{a_{1},t_{1},t_{2},a_{3},b_{3}\})\leq 2$, contradicting our assumption that $G\setminus t_{3}$ is weakly $3$-rank-connected.
\end{proof}

An \emph{independent} set of a graph is a set of pairwise nonadjacent vertices. For sets $A$ and $B$, let $A\triangle B=(A-B)\cup(B-A)$.

\begin{lemma}
\label{lem:nonzero_quad}
Let $G$ be a $3$-rank-connected graph with $|V(G)|\geq 6$ and $x$ be a vertex of $G$ such that $G\setminus x$ is weakly $3$-rank-connected. Let $P$ be a quad of $G\setminus x$. Then there is a graph $G'$ locally equivalent to $G$ such that the following hold.
\begin{enumerate}[label=\rm(\arabic*)]
\item $G'\setminus v$ is weakly $3$-rank-connected for each vertex $v\in P\cup\{x\}$.
\item $N_{G'}(t)-P\neq\emptyset$ for each $t\in P$.
\item $P$ is a quad of $G'\setminus x$.
\end{enumerate}
\end{lemma}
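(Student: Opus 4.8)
The goal of Lemma~\ref{lem:nonzero_quad} is to ``normalize'' the pair $(G,x,P)$ so that deleting any vertex in $P\cup\{x\}$ keeps the graph weakly $3$-rank-connected, while no vertex of $P$ has all of its neighbors inside $P$, and $P$ remains a quad of $G\setminus x$. The plan is to establish (1), (2), (3) separately, taking care that the adjustments made to achieve one do not destroy the others.

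\textbf{Establishing (1).} First I would apply Lemma~\ref{lem:weak_conn} to the graph $G$: let $S$ be the set of all vertices $v$ with $G\setminus v$ not weakly $3$-rank-connected, and set $G_{0}=G*v_{1}*\cdots*v_{t}$ where $S=\{v_1,\ldots,v_t\}$. Then $G_0$ is locally equivalent to $G$, and $G_0\setminus v$ is weakly $3$-rank-connected for \emph{every} vertex $v$, which in particular gives (1). Since local equivalence preserves cut-rank (Lemma~\ref{lem:local}), $P$ is still a quad of $G_0\setminus x$, so (3) also survives this step. Thus after replacing $G$ by $G_0$ we may assume (1) and (3) hold for all of $V(G)$, not merely $P\cup\{x\}$.

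\textbf{Establishing (2).} Now suppose some $t\in P$ has $N_{G}(t)\subseteq P$; since $\deg_G(t)\ge 3$ by Lemma~\ref{lem:deg3} (as $|V(G)|\ge 6$) and $|P|=4$, this means $N_G(t)=P-\{t\}$. The idea is that this cannot happen once $P$ is a quad: if $t$ is adjacent only to the other three elements of $P$, then the row of the adjacency matrix indexed by $t$ restricted to $V(G)-P$ is zero, and I would use this to show $\rho_{G\setminus x}(P-\{t\})\le \rho_{G\setminus x}(P)=2$, contradicting that $P$ is a quad (which requires $\rho_{G\setminus x}(P-\{t\})=3$). Concretely, the columns of $A_{G\setminus x}[P,\,V(G\setminus x)-P]$ indexed by rows other than $t$ span the whole column space, since the $t$-row is zero; hence deleting the $t$-row does not change the rank, giving $\rho_{G\setminus x}(P-\{t\})=\rho_{G\setminus x}(P)=2$, a contradiction. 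So in fact (2) holds automatically for the graph we have after the first step, and no further local complementation is needed --- the ``contradiction'' shows the bad situation never arises once $P$ is a quad of $G\setminus x$.

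\textbf{Assembling the proof and the main obstacle.} With this observation, the proof is short: take $G'=G_0$ as above, and (1), (2), (3) all hold simultaneously. I expect the main subtlety to lie in the verification for (2): one must be careful that $x\notin P$, so that the ``$t$-row is zero outside $P$'' statement is about $A_{G\setminus x}$ and really does reduce to a clean rank argument, and one must double-check the edge case where a neighbor of $t$ could be $x$ --- but $x\notin P$ and $N_G(t)=P-\{t\}\subseteq P$ rules that out. If instead the intended reading is that (2) genuinely requires a local complementation (because the first normalization step might, a priori, create a vertex of $P$ with no neighbor outside $P$), then the fallback is: pivot along an edge inside $P$ or locally complement at a vertex of $P$ to redistribute adjacencies, and then re-run the Lemma~\ref{lem:weak_conn} argument --- but the rank computation above shows this is unnecessary, since a quad of $G\setminus x$ can never contain such a vertex. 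Either way, the cut-rank invariance under local equivalence (Lemma~\ref{lem:local}) is what guarantees (3) is not lost, and Lemma~\ref{lem:weak_conn} is what delivers (1).
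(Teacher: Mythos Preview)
Your step establishing (1) and (3) via Lemma~\ref{lem:weak_conn} is fine and matches the paper. The problem is in your argument for (2): it contains a genuine error in the rank computation.

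You argue that if $N_G(t)\subseteq P$ then the $t$-row of $A_{G\setminus x}[P,\,V(G\setminus x)-P]$ is zero, so deleting that row preserves the rank, and you conclude $\rho_{G\setminus x}(P-\{t\})=\rho_{G\setminus x}(P)=2$. But $\rho_{G\setminus x}(P-\{t\})$ is \emph{not} the rank of $A_{G\setminus x}[P-\{t\},\,V(G\setminus x)-P]$; it is the rank of $A_{G\setminus x}[P-\{t\},\,(V(G\setminus x)-P)\cup\{t\}]$, which has one extra column, namely the column indexed by $t$. Since $N_G(t)=P-\{t\}$, that column is the all-ones vector in $\GF(2)^{3}$. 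There is no reason this vector must lie in the $2$-dimensional column space of $A_{G\setminus x}[P-\{t\},\,V(G\setminus x)-P]$; when it does not, $\rho_{G\setminus x}(P-\{t\})=3$ and $P$ is a perfectly good quad even though $t$ has all its neighbors in $P$. So (2) does \emph{not} follow automatically, and genuine work is required.

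The paper handles this by an extremal argument: among all locally equivalent graphs satisfying (1) and (3), pick one maximizing $|E(G[P])|$, and then, assuming some $p\in P$ has $N_G(p)=P-\{p\}=\{q,r,s\}$, split into cases. If $\{q,r,s\}$ is independent, a shortest $q$--$r$ path in $G\setminus x\setminus p$ (which exists by $3$-connectedness, Lemma~\ref{lem:kconn}) is used to locally complement at internal vertices outside $P\cup\{x\}$, creating an edge in $G[P]$ and contradicting maximality. Otherwise one shows $\rho_G(P)=3$ forces $N_G(q)-P$, $N_G(r)-P$, $N_G(s)-P$ to be nonempty and pairwise distinct with nontrivial symmetric difference, and then either $G*q$ or a pivot $G\wedge qr$ yields a graph satisfying all of (1), (2), (3), with careful case-checking that each vertex of $P$ retains a neighbor outside $P$ and that weak $3$-rank-connectedness is preserved (using Lemma~\ref{lem:weakly} to handle $G/q$, $G/r$ when $G*q\setminus q$, $G*r\setminus r$ fail). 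Your fallback sketch (``pivot inside $P$ and re-run Lemma~\ref{lem:weak_conn}'') is in the right spirit but underestimates the delicacy: an arbitrary local complementation at a vertex of $P$ can destroy (1), and re-running Lemma~\ref{lem:weak_conn} may then reintroduce a bad vertex in $P$, so one needs the specific case analysis the paper carries out.
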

\begin{proof}
Let $P=\{p,q,r,s\}$. By Lemma~\ref{lem:weak_conn}, there is a graph locally equivalent to $G$ satisfying (1) and (3). We may assume that among all graphs locally equivalent to $G$ satisfying (1) and (3), $G$ maximizes the number of edges between vertices in $P$. 

We may assume that $N_{G}(p)\subseteq\{q,r,s\}$ because otherwise (1), (2), and (3) hold for $G'=G$. Since $P$ is a quad of $G\setminus x$, we have $\rho_{G\setminus x}(P)=2$, which implies that $|V(G\setminus x)-P|\geq 2$. 
So $|V(G)|\geq 7$. Since $G$ is $3$-rank-connected, by Lemma~\ref{lem:deg3}, we have $N_{G}(p)=\{q,r,s\}$. 

Suppose that $\{q,r,s\}$ is independent in $G$. Since $G$ is $3$-rank-connected, by Lemma~\ref{lem:kconn},~$G$~is $3$-connected and so $G\setminus x\setminus p$ is connected. Let $X$ be a shortest path joining two vertices of $\{q,r,s\}$ in $G\setminus x\setminus p$. By symmetry, we may assume that $X=qv_{1}\cdots v_{m}r$ and $v_{i}\neq s$ for each $1\leq i\leq m$. Since $\{q,r,s\}$ is independent in $G$, we deduce that $m\geq 1$ and $\{v_{1},\ldots,v_{m}\}\subseteq V(G)-(P\cup\{x\})$. Then let $G'=G*v_{1}*\cdots*v_{m}$. Then $G'$ satisfies (1) and (3). Moreover, $N_{G'}(p)=\{q,r,s\}$ and $qr\in E(G')$. Hence $|E(G'[P])|>|E(G[P])|$, contradicting the choice of $G$. Therefore, $\{q,r,s\}$ is not independent in $G$.

Since $G$ is $3$-rank-connected, we have $3\leq\rho_{G}(P)\leq\rho_{G\setminus x}(P)+1=3$. Hence $\rho_{G}(P)=3$ and so $N_{G}(q)-P$, $N_{G}(r)-P$, and $N_{G}(s)-P$ are nonempty, pairwise distinct, and $(N_{G}(s)-P)\triangle (N_{G}(q)-P)\triangle(N_{G}(r)-P)\neq\emptyset$.

If $G*q\setminus q$ is weakly $3$-rank-connected, then let $G'=G*q$. Obviously, (1) and (3) hold. We have $N_{G'}(p)-P=N_{G}(q)-P=N_{G'}(q)-P\neq\emptyset$. For each vertex $v\in \{r,s\}$, 
\[
N_{G'}(v)-P=
\begin{cases}
N_{G}(v)-P\neq\emptyset & \text{if $v$ is not adjacent to $q$ in $G$,} \\
(N_{G}(q)-P)\triangle(N_{G}(v)-P)\neq\emptyset & \text{if $v$ is adjacent to $q$ in $G$,}
\end{cases}
\]
and therefore $G'$ satisfies (2).
So we can assume that none of $G*q\setminus q$, $G*r\setminus r$, and $G*s\setminus s$ is weakly $3$-rank-connected. Then by Lemma~\ref{lem:weakly}, all of $G/q$, $G/r$, and $G/s$ are weakly $3$-rank-connected.

Since $\{q,r,s\}$ is not independent in $G$, by symmetry, we may assume that $q$ and $r$ are adjacent in $G$.
Let $G'=G\wedge qr$. For each vertex $v\in P\cup\{x\}$, if $v\in\{p,s,x\}$, then $G'\setminus v=(G\setminus v)\wedge qr$ and if $v\in\{q,r\}$, then  $G'\setminus v=G/v$, which implies that (1) and (3) hold.
Then $N_{G'}(q)-P=N_{G}(r)-P$ and $N_{G'}(r)-P=N_{G}(q)-P$. Since $p\in N_{G}(q)\cap N_{G}(r)$ and $N_{G}(q)-P \neq N_{G}(r)-P$, we have $N_{G'}(p)-P=(N_{G}(q)-P)\triangle (N_{G}(r)-P)\neq\emptyset$. Furthermore, 
\[
N_{G'}(s)-P=
\begin{cases}
N_{G}(s)-P\neq\emptyset & \text{if $s\notin N_{G}(q)\cup N_{G}(r)$,} \\
(N_{G}(s)-P)\triangle (N_{G}(q)-P)\neq\emptyset & \text{if $s\in N_{G}(r)-N_{G}(q)$,} \\
(N_{G}(s)-P)\triangle (N_{G}(r)-P)\neq\emptyset & \text{if $s\in N_{G}(q)-N_{G}(r)$,} \\
(N_{G}(s)-P)\triangle (N_{G}(q)-P)\triangle (N_{G}(r)-P)\neq\emptyset & \text{if $s\in N_{G}(q)\cap N_{G}(r)$.} \\
\end{cases}
\]
Hence, (2) holds. 
\end{proof}

\begin{lemma}
\label{lem:find_cross}
Let $G$ be a $3$-rank-connected graph with $|V(G)|\geq 6$ and $x$ be a vertex of $G$. Let $P$ be a quad of $G\setminus x$ and $t$ be a vertex in $P$. If $G\setminus t$ is weakly $3$-rank-connected, then one of the following holds.
\begin{enumerate}[label=\rm(Q\arabic*)]
\item\label{item:Q1} $G\setminus t$ is sequentially $3$-rank-connected.
\item\label{item:Q2} There is a subset $X$ of $V(G\setminus t)$ such that $\rho_{G\setminus t}(X)\leq 2$, $X\cap P\neq\emptyset$, $(V(G\setminus t)-X)\cap P\neq\emptyset$, and neither $X$ nor $V(G\setminus t)-X$ is sequential in $G\setminus t$.
\item\label{item:Q3} $\rho_{G\setminus t}(P-\{t\})=2$ and $G\setminus t$ has a quad $Y$ containing $x$ such that $Y\cap P=\emptyset$.
\end{enumerate}
\end{lemma}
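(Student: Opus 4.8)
The plan is to assume that \ref{item:Q1} fails and to deduce \ref{item:Q2} or \ref{item:Q3}; concretely, I will assume that \ref{item:Q1} and \ref{item:Q2} both fail and prove \ref{item:Q3}. Since $G\setminus t$ is weakly $3$-rank-connected it is prime, so the failure of \ref{item:Q1} says that $G\setminus t$ is not sequentially $3$-rank-connected. I first record the rank data forced by $P$ being a quad of $G\setminus x$: because $\rho_{G\setminus x}(P)=2$ we need $|V(G\setminus x)-P|\ge 2$, so $|V(G)|\ge 7$; applying Lemma~\ref{lem:delrank} and $3$-rank-connectivity of $G$ to the partition $(P,V(G)-P)$ gives $\rho_{G}(P)=3$; and deleting $t$ from $G\setminus x$ and then $x$ from $G\setminus t$, with two further uses of Lemma~\ref{lem:delrank}, gives $\rho_{G\setminus x\setminus t}(P-\{t\})=2$ and hence $\rho_{G\setminus t}(P-\{t\})\in\{2,3\}$.

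Next I extract and normalize a witness. Since $G\setminus t$ is prime but not sequentially $3$-rank-connected, there is a partition $(A,B)$ of $V(G\setminus t)$ with $\rho_{G\setminus t}(A)\le 2$ and with neither $A$ nor $B$ sequential; then $|A|,|B|\ge 4$, since a set of size at most $3$ with cut-rank at most $2$ is sequential. As $P\cap V(G\setminus t)=P-\{t\}$ and both $A$ and $B$ are nonsequential, the partition $(A,B)$ would witness \ref{item:Q2} if it separated $P-\{t\}$; so the failure of \ref{item:Q2} lets me relabel so that $P-\{t\}\subseteq A$. Applying $3$-rank-connectivity of $G$ to the partition $(A\cup\{t\},B)$ of $V(G)$, whose parts have sizes at least $5$ and $4$, together with $\rho_{G}(A\cup\{t\})\le\rho_{G\setminus t}(A)+1\le 3$, forces $\rho_{G}(A\cup\{t\})=3$ and hence $\rho_{G\setminus t}(A)=2$. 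Now I apply \ref{item:s1} of Lemma~\ref{lem:subtool} with $v=t$, $X=A$, $Y=P-\{t\}$: since $A\cap(P-\{t\})=P-\{t\}$ and $A\cup(P-\{t\})\cup\{t\}=A\cup\{t\}$, this gives $\rho_{G\setminus t}(A)+\rho_{G}(P)\ge\rho_{G\setminus t}(P-\{t\})+\rho_{G}(A\cup\{t\})$, i.e. $\rho_{G\setminus t}(P-\{t\})\le 2$, so $\rho_{G\setminus t}(P-\{t\})=2$, which is the first clause of \ref{item:Q3}.

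I then locate the quad. Since $G\setminus t$ is weakly $3$-rank-connected and $\rho_{G\setminus t}(A)=2$, not both $|A|\ge 5$ and $|B|\ge 5$, so $|A|=4$ or $|B|=4$. If $|A|=4$ then $A=(P-\{t\})\cup\{z\}$, and Lemma~\ref{lem:quad} makes $A$ a quad of $G\setminus t$ (size $4$, cut-rank $2$, nonsequential), whence $\rho_{G\setminus t}(P-\{t\})=\rho_{G\setminus t}(A-\{z\})=3$, contradicting the previous paragraph; so $|B|=4$. By Lemma~\ref{lem:quad}, $B$ is a quad of $G\setminus t$, and $B\cap P=B\cap(P-\{t\})=\emptyset$ since $P-\{t\}\subseteq A$. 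So $B$ is a quad of $G\setminus t$ disjoint from $P$, and it remains only to show $x\in B$, for then $Y=B$ gives \ref{item:Q3}.

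This last step is where I expect the real difficulty. Suppose instead $x\in A$, so $(P-\{t\})\cup\{x\}\subseteq A$; bounding $\rho_{G}(P\cup\{x\})$ as before ($\le\rho_{G\setminus x}(P)+1=3$, and $\ge 3$ by $3$-rank-connectivity, using $|V(G)|\ge 9$) and applying \ref{item:s1} of Lemma~\ref{lem:subtool} with $v=t$, $X=A$, $Y=(P-\{t\})\cup\{x\}$ yields $\rho_{G\setminus t}((P-\{t\})\cup\{x\})\le 2$, hence $=2$ by primeness; since $\rho_{G\setminus t}(P-\{t\})=2\ne 3$, Lemma~\ref{lem:quad} forces $(P-\{t\})\cup\{x\}$ to be sequential in $G\setminus t$. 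Thus $A$ is a nonsequential set of cut-rank $2$ containing a sequential $4$-element subset of the same cut-rank, and I want this to contradict the nonsequentiality of $A$. To arrange this I would choose the witness $(A,B)$ with $|B|$ minimum among all witnesses with $P-\{t\}\subseteq A$, and use minimality together with primeness of $G\setminus t$ to extend an ordering of $(P-\{t\})\cup\{x\}$ through the rest of $A$ while keeping the cut-rank equal to $2$, invoking Lemma~\ref{lem:basic_sequential} at each step to conclude $A$ is sequential. Showing that these intermediate cut-ranks never jump to $3$ is the crux of the lemma and the step I would spend the most care on.
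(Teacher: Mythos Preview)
Your argument is sound through the derivation of $\rho_{G\setminus t}(P-\{t\})=2$, $|A|\ge 5$, and the identification of $B$ as a quad of $G\setminus t$ disjoint from $P$. In fact your route to $\rho_{G\setminus t}(P-\{t\})=2$ via \ref{item:s1} is a bit cleaner than the paper's, which only obtains this equality after first locating $x$. The genuine gap is exactly where you flag it: showing $x\in B$.

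Your proposed strategy for this step does not work. First, minimizing $|B|$ buys nothing: your own argument already shows that \emph{every} witness $(A,B)$ with $P-\{t\}\subseteq A$ has $|B|=4$, so the minimality hypothesis is vacuous. Second, the ``extend the sequential ordering'' plan is blocked. Suppose $x\in A$ and set $S=(P-\{t\})\cup\{x\}$; you correctly get $\rho_{G\setminus t}(S)=2$ and $S$ sequential. If $|A|=5$ then $A=S\cup\{a\}$ and $\rho_{G\setminus t}(A)=\rho_{G\setminus t}(S)$ forces $A$ sequential, a contradiction; so $|A|\ge 6$. But now take any sequential $S'$ with $S\subseteq S'\subseteq A$ and $\rho_{G\setminus t}(S')\le 2$. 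If $|S'|\ge 5$, weak $3$-rank-connectedness of $G\setminus t$ gives $|V(G\setminus t)-S'|\le 4$, hence $|A|\le |S'|$ and $S'=A$, contradicting nonsequentiality of $A$. Thus $|S'|=4$, i.e.\ $S'=S$, and $\rho_{G\setminus t}(S\cup\{a\})\ge 3$ for every $a\in A-S$: the extension is impossible, and no contradiction has been reached.

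The paper instead proves $x\in B$ directly, \emph{before} computing $\rho_{G\setminus t}(P-\{t\})$. Set $Z_{1}=A-\{x\}$, so $P-\{t\}\subseteq Z_{1}$. Since $P$ is a quad of $G\setminus x$, one has $\rho_{G\setminus x\setminus t}(P-\{t\})=2=\rho_{G\setminus x}(P)$; by Lemma~\ref{lem:cor_s1} this propagates to $\rho_{G\setminus x}(Z_{1}\cup\{t\})=\rho_{G\setminus x\setminus t}(Z_{1})$, and primeness of $G\setminus x$ pins both at $2$. Now apply \ref{item:ab3} of Lemma~\ref{lem:sub_eq_AB} with the pair $(Z_{1}\cup\{t\},\,Z_{1}\cup\{x\})$: using $\rho_{G}(Z_{1}\cup\{x,t\})\ge 3$ from $3$-rank-connectedness, one gets $\rho_{G\setminus t}(Z_{1}\cup\{x\})\ge 3$. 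If $x\in A$ this says $\rho_{G\setminus t}(A)\ge 3$, a contradiction; hence $x\in B$. This is the missing idea.
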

\begin{proof}
Suppose that $G\setminus t$ is not sequentially $3$-rank-connected. Then there is a subset $X$ of $V(G\setminus t)$ such that $\rho_{G\setminus t}(X)\leq 2$ and neither $X$ nor $V(G\setminus t)-X$ is sequential in $G\setminus t$. Let $Y=V(G\setminus t)-X$ and $(Z_1,Z_2)=(X-\{x\},Y-\{x\})$. Since both $X$ and $Y$ are non-sequential in $G\setminus t$, we have $|X|, |Y|\geq 4$ and so $|Z_{1}|, |Z_{2}|\geq 3$. If $X\cap P\neq\emptyset$ and $Y\cap P\neq\emptyset$, then \ref{item:Q2} holds. So by symmetry, we may assume that $P-\{t\}\subseteq X$. Then $P-\{t\}\subseteq Z_{1}$. Since $P$ is a quad of $G\setminus x$, we know that $\rho_{G\setminus x}(P)=2=\rho_{G\setminus x}(P-\{t\})-1\leq\rho_{G\setminus x\setminus t}(P-\{t\})$. Then by~Lemma~\ref{lem:cor_s1}, $\rho_{G\setminus x}(Z_1\cup\{t\})=\rho_{G\setminus x\setminus t}(Z_1)$.

By Lemma~\ref{lem:3toprime}, $G\setminus x$ is prime and so $2\leq\rho_{G\setminus x}(Z_1\cup\{t\})=\rho_{G\setminus x\setminus t}(Z_1)\leq\rho_{G\setminus t}(X)\leq 2$, which implies that 
\[
\rho_{G\setminus x}(Z_1\cup\{t\})=\rho_{G\setminus x\setminus t}(Z_1)=2.
\]
Since $G$ is $3$-rank-connected and $|V(G)-(Z_{1}\cup\{x,t\})|\geq|Z_{2}|\geq 3$, we have $\rho_{G}(Z_{1}\cup\{x,t\})\geq 3$. So by Lemma~\ref{item:ab3} of Lemma~\ref{lem:sub_eq_AB}, 
\[
2+\rho_{G\setminus t}(Z_{1}\cup\{x\})\geq \rho_{G\setminus x}(Z_{1}\cup\{t\})+\rho_{G\setminus t}(Z_{1}\cup\{x\})\geq \rho_{G}(Z_{1}\cup\{x,t\})+\rho_{G\setminus x\setminus t}(Z_{1})\geq 3+2.
\]
Hence $\rho_{G\setminus t}(Z_{1}\cup\{x\})>2$ and $x\in Y$. So $(Z_{1},Z_{2})=(X,Y-\{x\})$ and 
$\rho_{G\setminus x}(X\cup\{t\})=\rho_{G\setminus x}(Z_1\cup\{t\})=2$. Since $t\in P$ and $x\notin Z_{1}$, by \ref{item:ab2} of Lemma~\ref{lem:sub_eq_AB}, 
\[
2+2\geq\rho_{G\setminus x}(P)+\rho_{G\setminus t}(Z_{1})\geq\rho_{G\setminus x}(Z_{1}\cup\{t\})+\rho_{G\setminus t}(P-\{t\})\geq 2+\rho_{G\setminus t}(P-\{t\}).
\]
Therefore, $\rho_{G\setminus t}(P-\{t\})=2$ because $G\setminus t$ is prime. Since $X$ is non-sequential in $G\setminus t$ and $\rho_{G\setminus t}(P-\{t\})\leq 2$, we have $|X|\geq 5$. Hence $|Y|=4$ because $G\setminus t$ is weakly $3$-rank-connected. Since $Y$ is non-sequential in $G\setminus t$, by Lemma~\ref{lem:quad}, $Y=Z_{2}\cup\{x\}$ is a quad of $G\setminus t$. Hence~\ref{item:Q3} holds.
\end{proof}

\begin{lemma}
\label{lem:less_than4}
Let $G$ be a $3$-rank-connected graph such that $|V(G)|\geq 12$ and $x$ be a vertex of $G$. Let $P$ be a quad of $G\setminus x$ and $t$ be a vertex of $P$. Let $(X,Y)$ be a partition of $V(G)-\{t\}$ such that $\rho_{G\setminus t}(X)\leq 2$ and neither $X$ nor $Y$ is sequential in $G\setminus t$. If $G\setminus x$ and $G\setminus t$ are weakly $3$-rank-connected and $|X\cap P|=1$, then the following hold.
\begin{enumerate}[label=\rm(K\arabic*)]
\item\label{item:k1} $\rho_{G\setminus x\setminus t}(X-\{x\})=\rho_{G\setminus t}(X)=2$.
\item\label{item:k2} $X$ is a quad of $G\setminus t$ containing $x$.
\end{enumerate}
\end{lemma}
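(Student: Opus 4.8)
The plan is to reduce \ref{item:k1} and \ref{item:k2} to the two statements ``$x\in X$'' and ``$|X|=4$'', to dispose of the easy implications, and then to prove these two statements by submodularity arguments built on Lemma~\ref{lem:sub_eq_AB}.

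First the setup. Since $X$ and $Y$ are non-sequential in $G\setminus t$, and since any set of cut-rank at most $2$ on at most three vertices is trivially sequential, $|X|,|Y|\ge 4$; as $G\setminus t$ is prime this forces $\rho_{G\setminus t}(X)=2$. Because $|X|+|Y|=|V(G)|-1\ge 11$, not both of $|X|,|Y|$ equal $4$, and because $G\setminus t$ is weakly $3$-rank-connected with $\rho_{G\setminus t}(X)\le 2$, not both are $\ge 5$; hence exactly one of them equals $4$ and the other is at least $7$. As $t\notin X\cup Y$ and $|X\cap P|=1$, write $P=\{t,p_1,p_2,p_3\}$ with $X\cap P=\{p_1\}$ and $Y\cap P=\{p_2,p_3\}$; being a quad of $G\setminus x$, $P$ satisfies $\rho_{G\setminus x}(P)=2$ and $\rho_{G\setminus x}(P-\{z\})=3$ for each $z\in P$, and Lemma~\ref{lem:delrank} then gives $\rho_{G\setminus x\setminus t}(\{p_1,p_2,p_3\})=2$. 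Finally, once $x\in X$ and $|X|=4$ are known: $X$ is non-sequential with $\rho_{G\setminus t}(X)=2$ and $|X|=4$, so Lemma~\ref{lem:quad} makes $X$ a quad of $G\setminus t$, which is \ref{item:k2}; and then $\rho_{G\setminus t}(X-\{x\})=3$, so Lemma~\ref{lem:delrank}\ref{item:2.4i} gives $\rho_{G\setminus x\setminus t}(X-\{x\})\ge 2$ while Lemma~\ref{lem:delrank}\ref{item:2.4ii} gives $\rho_{G\setminus x\setminus t}(X-\{x\})\le\rho_{G\setminus t}(X)=2$, which is \ref{item:k1}. So it remains to prove $x\in X$ and $|X|=4$.

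For ``$x\in X\Rightarrow|X|=4$'': suppose $x\in X$ and $|X|\ge 7$, so $|Y|=4$. Apply Lemma~\ref{lem:sub_eq_AB}\ref{item:ab2} with $a=x$, $b=t$, $A=P$, $B=Y$ (legitimate since $t\in P$ and $x\notin Y$): using $\rho_{G\setminus x}(P)=2$, $\rho_{G\setminus t}(Y)=\rho_{G\setminus t}(X)=2$, and $\rho_{G\setminus t}(P\cap Y)=\rho_{G\setminus t}(\{p_2,p_3\})=2$ (primeness of $G\setminus t$), one gets $\rho_{G\setminus x}(P\cup Y)\le 2$. But $P\cup Y=Y\cup\{t,p_1\}$ has $|Y|+2=6$ vertices, while its complement in $V(G\setminus x)$ is $X-\{x,p_1\}$, of size $|X|-2\ge 5$; this contradicts the weak $3$-rank-connectivity of $G\setminus x$. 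Hence $x\in X$ forces $|X|=4$.

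The remaining and main task is to show $x\in X$; I would argue by contradiction, assuming $x\in Y$. The symmetric move, namely Lemma~\ref{lem:sub_eq_AB}\ref{item:ab3} with $a=x$, $b=t$, $A=P$, $B=Y$ (now $x\in Y$), yields only $\rho_{G\setminus x\setminus t}(\{p_2,p_3\})+\rho_{G}(P\cup Y)\le 4$; bounding $\rho_{G}(P\cup Y)\ge 3$ by the $3$-rank-connectivity of $G$ (the complement of $P\cup Y$ is $X-\{p_1\}$, of size $|X|-1\ge 3$) leaves $\rho_{G\setminus x\setminus t}(\{p_2,p_3\})\le 1$, and since $G\setminus x\setminus t$ is connected (Lemma~\ref{lem:krank}) in fact $\rho_{G\setminus x\setminus t}(\{p_2,p_3\})=1$. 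The hard part is finishing from here, and I expect it to be the most delicate step. My plan is to split on the size of $X$: combine $\rho_{G\setminus x\setminus t}(\{p_2,p_3\})=1$ with $\rho_{G\setminus x\setminus t}(\{p_1,p_2,p_3\})=2$ (which forces $\rho_{G\setminus x\setminus t}(\{p_1,p_2\})=\rho_{G\setminus x\setminus t}(\{p_1,p_3\})=2$ by submodularity), and, when $|X|=4$ so that $X$ is a quad of $G\setminus t$ not containing $x$, play $X\cup P$ against $P$ using Lemmas~\ref{lem:sub_eq_AB} and~\ref{lem:delrank} together with the quad structure to contradict either the primeness of $G\setminus t$ or the weak $3$-rank-connectivity of $G\setminus x$; when $|X|\ge 7$, so that $Y=\{p_2,p_3,x,v\}$ is a quad of $G\setminus t$, play $Y-\{x\}$ against $P$ in $G\setminus x$ in the same way. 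This is where the hypothesis $|V(G)|\ge 12$ is used crucially, to keep both sides of the relevant partitions of size at least $5$. Once this is carried out we conclude $x\in X$, hence $|X|=4$ by the previous paragraph, and \ref{item:k1}--\ref{item:k2} follow as noted.
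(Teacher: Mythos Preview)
Your setup and the two easy reductions are fine: the derivation of (K1)--(K2) from ``$x\in X$ and $|X|=4$'' is correct, and your argument that $x\in X$ forces $|X|=4$ via \ref{item:ab2} is clean and correct.

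The genuine gap is the step ``$x\in X$''. You only obtain $\rho_{G\setminus x\setminus t}(\{p_2,p_3\})=1$ and then stop at a plan. That single equality is too weak to finish by the moves you describe: in the case $|X|\ge 7$ (so $Y=\{p_2,p_3,x,v\}$ is a quad of $G\setminus t$), the quad property only gives $\rho_{G\setminus x\setminus t}(\{p_2,p_3,v\})\ge 2$, which is compatible with $\rho_{G\setminus x\setminus t}(\{p_2,p_3\})=1$; and in the case $|X|=4$, applying \ref{item:ab1}--\ref{item:ab3} to $X$, $P$, or $X\cup P$ does not by itself yield a set that violates weak $3$-rank-connectivity of $G\setminus x$ or primeness of $G\setminus t$. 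So as written the proposal is incomplete at exactly the point you flag as delicate.

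The paper avoids this difficulty by a different organisation. It does \emph{not} first reduce to ``$x\in X$''; instead it introduces $C=X-\{q,x\}$ (where $\{q\}=X\cap P$) and proves the chain
\[
2\le \rho_{G\setminus x}(C)\le \rho_{G\setminus x\setminus t}(C)\le \rho_{G\setminus x\setminus t}(X-\{x\})\le \rho_{G\setminus t}(X)\le 2
\]
directly: the second inequality comes from Lemma~\ref{lem:subtool_minus} applied with the quad identity $\rho_{G\setminus x}(P)=2\le\rho_{G\setminus x\setminus t}(P-\{t\})$, the third from Lemma~\ref{lem:subeq_minus} using $\rho_{G\setminus x\setminus t}(P-\{t\})\le 2\le\rho_{G\setminus x\setminus t}(\{r,s\})$, and the lower bound from primeness of $G\setminus x$. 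This already gives (K1), and since $\rho_{G\setminus x}(C)=2$ with $|V(G\setminus x)-C|\ge |P|+|D|\ge 5$, weak $3$-rank-connectivity of $G\setminus x$ forces $|C|\le 4$, hence $|Y|\ge 5$. The case $x\notin X$ is then one short paragraph: Lemma~\ref{lem:cor_s2} transfers $\rho_{G\setminus x\setminus t}(C)=2$ to $\rho_{G\setminus t}(C)=2$, and \ref{item:ab1} with $A=B=C$ gives $\rho_G(C)\le 2$, so $|C|\le 2$ by $3$-rank-connectivity and $|X|\le 3$, a contradiction. The point is that working with $C$ (rather than $\{p_2,p_3\}$) puts you inside $V(G\setminus x)$ where the weak $3$-rank-connectivity hypothesis bites, and the chain of equalities makes the $x\notin X$ case a two-line submodularity computation.
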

\begin{proof}
Since neither $X$ nor $Y$ is sequential in $G\setminus t$, we have $|X|,|Y|\geq 4$ and so $|X-\{x\}|,|Y-\{x\}|\geq 3$. Clearly, $\rho_{G\setminus x\setminus t}(X-\{x\})\leq\rho_{G\setminus t}(X)\leq 2$. 
Let $q$ be the element of $X \cap P$ and $r$, $s$ be the elements of $Y \cap P$. 
Let $C=X-\{q,x\}$ and $D=Y-\{r,s,x\}$. Then we have $|D|\geq 1$ because $|Y|\geq 4$. 

Let us show that $\rho_{G\setminus x\setminus t}(C)\leq 2$. Since $P$ is a quad of $G\setminus x$, by (ii) of Lemma~\ref{lem:delrank}, $\rho_{G\setminus x\setminus t}(P-\{t\})\leq\rho_{G\setminus x}(P)=2=\rho_{G\setminus x}(P-\{q\})-1\leq\rho_{G\setminus x\setminus t}(\{r,s\})$. Hence, by Lemma~\ref{lem:subeq_minus}, 
\begin{align*}
\rho_{G\setminus x\setminus t}(X-\{x\})+2&\geq\rho_{G\setminus x\setminus t}(X-\{x\})+\rho_{G\setminus x\setminus t}(P-\{t\}) \\
&\geq \rho_{G\setminus x\setminus t}(C)+\rho_{G\setminus x\setminus t}(\{r,s\})\geq \rho_{G\setminus x\setminus t}(C)+2
\end{align*}
and therefore $\rho_{G\setminus x\setminus t}(C)\leq\rho_{G\setminus x\setminus t}(X-\{x\})\leq 2$. Since $P$ is a quad of $G\setminus x$, by (i) of Lemma~\ref{lem:delrank}, $\rho_{G\setminus x}(P)=2=\rho_{G\setminus x}(P-\{t\})-1\leq\rho_{G\setminus x\setminus t}(P-\{t\})$.
By Lemma~\ref{lem:subtool_minus}, 
\[
2+\rho_{G\setminus x\setminus t}(C)\geq \rho_{G\setminus x}((P-\{t\})\cup\{t\})+\rho_{G\setminus x\setminus t}(C)\geq \rho_{G\setminus x}(C)+\rho_{G\setminus x\setminus t}(P-\{t\})\geq \rho_{G\setminus x}(C)+2,
\]
which implies that $\rho_{G\setminus x}(C)\leq\rho_{G\setminus x\setminus t}(C)\leq 2$. Hence $\rho_{G\setminus x}(C)=\rho_{G\setminus x\setminus t}(C)=\rho_{G\setminus x\setminus t}(X-\{x\})=\rho_{G\setminus t}(X)=2$ because $G\setminus x$ is prime and $|V(G\setminus x)-C|\geq 2$. Hence~\ref{item:k1} holds. 

Since $G\setminus x$ is weakly $3$-rank-connected and $|V(G\setminus x)-C|\geq |P|+|D|\geq 5$, we deduce that $|C|\leq 4$ and $|X|\leq 6$. So $|Y|\geq 11-|X|\geq 5$.

Suppose that $x\notin X$. Then $X=X-\{x\}$ and $\rho_{G\setminus x\setminus t}(X)=\rho_{G\setminus t}(X)=2$. Since $C\subseteq X$, by Lemma~\ref{lem:cor_s2}, we have $\rho_{G\setminus t}(C)=\rho_{G\setminus x\setminus t}(C)=2$. By~\ref{item:ab1} of Lemma~\ref{lem:sub_eq_AB},
\[
\rho_{G\setminus x}(C)+\rho_{G\setminus t}(C)\geq\rho_{G}(C)+\rho_{G\setminus x\setminus t}(C),
\]
which implies that $\rho_{G}(C)\leq 2$. So $|C|\leq 2$ because $G$ is $3$-rank-connected. Then $|X|=|C\cup\{q\}|\leq 3$, contradicting our assumption on $X$. Hence $x\in X$.

Since $G\setminus t$ is weakly $3$-rank-connected, $\rho_{G\setminus t}(X)=2$, and $|Y|\geq 5$, we have 
$|X|= 4$. Therefore, by Lemma~\ref{lem:quad}, $X$ is a quad of $G\setminus t$ and~\ref{item:k2} holds.
\end{proof}

\begin{lemma}
\label{lem:crossing_two_quad}
Let $G$ be a $3$-rank-connected graph with $|V(G)|\geq 12$ and no sequentially $3$-rank-connected vertex-minor on $|V(G)|-1$ vertices. Let $x$ be a vertex of $G$ such that $G\setminus x$ is weakly $3$-rank-connected and $P$ be a quad of $G\setminus x$. Then there is a graph $G'$ locally equivalent to $G$ such that the following hold.
\begin{enumerate}[label=\rm(\arabic*)]
\item $G'\setminus v$ is weakly $3$-rank-connected for each vertex $v$ of $P\cup\{x\}$.
\item $P$ is a quad of $G'\setminus x$.
\item There exist a $2$-element subset $S$ of $P$ and a quad $X_{u}$ of $G'\setminus u$ for each $u$ in $S$ such that $x\in X_{u}$, $|X_{u}\cap P|=1$, and $V(G'\setminus u)-X_{u}$ is not sequential in $G'\setminus u$.
\end{enumerate}
\end{lemma}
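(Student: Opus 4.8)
The plan is to take the graph produced by Lemma~\ref{lem:nonzero_quad} and then, for each vertex of $P$, run the trichotomy of Lemma~\ref{lem:find_cross}. First apply Lemma~\ref{lem:nonzero_quad} to $G$, $x$, and $P$ to obtain a graph $G_{0}$ locally equivalent to $G$ such that $G_{0}\setminus v$ is weakly $3$-rank-connected for every $v\in P\cup\{x\}$, $P$ is a quad of $G_{0}\setminus x$, and $N_{G_{0}}(t)-P\neq\emptyset$ for each $t\in P$; the first two of these are exactly our (1) and (2). By Lemma~\ref{lem:local}, any local complementation of $G_{0}$ still satisfies (1) and (2), so it suffices to find such a graph that in addition satisfies (3). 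Fix $t\in P$. Then $G_{0}\setminus t$ is weakly $3$-rank-connected, and it is a vertex-minor of $G$ on $|V(G)|-1$ vertices, so by hypothesis $G_{0}\setminus t$ is not sequentially $3$-rank-connected; hence Lemma~\ref{lem:find_cross} applied to $G_{0}$, $x$, $P$, $t$ rules out \ref{item:Q1} and gives \ref{item:Q2} or \ref{item:Q3} for $t$.

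Suppose \ref{item:Q2} holds for two distinct vertices $u_{1},u_{2}\in P$. Fix $j\in\{1,2\}$ and let $(X,Y)$ be the partition of $V(G_{0}\setminus u_{j})$ given by \ref{item:Q2}, so $\rho_{G_{0}\setminus u_{j}}(X)\le 2$, both $X\cap P$ and $Y\cap P$ are nonempty, and neither $X$ nor $Y$ is sequential in $G_{0}\setminus u_{j}$; note that $\rho_{G_{0}\setminus u_{j}}(X)=\rho_{G_{0}\setminus u_{j}}(Y)$, so the situation is symmetric in $X$ and $Y$. Since $X\cap P$ and $Y\cap P$ partition the three-element set $P-\{u_{j}\}$ into nonempty parts, exactly one of them has size $1$; swapping $X$ and $Y$ if necessary, assume $|X\cap P|=1$. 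Now Lemma~\ref{lem:less_than4} applies, since $|V(G)|\ge 12$, $P$ is a quad of $G_{0}\setminus x$, $u_{j}\in P$, and $G_{0}\setminus x$ and $G_{0}\setminus u_{j}$ are weakly $3$-rank-connected; it yields, via \ref{item:k2}, that $X$ is a quad of $G_{0}\setminus u_{j}$ with $x\in X$ and $|X\cap P|=1$. Together with the fact that $Y=V(G_{0}\setminus u_{j})-X$ is not sequential in $G_{0}\setminus u_{j}$, this is exactly the quad required in (3) for $u_{j}$. Hence $G'=G_{0}$, $S=\{u_{1},u_{2}\}$, and $X_{u_{j}}=X$ prove the lemma in this case.

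It remains to handle the case where \ref{item:Q2} holds for at most one vertex of $P$, so that \ref{item:Q3} holds for three vertices $t_{1},t_{2},t_{3}$ of $P=\{t_{1},t_{2},t_{3},t_{4}\}$; I expect this to be the main obstacle. The plan is to show this case cannot occur (or, if it occurs for $G_{0}$, that one further local complementation returns us to the previous case). For each $i\in\{1,2,3\}$, \ref{item:Q3} gives $\rho_{G_{0}\setminus t_{i}}(P-\{t_{i}\})=2$ and a quad $Y_{i}$ of $G_{0}\setminus t_{i}$ with $x\in Y_{i}$ and $Y_{i}\cap P=\emptyset$; since $P$ is a quad of $G_{0}\setminus x$, Lemma~\ref{lem:delrank} forces $\rho_{G_{0}\setminus x\setminus t_{i}}(P-\{t_{i}\})=2$, and since $G_{0}$ is $3$-rank-connected and $|V(G_{0})|\ge 12$ one gets $\rho_{G_{0}}(Y_{i})=3$ and $\rho_{G_{0}}(P-\{t_{i}\})=3$. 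The idea is then to combine these small-cut sets: feeding the triples $P-\{t_{i}\}$, the quad $P$ of $G_{0}\setminus x$, and the quads $Y_{i}$ into submodularity (Lemmas~\ref{lem:subeq}, \ref{lem:subeq_minus}, \ref{lem:subtool}, and the deletion inequalities \ref{item:ab1}--\ref{item:ab3} of Lemma~\ref{lem:sub_eq_AB}), and transferring the resulting estimates between $G_{0}$, $G_{0}\setminus x$, and the various $G_{0}\setminus t_{i}$, one should produce a set with cut-rank at most $2$ whose two sides each have at least five vertices in one of the weakly $3$-rank-connected graphs $G_{0}\setminus t_{i}$ or $G_{0}\setminus x$, a contradiction, the bound $|V(G_{0})|\ge 12$ being precisely what keeps both sides large. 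The delicate part is organising the several overlapping $2$-separations and quads so that such a violation is actually forced; a possible alternative endgame is to arrange, after a local complementation, the hypothesis of Proposition~\ref{prop:key} for a suitable triple of deleted vertices, whose conclusion contradicts the assumption that $G$ has no sequentially $3$-rank-connected vertex-minor on $|V(G)|-1$ vertices.
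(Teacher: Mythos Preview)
Your reduction to $G_{0}$ via Lemma~\ref{lem:nonzero_quad}, the elimination of \ref{item:Q1}, and the treatment of the case where \ref{item:Q2} holds for two vertices of $P$ are all correct and match the paper's argument exactly.

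The gap is in the \ref{item:Q3} case. Your sketch focuses on the quads $Y_{i}$ coming from \ref{item:Q3} and on possibly invoking Proposition~\ref{prop:key}, but the paper's argument uses neither of these; it uses only the \emph{first} clause of \ref{item:Q3}, namely $\rho_{G_{0}\setminus t_{i}}(P-\{t_{i}\})=2$ for three values of $i$, together with the neighbour condition $N_{G_{0}}(t)-P\neq\emptyset$ that Lemma~\ref{lem:nonzero_quad} supplies. Concretely: since $G_{0}$ is $3$-rank-connected and $|V(G_{0})-P|\ge 8$, Lemma~\ref{lem:delrank} gives $\rho_{G_{0}}(P)=3$. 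For any two of the three indices, say $i\neq j$, apply \ref{item:ab3} of Lemma~\ref{lem:sub_eq_AB} with $A=P-\{t_{i}\}$ and $B=P-\{t_{j}\}$ to obtain
\[
\rho_{G_{0}\setminus t_{i}\setminus t_{j}}(P-\{t_{i},t_{j}\})\le \rho_{G_{0}\setminus t_{i}}(P-\{t_{i}\})+\rho_{G_{0}\setminus t_{j}}(P-\{t_{j}\})-\rho_{G_{0}}(P)=2+2-3=1.
\]
Writing $P=\{p,q,r,s\}$ with $\{q,r,s\}=\{t_{1},t_{2},t_{3}\}$, the set $P-\{t_{i},t_{j}\}$ consists of $p$ together with one of $q,r,s$; the inequality says that the $2\times(V(G_{0})-P)$ submatrix of $A_{G_{0}}$ indexed by these two rows has rank at most~$1$. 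Since both rows are nonzero (each vertex of $P$ has a neighbour outside $P$), the two rows are equal over $\GF(2)$, i.e.\ $N_{G_{0}}(p)-P=N_{G_{0}}(t_{k})-P$. Running over all three pairs gives $N_{G_{0}}(p)-P=N_{G_{0}}(q)-P=N_{G_{0}}(r)-P=N_{G_{0}}(s)-P$, whence $\rho_{G_{0}}(P)=1$, contradicting $\rho_{G_{0}}(P)=3$. So this case does not occur, and $G'=G_{0}$ already works; no further local complementation is needed.
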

\begin{proof}
By Lemma~\ref{lem:3toprime}, $G\setminus v$ is prime for each vertex $v$ of $G$.
By Lemma~\ref{lem:nonzero_quad}, we can assume that $G\setminus v$ is weakly $3$-rank-connected for each vertex $v$ of $P\cup\{x\}$, the set $P$ is a quad of $G\setminus x$, and $N_{G}(t)-P$ is nonempty for each $t\in P$. 

By Lemma~\ref{lem:find_cross}, each vertex $t$ in $P$ satisfies \ref{item:Q2} or \ref{item:Q3}.
Suppose that at most $1$ vertex of $P$ satisfies~\ref{item:Q2}. Then by Lemma~\ref{lem:find_cross}, there exist $3$ vertices $q$, $r$, $s$ of $P$ such that $\rho_{G\setminus q}(P-\{q\})=2$, $\rho_{G\setminus r}(P-\{r\})=2$, and $\rho_{G\setminus s}(P-\{s\})=2$. Since $P$ is a quad of $G\setminus x$, by (i) of Lemma~\ref{lem:delrank}, we have $\rho_{G}(P)\leq\rho_{G\setminus x}(P)+1\leq 3$. Since $G$ is $3$-rank-connected, $3\leq\rho_{G}(P)$ and therefore, $\rho_{G}(P)=3$.
By \ref{item:ab3} of Lemma~\ref{lem:sub_eq_AB}, 
\begin{align*}
2+2&=\rho_{G\setminus q}(P-\{q\})+\rho_{G\setminus r}(P-\{r\}) \\
&\geq\rho_{G}(P)+\rho_{G\setminus q\setminus r}(P-\{q,r\})=3+\rho_{G\setminus q\setminus r}(P-\{q,r\}).
\end{align*}
Therefore, $\rho_{G\setminus q\setminus r}(P-\{q,r\})\leq 1$ and by symmetry, $\rho_{G\setminus q\setminus s}(P-\{q,s\})\leq 1$ and $\rho_{G\setminus r\setminus s}(P-\{r,s\})\leq 1$. Let $p$ be the element of $P-\{q,r,s\}$. Since $N_{G}(t)-P\neq\emptyset$ for each $t\in P$, we have $N_{G}(p)-P=N_{G}(q)-P=N_{G}(r)-P=N_{G}(s)-P$ and therefore $\rho_{G}(P)=1$, contradicting our assumption. 

Therefore, there exist a subset $S=\{p,q\}$ of $P$ and a subset $X_u$ of $V(G\setminus u)$ for each $u\in S$ such that $\rho_{G\setminus u}(X_u)\leq 2$, both $X_u\cap P$ and $(V(G\setminus u)-X_u)\cap P$ are nonempty, and neither $X_u$ nor $V(G\setminus u)-X_u$ is sequential in $G\setminus u$. 

Let $Y_p=V(G\setminus p)-X_{p}$ and $Y_q=V(G\setminus q)-X_{q}$. By symmetry, we may assume that $|X_{p}\cap P|=1$ and $|X_{q}\cap P|=1$. Then by~\ref{item:k2} of Lemma~\ref{lem:less_than4}, $X_{p}$ is a quad of $G\setminus p$, $X_q$ is a quad of $G\setminus q$, and $x\in X_{p}\cap X_{q}$. 
\end{proof}

\begin{lemma}
\label{lem:quad_intersection}
Let $G$ be a $3$-rank-connected graph with $|V(G)|\geq 12$ and $x$, $y$ be distinct vertices of~$G$ such that
both $G\setminus x$ and $G\setminus y$ are weakly $3$-rank-connected. Let $A$ be a quad of $G\setminus x$ and $B$ be a quad of $G\setminus y$. Then $|A\cap B|\leq 2$. 
\end{lemma}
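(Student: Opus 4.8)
The plan is to assume for contradiction that $|A\cap B|\geq 3$ and to derive a contradiction either with the $3$-rank-connectivity of $G$ or with the weak $3$-rank-connectivity of $G\setminus x$ or $G\setminus y$, using the three submodularity inequalities of Lemma~\ref{lem:sub_eq_AB} applied with $a=x$ and $b=y$. Since $|A|=|B|=4$, the hypothesis means $A=B$ or $|A\cap B|=3$; recall also $x\notin A$ and $y\notin B$. As $3\leq|A\cap B|\leq 4$ and $|V(G)-(A\cap B)|\geq|V(G)|-4\geq 8$, the $3$-rank-connectivity of $G$ forces $\rho_G(A\cap B)\geq 3$. Exchanging the roles of $(x,A)$ and $(y,B)$ if necessary (which preserves all hypotheses and the conclusion), I would split into three cases matching the hypotheses of Lemma~\ref{lem:sub_eq_AB}: (i)~$y\notin A$ and $x\notin B$; (ii)~$y\in A$ and $x\notin B$; (iii)~$y\in A$ and $x\in B$.

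In case~(i), which contains the subcase $A=B$, apply~\ref{item:ab1} of Lemma~\ref{lem:sub_eq_AB} to get $\rho_G(A\cap B)+\rho_{G\setminus x\setminus y}(A\cup B)\leq\rho_{G\setminus x}(A)+\rho_{G\setminus y}(B)=4$, hence $\rho_{G\setminus x\setminus y}(A\cup B)\leq 1$. Since $y\notin A\cup B$, Lemma~\ref{lem:delrank} gives $\rho_{G\setminus x}(A\cup B\cup\{y\})\leq 2$, while $|A\cup B\cup\{y\}|\geq 5$ and $|V(G\setminus x)-(A\cup B\cup\{y\})|\geq|V(G)|-7\geq 5$, contradicting the hypothesis that $G\setminus x$ is weakly $3$-rank-connected.

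In cases~(ii) and~(iii) one has $|A\cap B|=3$, since $A=B$ would force $y\in B$. In case~(ii), \ref{item:ab2} gives $\rho_{G\setminus y}(A\cap B)+\rho_{G\setminus x}(A\cup B)\leq 4$; here $A\cap B$ is $B$ with one element deleted, so the quad property of $B$ in $G\setminus y$ yields $\rho_{G\setminus y}(A\cap B)=3$, whence $\rho_{G\setminus x}(A\cup B)\leq 1$; since $x\notin A\cup B$, $|A\cup B|=5$, and the complement in $V(G\setminus x)$ has at least $|V(G)|-6\geq 6$ vertices, this again contradicts weak $3$-rank-connectivity of $G\setminus x$. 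In case~(iii), \ref{item:ab3} gives $\rho_{G\setminus x\setminus y}(A\cap B)+\rho_G(A\cup B)\leq 4$; now $A\cap B=A-\{y\}$, so the quad property of $A$ in $G\setminus x$ gives $\rho_{G\setminus x}(A-\{y\})=3$, hence $\rho_{G\setminus x\setminus y}(A\cap B)\geq 2$ by Lemma~\ref{lem:delrank}, so $\rho_G(A\cup B)\leq 2$; but $|A\cup B|=5$ and $|V(G)-(A\cup B)|\geq 7$, contradicting that $G$ is $3$-rank-connected.

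The only genuinely delicate point is the case split with its bookkeeping: one must match the positions of $x$ and $y$ relative to $A$ and $B$ to the correct inequality among \ref{item:ab1}--\ref{item:ab3}, and, crucially, recognize that when $|A\cap B|=3$ the set $A\cap B$ is obtained from one of the two quads by deleting a single element, so its cut-rank in the appropriate single-vertex deletion is \emph{exactly} $3$; the weaker bound $\geq 2$ coming from $3$-rank-connectivity alone would not close cases~(ii) and~(iii). Everything else is a routine combination of Lemma~\ref{lem:delrank} to pass between $G$, $G\setminus x$, $G\setminus y$, $G\setminus x\setminus y$ together with the inequality $|V(G)|\geq 12$, which is exactly what makes each low-cut-rank set that appears large enough on both sides to violate the relevant connectivity hypothesis.
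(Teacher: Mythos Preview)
Your proof is correct and follows essentially the same approach as the paper: the identical case split on the positions of $x$ and $y$ relative to $A$ and $B$, each case handled via the matching inequality of Lemma~\ref{lem:sub_eq_AB}. The only cosmetic difference is that in cases~(ii) and~(iii) you use the quad property to bound one summand from below by~$3$ (respectively~$2$) and then derive a contradiction from the other summand, whereas the paper bounds the other summand from below first and derives the contradiction from the quad side; both directions work.
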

\begin{proof}
Suppose that $|A\cap B|\geq 3$. First let us consider the case when $y\notin A$ and $x\notin B$. Since $G$ is $3$-rank-connected and $|V(G)-(A\cap B)|\geq 3$, we have $\rho_{G}(A\cap B)\geq 3$. So by~\ref{item:ab1} of Lemma~\ref{lem:sub_eq_AB}, 
\[
2+2\geq \rho_{G\setminus x}(A)+\rho_{G\setminus y}(B)\geq \rho_{G}(A\cap B)+\rho_{G\setminus x\setminus y}(A\cup B)\geq 3+\rho_{G\setminus x\setminus y}(A\cup B).
\]
Hence $\rho_{G\setminus x\setminus y}(A\cup B)\leq 1$. Then by (ii) of Lemma~\ref{lem:delrank}, we have $\rho_{G\setminus x}(A\cup B\cup\{y\})\leq 2$. Since $G\setminus x$ is weakly $3$-rank-connected and $|A\cup B\cup\{y\}|\in\{5,6\}$, we deduce that $|V(G\setminus x)-(A\cup B\cup\{y\})|\leq 4$ and so $|V(G)|\leq 11$, contradicting our assumption.

Now we consider the case when either
\begin{itemize}
\item $y\in A$ and $x\notin B$, or
\item $y\notin A$ and $x\in B$.
\end{itemize}
By symmetry, we may assume that $y\in A$ and $x\notin B$.
Then $|A\cap B|=3$ because $x\notin B$. Since $G\setminus x$ is weakly $3$-rank-connected, $|A\cup B|=5$, and $|V(G\setminus x)-(A\cup B)|\geq 6$, we have $\rho_{G\setminus x}(A\cup B)\geq 3$. By \ref{item:ab2} of Lemma~\ref{lem:sub_eq_AB}, 
\[
2+2\geq \rho_{G\setminus x}(A)+\rho_{G\setminus y}(B)\geq\rho_{G\setminus x}(A\cup B)+\rho_{G\setminus y}(A\cap B)\geq 3+\rho_{G\setminus y}(A\cap B).
\]
Hence $\rho_{G\setminus y}(A\cap B)\leq 1$, contradicting the fact that $G\setminus y$ is prime.

Now it remains to consider the case when $y\in A$ and $x\in B$. Since $x\notin A$ and $y\notin B$, we have $|A\cap B|=3$. Since $G$ is $3$-rank-connected and $|V(G)-(A\cup B)|\geq 7$, we have $\rho_{G}(A\cup B)\geq 3$. By~\ref{item:ab3} of Lemma~\ref{lem:sub_eq_AB},
\[
2+2\geq\rho_{G\setminus x}(A)+\rho_{G\setminus y}(B)\geq \rho_{G}(A\cup B)+\rho_{G\setminus x\setminus y}(A\cap B)\geq 3+\rho_{G\setminus x\setminus y}(A\cap B).
\]
So $\rho_{G\setminus x\setminus y}(A\cap B)\leq 1$ and $\rho_{G\setminus x}(A\cap B)\leq 2$, contradicting the assumption that $A$ is a quad of $G\setminus x$.
\end{proof}

\begin{lemma}
\label{lem:contain_quad}
Let $G$ be a $3$-rank-connected graph with $|V(G)|\geq 12$ and $x$ be a vertex of $G$. Let $P$ be a quad of $G\setminus x$ and $y$ be a vertex of $P$. Let $Q$ be a quad of $G\setminus y$. If $G\setminus x$ is weakly $3$-rank-connected and $|P\cap Q|=2$, then $x\in Q$.
\end{lemma}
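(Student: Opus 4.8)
The plan is to argue by contradiction: suppose $x\notin Q$. Since $y\in P$ and $y\notin Q$ (because $Q$ is a quad of $G\setminus y$, so $y\notin V(G\setminus y)\supseteq Q$), the hypothesis $|P\cap Q|=2$ forces the two elements of $P\cap Q$ to lie in $P-\{y\}$, and the remaining element of $P-\{y\}$, call it $z$, lies outside $Q$. So $P\cap Q$ consists of two vertices, say $a,b$, with $P=\{y,z,a,b\}$, and $\{a,b\}\subseteq Q$ while $z,y\notin Q$ and (by assumption) $x\notin Q$. The idea is to feed the quad $P$ of $G\setminus x$ and the quad $Q$ of $G\setminus y$ into the submodular inequalities of Lemma~\ref{lem:sub_eq_AB}, exploiting that $\rho_{G\setminus x}(P)=\rho_{G\setminus y}(Q)=2$ and that $G\setminus x$ is weakly $3$-rank-connected with $|V(G)|\geq 12$.

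First I would pin down the relative position of $x$ and $y$ with respect to the two sets. We have $y\in P$ and $y\notin Q$; we have $x\notin P$ (since $P\subseteq V(G\setminus x)$) and, by assumption, $x\notin Q$. So when we apply Lemma~\ref{lem:sub_eq_AB} to the pair of sets $A=P$ (a subset of $V(G\setminus x)$) and $B=Q$ (a subset of $V(G\setminus y)$), with the roles of the two deleted vertices being $x$ and $y$: $y\in A$ and $x\notin B$, so case~\ref{item:ab2} applies, giving
\[
\rho_{G\setminus y}(P\cap Q)+\rho_{G\setminus x}(P\cup Q)\leq\rho_{G\setminus x}(P)+\rho_{G\setminus y}(Q)=2+2=4.
\]
Now $P\cap Q=\{a,b\}$ has size $2$, and $G\setminus y$ is prime, so $\rho_{G\setminus y}(P\cap Q)=2$. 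Hence $\rho_{G\setminus x}(P\cup Q)\leq 2$. But $|P\cup Q|=|P|+|Q|-|P\cap Q|=4+4-2=6$, and since $x\notin Q$ and $x\notin P$ we get $P\cup Q\subseteq V(G\setminus x)$, so $|V(G\setminus x)-(P\cup Q)|=|V(G)|-1-6=|V(G)|-7\geq 5$. Together with $|P\cup Q|=6\geq 5$, this contradicts the hypothesis that $G\setminus x$ is weakly $3$-rank-connected (which forbids a set $X$ with $|X|\geq 5$, $|V(G\setminus x)-X|\geq 5$, and $\rho_{G\setminus x}(X)\leq 2$). Therefore $x\in Q$, as claimed.

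I expect the only genuine subtlety to be the bookkeeping that identifies which case of Lemma~\ref{lem:sub_eq_AB} applies — one must carefully check $y\in P$, $y\notin Q$, $x\notin P$, $x\notin Q$ — and the size count $|V(G\setminus x)-(P\cup Q)|\geq 5$, which is exactly where the hypothesis $|V(G)|\geq 12$ is used (it gives $\geq 5$, matching the threshold in the definition of weakly $3$-rank-connected). Everything else is routine: $\rho_{G\setminus y}(P\cap Q)=2$ from primeness of $G\setminus y$ together with $|P\cap Q|=2$ and $|V(G\setminus y)-(P\cap Q)|\geq 2$, and the cardinality computation for $P\cup Q$. No appeal to local complementation or to the structure of quads beyond $\rho_{G\setminus x}(P)=2$ and $\rho_{G\setminus y}(Q)=2$ is needed, so the argument is short.
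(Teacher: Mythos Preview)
Your proof is correct and essentially identical to the paper's own argument: both suppose $x\notin Q$, apply case~\ref{item:ab2} of Lemma~\ref{lem:sub_eq_AB} with $(a,b,A,B)=(x,y,P,Q)$ to obtain $\rho_{G\setminus x}(P\cup Q)\leq 2$, and then derive a contradiction with weak $3$-rank-connectedness of $G\setminus x$ from $|P\cup Q|=6$ and $|V(G)|\geq 12$. The only point you might make explicit is that primeness of $G\setminus y$ follows from Lemma~\ref{lem:3toprime}, which the paper cites; otherwise the two proofs match line for line.
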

\begin{proof}
Suppose that $x\notin Q$. Since $G$ is $3$-rank-connected, by Lemma~\ref{lem:3toprime}, $G\setminus y$ is prime. Therefore, $\rho_{G\setminus y}(P\cap Q)=2$ because $|P\cap Q|=2$. 
Since $y\in P$ and $x\notin Q$, by \ref{item:ab2} of Lemma~\ref{lem:sub_eq_AB}, 
\[
2+2\geq\rho_{G\setminus x}(P)+\rho_{G\setminus y}(Q)\geq\rho_{G\setminus x}(P\cup Q)+\rho_{G\setminus y}(P\cap Q)\geq\rho_{G\setminus x}(P\cup Q)+2.
\]
Hence $\rho_{G\setminus x}(P\cup Q)\leq 2$. Since $G\setminus x$ is weakly $3$-rank-connected and $|P\cup Q|=6$, we have $|V(G\setminus x)-(P\cup Q)|\leq 4$ and so $|V(G)|\leq 11$, contradicting our assumption.
\end{proof}

\begin{lemma}
\label{lem:disjoint_2set}
Let $G$ be a $3$-rank-connected graph with $|V(G)|\geq 13$ and $x$ be a vertex of $G$. Let $P$ be a quad of $G\setminus x$ and $p$, $q$ be distinct vertices of $P$. For each $u\in\{p,q\}$, let $A_{u}$ be a quad of $G\setminus u$ such that $x\in A_{u}$, $|A_{u}\cap P|=1$, and $V(G\setminus u)-A_{u}$ is not sequential in $G\setminus u$. If $G\setminus x$, $G\setminus p$, and $G\setminus q$ are weakly $3$-rank-connected, then $A_{p}\cap A_{q}\subseteq P\cup\{x\}$.
\end{lemma}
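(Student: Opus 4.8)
The goal is to show $A_p \cap A_q \subseteq P \cup \{x\}$. Suppose not, so there is a vertex $z \in A_p \cap A_q$ with $z \notin P$ and $z \neq x$. The plan is to derive a contradiction by applying the submodular inequalities of Lemma~\ref{lem:sub_eq_AB} to the pair of quads $A_p$ (a quad of $G\setminus p$) and $A_q$ (a quad of $G\setminus q$), exploiting that both contain $x$ and $z$, so $|A_p \cap A_q| \geq 2$, while $\rho_{G\setminus p}(A_p) = \rho_{G\setminus q}(A_q) = 2$. The first step is to pin down $|A_p \cap A_q|$: since $p \in A_q$ would force $|A_q \cap P| \geq 1$ with $p \in P$, and $q \notin A_q$ (as $A_q \subseteq V(G\setminus q)$), one must carefully sort out which of $p,q$ lies in which set. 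Note $q \notin A_q$ and $p \notin A_p$; and since $|A_p \cap P| = 1$ with $p \notin A_p$, the unique element of $A_p \cap P$ is some vertex of $P - \{p\}$, possibly $q$. So the combinatorial case analysis is on whether $q \in A_p$ and whether $p \in A_q$.

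\textbf{Key steps.} In the main case where $p \notin A_q$ and $q \notin A_p$ (so neither deleted vertex is in the other's quad), I would apply~\ref{item:ab1} of Lemma~\ref{lem:sub_eq_AB} with $a = p$, $b = q$, $A = A_p$, $B = A_q$ to get
\[
\rho_{G}(A_p \cap A_q) + \rho_{G\setminus p\setminus q}(A_p \cup A_q) \leq \rho_{G\setminus p}(A_p) + \rho_{G\setminus q}(A_q) = 4.
\]
Since $G$ is $3$-rank-connected and $|V(G)| \geq 13$ while $|V(G) - (A_p \cap A_q)|$ is large, we get $\rho_G(A_p \cap A_q) \geq 3$ provided $|A_p \cap A_q| \geq 3$ — but here $A_p \cap A_q$ only has $\{x, z\}$ guaranteed. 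If $|A_p \cap A_q| = 2$, then $A_p \cap A_q = \{x,z\}$ and $\rho_G(\{x,z\}) = 2$ since $G$ is prime, and the above gives $\rho_{G\setminus p\setminus q}(A_p \cup A_q) \leq 2$; then Lemma~\ref{lem:delrank} bumps this to $\rho_{G\setminus p}(A_p \cup A_q \cup \{q\}) \leq 3$, and since $G\setminus p$ is weakly $3$-rank-connected with $|A_p \cup A_q \cup \{q\}| \leq 8$ (each $A_u$ has $4$ elements sharing $\{x,z\}$, so $|A_p \cup A_q| \leq 6$), we need $|V(G\setminus p) - (A_p \cup A_q \cup \{q\})| \leq 4$, i.e.\ $|V(G)| \leq$ roughly $12$—contradiction with $|V(G)| \geq 13$. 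If instead $\rho_{G\setminus p\setminus q}(A_p \cup A_q) = 2$ exactly we still push it the same way. The cases $p \in A_q$, $q \notin A_p$ (use~\ref{item:ab2}) and $p \in A_q$, $q \in A_p$ (use~\ref{item:ab3}) are handled analogously, in each case obtaining either $\rho_{G\setminus p}$ or $\rho_{G\setminus q}$ of a $5$- or $6$-element set being $\leq 2$, which contradicts weak $3$-rank-connectivity and $|V(G)| \geq 13$, or obtaining $\rho$ of a small set being $\leq 1$, contradicting primeness, or contradicting that $A_p$ (resp.\ $A_q$) is a quad.

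\textbf{Main obstacle.} The delicate point is that $|A_p \cap A_q|$ is only known to be at least $2$ (from $\{x,z\}$), not $3$, so the "$\rho_G$ of the intersection is $\geq 3$" trick used in Lemma~\ref{lem:quad_intersection} does not directly apply; instead the argument must run through the $\rho$ of the \emph{union} being small and invoke weak $3$-rank-connectedness together with the hypothesis $|V(G)| \geq 13$ (one more than in the neighboring lemmas — presumably exactly to make this counting work). I also expect I will want to use the hypothesis that $V(G\setminus u) - A_u$ is not sequential for $u \in \{p,q\}$: this guarantees $|V(G\setminus u) - A_u| \geq 4$, equivalently $|A_u| = 4$ is genuinely a quad rather than something smaller, and it may be needed to rule out degenerate overlaps where $A_p \cup A_q$ is too small for the counting contradiction—so tracking cardinalities carefully, rather than any deep new idea, will be the bulk of the work.
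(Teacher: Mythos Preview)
Your plan has a genuine gap in what you call the ``main case'' $p\notin A_q$, $q\notin A_p$. From \ref{item:ab1} you correctly obtain $\rho_{G\setminus p\setminus q}(A_p\cup A_q)\le 2$, and then Lemma~\ref{lem:delrank} gives $\rho_{G\setminus p}(A_p\cup A_q\cup\{q\})\le 3$. But weak $3$-rank-connectedness only forbids sets with $\rho\le 2$; a bound of~$3$ yields nothing. The counting you sketch (``$|V(G)|\le$ roughly $12$'') would require $\rho\le 2$, which you do not have. This case really does arise: when $A_p\cap P$ and $A_q\cap P$ are the two distinct vertices of $P-\{p,q\}$, Lemma~\ref{lem:quad_intersection} forces $A_p\cap A_q=\{x,z\}$ exactly, and your submodular inequality is then too weak by one.

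The paper closes this gap with an extra ingredient you are missing: it brings $P$ back into play by proving $\rho_G(A_p\cup P)=3$. This is a nontrivial sub-claim established through Lemma~\ref{lem:less_than4}\ref{item:k1} (which is where the hypothesis that $V(G\setminus u)-A_u$ is not sequential actually gets used) together with several applications of Lemma~\ref{lem:cor_s1} and \ref{item:ab3}. Once $\rho_G(A_p\cup P)=3$ is known, one computes $|A_q\cap(A_p\cup P)|=3$ whenever $|B_p\cap B_q|=1$ (where $B_u=A_u-(P\cup\{x\})$); since this is a $3$-element subset of the quad $A_q$, it has $\rho_{G\setminus q}=3$. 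Submodularity in $G\setminus q$ then gives $\rho_{G\setminus q}\big((A_q\cup A_p\cup P)-\{q\}\big)\le 2$ on a $7$-element set, which genuinely contradicts weak $3$-rank-connectedness when $|V(G)|\ge 13$. The point is that enlarging $A_p$ to $A_p\cup P$ costs nothing in cut-rank (still $3$) but forces the intersection with $A_q$ up to three elements, and the quad property of $A_q$ converts that into the extra unit of rank you need on the right-hand side. Your direct $A_p$-versus-$A_q$ comparison cannot manufacture this.
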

\begin{proof}
For each $u\in\{p,q\}$, let $B_{u}=A_{u}-(P\cup\{x\})$. Then $|B_{u}|=2$ and $|A_{u}\cup P|=7$ for each $u\in\{p,q\}$. Let $t$ be the unique element of $A_{p}\cap P$.

Now we claim that $\rho_{G}(A_{p}\cup P)=3$. 
By Lemma~\ref{lem:deg3}, $N_{G\setminus x\setminus p}(t)\neq\emptyset$ and so $\rho_{G\setminus x\setminus p}(\{t\})=1$. Since $P$ is a quad of $G\setminus x$, we have $\rho_{G\setminus x\setminus p}(P-\{p\})\leq\rho_{G\setminus x}(P)=2$. By~\ref{item:k1} of Lemma~\ref{lem:less_than4}, $\rho_{G\setminus x\setminus p}(A_{p}-\{x\})=\rho_{G\setminus p}(A_{p})=2$. By Lemma~\ref{lem:cor_s1}, 
$\rho_{G\setminus p}(A_{p}\cup (P-\{p\}))=\rho_{G\setminus x\setminus p}((A_{p}-\{x\})\cup (P-\{p\}))$.

By Lemma~\ref{lem:subeq}, 
\begin{align*}
2+2&\geq\rho_{G\setminus x\setminus p}(A_{p}-\{x\})+\rho_{G\setminus x\setminus p}(P-\{p\}) \\ &\geq\rho_{G\setminus x\setminus p}((A_{p}-\{x\})\cup (P-\{p\}))+\rho_{G\setminus x\setminus p}(\{t\})\geq\rho_{G\setminus x\setminus p}((A_{p}-\{x\})\cup (P-\{p\}))+1.
\end{align*}
Hence $\rho_{G\setminus x\setminus p}((A_{p}-\{x\})\cup (P-\{p\}))\leq 3$. 
 
Since $P$ is a quad of $G\setminus x$, we have $\rho_{G\setminus x}(P)=2=\rho_{G\setminus x}(P-\{p\})-1\leq\rho_{G\setminus x\setminus p}(P-\{p\})$.
So by Lemma~\ref{lem:cor_s1}, 
$\rho_{G\setminus x}((A_{p}-\{x\})\cup P)=\rho_{G\setminus x\setminus p}((A_{p}-\{x\})\cup(P-\{p\}))$.
By~\ref{item:ab3} of Lemma~\ref{lem:sub_eq_AB},
\[
\rho_{G\setminus x}((A_{p}-\{x\})\cup P)+\rho_{G\setminus p}(A_{p}\cup (P-\{p\}))\geq\rho_{G}(A_{p}\cup P)+\rho_{G\setminus x\setminus p}((A_{p}-\{x\})\cup (P-\{p\})).
\]
It follows that $\rho_{G}(A_{p}\cup P)=\rho_{G\setminus x\setminus p}((A_{p}-\{x\})\cup (P-\{p\}))\leq 3$. 
Since $G$ is $3$-rank-connected and $|A_{p}\cup P|, |V(G)-(A_{p}\cup P)|\geq 3$, we have $\rho_{G}(A_{p}\cup P)=3$. 

By Lemma~\ref{lem:quad_intersection}, $|A_{p}\cap A_{q}|\leq 2$. Since $x\in A_{p}\cap A_{q}$, we have $|B_{p}\cap B_{q}|\leq 1$.

Suppose that $|B_{p}\cap B_{q}|=1$. Then  $|A_{q}\cap(A_{p}\cup P)|=|A_{q}|-|A_{q}-(A_{p}\cup P)|=|A_{q}|-|B_{q}-B_{p}|=|A_{q}|-(|B_{q}|-|B_{p}\cap B_{q}|)=3$.
So $\rho_{G\setminus q}(A_{q}\cap(A_{p}\cup P))=3$ because $A_{q}$ is a quad of $G\setminus q$. 
Since $\rho_{G\setminus q}(A_{q})=2$ and $\rho_{G\setminus q}((A_{p}\cup P)-\{q\})\leq\rho_{G}(A_{p}\cup P)=3$, by Lemma~\ref{lem:subeq_minus}, 
\begin{align*}
5\geq \rho_{G\setminus q}(A_{q})+\rho_{G\setminus q}((A_{p}\cup P)-\{q\})&\geq \rho_{G\setminus q}((A_{q}\cup (A_{p}\cup P))-\{q\})+\rho_{G\setminus q}(A_{q}\cap (A_{p}\cup P)) \\
&=\rho_{G\setminus q}((A_{q}\cup (A_{p}\cup P))-\{q\})+3.
\end{align*}
Hence $\rho_{G\setminus q}((A_{q}\cup(A_{p}\cup P))-\{q\})\leq 2$. Since $G\setminus q$ is weakly $3$-rank-connected and $|(A_{q}\cup(A_{p}\cup P))-\{q\}|=|A_{q}|+|A_{p}\cup P|-|A_{q}\cap(A_{p}\cup P)|-1=7$, we deduce that $|V(G\setminus q)-((A_{q}\cup(A_{p}\cup P))-\{q\})|\leq 4$. 
Therefore, $|V(G)|\leq 12$, contradicting our assumption. Therefore, $B_{p}\cap B_{q}=\emptyset$ and so $A_{p}\cap A_{q}\subseteq P\cup\{x\}$.
\end{proof}

\begin{lemma}
\label{lem:different_quad}
Let $G$ be a $3$-rank-connected graph with $|V(G)|\geq 6$ and $a$, $b$ be distinct vertices of~$G$. Let $A$ be a quad of $G\setminus a$ and $B$ be a quad of $G\setminus b$. If $|A\cap B|=1$, then $b\in A$ and $a\in B$.
\end{lemma}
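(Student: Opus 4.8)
The plan is to argue by contradiction. Both the hypotheses and the conclusion $(b\in A)\wedge(a\in B)$ are invariant under simultaneously exchanging the pairs $(a,A)$ and $(b,B)$, so it suffices to derive a contradiction from the assumption $b\notin A$; that proves $b\in A$, and $a\in B$ then follows by symmetry.

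First I would record the elementary facts. Since $\rho_{G\setminus a}(A)=2$ forces $|V(G\setminus a)-A|\ge 2$, we have $|V(G)|\ge 7$; by Lemma~\ref{lem:3toprime}, $G\setminus a$ and $G\setminus b$ are prime; and since $G$ is $3$-rank-connected with more than $6$ vertices, $\rho_G(A)=\rho_G(B)=3$ (indeed $\rho_G(A)\ge\rho_{G\setminus a}(A)=2$ by~\ref{item:2.4i} of Lemma~\ref{lem:delrank}, the value $2$ would violate $3$-rank-connectivity via the partition $(A,V(G)-A)$, and $\rho_G(A)\le\rho_{G\setminus a}(A)+1=3$, and likewise for $B$). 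Writing $A\cap B=\{c\}$, note $c\neq a$ and $c\neq b$; each of $\rho_G(\{c\})$, $\rho_{G\setminus a}(\{c\})$, $\rho_{G\setminus b}(\{c\})$ equals $1$ because those graphs are connected, and $\rho_{G\setminus a\setminus b}(\{c\})=1$ as well since $\deg_G(c)\ge 3$ by Lemma~\ref{lem:deg3}.

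Next I would split according to whether $a\in B$. If $a\notin B$, then $a,b\notin A$ and $a,b\notin B$, so~\ref{item:ab1} of Lemma~\ref{lem:sub_eq_AB}, applied to $A$ and $B$, gives
\[
\rho_G(\{c\})+\rho_{G\setminus a\setminus b}(A\cup B)\le\rho_{G\setminus a}(A)+\rho_{G\setminus b}(B)=4,
\]
so $\rho_{G\setminus a\setminus b}(A\cup B)\le 3$. If instead $a\in B$, then $b\notin A$ and $a\in B$, so the form of~\ref{item:ab2} of Lemma~\ref{lem:sub_eq_AB} obtained by exchanging $(a,A)$ with $(b,B)$ gives $\rho_{G\setminus a}(\{c\})+\rho_{G\setminus b}(A\cup B)\le\rho_{G\setminus a}(A)+\rho_{G\setminus b}(B)=4$, so $\rho_{G\setminus b}(A\cup B)\le 3$; here $b\notin A\cup B$, so $A\cup B$ is a $7$-element subset of the prime graph $G\setminus b$. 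In either case we obtain a graph $H\in\{G\setminus a\setminus b,\ G\setminus b\}$, connected (and prime in the second case), with a $7$-element subset $A\cup B$ whose cut-rank in $H$ is at most $3$. (The analogous computation for the target configuration $b\in A$, $a\in B$ uses~\ref{item:ab3} and only yields $\rho_G(A\cup B)\le 3$, which is consistent because a $7$-element set may legitimately have cut-rank $3$; this is why the argument has to proceed from the three excluded configurations.)

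The remaining step is to turn $\rho_H(A\cup B)\le 3$ into a contradiction, and this is where the real work lies: since $H$ need not be $3$-rank-connected, that bound is not absurd on its own, so one must exploit that $A$ and $B$ are quads rather than arbitrary rank-$2$ sets. Concretely, every $3$-element subset of $A$ has cut-rank $3$ in $G\setminus a$ and every $3$-element subset of $B$ has cut-rank $3$ in $G\setminus b$; I would transfer the relevant rank-$3$ and rank-$2$ equalities into $H$ using Lemma~\ref{lem:delrank} together with Lemmas~\ref{lem:cor_s1} and~\ref{lem:cor_s2}, and then play $A\cup B$ against the complements of the ``deficiency'' $3$-sets $A-\{c\}$ and $B-\{c\}$ via Lemmas~\ref{lem:subeq} and~\ref{lem:subeq_minus}, with the goal of driving the cut-rank of some subset of $V(G\setminus a)$ or $V(G\setminus b)$ of size at least $2$ whose complement also has size at least $2$ down to at most $1$, contradicting primeness of $G\setminus a$ or $G\setminus b$ after undoing one deletion via Lemma~\ref{lem:delrank}. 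I expect this extraction of a genuine $2$-separation from the quad conditions to be the main obstacle; a secondary difficulty is the bookkeeping needed to invoke the correct clause of Lemma~\ref{lem:sub_eq_AB} in each configuration (whether $a\in B$, and on which side of each separation the vertex $c$ falls).
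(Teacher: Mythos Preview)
Your proposal is incomplete: you stop precisely at the point you yourself identify as ``where the real work lies,'' and the vague plan to play $A\cup B$ against the deficiency $3$-sets is not a proof. More importantly, the direction you have chosen --- bounding the cut-rank of the $7$-element set $A\cup B$ in $G\setminus a\setminus b$ or $G\setminus b$ --- is a dead end as stated: neither graph is assumed to be $3$-rank-connected or even weakly so, and under the hypothesis $|V(G)|\ge 6$ the complement of $A\cup B$ can be tiny, so the bound $\rho_H(A\cup B)\le 3$ carries no force on its own. The case split on whether $a\in B$ is also unnecessary.

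The paper's argument is short and aims at the $3$-element set $A-B$ rather than the $7$-element set $A\cup B$. Assuming $b\notin A$, work in $G\setminus b$: the quad property of $B$ gives $\rho_{G\setminus b}(B-A)=3>2=\rho_{G\setminus b}(B)$, and Lemma~\ref{lem:subeq_minus} (applied in $G\setminus b$ to $A$ and $B$, both of which avoid $b$) then forces $\rho_{G\setminus b}(A-B)<\rho_{G\setminus b}(A)$. Primeness of $G\setminus b$ and $\rho_G(A)\le 3$ pin these down as $\rho_{G\setminus b}(A-B)=2$ and $\rho_{G\setminus b}(A)=3$, whence $\rho_{G\setminus a\setminus b}(A)=2$ by sandwiching. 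Now apply~\ref{item:ab1} of Lemma~\ref{lem:sub_eq_AB} not to $A$ and $B$ but to $A$ and $A-B$ (this is the move you are missing; note $a\notin A-B$ automatically since $a\notin A$, so no case split is needed):
\[
\rho_{G\setminus a}(A)+\rho_{G\setminus b}(A-B)\ge \rho_G(A-B)+\rho_{G\setminus a\setminus b}(A),
\]
which gives $\rho_G(A-B)\le 2$ for a $3$-element set with complement of size $\ge 4$, contradicting $3$-rank-connectivity directly.
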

\begin{proof}
Suppose not. Then by symmetry, we may assume that $b\notin A$. Since $B$ is a quad of $G\setminus b$, we know that $\rho_{G\setminus b}(B)<\rho_{G\setminus b}(B-A)$. Then by Lemma~\ref{lem:subeq_minus}, 
\[
\rho_{G\setminus b}(B)+\rho_{G\setminus b}(A)\geq\rho_{G\setminus b}(A-B)+\rho_{G\setminus b}(B-A)
\]
and therefore $\rho_{G\setminus b}(A-B)<\rho_{G\setminus b}(A)$. Since $A$ is a quad of $G\setminus a$, we have that $\rho_{G}(A)\leq\rho_{G\setminus a}(A)+1\leq 3$. By Lemma~\ref{lem:3toprime}, $G\setminus b$ is prime and so
\[
2\leq\rho_{G\setminus b}(A-B)<\rho_{G\setminus b}(A)\leq\rho_{G}(A)\leq 3,
\]
which implies that $\rho_{G\setminus b}(A-B)=2$ and $\rho_{G\setminus b}(A)=3$. Since $2=\rho_{G\setminus b}(A)-1\leq\rho_{G\setminus a\setminus b}(A)\leq\rho_{G\setminus a}(A)=2$, we have $\rho_{G\setminus a\setminus b}(A)=2$. Since $a\notin A-B$ and $b\notin A$, by \ref{item:ab1} of Lemma~\ref{lem:sub_eq_AB}, 
\[
2+2=\rho_{G\setminus a}(A)+\rho_{G\setminus b}(A-B)\geq\rho_{G}(A-B)+\rho_{G\setminus a\setminus b}(A)=\rho_{G}(A-B)+2.
\]
Hence $\rho_{G}(A-B)\leq 2$, contradicting the condition that $G$ is $3$-rank-connected.
\end{proof}

\begin{proposition}
\label{prop:3rank_conn}
Let $G$ be a $3$-rank-connected graph such that $|V(G)|\geq 13$. Then there exists a sequentially $3$-rank-connected vertex-minor $H$ of $G$ such that $|V(H)|=|V(G)|-1$.
\end{proposition}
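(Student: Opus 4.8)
The plan is to argue by contradiction. Assume $G$ has no sequentially $3$-rank-connected vertex-minor on $|V(G)|-1$ vertices. By Lemma~\ref{lem:weak_conn} we may replace $G$ by a locally equivalent graph and assume $G\setminus v$ is weakly $3$-rank-connected for every vertex $v$; this changes neither the set of vertex-minors nor the $3$-rank-connectivity of $G$. Fix a vertex $v$. By Lemma~\ref{lem:3toprime}, $G\setminus v$ is prime, and by hypothesis it is not sequentially $3$-rank-connected, so it has a partition $(X,Y)$ with $\rho_{G\setminus v}(X)\le 2$ and neither side sequential; then $\rho_{G\setminus v}(X)=2$ and $|X|,|Y|\ge 4$, so weak $3$-rank-connectivity forces one side to have exactly $4$ vertices, and by Lemma~\ref{lem:quad} that side is a quad of $G\setminus v$ with non-sequential complement. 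So every $G\setminus v$ has such a \emph{bad quad}.

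Now fix a vertex $x$ and a bad quad $P$ of $G\setminus x$, and apply Lemma~\ref{lem:crossing_two_quad}: replacing $G$ by the locally equivalent graph it produces, we obtain distinct $p,q\in P$, a quad $X_p$ of $G\setminus p$, and a quad $X_q$ of $G\setminus q$, with $x\in X_p\cap X_q$, $|X_p\cap P|=|X_q\cap P|=1$, $V(G\setminus p)-X_p$ and $V(G\setminus q)-X_q$ not sequential, and $G\setminus w$ weakly $3$-rank-connected for each $w\in P\cup\{x\}$. By Lemma~\ref{lem:disjoint_2set}, $X_p\cap X_q\subseteq P\cup\{x\}$. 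Since $x\notin P$ and each of $X_p,X_q$ meets $P$ in one vertex, there are just two possibilities: either $X_p\cap X_q=\{x\}$, or $X_p$ and $X_q$ meet $P$ in a common vertex $t$ and $X_p\cap X_q=\{x,t\}$.

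In the first case I would finish at once. Since $|X_p\cap X_q|=1$, Lemma~\ref{lem:different_quad} gives $q\in X_p$ and $p\in X_q$, so $X_p\cap P=\{q\}$ and $X_q\cap P=\{p\}$. Then setting $t_1=p$, $t_2=q$, $t_3=x$ and $Q_1=X_p$, $Q_2=X_q$, $Q_3=P$, the pairwise intersections are $X_p\cap X_q=\{x\}$, $X_q\cap P=\{p\}$, and $P\cap X_p=\{q\}$, and $G\setminus p$, $G\setminus q$, $G\setminus x$ are weakly $3$-rank-connected, so Proposition~\ref{prop:key} applies and yields a sequentially $3$-rank-connected vertex-minor of $G$ on $|V(G)|-1$ vertices, contradicting our assumption.

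The second case is the main obstacle: $X_p$ and $X_q$ now share the two vertices $x$ and $t$, so they cannot directly serve as the quads $Q_1,Q_2$ in Proposition~\ref{prop:key}. My plan here is to manufacture further quads of $G\setminus t$. Since $t\in P$ and $G\setminus t$ is weakly $3$-rank-connected but (by hypothesis) not sequentially $3$-rank-connected, Lemma~\ref{lem:find_cross} applies to the quad $P$ at $t$ and also to $X_p$ at $t$ and to $X_q$ at $t$, with outcome \ref{item:Q1} excluded in each case; so each application gives either a partition crossing the relevant quad --- to which Lemma~\ref{lem:less_than4} applies, producing a quad of $G\setminus t$ through $x$ --- or outcome \ref{item:Q3}, producing a quad of $G\setminus t$ disjoint from $P$, $X_p$, or $X_q$. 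Tracking how these new quads meet $P$, $X_p$, and $X_q$ by means of Lemmas~\ref{lem:quad_intersection}, \ref{lem:contain_quad}, and \ref{lem:different_quad}, I would either assemble three vertices and three quads realizing the triangle configuration of Proposition~\ref{prop:key}, or produce a vertex set whose cut-rank contradicts the weak $3$-rank-connectivity of one of $G\setminus x$, $G\setminus p$, $G\setminus q$, $G\setminus t$ together with the bound $|V(G)|\ge 13$. Ruling out this ``doubly crossing'' configuration is where the real work lies, and it is the reason the statement requires $|V(G)|\ge 13$ rather than merely $|V(G)|\ge 12$.
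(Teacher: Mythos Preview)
Your setup and Case~1 match the paper exactly. The gap is Case~2, which you leave as a sketch and describe as ``where the real work lies''; in fact the paper finishes it in a few lines with a \emph{single} application of Lemma~\ref{lem:find_cross} to the quad $P$ at the vertex $r:=t$, not three.

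If outcome~\ref{item:Q2} occurs, after swapping sides so the crossing side meets $P$ in one vertex, Lemma~\ref{lem:less_than4} produces a quad $R$ of $G\setminus r$ with $x\in R$ and $|R\cap P|=1$. By Lemma~\ref{lem:quad_intersection}, $|R\cap X_p|,|R\cap X_q|\le 2$. If both equal $2$, then Lemma~\ref{lem:contain_quad} applied twice (once with the quad $X_p$ of $G\setminus p$ and $r\in X_p$, once with $X_q$ and $r\in X_q$) forces $p,q\in R$, contradicting $|R\cap P|=1$. Otherwise, say $|R\cap X_p|=1$; then Lemma~\ref{lem:different_quad} gives $p\in R$, so $R\cap P=\{p\}$, $R\cap X_p=\{x\}$, $X_p\cap P=\{r\}$, and Proposition~\ref{prop:key} with $(t_1,t_2,t_3)=(x,r,p)$ and $(Q_1,Q_2,Q_3)=(P,R,X_p)$ gives the contradiction.

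If outcome~\ref{item:Q3} occurs, $R$ is a quad of $G\setminus r$ with $x\in R$ and $R\cap P=\emptyset$. Then $|R\cap X_p|\le 2$ by Lemma~\ref{lem:quad_intersection}; since $p\notin R$, Lemma~\ref{lem:contain_quad} rules out $|R\cap X_p|=2$, so $R\cap X_p=\{x\}$, and now Lemma~\ref{lem:different_quad} forces $p\in R$, contradicting $R\cap P=\emptyset$.

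A small slip in your sketch: when you apply Lemma~\ref{lem:less_than4} with $X_p$ (a quad of $G\setminus p$) in place of $P$, the conclusion is a quad of $G\setminus t$ containing $p$, not $x$; so the three applications you propose would not all yield quads through $x$. As shown above, those extra applications are unnecessary anyway.
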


\begin{proof}
Suppose that no vertex-minor of $G$ on $|V(G)|-1$ vertices is sequentially $3$-rank-connected. Let $x$ be a vertex of $G$. By Lemma~\ref{lem:weak_conn}, we can assume that $G\setminus x$ is weakly $3$-rank-connected.
By Lemma~\ref{lem:3toprime}, $G\setminus x$ is prime. Since $G\setminus x$ is not sequentially $3$-rank-connected, there exists a subset $P$ of $V(G\setminus x)$ such that $\rho_{G\setminus x}(P)\leq 2$ and neither $P$ nor $V(G\setminus x)-P$ is sequential in $G\setminus x$. Since $G\setminus x$ is weakly $3$-rank-connected, we may assume that $|P|=4$. Since $|V(G\setminus x)-P|\geq 4$ and $G\setminus x$ is prime, $\rho_{G\setminus x}(P)=2$. So by Lemma~\ref{lem:quad}, $P$ is a quad of $G\setminus x$. Then by Lemma~\ref{lem:crossing_two_quad}, we can assume the following.
\begin{enumerate}[label=\rm(\arabic*)]
\item $G\setminus v$ is weakly $3$-rank-connected for each vertex $v$ of $P\cup\{x\}$.
\item $P$ is a quad of $G\setminus x$.
\item There exist a $2$-element subset $S$ of $P$ and a quad $X_{u}$ of $G\setminus u$ for each $u$ in $S$ such that $x\in X_{u}$, $|X_{u}\cap P|=1$, and $V(G\setminus u)-X_{u}$ is not sequential in $G\setminus u$.
\end{enumerate}
Let $p$ and $q$ be distinct vertices of $S$. By Lemma~\ref{lem:disjoint_2set}, $x\in X_{p}\cap X_{q}\subseteq P\cup\{x\}$. 
By Lemma~\ref{lem:quad_intersection}, $|X_{p}\cap X_{q}|\leq 2$.

If $|X_{p}\cap X_{q}|=1$, then, by Lemma~\ref{lem:different_quad}, $q\in X_p$ and $p\in X_q$.
Then, since $X_{p}\cap X_{q}=\{x\}$, $X_{p}\cap P=\{q\}$, and $X_{q}\cap P=\{p\}$, by Proposition~\ref{prop:key}, $G*x\setminus x$ or $G/x$ is sequentially $3$-rank-connected, contradicting the assumption.

So $|X_{p}\cap X_{q}|=2$. Let $r\in X_{p}\cap X_{q}-\{x\}$. Since $r$ does not satisfy~\ref{item:Q1}, by Lemma~\ref{lem:find_cross}, \ref{item:Q2} or~\ref{item:Q3} holds for $r$. 

If \ref{item:Q2} holds, there is a subset $R$ of $V(G\setminus r)$ such that $\rho_{G\setminus r}(R)\leq 2$, $R\cap P\neq\emptyset$,
$(V(G\setminus r)-R)\cap P\neq\emptyset$, and neither $R$ nor $V(G\setminus r)-R$ is sequential in $G\setminus r$. By symmetry, we may assume that $|P\cap R|=1$ by replacing $R$ by $V(G\setminus r)-R$. Then by~\ref{item:k2} of Lemma~\ref{lem:less_than4}, $R$ is a quad of $G\setminus r$ containing $x$. By Lemma~\ref{lem:quad_intersection}, $|R\cap X_{p}|, |R\cap X_{q}|\leq 2$. 

Suppose that $|R\cap X_{p}|=2$ and $|R\cap X_{q}|=2$. Then by applying Lemma~\ref{lem:contain_quad} twice, we deduce that $R$ contains both $p$ and $q$, contradicting our assumption that $|P\cap R|=1$.
So by symmetry, we can assume that $|R\cap X_{p}|=1$. Then by Lemma~\ref{lem:different_quad}, $p\in R$. Since $R\cap X_{p}=\{x\}$, $P\cap R=\{p\}$, and $X_{p}\cap P=\{r\}$, by Lemma~\ref{prop:key}, we deduce that $G*x\setminus x$ or $G/x$ is sequentially $3$-rank-connected, contradicting our assumption.

If~\ref{item:Q3} holds, then there is a quad of $R$ of $G\setminus r$ containing $x$ such that $R\cap P=\emptyset$. 
By Lemma~\ref{lem:quad_intersection}, $|R\cap X_{p}|\leq 2$. Since $p\notin R$, by Lemma~\ref{lem:contain_quad}, $|R\cap X_{p}|=1$. Then Lemma~\ref{lem:different_quad} implies that $p\in R$, contradicting the assumption. 
\end{proof}

\section{Treating internally $3$-rank-connected graphs}
\label{sec:internal}
In this section, we prove Theorem~\ref{thm:main} for internally $3$-rank-connected graphs.

A graph $G$ is \emph{internally $3$-rank-connected} if $G$ is prime and for each subset $X$ of $V(G)$, either $|X|\leq 3$ or $|V(G)-X|\leq 3$ whenever $\rho_{G}(X)\leq 2$.
A $3$-element set $T$ of vertices of a graph $G$ is a \emph{triplet} of~$G$ if $\rho_{G}(T)=2$ and $\rho_{G\setminus x}(T-x)=2$ for each $x\in T$. 

Here is a rough overview of our approach in this section. If $G$ is an  internally $3$-rank-connected counterexample of Theorem~\ref{thm:main} and $|V(G)|\geq 13$, then by pivoting, we may assume that $G$ has a triplet $T=\{a,b,c\}$. Next we find a partition $(A_b,A_c)$ of $V(G\setminus a)$, a partition $(B_a,B_c)$ of $V(G\setminus b)$, and a partition $(C_a,C_b)$ of $V(G\setminus c)$ satisfying the following conditions:
\begin{enumerate}
\item[(1)] $b\in A_b$, $c\in A_c$, and neither $A_b$ nor $A_c$ is sequential in $G\setminus a$. 
\item[(2)] $a\in B_a$, $c\in B_c$, and neither $B_a$ nor $B_c$ is sequential in $G\setminus b$. 
\item[(3)] $a\in C_a$, $b\in C_b$, and neither $C_a$ nor $C_b$ is sequential in $G\setminus c$. 
\end{enumerate}
We then prove that all of $A_b, A_c, B_a, B_c, C_a, C_b$ must be small, contradicting the assumption that $|V(G)|\geq 13$.

The following lemma shows that if a graph is internally $3$-rank-connected but not $3$-rank-connected, then we can apply pivoting to obtain a graph with a triplet. 

\begin{lemma}[Oum~{\cite[Lemma 5.1]{Oum2020}}]
\label{lem:triplet}
Let $G$ be a prime graph and $A$ be a $3$-element subset of $V(G)$ such that $\rho_{G}(A)=2$. Then there is a graph $G'$ pivot-equivalent to $G$ such that $A$ is a triplet of~$G'$.
\end{lemma}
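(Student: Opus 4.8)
The plan is to translate ``$A$ is a triplet'' into a statement about three ``attachment vectors'', and then to reach a favourable configuration by at most three carefully chosen pivots, using primeness to control the local structure at each step. For a graph $H$ on the vertex set $V(G)$ and a vertex $x\in A$, write $r_x^{H}$ for the row $A_{H}[\{x\},V(G)-A]\in\GF(2)^{V(G)-A}$. Note first that $\rho_G(A)=2$ forces $|V(G)-A|\geq2$, so $|V(G)|\geq5$; hence $G$ has minimum degree at least $2$ and no pair of twins (were $x,y$ twins, the rows of $A_G[\{x,y\},V(G)-\{x,y\}]$ would be equal, so $\rho_G(\{x,y\})\leq1$, contradicting primeness). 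For any $H$ with $\rho_H(A)=2$ and any $x\in A$ we have $\rho_{H\setminus x}(A-\{x\})=\rnk(A_H[A-\{x\},V(G)-A])$, so $A$ is a triplet of $H$ exactly when $\rho_H(A)=2$ and $r_a^{H},r_b^{H},r_c^{H}$ are pairwise distinct and all nonzero. Since a pivot is a composition of local complementations, $\rho_{\cdot}(A)=2$ is preserved under pivoting by Lemma~\ref{lem:local}; so it suffices to reach, by pivoting $G$, a graph whose three attachment vectors are pairwise distinct and all nonzero.

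Assume $G$ is not already such a graph. As $\rho_G(A)=2$, after relabelling $A$ exactly one of the following holds: \emph{(I)} exactly one attachment vector is zero, say $r_c^{G}=0$, so that $N_G(c)=\{a,b\}$ by the degree bound; or \emph{(II)} all three are nonzero and two coincide, say $r_a^{G}=r_b^{G}$, in which case, since $G$ has no twins, exactly one of $ac,bc$ is an edge of $G$, say $ac$. In case~(II) I distinguish subcases according to whether $ab$ is an edge, whether $r_c^{G}\subseteq r_a^{G}$, and, when $r_c^{G}\subsetneq r_a^{G}$, whether some vertex of $r_a^{G}-r_c^{G}$ has a neighbour outside $A$. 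In each of these a single pivot --- on $ac$, or on $cw$ for a suitable $w\in r_c^{G}-r_a^{G}$, or on $bw$ for a suitable $w\in r_a^{G}-r_c^{G}$ --- produces a triplet, except in the last subcase, where by the absence of twins $r_a^{G}-r_c^{G}$ is a single vertex $w_0$ with $N_G(w_0)=\{a,b\}$; there, pivoting the edge $ac$ yields an instance of case~(II) in which $w_0$ has acquired a neighbour outside $A$, so that a second pivot finishes. In case~(I) one chooses $w\in r_a^{G}-r_b^{G}$ (possible since $r_a^{G}\ne r_b^{G}$, and, by the symmetry of $a$ and $b$ in this case, we may assume this set is nonempty) and pivots $aw$; the resulting graph has all three attachment vectors nonzero with exactly two of them equal, i.e.\ it is an instance of case~(II), which is then handled as above.

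Each step is checked by computing how $G\wedge uv=G\ast u\ast v\ast u$ acts on the triple of attachment vectors: pivoting $uv$ exchanges the $u$- and $v$-adjacencies of every other vertex and complements every edge running between two of the three sets $N_G(u)-N_G(v)$, $N_G(v)-N_G(u)$, $N_G(u)\cap N_G(v)$. From this one reads off the new attachment vectors directly and verifies, using that no row of $A_G[A,V(G)-A]$ degenerates, that the chosen pivot introduces neither a new zero vector nor a new coincidence (outside the subcases that are deliberately routed back into case~(II)). Since the whole construction uses at most three pivots and never returns to a worse configuration, it terminates with a graph in which $A$ is a triplet.

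The main obstacle is precisely this verification. The bookkeeping is delicate: the pivot vertex may itself lie in $V(G)-A$, so its exchange with the other pivot vertex also changes the attachment vectors, and in several degenerate configurations the naive pivot merely relocates the defect (the zero vector, or the coinciding pair) around $A$, gaining nothing --- one must find the correct pivot, and in the worst subcase of case~(II) a chain of two pivots. Primeness is exactly the tool that makes these configurations tractable: by forbidding twins and vertices of degree at most $1$ it pins the local structure down tightly enough that a good pivot always exists.
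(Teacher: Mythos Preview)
The paper does not prove this lemma; it is quoted from \cite[Lemma~5.1]{Oum2020} without argument, so there is no in-paper proof to compare against directly. Your reformulation is correct: $A=\{a,b,c\}$ is a triplet of $H$ precisely when $\rho_H(A)=2$ and the attachment rows $r_a^H,r_b^H,r_c^H\in\GF(2)^{V(G)\setminus A}$ are pairwise distinct and nonzero, and since $\rho(A)$ is pivot-invariant the task is exactly to repair the single defect (one zero row in Case~(I), one coincident pair in Case~(II)). Your Case~(I)$\to$(II) reduction via the pivot on $aw$ with $w\in r_a\setminus r_b$ is correct --- one computes $r_a^{\text{new}}=r_c^{\text{new}}\neq r_b^{\text{new}}$, all nonzero --- and in Case~(II) with $ab\in E(G)$ the pivot on $ac$ does produce a triplet.

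There is, however, a genuine error in your Case~(II) analysis. Take $r_a=r_b$, $ac\in E(G)$, $bc\notin E(G)$, $ab\notin E(G)$, and $w\in r_a\setminus r_c$; you claim that pivoting $bw$ produces a triplet. It does not. Since $a\notin N_G(b)$ and $a\in N_G(w)$, the vertex $a$ lies in the part $N_G(w)\setminus N_G(b)$, so its adjacency to each $y\in V(G)\setminus(A\cup\{w\})$ is flipped exactly when $y\in N_G(b)$, i.e.\ exactly when $r_b(y)=r_a(y)=1$; hence $r_a^{\text{new}}(y)=r_a(y)+r_a(y)=0$ for every such $y$, and one also checks $r_a^{\text{new}}(w)=0$ because $a\notin N_G(b)$. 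Thus $r_a^{\text{new}}=0$ and you are thrown back into Case~(I), not forward to a triplet. The same failure occurs for the pivot on $aw$. So your subcase~``(c)'' does not terminate as claimed, and the two-pivot chain built on it in the ``last subcase'' does not work either.

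The repair is easy and in fact shortens the argument. In Case~(II), regardless of whether $ab\in E(G)$ and regardless of any inclusion between $r_c$ and $r_a$, pivot on $cy$ for \emph{any} $y\in r_c$ (nonempty since $r_c\neq 0$). A direct computation, splitting only on whether $y\in r_a$, shows that the three new attachment rows are $\{r_a+r_c+n_y+e_y,\ r_a+r_c,\ n_y+e_y\}$ in one subcase and $\{r_a+n_y+e_y,\ r_a,\ n_y+e_y\}$ in the other (here $n_y$ is the indicator of $N_G(y)\cap(V(G)\setminus A)$ and $e_y$ the unit vector at $y$); in both subcases they are pairwise distinct and nonzero because $r_a\neq r_c$, $r_a\neq0$, and the $y$-coordinate separates the relevant pairs. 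With this single pivot Case~(II) is finished, so the lemma holds with at most two pivots rather than three, and your further subcases involving $r_c\subseteq r_a$ and the outside-neighbours of vertices in $r_a\setminus r_c$ become unnecessary.
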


\begin{lemma}[Oum~{\cite[Lemma 5.2]{Oum2020}}]
\label{lem:prime}
Let $G$ be an internally $3$-rank-connected graph and $T=\{a,b,c\}$ be a triplet of $G$. Then $G\setminus a$, $G\setminus b$, and $G\setminus c$ are prime.
\end{lemma}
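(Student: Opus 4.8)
The plan is to reduce to a single vertex deletion and then argue by contradiction. By the symmetry of the roles of $a,b,c$ it suffices to show that $G\setminus a$ is prime. The only input from the triplet hypothesis is the following: since $\rho_{G\setminus a}(T-\{a\})=\rho_{G\setminus a}(\{b,c\})=2=\rho_G(T)$, Lemma~\ref{lem:cor_s1} applied with $v=a$ (and $T$ playing the role of its smaller set) gives
\[
\rho_{G\setminus a}(W-\{a\})=\rho_G(W)\qquad\text{for every }W\text{ with }T\subseteq W\subseteq V(G).
\]
That is, on sets containing all of $T$ the deletion of $a$ loses no cut-rank, which is exactly what cancels the ``$+1$'' of Lemma~\ref{lem:delrank}(ii) when one transfers separations between $G\setminus a$ and $G$.

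Now suppose $G\setminus a$ is not prime and fix a partition $(X,Y)$ of $V(G)-\{a\}$ with $|X|,|Y|\ge 2$ and $\rho_{G\setminus a}(X)\le 1$. First I would dispose of the case where $\{b,c\}$ lies in one part, say $\{b,c\}\subseteq X$ (the case $\{b,c\}\subseteq Y$ being symmetric, with $Y\cup\{a\}$ in place of $X\cup\{a\}$). Then $T\subseteq X\cup\{a\}$, so the displayed identity gives $\rho_G(X\cup\{a\})=\rho_{G\setminus a}(X)\le 1$; since $(X\cup\{a\},Y)$ is a partition of $V(G)$ with both parts of size at least $2$, this contradicts the primeness of $G$. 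Note this step uses nothing about $G$ beyond being prime.

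The remaining case, where $\{b,c\}$ is split --- say $b\in X$, $c\in Y$ --- is the crux, because neither $X\cup\{a\}$ nor $Y\cup\{a\}$ contains $T$. Here I would enlarge by one more vertex: since $\rho_{G\setminus a}(\{c\})\le 1$ (it is the rank of a single row), Lemma~\ref{lem:subeq} gives $\rho_{G\setminus a}(X\cup\{c\})\le\rho_{G\setminus a}(X)+\rho_{G\setminus a}(\{c\})\le 2$, and symmetrically $\rho_{G\setminus a}(Y\cup\{b\})\le 2$. Because $T\subseteq X\cup\{a,c\}$ and $T\subseteq Y\cup\{a,b\}$, the displayed identity upgrades these to $\rho_G(X\cup\{a,c\})\le 2$ and $\rho_G(Y\cup\{a,b\})\le 2$. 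Since $|X\cup\{a,c\}|,|Y\cup\{a,b\}|\ge 4$ (as $|X|,|Y|\ge 2$), internal $3$-rank-connectivity forces $|Y-\{c\}|\le 3$ and $|X-\{b\}|\le 3$, hence $|X|,|Y|\le 4$ and $|V(G)|\le 9$. Thus the split case is impossible once $|V(G)|$ is large enough (as then $\max(|X|,|Y|)\ge 5$), and the finitely many small internally $3$-rank-connected graphs admitting such a configuration are eliminated by direct inspection.

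I expect this split case to be the main obstacle: it is the only place where the full strength of internal $3$-rank-connectivity --- as opposed to mere primeness --- is actually used, and squeezing out the contradiction (or reducing to a clean finite check) requires the size bookkeeping above. Everything else reduces to the single principle that a triplet turns a separation of $G\setminus a$ whose inside contains $T$ into a separation of $G$ of the same cut-rank.
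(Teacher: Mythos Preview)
Your non-split case is correct, and invoking Lemma~\ref{lem:cor_s1} through the triplet identity $\rho_{G\setminus a}(\{b,c\})=\rho_G(T)$ is exactly the right mechanism. The genuine gap is in the split case, and it is not merely a missing routine check: your assertion that the remaining small configurations ``are eliminated by direct inspection'' is false. Take $V(G)=\{a,b,c,d,e\}$ with edge set $\{ab,ac,ad,ae,bd,ce,de\}$. This $G$ is prime (and hence internally $3$-rank-connected, since for $|V(G)|\le 7$ the extra condition is vacuous), and $T=\{a,b,c\}$ is a triplet because the rows of $a,b,c$ over the columns $\{d,e\}$ are $(1,1),(1,0),(0,1)$. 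Yet $G\setminus a$ is the path $b\text{--}d\text{--}e\text{--}c$, which has $\rho_{G\setminus a}(\{b,d\})=1$ and is therefore not prime. So the lemma, read literally with no lower bound on $|V(G)|$, is false; the version in~\cite{Oum2020} presumably carries such a hypothesis, and in any case every application of Lemma~\ref{lem:prime} in this paper occurs with $|V(G)|\ge 9$.

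Under a hypothesis $|V(G)|\ge 8$ your approach does finish, but you are overlooking the most direct step. Rather than (or in addition to) passing to $X\cup\{a,c\}$, apply internal $3$-rank-connectivity to $X$ and to $X\cup\{a\}$ themselves: from $\rho_{G\setminus a}(X)\le 1$ and primeness of $G$ one gets $\rho_G(X)=\rho_G(X\cup\{a\})=2$, whence ($|X|\le 3$ or $|Y|\le 2$) and ($|X|\le 2$ or $|Y|\le 3$). Together with your bounds $|X|,|Y|\le 4$ this forces $|V(G)|\le 7$, a contradiction. No finite inspection is needed once the size hypothesis is in place, and the ``main obstacle'' you anticipated dissolves as soon as you look at $X$ itself rather than only at the enlarged set $X\cup\{a,c\}$.
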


\begin{lemma}
\label{lem:bc}
Let $T$ be a triplet of an internally $3$-rank-connected graph $G$ and $a\in T$. Let $(X,Y)$ be a partition of $V(G)-\{a\}$ such that $\rho_{G\setminus a}(X)\leq 2$ and neither $X$ nor $Y$ is sequential in $G\setminus a$. Then there exist $b\in X\cap T$ and $c\in Y\cap T$ such that $\rho_{G\setminus b}(X-\{b\})=\rho_{G\setminus c}(Y-\{c\})=3$.
\end{lemma}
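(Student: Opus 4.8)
The plan is to argue in two stages, after a few routine preliminary observations. A set of at most three vertices with cut-rank at most $2$ is automatically sequential, so the hypothesis that neither $X$ nor $Y$ is sequential in $G\setminus a$ forces $|X|,|Y|\geq 4$; note also $\rho_{G\setminus a}(Y)=\rho_{G\setminus a}(X)\leq 2$, so the hypotheses on $(X,Y)$ are symmetric in $X$ and $Y$. Since $T$ is a triplet of $G$, Lemma~\ref{lem:prime} gives that $G\setminus t$ is prime for every $t\in T$. The goal is then (I) to show that the two vertices of $T-\{a\}$ are separated by $(X,Y)$, and, writing $b$ for the one in $X$ and $c$ for the one in $Y$, (II) to prove $\rho_{G\setminus b}(X-\{b\})=\rho_{G\setminus c}(Y-\{c\})=3$.

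Stage (I). Suppose, for contradiction, that both vertices of $T-\{a\}$ lie in $Y$. I would apply Lemma~\ref{lem:subtool_minus} with $v=a$, taking its first set to be $X$ and its second set to be $T-\{a\}$, both of which are subsets of $V(G)-\{a\}$. Since $T-\{a\}\subseteq Y$ is disjoint from $X$, the two set-differences on the right-hand side reduce to $T-\{a\}$ and $X$, and after substituting $\rho_{G}(T)=2$ and the triplet identity $\rho_{G\setminus a}(T-\{a\})=2$ the inequality collapses to $\rho_{G\setminus a}(X)\geq\rho_{G}(X)$. Combined with $\rho_{G}(X)\geq\rho_{G\setminus a}(X)$ from Lemma~\ref{lem:delrank}(i), this gives $\rho_{G}(X)=\rho_{G\setminus a}(X)\leq 2$; but $|X|\geq 4>3$ and $|V(G)-X|=|Y|+1\geq 5>3$, contradicting that $G$ is internally $3$-rank-connected. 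By the symmetry between $X$ and $Y$, the case ``both in $X$'' is impossible as well, so the two vertices of $T-\{a\}$ split; name them $b\in X$ and $c\in Y$, so that $T=\{a,b,c\}$.

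Stage (II). By the $X\leftrightarrow Y$ symmetry it is enough to prove $\rho_{G\setminus b}(X-\{b\})=3$. Primeness of $G\setminus b$ gives $\rho_{G\setminus b}(X-\{b\})\geq 2$ (both $X-\{b\}$ and its complement in $G\setminus b$ have at least two vertices), while $\rho_{G\setminus a}(X)\leq 2$ together with Lemma~\ref{lem:delrank} gives $\rho_{G\setminus b}(X-\{b\})\leq\rho_{G}(X)\leq 3$. Suppose, for contradiction, $\rho_{G\setminus b}(X-\{b\})=2$. I would then apply Lemma~\ref{lem:subtool_minus} with $v=b$, first set $X-\{b\}$, second set $T-\{b\}=\{a,c\}$: since $a\notin X$ and $c\in Y$, the set $\{a,c\}$ is disjoint from $X-\{b\}$, the right-hand differences reduce to $\{a,c\}$ and $X-\{b\}$, and plugging in $\rho_{G}(T)=2$ and $\rho_{G\setminus b}(\{a,c\})=2$ yields $\rho_{G}(X-\{b\})\leq 2$. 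Internal $3$-rank-connectivity of $G$ then forces $|X-\{b\}|\leq 3$ (its complement has $|Y|+2\geq 6$ vertices), so $|X|=4$. Now $\rho_{G\setminus a}(X)=2$ because $G\setminus a$ is prime, so Lemma~\ref{lem:quad} together with the non-sequentiality of $X$ makes $X$ a quad of $G\setminus a$; hence $\rho_{G\setminus a}(X-\{b\})=3$, and Lemma~\ref{lem:delrank}(i) then gives $\rho_{G}(X-\{b\})\geq 3$, contradicting $\rho_{G}(X-\{b\})\leq 2$. Therefore $\rho_{G\setminus b}(X-\{b\})=3$, and symmetrically $\rho_{G\setminus c}(Y-\{c\})=3$.

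The step I expect to require the most care is the bookkeeping in the two applications of Lemma~\ref{lem:subtool_minus}: one must delete the right vertex ($a$ in Stage (I), $b$ in Stage (II)), pair it with the matching two-element restriction of the triplet ($T-\{a\}$, then $T-\{b\}$), and verify the disjointness that trivializes the set differences so that the triplet identities $\rho_{G}(T)=2$ and $\rho_{G\setminus t}(T-\{t\})=2$ can be inserted. Everything else is a short deduction from Lemma~\ref{lem:delrank}, Lemma~\ref{lem:prime}, Lemma~\ref{lem:quad}, and the definition of internally $3$-rank-connected graphs, so I do not anticipate a substantive obstacle beyond this.
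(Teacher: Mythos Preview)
Your proof is correct. Stage~(I) is essentially the same as the paper's, just packaged through Lemma~\ref{lem:subtool_minus} rather than Lemma~\ref{lem:cor_s1}; both routes exploit the triplet identity $\rho_{G\setminus a}(T-\{a\})=\rho_G(T)$ to push the cut-rank bound from $G\setminus a$ up to $G$ and contradict internal $3$-rank-connectivity.

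Stage~(II), however, is genuinely different. The paper assumes $\rho_{G\setminus c}(Y-\{c\})<3$ and applies Lemma~\ref{lem:rank_subeq} twice (using the triplet condition to compare ranks of submatrices) to obtain $\rho_{G\setminus a}(X\cup\{c\})\leq 2$; it then invokes Lemma~\ref{lem:basic_sequential} to see that $Y-\{c\}$ remains non-sequential, hence $|Y-\{c\}|\geq 4$, and finally uses Lemma~\ref{lem:cor_s1} to lift to $\rho_G(X\cup\{a,c\})=2$, contradicting internal $3$-rank-connectivity with large sets on both sides. Your route is shorter: a single application of Lemma~\ref{lem:subtool_minus} gives $\rho_G(X-\{b\})\leq 2$ directly, which forces $|X|=4$, and then Lemma~\ref{lem:quad} makes $X$ a quad of $G\setminus a$, so $\rho_{G\setminus a}(X-\{b\})=3$ contradicts $\rho_G(X-\{b\})\leq 2$. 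Your argument avoids the raw matrix-rank submodularity and the sequentiality detour entirely, at the cost of invoking the quad characterization; the paper's argument never pins down $|X|$ and works uniformly regardless of its size. Both are clean, but yours is the more economical of the two.
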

\begin{proof}
Since neither $X$ nor $Y$ is sequential in $G\setminus a$, $|X|\geq 4$ and $|Y|\geq 4$. So $\rho_{G\setminus a}(X)=2$ because $G\setminus a$ is prime by Lemma~\ref{lem:prime}. Since $T$ is a triplet of $G$, we have $\rho_{G\setminus a}(T-\{a\})=\rho_{G}(T)$. If $T\subseteq X\cup\{a\}$, then by Lemma~\ref{lem:cor_s1}, 
$\rho_{G}(X\cup\{a\})=\rho_{G\setminus a}(X)=2$, contradicting the assumption that $G$ is internally $3$-rank-connected. 

Hence $T-\{a\}\nsubseteq X$ and similarly $T-\{a\}\nsubseteq Y$. Therefore, there exist $b\in X\cap T$ and $c\in Y\cap T$. Then $T=\{a,b,c\}$.

By (i) of Lemma~\ref{lem:delrank}, $\rho_{G}(X)\leq\rho_{G\setminus a}(X)+1\leq 3$. So by (ii) of Lemma~\ref{lem:delrank}, we have $\rho_{G\setminus b}(X-\{b\})\leq 3$ and similarly, $\rho_{G\setminus c}(Y-\{c\})\leq 3$.

Suppose that $\rho_{G\setminus c}(Y-\{c\})<3$. Since $T$ is a triplet of $G$,  by Lemma~\ref{lem:rank_subeq},
\begin{align*}
\rho_{G}(\{a,b\},Y-\{c\})+2&=\rho_{G}(\{a,b\},Y-\{c\})+\rho_{G}(\{a,b,c\},V(G)-\{a,b,c\}) \\
&\geq\rho_{G}(\{a,b,c\},Y-\{c\})+\rho_{G}(\{a,b\},V(G)-\{a,b,c\}) \\
&=\rho_{G}(\{a,b,c\},Y-\{c\})+2,
\end{align*}
and therefore $\rho_{G}(\{a,b,c\},Y-\{c\})\leq\rho_{G}(\{a,b\},Y-\{c\})$. Then by Lemma~\ref{lem:rank_subeq}, we have
\[
\rho_{G}(X\cup\{a\},Y-\{c\})+\rho_{G}(\{a,b,c\},Y-\{c\})\geq\rho_{G}(X\cup\{a,c\},Y-\{c\})+\rho_{G}(\{a,b\},Y-\{c\}).
\]
Hence $\rho_{G\setminus a}(X\cup\{c\})\leq\rho_{G}(X\cup\{a,c\},Y-\{c\})\leq\rho_{G}(X\cup\{a\},Y-\{c\})=\rho_{G\setminus c}(Y-\{c\})<3$. Therefore, $\rho_{G\setminus a}(X\cup\{c\})\leq 2=\rho_{G\setminus a}(X)$. Since $|Y-\{c\}|\geq 3$ and $G\setminus a$ is prime, we have $\rho_{G\setminus a}(X\cup\{c\})=2$. Since $Y$ is not sequential in $G\setminus a$, by Lemma~\ref{lem:basic_sequential}, $Y-\{c\}$ is not sequential in $G\setminus a$ and therefore $|Y-\{c\}|\geq 4$.
Since $T\subseteq X\cup\{a,c\}$, by~Lemma~\ref{lem:cor_s1}, $\rho_{G}(X\cup\{a,c\})=\rho_{G\setminus a}(X\cup\{c\})=2$, contradicting the assumption that $G$ is internally $3$-rank-connected. Therefore $\rho_{G\setminus c}(Y-\{c\})=3$. By symmetry, we deduce that $\rho_{G\setminus b}(X-\{b\})=3$.
\end{proof}

\begin{lemma}
\label{lem:series}
Let $G$ be an internally $3$-rank-connected graph with $|V(G)|\geq 12$ and $T=\{a,b,c\}$ be a triplet of $G$ such that $G\setminus c$ is not sequentially $3$-rank-connected.
Let $X$ be a subset of $V(G\setminus a\setminus b)$ such that $|X|\geq 3$, $|V(G\setminus a\setminus b)-X|\geq 2$, and $c\notin X$. Then $\rho_{G\setminus a\setminus b}(X)\geq 2$.
\end{lemma}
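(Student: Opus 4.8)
The plan is to argue by contradiction: suppose $\rho_{G\setminus a\setminus b}(X)\leq 1$. We will contradict either the primeness of one of $G\setminus a,G\setminus b,G\setminus c$, or the internal $3$-rank-connectivity of $G$ together with $|V(G)|\geq 12$, or the hypothesis that $G\setminus c$ is not sequentially $3$-rank-connected (the latter entering through Lemma~\ref{lem:bc}).

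First I would extract the elementary consequences. Since $X\subseteq V(G\setminus a\setminus b)$ and $c\notin X$, the set $X$ avoids $a$, $b$, and $c$, and the size conditions give $|V(G)-X|\geq 4$ and $|V(G\setminus a)-(X\cup\{b\})|\geq 2$. By Lemma~\ref{lem:prime}, $G\setminus a$, $G\setminus b$, and $G\setminus c$ are prime. If $\rho_{G\setminus a\setminus b}(X)=0$ then $\rho_{G\setminus a}(X)\leq 1$ by Lemma~\ref{lem:delrank}(i), contradicting primeness of $G\setminus a$ (as $|X|\geq 3$ and $|V(G\setminus a)-X|\geq 3$); so $\rho_{G\setminus a\setminus b}(X)=1$. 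Then Lemma~\ref{lem:delrank} together with primeness of $G\setminus a$, $G\setminus b$ gives $\rho_{G\setminus a}(X)=\rho_{G\setminus b}(X)=\rho_{G\setminus a}(X\cup\{b\})=\rho_{G\setminus b}(X\cup\{a\})=2$, whence $\rho_G(X\cup\{a,b\})\leq 3$. Applying Lemma~\ref{lem:subeq} to $X\cup\{a,b\}$ and the triplet $T$, using $\rho_G(T)=\rho_G(\{a,b\})=2$ and $(X\cup\{a,b\})\cap T=\{a,b\}$ (since $c\notin X$), we obtain $\rho_G(X\cup\{a,b,c\})\leq\rho_G(X\cup\{a,b\})\leq 3$.

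Next I would invoke the hypothesis on $G\setminus c$. Since $G\setminus c$ is prime but not sequentially $3$-rank-connected, fix a partition $(W,Z)$ of $V(G\setminus c)$ with $\rho_{G\setminus c}(W)\leq 2$ and neither $W$ nor $Z$ sequential in $G\setminus c$; then $|W|,|Z|\geq 4$. By Lemma~\ref{lem:bc} applied to the triplet $T$ with deleted vertex $c$ and partition $(W,Z)$, after possibly interchanging the names of $W$ and $Z$ we may assume $a\in W$, $b\in Z$, and $\rho_{G\setminus a}(W-\{a\})=\rho_{G\setminus b}(Z-\{b\})=3$. Combining with Lemma~\ref{lem:delrank} and $\rho_{G\setminus c}(W)\leq 2$, we pin down $\rho_G(W)=\rho_G(Z)=3$ and $\rho_{G\setminus c}(W)=\rho_{G\setminus c}(Z)=2$, so that $c$ behaves for $W$ and $Z$ like a triplet vertex. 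Since $(W-\{a\},Z-\{b\})$ is a partition of $V(G)-\{a,b,c\}\supseteq X$, the set $X$ splits as $X=X_W\cup X_Z$ with $X_W=X\cap W\subseteq W-\{a\}$ and $X_Z=X\cap Z\subseteq Z-\{b\}$, while $V(G\setminus a\setminus b)-X$ splits into $\{c\}$, $(W-\{a\})\setminus X_W$, and $(Z-\{b\})\setminus X_Z$.

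Finally I would play the two structures against each other. Using submodularity — Lemmas~\ref{lem:subeq}, \ref{lem:subeq_minus}, \ref{lem:subtool}, \ref{lem:subtool_minus}, and the deletion versions in Lemma~\ref{lem:sub_eq_AB} — to compare $X$, $X\cup\{a,b\}$, and $X\cup\{a,b,c\}$ with $W$, $Z$, and $W\cup Z$ across the graphs $G$, $G\setminus a$, $G\setminus b$, $G\setminus c$, and $G\setminus a\setminus b$, I expect to force the cut-rank of one of $W-\{a\}$ in $G\setminus a$, $Z-\{b\}$ in $G\setminus b$, or $W$ in $G\setminus c$ below the value just established, or else to produce a subset of $V(G)$ of cut-rank at most $2$ with both sides of size at least $4$, contradicting the internal $3$-rank-connectivity of $G$; the case in which the complement pieces $(W-\{a\})\setminus X_W$ and $(Z-\{b\})\setminus X_Z$ are too small for primeness or for internal $3$-rank-connectivity to bite is precisely where the bound $|V(G)|\geq 12$ is spent. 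I expect the main obstacle to be organizing this last case analysis: keeping track of which of $X_W$, $X_Z$, $(W-\{a\})\setminus X_W$, $(Z-\{b\})\setminus X_Z$ are empty or of size $1$, $2$, or at least $3$, and in each case selecting the submodular inequality that makes the known cut-rank values $2$ and $3$ collide.
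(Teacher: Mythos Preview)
Your outline has the right shape but misses the one step that makes the argument short: you never pin down $|X|=3$. Write $Y=V(G\setminus a\setminus b)-X$. The triplet hypothesis gives not just $\rho_G(T)=2$ but also $\rho_{G\setminus a}(\{b,c\})=\rho_G(\{a,b,c\})$; since $c\in Y$ we have $T\subseteq Y\cup\{a,b\}$, and Lemma~\ref{lem:cor_s1} then yields $\rho_G(Y\cup\{a,b\})=\rho_{G\setminus a}(Y\cup\{b\})\leq\rho_{G\setminus a\setminus b}(Y)+1\leq 2$, i.e.\ $\rho_G(X)\leq 2$. Internal $3$-rank-connectivity (with $|Y\cup\{a,b\}|\geq 4$) now forces $|X|=3$. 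You instead bounded $\rho_G(X\cup\{a,b\})\leq 3$: that is the side of the partition \emph{not} containing $c$, so the triplet absorption via Lemma~\ref{lem:cor_s1} does not apply there, and you stay one unit short of what internal $3$-rank-connectivity needs.

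Once $|X|=3$, the finish is quick and avoids the cross-comparison you anticipate. Since $\rho_{G\setminus c}(X)\leq\rho_G(X)=2$, a single submodularity step in $G\setminus c$ together with Lemma~\ref{lem:basic_sequential} lets you absorb the at most one element of $X$ lying on the minority side of your partition $(W,Z)$, so without loss of generality $X\subseteq W$. Now Lemma~\ref{lem:bc} gives $t\in\{a,b\}\cap Z$ with $\rho_{G\setminus t}(Z-\{t\})=3$; then $Z\subseteq Y\cup\{t\}$, and from $\rho_{G\setminus t}(Y)\leq\rho_{G\setminus a\setminus b}(Y)+1\leq 2<3\leq\rho_G(Y\cup\{t\})$ (the last by internal $3$-rank-connectivity, as $|X|+1=4$ and $|Y|+1\geq 4$) the contrapositive of Lemma~\ref{lem:cor_s1} gives $\rho_{G\setminus t}(Z-\{t\})<\rho_G(Z)\leq 3$, a contradiction. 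Your final paragraph is only a plan, not a proof; without the reduction to $|X|=3$ the case analysis you describe would have to handle arbitrary splits of a set $X$ of unbounded size, and nothing you wrote shows it closes.
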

\begin{proof}
Suppose that $\rho_{G\setminus a\setminus b}(X)\leq 1$.
Let $Y=V(G\setminus a\setminus b)-X$.
Since $\{a,b,c\}$ is a triplet of~$G$, we have $\rho_{G\setminus a}(\{b,c\})=\rho_{G}(\{a,b,c\})$.
By~Lemma~\ref{lem:cor_s1}, $\rho_{G}(Y\cup\{a,b\})=\rho_{G\setminus a}(Y\cup\{b\})$.
Hence $\rho_{G}(Y\cup\{a,b\})=\rho_{G\setminus a}(Y\cup\{b\})
\leq\rho_{G\setminus a\setminus b}(Y)+1=\rho_{G\setminus a\setminus b}(X)+1\leq 2$. So $|X|\leq 3$ because $G$ is internally $3$-rank-connected and $|Y\cup\{a,b\}|\geq 4$.

Since $G\setminus c$ is not sequentially $3$-rank-connected, there exists a partition $(C_{a},C_{b})$ of $V(G\setminus c)$ such that $\rho_{G\setminus c}(C_{a})\leq 2$ and neither $C_{a}$ nor $C_{b}$ is sequential in $ G\setminus c$.

Suppose that $|X|=3$. By symmetry, we may assume that $|C_{a}\cap X|\geq 2$ by swapping $C_{a}$ and $C_{b}$ if necessary. If $|C_{a}\cap X|=2$, then let $x$~be the element in $C_{b}\cap X$. By Lemma~\ref{lem:prime}, $G\setminus c$ is prime. Since $|(Y\cup\{a,b\})-\{c\}|\geq 2$, we have $2\leq\rho_{G\setminus c}(X)\leq\rho_{G}(X)=2$. Since $|X-\{x\}|=2$ and $G\setminus c$ is prime, we also have $\rho_{G\setminus c}(X-\{x\})=2$. So by Lemma~\ref{lem:subeq},
\[
\rho_{G\setminus c}(C_{a})+\rho_{G\setminus c}(X)\geq\rho_{G\setminus c}(C_{a}\cup\{x\})+\rho_{G\setminus c}(X-\{x\}).
\]
Therefore, $\rho_{G\setminus c}(C_{a}\cup\{x\})\leq\rho_{G\setminus c}(C_{a})\leq 2$. Since $|C_{b}-\{x\}|\geq 3$ and by Lemma~\ref{lem:prime}, $G\setminus c$ is prime, we have $\rho_{G\setminus c}(C_{a}\cup\{x\})=\rho_{G\setminus c}(C_{a})=2$. So by Lemma~\ref{lem:basic_sequential}, neither $C_{a}\cup\{x\}$ nor $C_{b}-\{x\}$ is sequential in $G\setminus c$. By replacing $(C_{a},C_{b})$ with $(C_{a}\cup\{x\},C_{b}-\{x\})$, we may assume that $|C_{a}\cap X|=3$. 

By Lemma~\ref{lem:bc}, there is a unique element $t\in\{a,b\}$ of $C_{b}\cap T$.
Then $X\subseteq C_{a}$ and $C_{b}-\{t\}\subseteq Y-\{c\}\subseteq Y$.
Since $|V(G)|\geq 12$ and $G$ is internally $3$-rank-connected, we have $\rho_{G}(Y\cup\{t\})\geq 3$. Since $\rho_{G\setminus t}(Y)\leq\rho_{G\setminus a\setminus b}(Y)+1\leq 2<\rho_{G}(Y\cup\{t\})$ and $t\in C_{b}\subseteq Y\cup\{t\}$, by Lemma~\ref{lem:cor_s1}, 
$\rho_{G\setminus t}(C_{b}-\{t\})<\rho_{G}(C_{b})\leq 3$, contradicting Lemma~\ref{lem:bc}.
\end{proof}

\begin{lemma}
\label{lem:eq}
Let $G$ be an internally $3$-rank-connected graph with $|V(G)|\geq 12$ and $T=\{a,b,c\}$ be a triplet of $G$. Let $(A_{b}, A_{c})$ be a partition of $V(G\setminus a)$ such that $b\in A_{b}$, $c\in A_{c}$, $\rho_{G\setminus a}(A_{b})\leq 2$, and neither $A_{b}$ nor $A_{c}$ is sequential in $G\setminus a$ and let $(B_{a}, B_{c})$ be a partition of $V(G\setminus b)$ such that $a\in B_{a}$, $c\in B_{c}$, $\rho_{G\setminus b}(B_{a})\leq 2$, and neither $B_{a}$ nor $B_{c}$ is sequential in $G\setminus b$. If $G\setminus c$ is not sequentially $3$-rank-connected, then the following hold.
\begin{enumerate}[label=\rm(\arabic*)]
\item $\rho_{G\setminus a\setminus b}(A_{b}\cap B_{c})=\rho_{G}(A_{b}\cap B_{c})$.
\item $\rho_{G\setminus a\setminus b}(A_{c}\cap B_{a})=\rho_{G}(A_{c}\cap B_{a})$.
\item $\rho_{G\setminus a\setminus b}(A_{c}\cap B_{c})=\rho_{G}(A_{c}\cap B_{c})$.
\end{enumerate}
\end{lemma}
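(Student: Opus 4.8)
The plan is to split $V(G)-\{a,b\}$ into the four pieces
\[
Z_{1}=A_{b}\cap B_{a},\quad Z_{2}=A_{b}\cap B_{c},\quad Z_{3}=A_{c}\cap B_{a},\quad Z_{4}=A_{c}\cap B_{c},
\]
so that $A_{b}=Z_{1}\cup Z_{2}\cup\{b\}$, $A_{c}=Z_{3}\cup Z_{4}$, $B_{a}=Z_{1}\cup Z_{3}\cup\{a\}$, $B_{c}=Z_{2}\cup Z_{4}$, and $c\in Z_{4}$; writing $G'=G\setminus a\setminus b$, the three assertions read $\rho_{G'}(Z_{i})=\rho_{G}(Z_{i})$ for $i\in\{2,3,4\}$. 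Since each of $A_{b},A_{c},B_{a},B_{c}$ is non-sequential of cut-rank at most $2$ it has at least four elements, and since $G\setminus a$ and $G\setminus b$ are prime (Lemma~\ref{lem:prime}) this forces $\rho_{G\setminus a}(A_{b})=\rho_{G\setminus a}(A_{c})=\rho_{G\setminus b}(B_{a})=\rho_{G\setminus b}(B_{c})=2$. Three preliminary facts drive the proof. (i) As $T=\{a,b,c\}$ is a triplet, $\rho_{G\setminus x}(T-x)=\rho_{G}(T)$ for $x\in\{a,b,c\}$, so Lemma~\ref{lem:cor_s1} with $Y=T$ gives $\rho_{G\setminus x}(X-\{x\})=\rho_{G}(X)$ for every $X\supseteq T$; passing to complements, deleting $a$, $b$, or $c$ leaves the cut-rank of any $T$-avoiding set unchanged, in particular $\rho_{G\setminus a}(S)=\rho_{G\setminus b}(S)=\rho_{G}(S)$ for $S\in\{Z_{1},Z_{2},Z_{3},Z_{1}\cup Z_{2},Z_{1}\cup Z_{3}\}$. (ii) Applying Lemma~\ref{lem:bc} to the partition $(A_{b},A_{c})$ of $V(G\setminus a)$ and to $(B_{a},B_{c})$ of $V(G\setminus b)$ gives $\rho_{G\setminus b}(Z_{1}\cup Z_{2})=\rho_{G\setminus a}(Z_{1}\cup Z_{3})=3$, hence $\rho_{G}(Z_{1}\cup Z_{2})=\rho_{G}(Z_{1}\cup Z_{3})=3$ by (i). (iii) The hypotheses of Lemma~\ref{lem:series} are exactly those available, so $\rho_{G'}(S)\geq 2$ for every $S\subseteq V(G')$ with $c\notin S$, $|S|\geq 3$, $|V(G')-S|\geq 2$; in particular $\rho_{G'}(Z_{1}\cup Z_{2})\geq 2$ and $\rho_{G'}(Z_{1}\cup Z_{3})\geq 2$ (their complements in $V(G')$ being $A_{c}$ and $B_{c}$, of size $\geq 4$), and if $|Z_{4}|\geq 2$ then $\rho_{G'}(Z_{1}\cup Z_{2}\cup Z_{3})\geq 2$.

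For (1): $\rho_{G'}(B_{c})=\rho_{G'}(Z_{1}\cup Z_{3})\geq 2=\rho_{G\setminus b}(B_{c})$ by (iii), and since $Z_{2}\subseteq B_{c}$ with $a\notin B_{c}$, Lemma~\ref{lem:cor_s2} applied to the graph $G\setminus b$ and the vertex $a$ gives $\rho_{G'}(Z_{2})=\rho_{G\setminus b}(Z_{2})$, which is $\rho_{G}(Z_{2})$ by (i). Part (2) is the mirror statement with $A_{c}$ in place of $B_{c}$: $\rho_{G'}(A_{c})=\rho_{G'}(Z_{1}\cup Z_{2})\geq 2=\rho_{G\setminus a}(A_{c})$, and since $Z_{3}\subseteq A_{c}$ with $b\notin A_{c}$, Lemma~\ref{lem:cor_s2} for $G\setminus a$ and $b$ gives $\rho_{G'}(Z_{3})=\rho_{G\setminus a}(Z_{3})=\rho_{G}(Z_{3})$.

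For (3), the same application of Lemma~\ref{lem:cor_s2} (now with $Z_{4}\subseteq A_{c}$) gives $\rho_{G'}(Z_{4})=\rho_{G\setminus a}(Z_{4})$, and symmetrically $\rho_{G'}(Z_{4})=\rho_{G\setminus b}(Z_{4})$; it then remains to show this common value equals $\rho_{G}(Z_{4})$, i.e.\ by Lemma~\ref{lem:delrank} to rule out $\rho_{G}(Z_{4})=\rho_{G\setminus a}(Z_{4})+1$. If $|Z_{4}|=1$ then $Z_{4}=\{c\}$, and since $T$ is a triplet $c$ has a neighbour outside $\{a,b\}$ (otherwise $\rho_{G\setminus a}(\{b,c\})\leq 1$), so $\rho_{G'}(\{c\})=\rho_{G}(\{c\})=1$. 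If $|Z_{4}|\geq 2$, then $\rho_{G'}(Z_{4})=\rho_{G'}(Z_{1}\cup Z_{2}\cup Z_{3})\geq 2$ by (iii), while by~\ref{item:ab1} of Lemma~\ref{lem:sub_eq_AB} applied to $A_{c}$ and $B_{c}$, $\rho_{G}(Z_{4})+\rho_{G'}(Z_{1})\leq\rho_{G\setminus a}(A_{c})+\rho_{G\setminus b}(B_{c})=4$; hence whenever $\rho_{G'}(Z_{1})\geq 2$ we get $\rho_{G}(Z_{4})\leq 2\leq\rho_{G'}(Z_{4})$ and equality follows. As $\rho_{G'}(Z_{1})\geq 2$ holds by (iii) when $|Z_{1}|\geq 3$, only the case $|Z_{1}|\leq 2$ is left.

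The hard part will be this residual case $|Z_{1}|\leq 2$, where (iii) gives no lower bound on $\rho_{G'}(Z_{1})$ and the submodular inequalities above are too weak. My plan there is to use that $A_{b}$ and $B_{a}$ are then small: by~\ref{item:ab2} of Lemma~\ref{lem:sub_eq_AB} applied to $A_{b}$ and $B_{c}$ one gets $\rho_{G}(Z_{2})+\rho_{G}(Z_{3})\leq 4$, and combining this with the primeness of $G$, internal $3$-rank-connectedness, and $|Z_{1}|\leq 2$ pins down $|Z_{2}|$, $|Z_{3}|$, hence $|A_{b}|$, $|B_{a}|$. When one of these sets has exactly four elements it is a quad of $G\setminus a$ or $G\setminus b$ by Lemma~\ref{lem:quad}, and the quad relations $\rho_{G\setminus a}(A_{b}-\{x\})=3$, read against the identities in (i), either yield an outright contradiction --- for instance $Z_{1}=\emptyset$ forces $|Z_{2}|=3$ and then $\rho_{G\setminus a}(Z_{2})=3\neq 2=\rho_{G}(Z_{2})$, so $Z_{1}\neq\emptyset$ --- or supply the missing bound $\rho_{G'}(Z_{1})\geq 2$ (equivalently $\rho_{G}(Z_{4})\leq\rho_{G'}(Z_{4})$). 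Working through the few remaining small configurations of $(|Z_{1}|,|Z_{2}|,|Z_{3}|,|Z_{4}|)$ is where I expect the real effort to go; everything else is a direct chaining of Lemmas~\ref{lem:cor_s1}, \ref{lem:cor_s2}, \ref{lem:bc}, \ref{lem:series}, and~\ref{lem:sub_eq_AB}.
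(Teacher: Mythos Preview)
Your treatment of (1) and (2) is correct and essentially the paper's argument, reorganised: establishing $\rho_{G'}(A_c)=\rho_{G\setminus a}(A_c)=2$ and $\rho_{G'}(B_c)=\rho_{G\setminus b}(B_c)=2$ via Lemma~\ref{lem:series}, then pushing down with Lemma~\ref{lem:cor_s2}, and finally passing from $\rho_{G\setminus b}(Z_2)$ to $\rho_{G}(Z_2)$ using the triplet property and Lemma~\ref{lem:cor_s1}. Your abstraction (i) packages this last step cleanly; fact (ii) is correct but is not actually used anywhere in the complete parts of your argument.

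For (3) you correctly obtain $\rho_{G'}(Z_4)=\rho_{G\setminus a}(Z_4)=\rho_{G\setminus b}(Z_4)$ by the same mechanism, and then set out to show this equals $\rho_{G}(Z_4)$. Here you apply \ref{item:ab1} of Lemma~\ref{lem:sub_eq_AB} to the pair $(A_c,B_c)$, which yields $\rho_{G}(Z_4)+\rho_{G'}(Z_1)\le 4$ and leaves you needing $\rho_{G'}(Z_1)\ge 2$; hence the case analysis on $|Z_1|$ that you anticipate as the ``hard part'' and do not complete. This difficulty is entirely self-inflicted. Apply \ref{item:ab1} instead to the pair $(A_c,\,Z_4)$: since $b\notin A_c$ and $a\notin Z_4$, it gives
\[
\rho_{G}(A_c\cap Z_4)+\rho_{G\setminus a\setminus b}(A_c\cup Z_4)\;\le\;\rho_{G\setminus a}(A_c)+\rho_{G\setminus b}(Z_4),
\]
that is, $\rho_{G}(Z_4)+\rho_{G'}(A_c)\le 2+\rho_{G\setminus b}(Z_4)$. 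You have already shown $\rho_{G'}(A_c)=2$, so $\rho_{G}(Z_4)\le\rho_{G\setminus b}(Z_4)=\rho_{G'}(Z_4)$ and Lemma~\ref{lem:delrank} gives equality. No case split on $|Z_1|$ or $|Z_4|$ is needed; this is exactly the paper's route.
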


\begin{proof}
Since none of $A_{b}$, $A_{c}$ is sequential in $G\setminus a$ and none of $B_{a}$, $B_{c}$ is sequential in $G\setminus b$, we have $|A_{b}|,|A_{c}|,|B_{a}|,|B_{c}|\geq 4$.
By Lemma~\ref{lem:prime}, $G\setminus a$ is prime and so $\rho_{G\setminus a}(A_{c})=2$. Since $c\notin A_{b}-\{b\}$ and $|A_{b}-\{b\}|\geq 3$, by Lemma~\ref{lem:series}, we have $\rho_{G\setminus a\setminus b}(A_{c})=\rho_{G\setminus a\setminus b}(A_{b}-\{b\})\geq 2$. So by Lemma~\ref{lem:delrank}(i), we have $\rho_{G\setminus a\setminus b}(A_{c})=\rho_{G\setminus a}(A_{c})=2$. Similarly, $\rho_{G\setminus a\setminus b}(B_{c})=\rho_{G\setminus b}(B_{c})=2$.

Since $\rho_{G\setminus a\setminus b}(B_{c})=\rho_{G\setminus b}(B_{c})=2$ and $A_{b}\cap B_{c}\subseteq B_{c}$, by Lemma~\ref{lem:cor_s2}, we have 
$\rho_{G\setminus b}(A_{b}\cap B_{c})=\rho_{G\setminus a\setminus b}(A_{b}\cap B_{c})$. 

Since $\{a,b,c\}$ is a triplet of $G$, we have $\rho_{G}(\{a,b,c\})=\rho_{G\setminus b}(\{a,c\})$. 
Observe that $\rho_{G\setminus b}(A_{b}\cap B_{c})=\rho_{G\setminus b}(A_{c}\cup B_{a})$ and $\rho_{G}(A_{b}\cap B_{c})=\rho_{G}(A_{c}\cup B_{a}\cup\{b\})$. Since $\{a,b,c\}\subseteq A_{c}\cup B_{a}\cup\{b\}$, by~Lemma~\ref{lem:cor_s1},
$\rho_{G\setminus b}(A_{b}\cap B_{c})=\rho_{G\setminus b}(A_{c}\cup B_{a})=\rho_{G}(A_{c}\cup B_{a}\cup\{b\})=\rho_{G}(A_{b}\cap B_{c})$. 

Hence $\rho_{G\setminus a\setminus b}(A_{b}\cap B_{c})=\rho_{G}(A_{b}\cap B_{c})$ and (1) holds. By symmetry, (2) also holds. 

Now let us prove (3). Since $\rho_{G\setminus a\setminus b}(B_{c})=\rho_{G\setminus b}(B_{c})=2$ and $A_{c}\cap B_{c}\subseteq B_{c}$, by Lemma~\ref{lem:cor_s2}, we have $\rho_{G\setminus a\setminus b}(A_{c}\cap B_{c})=\rho_{G\setminus b}(A_{c}\cap B_{c})$.

By~\ref{item:ab1} of Lemma~\ref{lem:sub_eq_AB}, 
\begin{align*}
2+\rho_{G\setminus b}(A_{c}\cap B_{c})&=\rho_{G\setminus a}(A_{c})+\rho_{G\setminus b}(A_{c}\cap B_{c}) \\
&\geq\rho_{G\setminus a\setminus b}(A_{c})+\rho_{G}(A_{c}\cap B_{c})=2+\rho_{G}(A_{c}\cap B_{c}),
\end{align*}
which implies that $\rho_{G\setminus b}(A_{c}\cap B_{c})\geq \rho_{G}(A_{c}\cap B_{c})$. By (i) of Lemma~\ref{lem:delrank}, $\rho_{G\setminus b}(A_{c}\cap B_{c})\leq \rho_{G}(A_{c}\cap B_{c})$ and so $\rho_{G\setminus b}(A_{c}\cap B_{c})= \rho_{G}(A_{c}\cap B_{c})$.
Hence $\rho_{G}(A_{c}\cap B_{c})=\rho_{G\setminus a\setminus b}(A_{c}\cap B_{c})$.
\end{proof}

\begin{lemma}
\label{lem:lessthan2}
Let $G$ be an internally $3$-rank-connected graph with $|V(G)|\geq 12$ and $T=\{a,b,c\}$ be a triplet of $G$. Let $(A_{b}, A_{c})$ be a partition of $V(G\setminus a)$ such that $b\in A_{b}$, $c\in A_{c}$, $\rho_{G\setminus a}(A_{b})\leq 2$, and neither $A_{b}$ nor $A_{c}$ is sequential in $G\setminus a$ and let $(B_{a}, B_{c})$ be a partition of $V(G\setminus b)$ such that $a\in B_{a}$, $c\in B_{c}$, $\rho_{G\setminus b}(B_{a})\leq 2$, and neither $B_{a}$ nor $B_{c}$ is sequential in $G\setminus b$. If $G\setminus c$ is not sequentially $3$-rank-connected, then the following hold.
\begin{enumerate}[label=\rm(\roman*)]
\item $\rho_{G}(A_{c}\cap B_{a})\leq 2$ and $2\leq |A_{c}\cap B_{a}|\leq 3$.
\item $\rho_{G}(A_{b}\cap B_{c})\leq 2$ and $2\leq |A_{b}\cap B_{c}|\leq 3$.
\item $\rho_{G\setminus a\setminus b}(A_{b}\cap B_{a})\leq 2$.
\item $|A_{c}\cap B_{c}|\geq 2$.
\item If $\rho_{G\setminus a\setminus b}(A_{b}\cap B_{a})\geq 2$, then $\rho_{G}(A_{c}\cap B_{c})\leq 2$ and $|A_{c}\cap B_{c}|\leq 3$.
\end{enumerate}
\end{lemma}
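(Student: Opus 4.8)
The plan is to analyze the partition of $V(G\setminus a\setminus b)$ into the four cells $A_b\cap B_a$, $A_b\cap B_c$, $A_c\cap B_a$, $A_c\cap B_c$. Note that $c\in A_c\cap B_c$ while $a$ and $b$ lie outside $V(G\setminus a\setminus b)$, that $A_b=(A_b\cap B_a)\cup(A_b\cap B_c)\cup\{b\}$ and $B_a=(A_b\cap B_a)\cup(A_c\cap B_a)\cup\{a\}$ (and symmetrically $A_c=(A_c\cap B_a)\cup(A_c\cap B_c)$, $B_c=(A_b\cap B_c)\cup(A_c\cap B_c)$), and that non-sequentiality forces $|A_b|,|A_c|,|B_a|,|B_c|\ge 4$ while $|V(G)|\ge 12$. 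Since cut-rank is complementation-invariant, $\rho_{G\setminus a}(A_c)=\rho_{G\setminus a}(A_b)\le 2$ and $\rho_{G\setminus b}(B_c)=\rho_{G\setminus b}(B_a)\le 2$. Applying case (A3) of Lemma~\ref{lem:sub_eq_AB} to $A_b$ and $B_a$ and identifying the complement of $A_b\cup B_a$ in $V(G)$ gives
\[
\rho_{G\setminus a\setminus b}(A_b\cap B_a)+\rho_G(A_c\cap B_c)\le 4;
\]
applying case (A2) of Lemma~\ref{lem:sub_eq_AB}, with the roles of $a$ and $b$ interchanged, to $A_c$ and $B_a$ gives $\rho_{G\setminus a}(A_c\cap B_a)+\rho_{G\setminus b}(A_b\cap B_c)\le 4$; and Lemma~\ref{lem:subeq_minus} in $G\setminus a\setminus b$ applied to $A_c$ and $B_c$, combined with $\rho_{G\setminus a\setminus b}(A_c),\rho_{G\setminus a\setminus b}(B_c)\le 2$ (Lemma~\ref{lem:delrank}) and Lemma~\ref{lem:eq}, gives $\rho_G(A_c\cap B_a)+\rho_G(A_b\cap B_c)\le 4$. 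I will call these the three workhorse inequalities. Note also that $\rho_{G\setminus a}(X)\ge\rho_{G\setminus a\setminus b}(X)$ and $\rho_{G\setminus b}(X)\ge\rho_{G\setminus a\setminus b}(X)$ for cells $X\subseteq V(G\setminus a\setminus b)$ by Lemma~\ref{lem:delrank}.

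I would prove (iv) first (the other parts reduce to it). Suppose $|A_c\cap B_c|\le 1$; as $c\in A_c\cap B_c$ this forces $A_c\cap B_c=\{c\}$, so $|A_c\cap B_a|=|A_c|-1\ge 3$ and $|A_b\cap B_c|=|B_c|-1\ge 3$. Since $A_c\cap B_a$ and $A_b\cap B_c$ are $c$-free subsets of $V(G\setminus a\setminus b)$ of size $\ge 3$ with complement of size $\ge 2$, Lemma~\ref{lem:series} and Lemma~\ref{lem:eq} give $\rho_G(A_c\cap B_a),\rho_G(A_b\cap B_c)\ge 2$, so the third workhorse inequality forces $\rho_G(A_c\cap B_a)=\rho_G(A_b\cap B_c)=2$. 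Together with $\rho_{G\setminus b}(A_b\cap B_c)\ge\rho_{G\setminus a\setminus b}(A_b\cap B_c)=\rho_G(A_b\cap B_c)=2$, the second workhorse inequality forces $\rho_{G\setminus a}(A_c\cap B_a)\le 2$, hence $=2$. But $\rho_G(A_c\cap B_a)=2$ with $|V(G)-(A_c\cap B_a)|\ge 6$ and $G$ internally $3$-rank-connected force $|A_c\cap B_a|\le 3$, so $|A_c\cap B_a|=3$ and $|A_c|=4$; since $A_c$ is non-sequential in $G\setminus a$ with $\rho_{G\setminus a}(A_c)=2$, Lemma~\ref{lem:quad} makes $A_c$ a quad of $G\setminus a$, whence $\rho_{G\setminus a}(A_c-\{c\})=\rho_{G\setminus a}(A_c\cap B_a)=3$, a contradiction. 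Hence $|A_c\cap B_c|\ge 2$, which is (iv). Now (iii) follows: $|A_c\cap B_c|\ge 2$ and $G$ prime give $\rho_G(A_c\cap B_c)\ge 2$, so the first workhorse inequality gives $\rho_{G\setminus a\setminus b}(A_b\cap B_a)\le 2$. And (v) follows: if $\rho_{G\setminus a\setminus b}(A_b\cap B_a)\ge 2$, the first workhorse inequality gives $\rho_G(A_c\cap B_c)\le 2$, and since $|V(G)-(A_c\cap B_c)|\ge|A_b\cap B_a|+|A_b\cap B_c|+|A_c\cap B_a|+2\ge 5$ (using $|A_b|\ge 4$), internal $3$-rank-connectivity gives $|A_c\cap B_c|\le 3$.

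For (i) and (ii), observe that interchanging $a$ and $b$ carries a valid configuration to a valid one and swaps the roles of $A_c\cap B_a$ and $A_b\cap B_c$, so proving $|A_c\cap B_a|\ge 2$ in general also yields $|A_b\cap B_c|\ge 2$; thus it suffices to prove (i). If $|A_c\cap B_a|\ge 2$ and $|A_b\cap B_c|\ge 2$, then both cut-ranks are $\ge 2$ by primeness, so the third workhorse inequality forces $\rho_G(A_c\cap B_a)=\rho_G(A_b\cap B_c)=2$; and since $|V(G)-(A_c\cap B_a)|\ge|A_b\cap B_c|+|A_c\cap B_c|+2\ge 6$, internal $3$-rank-connectivity gives $|A_c\cap B_a|\le 3$. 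So (i) and (ii) both reduce to the single inequality $|A_c\cap B_a|\ge 2$, and this is where I expect the main obstacle to be. I would argue by contradiction: if $|A_c\cap B_a|\le 1$ then $|A_c\cap B_c|\ge 3$ and $|A_b\cap B_a|\ge 2$, and the workhorse inequalities no longer suffice on their own. The extra leverage should come from the third partition $(C_a,C_b)$ of $V(G\setminus c)$ furnished by $G\setminus c$ not being sequentially $3$-rank-connected: a second application of Lemma~\ref{lem:eq}, to the pair $(G\setminus a,G\setminus c)$ with common vertex $b$ (the remaining graph $G\setminus b$ being non-sequentially-$3$-rank-connected), together with Lemma~\ref{lem:bc} applied to $(C_a,C_b)$, yields further rank identities for the cells of the common refinement of all three partitions; combining these with $|V(G)|\ge 12$ and Lemma~\ref{lem:quad} should force some $4$-element cell to be a quad with incompatible ranks, exactly as in the proof of (iv). The delicate part is the bookkeeping: keeping track of which of $a$, $b$, $c$ sits in which cell of the three-way refinement, and which cells the size bound forces to be empty.
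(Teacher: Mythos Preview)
Your arguments for (iii), (iv), and (v) are correct. In fact your proof of (iv) is a minor variant of the paper's: you conclude by showing that $A_c$ becomes a quad of $G\setminus a$ with $\rho_{G\setminus a}(A_c\cap B_a)$ simultaneously equal to $2$ and $3$, whereas the paper first proves the conditional implications (your reductions of (i) and (ii)) and then shows that $B_c$ becomes sequential in $G\setminus b$. Your reduction of (i) and (ii) to the single inequality $|A_c\cap B_a|\ge 2$ (plus symmetry) is also fine.

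The gap is your plan for that last inequality. Bringing in the third partition $(C_a,C_b)$ is a detour; the paper finishes entirely within the two given partitions, and you already have all the ingredients. Suppose $|A_c\cap B_a|\le 1$; then $|A_b\cap B_a|\ge |B_a|-2\ge 2$. The point you are missing is that this forces $\rho_{G\setminus a\setminus b}(A_b\cap B_a)\ge 2$: if $|A_b\cap B_a|\ge 3$ this is Lemma~\ref{lem:series} (as $c\notin A_b\cap B_a$), and if $|A_b\cap B_a|=2$ then $|B_a|=4$, so by Lemma~\ref{lem:quad} $B_a$ is a quad of $G\setminus b$, whence $\rho_{G\setminus b}((A_b\cap B_a)\cup\{a\})=3$ and $\rho_{G\setminus a\setminus b}(A_b\cap B_a)\ge 2$ by Lemma~\ref{lem:delrank}(ii). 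Now your already–proved (v) gives $\rho_G(A_c\cap B_c)\le 2$ and $|A_c\cap B_c|\le 3$, so $|A_c|=|A_c\cap B_a|+|A_c\cap B_c|\le 4$, hence $|A_c|=4$ with $|A_c\cap B_c|=3$ and $\rho_{G\setminus a}(A_c\cap B_c)\le\rho_G(A_c\cap B_c)\le 2$; thus $A_c$ is sequential in $G\setminus a$, a contradiction. This is exactly the quad/sequential trick you used for (iv), applied to the cell $A_b\cap B_a$ instead of $A_c\cap B_a$.
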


\begin{proof}
Since none of $A_{b}$, $A_{c}$ is sequential in $G\setminus a$ and none of $B_{a}$, $B_{c}$ is sequential in $G\setminus b$, we have $|A_{b}|,|A_{c}|,|B_{a}|,|B_{c}|\geq 4$.
Let us prove the following, which prove the lemma.
\begin{enumerate}[label=\rm(\arabic*)]
\item If $|A_{b}\cap B_{c}|\geq 2$, then $\rho_{G}(A_{c}\cap B_{a})\leq 2$ and $|A_{c}\cap B_{a}|\leq 3$.
\item If $|A_{c}\cap B_{a}|\geq 2$, then $\rho_{G}(A_{b}\cap B_{c})\leq 2$ and $|A_{b}\cap B_{c}|\leq 3$.
\item If $|A_{c}\cap B_{c}|\geq 2$, then $\rho_{G\setminus a\setminus b}(A_{b}\cap B_{a})\leq 2$.
\item If $\rho_{G\setminus a\setminus b}(A_{b}\cap B_{a})\geq 2$, then $\rho_{G}(A_{c}\cap B_{c})\leq 2$ and $|A_{c}\cap B_{c}|\leq 3$.
\item $|A_{b}\cap B_{c}|\geq 2$.
\item $|A_{c}\cap B_{a}|\geq 2$.
\item $|A_{c}\cap B_{c}|\geq 2$.
\end{enumerate}

To prove (1), suppose that $|A_{b}\cap B_{c}|\geq 2$.
Since $G$ is prime and $|V(G)-(A_{b}\cap B_{c})|\geq |A_{c}|\geq 4$, by (1) of Lemma~\ref{lem:eq}, $\rho_{G\setminus a\setminus b}(A_{b}\cap B_{c})=\rho_{G}(A_{b}\cap B_{c})\geq 2$. Since $G\setminus b$ is prime and $|A_{b}\cap B_{c}|\geq 2$, we have $\rho_{G\setminus b}(A_{c}\cup B_{a})=\rho_{G\setminus b}(A_{b}\cap B_{c})\geq 2$. Since $\rho_{G\setminus a\setminus b}(A_{c})=2$, by~\ref{item:s1} of Lemma~\ref{lem:subtool},
\begin{align*}
2+2&=\rho_{G\setminus a\setminus b}(A_{c})+\rho_{G\setminus b}(B_{a}) \\
&\geq\rho_{G\setminus a\setminus b}(A_{c}\cap B_{a})+\rho_{G\setminus b}(A_{c}\cup B_{a})\geq \rho_{G\setminus a\setminus b}(A_{c}\cap B_{a})+2.
\end{align*}
Therefore, by (2) of Lemma~\ref{lem:eq}, $\rho_{G}(A_{c}\cap B_{a})=\rho_{G\setminus a\setminus b}(A_{c}\cap B_{a})\leq 2$. Since $G$ is internally $3$-rank-connected and $|V(G)-(A_{c}\cap B_{a})|\geq |A_{b}|\geq 4$, we deduce that $|A_{c}\cap B_{a}|\leq 3$.
So this proves (1). By symmetry between $a$ and~$b$, (2) also holds. 

Now we show (3). Suppose that $|A_{c}\cap B_{c}|\geq 2$. Since $G$ is prime and $|V(G)-(A_{b}\cup B_{a})|\geq |A_{c}|\geq 4$, we have $\rho_{G}(A_{b}\cup B_{a})\geq 2$. By~\ref{item:ab3} of Lemma~\ref{lem:sub_eq_AB}, 
\[
4\geq \rho_{G\setminus a}(A_{b})+\rho_{G\setminus b}(B_{a})\geq \rho_{G}(A_{b}\cup B_{a})+\rho_{G\setminus a\setminus b}(A_{b}\cap B_{a})
\]
and therefore $\rho_{G\setminus a\setminus b}(A_{b}\cap B_{a})\leq 2$.

Now let us prove (4). Suppose that $\rho_{G\setminus a\setminus b}(A_{b}\cap B_{a})\geq 2$. By~\ref{item:ab3} of Lemma~\ref{lem:sub_eq_AB},
\[
4\geq \rho_{G\setminus a}(A_{b})+\rho_{G\setminus b}(B_{a})\geq\rho_{G}(A_{b}\cup B_{a})+\rho_{G\setminus a\setminus b}(A_{b}\cap B_{a})\geq\rho_{G}(A_{b}\cup B_{a})+2.
\]
Hence $\rho_{G}(A_{b}\cup B_{a})=\rho_{G}(A_{c}\cap B_{c})\leq 2$.
Since $G$ is internally $3$-rank-connected and $|V(G)-(A_{c}\cap B_{c})|\geq 4$, we conclude that $|A_{c}\cap B_{c}|\leq 3$.

To prove (5), suppose that $|A_{b}\cap B_{c}|\leq 1$. Then $4\leq|A_{b}|=|\{b\}|+|A_{b}\cap B_{c}|+|A_{b}\cap B_{a}|\leq 2+|A_{b}\cap B_{a}|$ and so $|A_{b}\cap B_{a}|\geq 2$. 

If $|A_{b}\cap B_{a}|\geq 3$, then, since $c\in A_{c}\cap B_{c}$, by Lemma~\ref{lem:series}, $\rho_{G\setminus a\setminus b}(A_{b}\cap B_{a})\geq 2$.
If $|A_{b}\cap B_{a}|=2$, then $|A_{b}|=4$ and by Lemma~\ref{lem:quad}, $A_{b}$ is a quad of $G\setminus a$. Then by (ii) of Lemma~\ref{lem:delrank}, $\rho_{G\setminus a\setminus b}(A_{b}\cap B_{a})\geq\rho_{G\setminus a}((A_{b}\cap B_{a})\cup\{b\})-1=2$. So, in both cases, we deduce that $\rho_{G\setminus a\setminus b}(A_{b}\cap B_{a})\geq 2$.

Hence, by (4), $\rho_{G}(A_{c}\cap B_{c})\leq 2$ and $|A_{c}\cap B_{c}|\leq 3$. 
Since $4\leq |B_{c}|=|A_{b}\cap B_{c}|+|A_{c}\cap B_{c}|\leq 1+|A_{c}\cap B_{c}|\leq 4$, we have $|A_{c}\cap B_{c}|=3$ and $|B_{c}|=4$. By (i) of Lemma~\ref{lem:delrank}, $\rho_{G\setminus b}(A_{c}\cap B_{c})\leq \rho_{G}(A_{c}\cap B_{c})\leq 2$. So $B_{c}$ is sequential in $G\setminus b$, contradicting our assumption.
So this proves that $|A_{b}\cap B_{c}|\geq 2$ and by symmetry between $a$ and $b$, $|A_{c}\cap B_{a}|\geq 2$ and (6) holds. 

Now let us prove (7). Suppose that $|A_{c}\cap B_{c}|\leq 1$. Then $4\leq|A_{c}|=1+|A_{c}\cap B_{c}|+|A_{c}\cap B_{a}|\leq 2+|A_{c}\cap B_{a}|$ and so $2\leq|A_{c}\cap B_{a}|$. Then by (2), we have $\rho_{G}(A_{b}\cap B_{c})\leq 2$ and $|A_{b}\cap B_{c}|\leq 3$. 
Since $4\leq |B_{c}|=|A_{b}\cap B_{c}|+|A_{c}\cap B_{c}|\leq |A_{b}\cap B_{c}|+1\leq 4$, we have $|A_{b}\cap B_{c}|=3$ and $|B_{c}|=4$. By (i) of Lemma~\ref{lem:delrank}, $\rho_{G\setminus b}(A_{b}\cap B_{c})\leq \rho_{G}(A_{b}\cap B_{c})\leq 2$. So $B_{c}$ is sequential in $G\setminus b$, contradicting our assumption.
\end{proof}

\begin{lemma}
\label{lem:acbc}
Let $G$ be an internally $3$-rank-connected graph with $|V(G)|\geq 12$ and $T=\{a,b,c\}$ be a triplet of $G$. Let $(A_{b}, A_{c})$ be a partition of $V(G\setminus a)$ such that $b\in A_{b}$, $c\in A_{c}$, $\rho_{G\setminus a}(A_{b})\leq 2$, and neither $A_{b}$ nor $A_{c}$ is sequential in $G\setminus a$, let $(B_{a}, B_{c})$ be a partition of $V(G\setminus b)$ such that $a\in B_{a}$, $c\in B_{c}$, $\rho_{G\setminus b}(B_{a})\leq 2$, and neither $B_{a}$ nor $B_{c}$ is sequential in $G\setminus b$, and let $(C_{a}, C_{b})$ be a partition of $V(G\setminus c)$ such that $a\in C_{a}$, $b\in C_{b}$, $\rho_{G\setminus c}(C_{a})\leq 2$, and neither $C_{a}$ nor $C_{b}$ is sequential in $G\setminus c$. Then the following hold.
\begin{enumerate}[label=\rm(\arabic*)]
\item If $|A_{c}\cap B_{c}|\geq 3$ and $\rho_{G\setminus a\setminus b}(A_{b}\cap B_{a})>1$, then $|A_{c}\cap B_{c}|=~3$, $\rho_{G}(A_{c}\cap B_{c})=2$, and $|A_{c}\cap B_{c}\cap C_{a}|=|A_{c}\cap B_{c}\cap C_{b}|=1$.
\item If $|A_{c}\cap B_{c}|\geq 3$ and $\rho_{G\setminus a\setminus b}(A_{b}\cap B_{a})\leq 1$, then either
\begin{itemize}
\item $A_{b}\cap B_{a}=\emptyset$, or 
\item $1\leq |A_{b}\cap B_{a}|\leq 2$ and $\rho_{G\setminus c}((A_{b}\cap B_{a})\cup\{a,b\})=~3$.
\end{itemize}
\end{enumerate}
\end{lemma}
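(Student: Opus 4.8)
The plan is to prove (1) and (2) separately; in each case the quantitative facts will come straight from the earlier lemmas, and the remaining structural statement will be extracted from a single rank-submodularity inequality. As standing setup I would first note, via Lemma~\ref{lem:prime}, that $G\setminus a$, $G\setminus b$ and $G\setminus c$ are prime, and that the three given partitions witness that none of these three deletions is sequentially $3$-rank-connected; in particular Lemmas~\ref{lem:series} and~\ref{lem:lessthan2} apply with precisely the data of the hypothesis.

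For (1): assume $|A_c\cap B_c|\geq 3$ and $\rho_{G\setminus a\setminus b}(A_b\cap B_a)\geq 2$. Lemma~\ref{lem:lessthan2}(v) gives at once $\rho_G(A_c\cap B_c)\leq 2$ and $|A_c\cap B_c|\leq 3$, hence $|A_c\cap B_c|=3$; and since $G$ is prime with $|V(G)\setminus(A_c\cap B_c)|\geq 9$ we also get $\rho_G(A_c\cap B_c)\geq 2$, so $\rho_G(A_c\cap B_c)=2$. Write $D=A_c\cap B_c$ and $S=D\setminus\{c\}$, so $|S|=2$ and $S\subseteq V(G\setminus c)=C_a\sqcup C_b$; then $D\cap C_a$ and $D\cap C_b$ partition $S$, and it suffices to rule out $S\subseteq C_a$ and, symmetrically, $S\subseteq C_b$. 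If $S\subseteq C_a$, then Lemma~\ref{lem:subtool}(S1) with $v=c$, $X=C_a$, $Y=S$ reads $\rho_{G\setminus c}(C_a)+\rho_G(D)\geq\rho_{G\setminus c}(S)+\rho_G(C_a\cup\{c\})$, and since $\rho_{G\setminus c}(S)=2$ ($G\setminus c$ is prime and $|S|=2$) this forces $\rho_G(C_a\cup\{c\})\leq 2$. But $|C_a\cup\{c\}|\geq 5$ and $|V(G)\setminus(C_a\cup\{c\})|=|C_b|\geq 4$, contradicting that $G$ is internally $3$-rank-connected. The case $S\subseteq C_b$ is identical, using $\rho_{G\setminus c}(C_b)=\rho_{G\setminus c}(C_a)\leq 2$. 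Hence $|D\cap C_a|=|D\cap C_b|=1$.

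For (2): assume $|A_c\cap B_c|\geq 3$ and $\rho_{G\setminus a\setminus b}(A_b\cap B_a)\leq 1$, and suppose the first alternative fails, so that $W:=A_b\cap B_a$ is nonempty; note $W\cap\{a,b,c\}=\emptyset$ and $W\subseteq V(G\setminus a\setminus b)$. If $|W|\geq 3$ then, since $A_c\cap B_c$ is disjoint from $W$ and has at least $3$ elements, $|V(G\setminus a\setminus b)\setminus W|\geq 3$, so Lemma~\ref{lem:series} applied to $X=W$ (which avoids $c$) gives $\rho_{G\setminus a\setminus b}(W)\geq 2$, a contradiction; hence $1\leq|W|\leq 2$. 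Next, $\rho_{G\setminus c}(W\cup\{a,b\})\leq 3$ by peeling $a$ and then $b$ off with Lemma~\ref{lem:delrank}(ii) and deleting $c$ with Lemma~\ref{lem:delrank}(i), starting from $\rho_{G\setminus a\setminus b}(W)\leq 1$. For the reverse inequality, suppose $\rho_{G\setminus c}(W\cup\{a,b\})\leq 2$; since $T$ is a triplet we have $\rho_{G\setminus c}(\{a,b\})=2=\rho_G(\{a,b,c\})$, so Lemma~\ref{lem:cor_s1} with $v=c$ and $\{a,b,c\}\subseteq W\cup\{a,b,c\}$ yields $\rho_G(W\cup\{a,b,c\})=\rho_{G\setminus c}(W\cup\{a,b\})\leq 2$; but $|W\cup\{a,b,c\}|=|W|+3\geq 4$ and $|V(G)\setminus(W\cup\{a,b,c\})|\geq 7$, again contradicting internal $3$-rank-connectivity. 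Therefore $\rho_{G\setminus c}(W\cup\{a,b\})=3$, as required.

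The difficulty I expect is not conceptual but bookkeeping: one has to keep careful track of which of $a$, $b$, $c$ lies in which block of which of the three partitions, so that the intersections and the applications of Lemmas~\ref{lem:subtool}, \ref{lem:cor_s1} and~\ref{lem:series} have the right shape, and one must remember that the cut-rank is submodular but not monotone, so every rank bound has to be produced from Lemma~\ref{lem:delrank}, from primeness, or from a submodular inequality rather than from a containment. Once the bookkeeping is set up, each of (1) and (2) boils down to exhibiting a single set of cut-rank at most $2$ whose two sides both have size at least $4$, which contradicts that $G$ is internally $3$-rank-connected.
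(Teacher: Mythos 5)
Your proposal is correct and follows essentially the same route as the paper: part (1) via Lemma~\ref{lem:lessthan2}(v), primeness, and a single submodularity inequality ruling out the $(2,0)$ split of $(A_c\cap B_c)-\{c\}$ across $(C_a,C_b)$ (you use \ref{item:s1} directly where the paper invokes Lemma~\ref{lem:cor_s1}, which amounts to the same thing), and part (2) via Lemma~\ref{lem:series} plus Lemma~\ref{lem:delrank} for the upper bound and Lemma~\ref{lem:cor_s1} with the triplet property against internal $3$-rank-connectivity for the lower bound. All rank bounds are justified, so no changes are needed.
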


\begin{proof}
(1)
Since $\rho_{G\setminus a\setminus b}(A_{b}\cap B_{a})>1$, we have $|A_{b}\cap B_{a}|\geq 2$ and by (v) of Lemma~\ref{lem:lessthan2}, $\rho_{G}(A_{c}\cap B_{c})\leq 2$ and $|A_{c}\cap B_{c}|\leq~3$. Hence $|A_{c}\cap B_{c}|=3$. Since $G$ is prime and $|V(G)-(A_c\cap B_c)|\geq 3$, we have $\rho_{G}(A_{c}\cap B_{c})=2$.
Now we prove that $|A_{c}\cap B_{c}\cap C_{a}|=|A_{c}\cap B_{c}\cap C_{b}|=1$.
Suppose not. Then, by symmetry, we may assume that $|A_{c}\cap B_{c}\cap C_{a}|=2$ and $|A_{c}\cap B_{c}\cap C_{b}|=0$. Since $|(A_{c}\cap B_{c})-\{c\}|=2$ and $G\setminus c$ is prime, $\rho_{G\setminus c}((A_{c}\cap B_{c})-\{c\})\geq 2$. By (ii) of Lemma~\ref{lem:delrank}, $\rho_{G\setminus c}((A_{c}\cap B_{c})-\{c\})=\rho_{G}(A_{c}\cap B_{c})=2$.
Since $A_{c}\cap B_{c}\subseteq C_{a}\cup\{c\}$, by~Lemma~\ref{lem:cor_s1}, 
$\rho_{G}(C_{a}\cup\{c\})=\rho_{G\setminus c}(C_{a})\leq 2$. Since $G$ is internally $3$-rank-connected and $|C_{a}\cup\{c\}|\geq 5$, we have $|C_{b}|\leq 3$, contradicting our assumption.

\medskip 
\noindent
(2) By Lemma~\ref{lem:series}, $|A_{b}\cap B_{a}|\leq 2$. Suppose that $|A_{b}\cap B_{a}|\geq 1$. We can observe that $\rho_{G}((A_{b}\cap B_{a})\cup\{a,b,c\})\geq 3$ because $G$ is internally $3$-rank-connected, $|(A_{b}\cap B_{a})\cup\{a,b,c\}|\geq 4$, and $|V(G)-((A_{b}\cap B_{a})\cup\{a,b,c\})|\geq 12-5=7$. Since $\{a,b,c\}$ is a triplet of $G$, we have $\rho_{G}(\{a,b,c\})=\rho_{G\setminus c}(\{a,b\})=2$. Since $\{a,b,c\}\subseteq (A_{b}\cap B_{a})\cup\{a,b,c\}$, by~Lemma~\ref{lem:cor_s1}, $\rho_{G\setminus c}((A_{b}\cap B_{a})\cup\{a,b\})=\rho_{G}((A_{b}\cap B_{a})\cup\{a,b,c\})\geq 3$. By (i) and (ii) of Lemma~\ref{lem:delrank}, $\rho_{G\setminus c}((A_{b}\cap B_{a})\cup\{a,b\})\leq\rho_{G}((A_{b}\cap B_{a})\cup\{a,b\})\leq 2+\rho_{G\setminus a\setminus b}(A_{b}\cap B_{a})\leq 3$ and we conclude that $\rho_{G\setminus c}((A_{b}\cap B_{a})\cup\{a,b\})=3$.
\end{proof}

\begin{proposition}
\label{prop:internal}
Let $T$ be a triplet of an internally $3$-rank-connected graph $G$. If $|V(G)|\geq 12$, then there exists $t\in T$ such that $G\setminus t$ is sequentially $3$-rank-connected.
\end{proposition}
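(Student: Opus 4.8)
The plan is to argue by contradiction: assume none of $G\setminus a$, $G\setminus b$, $G\setminus c$ is sequentially $3$-rank-connected, and deduce $|V(G)|\le 11$. By Lemma~\ref{lem:prime} each of $G\setminus a,G\setminus b,G\setminus c$ is prime, so each admits a partition into two non-sequential sides of cut-rank at most~$2$. Applying Lemma~\ref{lem:bc} to each of the three deletions (with the triplet $T$) lets me normalize these to partitions $(A_b,A_c)$ of $V(G\setminus a)$, $(B_a,B_c)$ of $V(G\setminus b)$, $(C_a,C_b)$ of $V(G\setminus c)$ with $b\in A_b$, $c\in A_c$, $a\in B_a$, $c\in B_c$, $a\in C_a$, $b\in C_b$, each side non-sequential (hence of size at least~$4$), $\rho_{G\setminus a}(A_b),\rho_{G\setminus b}(B_a),\rho_{G\setminus c}(C_a)\le 2$, and — from the second assertion of Lemma~\ref{lem:bc} — the rank-$3$ identities $\rho_{G\setminus b}(A_b\setminus\{b\})=\rho_{G\setminus c}(A_c\setminus\{c\})=3$ and their cyclic analogues. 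These three partitions cut $V(G)\setminus\{a,b,c\}$ into pieces; note that each of $a,b,c$ lies in exactly two of the six sides (e.g. $a\in B_a\cap C_a$), which is the bookkeeping I will rely on.

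\textbf{Extracting the constraints.} Since we are assuming that all three deletions fail to be sequentially $3$-rank-connected, I may apply Lemma~\ref{lem:lessthan2} with any element of $T$ playing the role of its ``$c$'', the two required partitions being the ones of the other two deletions. This yields: every ``off-diagonal'' intersection ($A_b\cap B_c$, $A_c\cap B_a$, and their two cyclic analogues) has between $2$ and $3$ vertices; every ``same-side'' intersection ($A_c\cap B_c$ and its cyclic analogues) has at least $2$ vertices; every ``diagonal'' intersection ($A_b\cap B_a$ and its cyclic analogues) has cut-rank at most $2$ in the relevant doubly-deleted graph; and if a diagonal intersection has cut-rank exactly $2$, then the matching same-side intersection has at most $3$ vertices. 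Lemma~\ref{lem:series} supplies the complementary fact that a diagonal intersection of cut-rank at most $1$ has at most $2$ vertices. So for each of the three pairs $\{a,b\},\{b,c\},\{a,c\}$ I get a dichotomy: the diagonal intersection has at most $2$ vertices, or the corresponding same-side intersection has at most $3$.

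\textbf{Refining the tight cases and counting.} In a ``tight'' branch I invoke Lemma~\ref{lem:acbc}: if the diagonal intersection has cut-rank $\ge 2$ while its same-side partner has $\ge 3$ vertices, then that partner has exactly $3$ vertices and meets $C_a$ and $C_b$ in one vertex each; and if the diagonal intersection is non-empty of cut-rank $\le 1$ while its same-side partner has $\ge 3$ vertices, then the diagonal has at most $2$ vertices and its union with the relevant two triplet vertices has cut-rank $3$ across the $(C_a,C_b)$-partition. I then finish by the count $|V(G)| = 3 + (\text{sum of the sizes of the pieces into which the three partitions cut }V(G)\setminus\{a,b,c\})$, running a short case analysis over which branch of each of the three dichotomies holds and using the refined information above, together with submodularity (Lemma~\ref{lem:subeq}) and the rank-$3$ identities from Lemma~\ref{lem:bc}, to bound every piece; each case gives $|V(G)|\le 11$, contradicting $|V(G)|\ge 12$.

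\textbf{Main obstacle.} The crux is precisely this last counting step. The bounds from Lemmas~\ref{lem:series},~\ref{lem:lessthan2}, and~\ref{lem:acbc} do not collapse under a single inequality, because the ``diagonal'' pieces are not a priori bounded in size once their cut-rank is~$2$; one genuinely has to split into cases and, in the worst branch, combine Lemma~\ref{lem:acbc}'s finer structural conclusions with the $(C_a,C_b)$-partition to account for the last few vertices. Since the threshold $12$ is sharp, the arithmetic has no slack, which is what makes this bookkeeping the delicate part of the argument.
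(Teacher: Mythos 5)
Your setup is exactly the paper's: assume all three deletions fail, invoke Lemma~\ref{lem:prime} and Lemma~\ref{lem:bc} to normalize the three partitions, and feed them into Lemmas~\ref{lem:series}, \ref{lem:lessthan2}, and \ref{lem:acbc}. But the proof has a genuine gap at precisely the point you flag as ``the crux'': the final count is not a short case analysis over your three dichotomies, and neither branch of a dichotomy actually bounds $|V(G)|$. If $\rho_{G\setminus a\setminus b}(A_b\cap B_a)\ge 2$ you learn $|A_c\cap B_c|\le 3$ but $|A_b\cap B_a|$ remains unbounded; if instead $|A_b\cap B_a|\le 2$ you have no bound on $|A_c\cap B_c|$. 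So two partitions never suffice, and the argument must work with the common refinement of all three, i.e.\ with the eight triple intersections $A_\ast\cap B_\ast\cap C_\ast$. The paper's proof shows each of these eight cells has at most one vertex, giving $|V(G)|\le 8+3=11$, and that is where all the work is.

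Two specific ingredients are missing from your outline. First, before any cell can be bounded, the paper must \emph{re-choose} the partitions so that every off-diagonal intersection of size $3$ (such as $A_b\cap B_c$) lies entirely inside one side of the third partition (conditions \ref{item:b1}--\ref{item:b6}). This is done by a maximality argument: pick the sextuple satisfying the most of \ref{item:b1}--\ref{item:b6}, and show that a violated condition lets you shift a single vertex across $(C_a,C_b)$ (using Lemma~\ref{lem:subeq}, primality, and Lemma~\ref{lem:basic_sequential} to preserve non-sequentiality) while not destroying any condition already satisfied --- verifying the latter is itself delicate and uses Lemma~\ref{lem:lessthan2} again. Second, the bounds $|A_b\cap B_a\cap C_a|\le 1$ and $|A_b\cap B_c\cap C_a|\le 1$ (Claims~\ref{claim:baa} and~\ref{claim:bca}) are not bookkeeping consequences of the dichotomies: their proofs depend on \ref{item:b1}--\ref{item:b6} and require fresh rank computations (e.g.\ deriving $\rho_{G\setminus c}(C_b-\{b\})\le 2$ and contradicting Lemma~\ref{lem:bc}). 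Without the normalization and these two claims, your plan does not close, so the proposal should be regarded as an accurate reconstruction of the strategy with the decisive step unproved.
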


\begin{proof}
Let $T=\{a,b,c\}$.
Suppose that none of $G\setminus a$, $G\setminus b$, and $G\setminus c$ is sequentially $3$-rank-connected. Then there exist partitions $(A_{b},A_{c})$ of $V(G)-\{a\}$, $(B_{a},B_{c})$ of $V(G)-\{b\}$, and $(C_{a},C_{b})$ of $V(G)-\{c\}$ such that $\rho_{G\setminus a}(A_{b})\leq 2$, $\rho_{G\setminus b}(B_{a})\leq 2$, $\rho_{G\setminus c}(C_{a})\leq 2$, neither $A_{b}$ nor $A_{c}$ is sequential in $G\setminus a$, neither $B_{a}$ nor $B_{c}$ is sequential in $G\setminus b$, and neither $C_{a}$ nor $C_{b}$ is sequential in $G\setminus c$. Then 
$|A_{b}|,|A_{c}|\geq 4$, $|B_{a}|,|B_{c}|\geq 4$, and $|C_{a}|,|C_{b}|\geq 4$.

By Lemma~\ref{lem:bc}, we may assume that 
$b\in A_{b}$, $c\in A_{c}$,
$a\in B_{a}$, $c\in B_{c}$,
$a\in C_{a}$, and $b\in C_{b}$.

By Lemma~\ref{lem:lessthan2}, we have $|A_{b}\cap B_{c}|\leq 3,|A_{c}\cap B_{a}|\leq 3$, and $\rho_{G}(A_{b}\cap B_{c})\leq 2$. By symmetry between $b$ and $c$, we have that $|A_{c}\cap C_{b}|\leq 3$ and $|A_{b}\cap C_{a}|\leq 3$. By symmetry between $a$ and $c$, we have that $|B_{c}\cap C_{a}|\leq 3$ and $|B_{a}\cap C_{b}|\leq 3$.
Now we show that we can assume the following.

\begin{enumerate}[label=\rm(B\arabic*)]
\item\label{item:b1} If $|A_{b}\cap B_{c}|=3$, then $A_{b}\cap B_{c}\subseteq C_{a}$ or $A_{b}\cap B_{c}\subseteq C_{b}$.
\item\label{item:b2} If $|A_{c}\cap B_{a}|=3$, then $A_{c}\cap B_{a}\subseteq C_{a}$ or $A_{c}\cap B_{a}\subseteq C_{b}$.
\item\label{item:b3} If $|A_{c}\cap C_{b}|=3$, then $A_{c}\cap C_{b}\subseteq B_{a}$ or $A_{c}\cap C_{b}\subseteq B_{c}$.
\item\label{item:b4} If $|A_{b}\cap C_{a}|=3$, then $A_{b}\cap C_{a}\subseteq B_{a}$ or $A_{b}\cap C_{a}\subseteq B_{c}$.
\item\label{item:b5} If $|B_{c}\cap C_{a}|=3$, then $B_{c}\cap C_{a}\subseteq A_{b}$ or $B_{c}\cap C_{a}\subseteq A_{c}$.
\item\label{item:b6} If $|B_{a}\cap C_{b}|=3$, then $B_{a}\cap C_{b}\subseteq A_{b}$ or $B_{a}\cap C_{b}\subseteq A_{c}$.
\end{enumerate}

We choose $(A_{b},A_{c},B_{a},B_{c},C_{a},C_{b})$ such that $b\in A_{b}$, $c\in A_{c}$,
$a\in B_{a}$, $c\in B_{c}$,
$a\in C_{a}$, $b\in C_{b}$, and it satisfies the maximum number of ~\ref{item:b1}--\ref{item:b6}. Then we claim that all of \ref{item:b1}--\ref{item:b6} hold. Suppose not. Then by symmetry, we can assume that \ref{item:b1} does not hold. Then $|A_{b}\cap B_{c}|=3$, $A_{b}\cap B_{c}\nsubseteq C_{a}$, and $A_{b}\cap B_{c}\nsubseteq C_{b}$. 
Then either $|A_{b}\cap B_{c}\cap C_{a}|=2$ and $|A_{b}\cap B_{c}\cap C_{b}|=1$
or $|A_{b}\cap B_{c}\cap C_{a}|=1$ and $|A_{b}\cap B_{c}\cap C_{b}|=2$. 

\medskip\noindent
\textit{(i)} Suppose that $|A_{b}\cap B_{c}\cap C_{a}|=2$ and $|A_{b}\cap B_{c}\cap C_{b}|=1$.
Let $x$ be the element of $A_{b}\cap B_{c}\cap C_{b}$. We have $\rho_{G\setminus c}(A_{b}\cap B_{c})\leq \rho_{G}(A_{b}\cap B_{c})\leq 2$. Since $|(A_{b}\cap B_{c})-\{x\}|=2$ and $G\setminus c$ is prime, $\rho_{G\setminus c}((A_{b}\cap B_{c})-\{x\})\geq 2$. So by Lemma~\ref{lem:subeq}, 
\begin{align*}
2+2&\geq\rho_{G\setminus c}(C_{a})+\rho_{G\setminus c}(A_{b}\cap B_{c}) \\
&\geq\rho_{G\setminus c}((A_{b}\cap B_{c})-\{x\})+\rho_{G\setminus c}(C_{a}\cup\{x\})\geq 2+\rho_{G\setminus c}(C_{a}\cup\{x\}).
\end{align*}
Therefore, $\rho_{G\setminus c}(C_{a}\cup\{x\})\leq \rho_{G\setminus c}(C_{a})\leq 2$. Since $G\setminus c$ is prime and $|V(G\setminus c)-(C_{a}\cup\{x\})|=|C_{b}|-1\geq 3$, we have $\rho_{G\setminus c}(C_{a}\cup\{x\})=\rho_{G\setminus c}(C_{a})=2$. Hence by Lemma~\ref{lem:basic_sequential}, neither $C_{a}\cup\{x\}$ nor $C_{b}-\{x\}$ is sequential in $G\setminus c$. We deduce that $(A_{b},A_{c},B_{a},B_{c},C_{a}\cup\{x\}, C_{b}-\{x\})$ satisfies \ref{item:b1}.  
Since $x\notin A_{c}\cap B_{a}$, if $(A_{b},A_{c},B_{a},B_{c},C_{a},C_{b})$ satisfies \ref{item:b2}, then $(A_{b},A_{c},B_{a},B_{c},C_{a}\cup\{x\}, C_{b}-\{x\})$ satisfies~\ref{item:b2}.
Since $A_{c}\cap C_{b}=A_{c}\cap (C_{b}-\{x\})$, if
$(A_{b},A_{c},B_{a},B_{c},C_{a},C_{b})$ satisfies \ref{item:b3}, then $(A_{b},A_{c},B_{a},B_{c},C_{a}\cup\{x\}, C_{b}-\{x\})$ satisfies~\ref{item:b3}. Since $B_{a}\cap(C_{b}-\{x\})=B_{a}\cap C_{b}$, if $(A_{b},A_{c},B_{a},B_{c},C_{a},C_{b})$ satisfies \ref{item:b6}, then $(A_{b},A_{c},B_{a},B_{c},C_{a}\cup\{x\}, C_{b}-\{x\})$ satisfies~\ref{item:b6}. Since $x\in A_{b}$, we have $|A_{b}\cap C_{a}|+1=|A_{b}\cap (C_{a}\cup\{x\})|\leq 3$ by applying Lemma~\ref{lem:lessthan2}(i) with $(A_{c},A_{b})$ and $(C_{a}\cup\{x\}, C_{b}-\{x\})$. So $|A_{b}\cap C_{a}|\leq 2$. Since $|A_{b}\cap B_{c}\cap C_{a}|=2$ we have $A_{b}\cap C_{a}\subseteq B_{c}$.
So $A_{b}\cap (C_{a}\cup\{x\})\subseteq B_{c}$ because $x\in B_{c}$. Hence $(A_{b},A_{c},B_{a},B_{c},C_{a}\cup\{x\}, C_{b}-\{x\})$ satisfies \ref{item:b4}.

Since $x\in B_{c}$, we have $|B_{c}\cap C_{a}|+1=|B_{c}\cap (C_{a}\cup\{x\})|\leq 3$ by applying Lemma~\ref{lem:lessthan2}(ii) with $(B_{c},B_{a})$ and $(C_{b}-\{x\}, C_{a}\cup\{x\})$. So $|B_{c}\cap C_{a}|\leq 2$. Since $|A_{b}\cap B_{c}\cap C_{a}|=2$ we have $B_{c}\cap C_{a}\subseteq A_{b}$.
So $B_{c}\cap (C_{a}\cup\{x\})\subseteq A_{b}$ because $x\in A_{b}$. Hence $(A_{b},A_{c},B_{a},B_{c},C_{a}\cup\{x\}, C_{b}-\{x\})$ satisfies \ref{item:b5}.
Therefore, the number of \ref{item:b1}--\ref{item:b6} which $(A_{b},A_{c},B_{a},B_{c},C_{a}\cup\{x\}, C_{b}-\{x\})$ satisfies is larger than the number of \ref{item:b1}--\ref{item:b6} which $(A_{b},A_{c},B_{a},B_{c},C_{a}, C_{b})$ satisfies, contradicting our assumption.

\medskip\noindent
\textit{(ii)} Suppose that $|A_{b}\cap B_{c}\cap C_{a}|=1$ and $|A_{b}\cap B_{c}\cap C_{b}|=2$. Let $y$ be the element of $A_{b}\cap B_{c}\cap C_{a}$. Since $|(A_{b}\cap B_{c})-\{y\}|=2$ and $G\setminus c$ is prime, $\rho_{G\setminus c}((A_{b}\cap B_{c})-\{y\})\geq 2$. So by Lemma~\ref{lem:subeq}, 
\begin{align*}
2+2&\geq\rho_{G\setminus c}(C_{b})+\rho_{G\setminus c}(A_{b}\cap B_{c}) \\
&\geq\rho_{G\setminus c}((A_{b}\cap B_{c})-\{y\})+\rho_{G\setminus c}(C_{b}\cup\{y\})\geq 2+\rho_{G\setminus c}(C_{b}\cup\{y\}). 
\end{align*}
Therefore, $\rho_{G\setminus c}(C_{b}\cup\{y\})\leq \rho_{G\setminus c}(C_{b})\leq 2$. Since $G\setminus c$ is prime and $|V(G\setminus c)-(C_{b}\cup\{y\})|=|C_{a}|-1\geq 3$, we have $\rho_{G\setminus c}(C_{b}\cup\{y\})=\rho_{G\setminus c}(C_{b})=2$. Hence by Lemma~\ref{lem:basic_sequential}, neither $C_{a}-\{y\}$ nor $C_{b}\cup\{y\}$ is sequential in $G\setminus c$. We deduce that $(A_{b},A_{c},B_{a},B_{c},C_{a}-\{y\}, C_{b}\cup\{y\})$ satisfies \ref{item:b1}. Since $y\notin A_{c}\cap B_{a}$, if $(A_{b},A_{c},B_{a},B_{c},C_{a},C_{b})$ satisfies \ref{item:b2}, then $(A_{b},A_{c},B_{a},B_{c},C_{a}-\{y\}, C_{b}\cup\{y\})$ satisfies~\ref{item:b2}. Since $y\in A_{b}\cap C_{a}$, by applying Lemma~\ref{lem:lessthan2}(i) with $(A_{c},A_{b})$ and $(C_{a},C_{b})$, we have $|A_{b}\cap (C_{a}-\{y\})|=|A_{b}\cap C_{a}|-1\leq 3-1=2$. Hence $(A_{b},A_{c},B_{a},B_{c},C_{a}-\{y\}, C_{b}\cup\{y\})$ satisfies \ref{item:b4}. Since $y\in B_{c}\cap C_{a}$, by applying Lemma~\ref{lem:lessthan2}(ii) with $(B_{c},B_{a})$ and $(C_{b},C_{a})$, we have $|B_{c}\cap (C_{a}-\{y\})|=|B_{c}\cap C_{a}|-1\leq 3-1=2$. 
Hence $(A_{b},A_{c},B_{a},B_{c},C_{a}-\{y\}, C_{b}\cup\{y\})$ satisfies \ref{item:b5}.

Since $A_{c}\cap (C_{b}\cup\{y\})= A_{c}\cap C_{b}$, if $(A_{b},A_{c},B_{a},B_{c},C_{a},C_{b})$ satisfies \ref{item:b3}, then $(A_{b},A_{c},B_{a},B_{c},C_{a}-\{y\},C_{b}\cup\{y\})$ satisfies \ref{item:b3}.
Since $B_{a}\cap (C_{b}\cup\{y\})=B_{a}\cap C_{b}$, if $(A_{b},A_{c},B_{a},B_{c},C_{a},C_{b})$ satisfies \ref{item:b6}, then $(A_{b},A_{c},B_{a},B_{c},C_{a}-\{y\},C_{b}\cup\{y\})$ satisfies \ref{item:b6}. 
Therefore, the number of \ref{item:b1}--\ref{item:b6} which $(A_{b},A_{c},B_{a},B_{c},C_{a}-\{y\}, C_{b}\cup\{y\})$ satisfies is larger than the number of \ref{item:b1}--\ref{item:b6} which $(A_{b},A_{c},B_{a},B_{c},C_{a}, C_{b})$ satisfies, contradicting our assumption.

Therefore, the claim is proved and  $(A_{b},A_{c},B_{a},B_{c},C_{a},C_{b})$ satisfies \ref{item:b1}--\ref{item:b6}.
\begin{claim}
\label{claim:baa}
$|A_{b}\cap B_{a}\cap C_{a}|\leq 1$.
\end{claim}
\begin{subproof}
Suppose that $|A_{b}\cap B_{a}\cap C_{a}|\geq 2$. If $|A_{b}\cap C_{a}|=2$, then $A_{b}\cap C_{a}\subseteq B_{a}$ and so $A_{b}\cap B_{c}\cap C_{a}=\emptyset$. If $|A_{b}\cap C_{a}|=3$, then by~\ref{item:b4}, $A_{b}\cap B_{c}\cap C_{a}=\emptyset$. Since $2\leq |A_{b}\cap C_{a}|\leq 3$, we deduce that $A_{b}\cap B_{c}\cap C_{a}=\emptyset$.

By applying Lemma~\ref{lem:lessthan2}(ii) with $(B_{c},B_{a})$ and $(C_{b},C_{a})$, we have that $|B_{c}\cap C_{a}|\geq 2$. 
Since $A_{b}\cap B_{c}\cap C_{a}=\emptyset$, we have $|A_{c}\cap B_{c}\cap C_{a}|=|B_{c}\cap C_{a}|\geq 2$ and so $|A_{c}\cap B_{c}|\geq|\{c\}|+ |A_{c}\cap B_{c}\cap C_{a}|\geq 3$. 
Since $|A_{c}\cap B_{c}\cap C_{a}|\geq 2$, by Lemma~\ref{lem:acbc}(1), $\rho_{G\setminus a\setminus b}(A_{b}\cap B_{a})\leq 1$. So by Lemma~\ref{lem:acbc}(2), 
\[
\text{$|A_{b}\cap B_{a}|=2$ and $\rho_{G\setminus c}((A_{b}\cap B_{a})\cup\{a,b\})=3$},
\]
because $|A_{b}\cap B_{a}|\geq |A_{b}\cap B_{a}\cap C_{a}|\geq 2$. Hence $A_{b}\cap B_{a}\subseteq C_{a}$. 

By Lemma~\ref{lem:prime}, $G\setminus a$ is prime and so $\rho_{G\setminus a}(A_{b}\cap B_{a})=2$.
By (ii) of Lemma~\ref{lem:delrank}, we have $\rho_{G\setminus a}((A_{b}\cap B_{a})\cup\{b\})\leq \rho_{G\setminus a\setminus b}(A_{b}\cap B_{a})+1\leq 2$. So by~\ref{item:ab2} of Lemma~\ref{lem:sub_eq_AB}, 
\begin{align*}
\rho_{G\setminus c}((A_{b}\cap B_{a})\cup\{a\})+2&\geq\rho_{G\setminus c}((A_{b}\cap B_{a})\cup\{a\})+\rho_{G\setminus a}((A_{b}\cap B_{a})\cup\{b\}) \\
&\geq\rho_{G\setminus c}((A_{b}\cap B_{a})\cup\{a,b\})+\rho_{G\setminus a}(A_{b}\cap B_{a})=3+2,
\end{align*}
which implies that $\rho_{G\setminus c}((A_{b}\cap B_{a})\cup\{a\})\geq 3$.
Therefore, by Lemma~\ref{lem:subeq_minus}, 
\begin{align*}
3+2&\geq\rho_{G\setminus c}((A_{b}\cap B_{a})\cup\{a,b\})+\rho_{G\setminus c}(C_{b}) \\
&\geq \rho_{G\setminus c}((A_{b}\cap B_{a})\cup\{a\})+\rho_{G\setminus c}(C_{b}-\{b\})\geq 3+\rho_{G\setminus c}(C_{b}-\{b\}).
\end{align*}
Therefore, $\rho_{G\setminus c}(C_{b}-\{b\})\leq 2$. By Lemma~\ref{lem:prime}, $G\setminus c$ is prime. Since $|C_{b}-\{b\}|\geq 3$, we have $\rho_{G\setminus c}(C_{b}-\{b\})=2$. So by Lemma~\ref{lem:basic_sequential}, neither $C_{a}\cup\{b\}$ nor $C_{b}-\{b\}$ is sequential in $G\setminus c$, contradicting Lemma~\ref{lem:bc} because $\{a,b\}\subseteq C_{a}\cup\{b\}$.
\end{subproof}

Hence, by symmetry, we have $|A_{b}\cap B_{a}\cap C_{a}|\leq 1$, $|A_{c}\cap B_{a}\cap C_{a}|\leq 1$, $|A_{b}\cap B_{a}\cap C_{b}|\leq 1$, $|A_{b}\cap B_{c}\cap C_{b}|\leq 1$, $|A_{c}\cap B_{c}\cap C_{a}|\leq 1$, and $|A_{c}\cap B_{c}\cap C_{b}|\leq 1$.

\begin{claim}
\label{claim:bca}
$|A_{b}\cap B_{c}\cap C_{a}|\leq 1$.
\end{claim}
\begin{subproof}
Suppose that $|A_{b}\cap B_{c}\cap C_{a}|\geq 2$. If $|A_{b}\cap B_{c}|=2$, then $A_{b}\cap B_{c}\subseteq C_{a}$ and $A_{b}\cap B_{c}\cap C_{b}=\emptyset$. If $|A_{b}\cap B_{c}|=3$, then by~\ref{item:b1}, $A_{b}\cap B_{c}\cap C_{b}=\emptyset$. By Lemma~\ref{lem:lessthan2}(i), we have $2\leq|A_{b}\cap B_{c}|\leq 3$. So we deduce that $A_{b}\cap B_{c}\cap C_{b}=\emptyset$.

By symmetry between $(a,b,c)$ and $(c,a,b)$, we deduce that $C_{a}\cap A_{b}\cap B_{a}=~\emptyset$.
By symmetry between $(a,b,c)$ and $(b,c,a)$, we deduce that $B_{c}\cap C_{a}\cap A_{c}=\emptyset$. By Lemma~\ref{lem:lessthan2}(iv), $|A_{c}\cap B_{c}|\geq 2$. So we deduce that
\[
1 \leq |A_{c}\cap B_{c}|-|\{c\}|-|A_{c}\cap B_{c}\cap C_{a}|=|A_{c}\cap B_{c}\cap C_{b}|\leq 1,
\] 
and therefore $|A_{c}\cap B_{c}\cap C_{b}|=1$.

If $|A_{c}\cap C_{b}|=3$, then by~\ref{item:b3}, $|A_{c}\cap B_{a}\cap C_{b}|=0$. If $|A_{c}\cap C_{b}|\leq 2$, then $|A_{c}\cap B_{a}\cap C_{b}|=|A_{c}\cap C_{b}|-|A_{c}\cap B_{c}\cap C_{b}|\leq 2-1=1$. Since $|A_{c}\cap C_{b}|\leq 3$, in both cases, we deduce that $|A_{c}\cap B_{a}\cap C_{b}|\leq 1$.
Then we have
\begin{align*}
|V(G)|&=|A_{b}\cap B_{a}\cap C_{a}|+|A_{b}\cap B_{a}\cap C_{b}|+|A_{b}\cap B_{c}\cap C_{a}|+|A_{b}\cap B_{c}\cap C_{b}| \\
&\quad+|A_{c}\cap B_{a}\cap C_{a}|+|A_{c}\cap B_{a}\cap C_{b}|+|A_{c}\cap B_{c}\cap C_{a}|+|A_{c}\cap B_{c}\cap C_{b}|+|\{a,b,c\}| \\
&=0+|A_{b}\cap B_{a}\cap C_{b}|+|A_{b}\cap B_{c}\cap C_{a}|+0 \\
&\quad+|A_{c}\cap B_{a}\cap C_{a}|+|A_{c}\cap B_{a}\cap C_{b}|+0+|A_{c}\cap B_{c}\cap C_{b}|+|\{a,b,c\}| \\
&\leq 0+1+|A_{b}\cap B_{c}|+0+1+1+0+1+3\leq 10,
\end{align*}
contradicting our assumption.
\end{subproof}
By symmetry, we have $|A_{c}\cap B_{a}\cap C_{b}|\leq 1$.
Therefore, we have
\begin{align*}
|V(G)|&=|A_{b}\cap B_{a}\cap C_{a}|+|A_{b}\cap B_{a}\cap C_{b}|+|A_{b}\cap B_{c}\cap C_{a}|+|A_{b}\cap B_{c}\cap C_{b}| \\
&\quad+|A_{c}\cap B_{a}\cap C_{a}|+|A_{c}\cap B_{a}\cap C_{b}|+|A_{c}\cap B_{c}\cap C_{a}|+|A_{c}\cap B_{c}\cap C_{b}|+|\{a,b,c\}|\leq 11,
\end{align*} 
contradicting our assumption.
\end{proof}

\section{Completing the proof}
\label{sec:sequential}
A set $X$ of vertices of a graph $G$ is \emph{fully closed} if $\rho_{G}(X\cup\{v\})>\rho_{G}(X)$ for all $v\in V(G)-X$.
\begin{lemma}[Oum~{\cite[Proposition 3.1]{Oum2020}}]
\label{lem:getprime}
Let $G$ be a prime graph with $|V(G)|\geq 8$. Suppose that $G$ has a fully closed set $A$ such that $\rho_{G}(A)\geq 2$. Then there is a vertex $v$ of $A$ such that $G\setminus v$ or $G/v$ is prime.
\end{lemma}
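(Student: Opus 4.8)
The plan is a proof by contradiction. Set $B=V(G)-A$. Since the cut-rank of a set never exceeds its size, $|A|,|B|\ge\rho_G(A)\ge 2$; also $G$, being prime with $|V(G)|\ge 4$, has minimum degree at least $2$ (a vertex of degree at most $1$, together with its unique neighbour if any, gives a partition with both sides of size at least $2$ and cut-rank at most $1$), so $G/v$ is defined for every vertex $v$. Assume for contradiction that for every $v\in A$, neither $G\setminus v$ nor $G/v$ is prime. I would aim to deduce that $\rho_G(A)=2$ and $\rho_{G\setminus v}(A-\{v\})\le 1$ for every $v\in A$, and then obtain a contradiction by an elementary rank computation over $\GF(2)$.

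Fix $v\in A$. Since $G\setminus v$ is not prime, there is a partition $(X,Y)$ of $V(G)-\{v\}$ with $|X|,|Y|\ge 2$ and $\rho_{G\setminus v}(X)\le 1$. By Lemma~\ref{lem:delrank}, $\rho_G(X)\le 2$ and $\rho_G(X\cup\{v\})\le 2$, and since $G$ is prime and each of $X$, $Y$, $X\cup\{v\}$, $Y\cup\{v\}$ has complement of size at least $2$,
\[
\rho_G(X)=\rho_G(Y)=\rho_G(X\cup\{v\})=\rho_G(Y\cup\{v\})=2 .
\]
Writing $G/v=(G\wedge uv)\setminus v$ for a neighbour $u$ of $v$ and using Lemmas~\ref{lem:local} and~\ref{lem:delrank} (or Lemmas~\ref{lem:matrix_local} and~\ref{lem:local_or_pivot}), the same conclusion holds for any partition $(X',Y')$ of $V(G)-\{v\}$ witnessing non-primeness of $G/v$.

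The core of the argument is to show that such a $2$-separation must be the canonical one, $\{X,Y\}=\{A-\{v\},B\}$; this is precisely where the fully-closedness of $A$ enters, since submodularity alone is symmetric and yields no restriction. The main tools are \ref{item:s1} of Lemma~\ref{lem:subtool} and Lemma~\ref{lem:subeq}, used to uncross $(X,Y)$ against the partition $(A,B)$: applying \ref{item:s1} of Lemma~\ref{lem:subtool} to $X$ and $A-\{v\}$ gives
\[
\rho_{G\setminus v}(X)+\rho_G(A)\ge\rho_{G\setminus v}(X\cap A)+\rho_G(X\cup A),
\]
and since $X\cup A=A\cup(X\cap B)$, whenever $X\cap B$ is a single vertex fully-closedness forces $\rho_G(X\cup A)=\rho_G(A)+1$, so $\rho_{G\setminus v}(X\cap A)$ becomes small enough that $X\cap A$ (or a vertex of it) contradicts primeness of $G\setminus v$ or the minimum-degree bound; the cases $X\cap B=\emptyset$ and $|X\cap B|\ge 2$ reduce to this one by uncrossing and by passing to $Y$. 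The delicate configuration is $X\subseteq A-\{v\}$ with $|X|\ge 2$ (which forces $|A|\ge 3$): here I would also use non-primeness of $G/v$ via \ref{item:p2} of Lemma~\ref{lem:pivot_subeq}, since taking the same partition on both sides shows $\rho_{G\setminus v}(X)+\rho_{G/v}(X)\ge\rho_G(X)+\rho_G(Y)-1=3$, hence $\rho_{G/v}(X)\ge 2$ and the $G/v$-separation is genuinely different, and if necessary first replace $G$ by a locally equivalent graph (which leaves all cut-ranks, hence all hypotheses, unchanged). Completing this analysis yields $\{X,Y\}=\{A-\{v\},B\}$, so $\rho_{G\setminus v}(A-\{v\})=\rho_{G\setminus v}(B)\le 1$ and, via $\rho_G(A)-1\le\rho_{G\setminus v}(B)$, also $\rho_G(A)=2$. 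I expect this paragraph to be the main obstacle, exactly because fully-closedness only controls adding one vertex to $A$ at a time.

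Granting the previous paragraph, $A_G[B,A]$ has rank $\rho_G(B)=\rho_G(A)=2$ over $\GF(2)$, while for every $v\in A$ the submatrix $A_G[B,A-\{v\}]$ has rank $\rho_{G\setminus v}(B)\le 1$. If $|A|\ge 3$, choose $w_1,w_2\in A$ whose columns in $A_G[B,A]$ are linearly independent and pick $v\in A-\{w_1,w_2\}$; then $A_G[B,A-\{v\}]$ still contains the independent columns indexed by $w_1$ and $w_2$, so has rank at least $2$, a contradiction. If $|A|=2$, then for $v\in A$ the canonical separation $(A-\{v\},B)$ has a side of size $1$, so (the delicate case being vacuous) the forcing of the previous paragraph shows $G\setminus v$ admits no separation witnessing non-primeness, i.e. $G\setminus v$ is already prime, again contradicting our assumption. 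In either case we reach a contradiction, completing the proof.
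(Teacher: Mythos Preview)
The paper does not prove this lemma; it is quoted from~\cite{Oum2020} without argument, so there is no paper proof to compare against and your attempt must stand on its own.

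Your proof has a genuine gap precisely at the step you flag as ``the main obstacle''. You want to show that, for every $v\in A$, any partition $(X,Y)$ of $V(G)-\{v\}$ with $\rho_{G\setminus v}(X)\le 1$ and $|X|,|Y|\ge 2$ satisfies $\{X,Y\}=\{A-\{v\},B\}$. Your uncrossing handles only the case $|X\cap B|=1$ (or symmetrically $|Y\cap B|=1$) cleanly: there fully-closedness gives $\rho_G(X\cup A)\ge\rho_G(A)+1$, and \ref{item:s1} forces $\rho_{G\setminus v}(X\cap A)\le 0$, which contradicts primeness of $G$. But the remaining cases are not resolved. In the ``delicate'' case $X\subsetneq A-\{v\}$, all you extract from~\ref{item:p2} is $\rho_{G/v}(X)\ge 2$, i.e.\ that \emph{this particular} $(X,Y)$ does not witness non-primeness of $G/v$; knowing that the $G/v$-separation is different gives you a second partition $(X',Y')$ to uncross against, but you do not show how that yields a contradiction, and ``replace $G$ by a locally equivalent graph'' restarts the same unfinished case analysis. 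Moreover, the case $|X\cap B|\ge 2$ and $|Y\cap B|\ge 2$ is not handled: ``passing to $Y$'' lands you in the same configuration, and fully-closedness, which only controls $\rho_G(A\cup\{w\})$ for a single $w\in B$, gives no direct lower bound on $\rho_G(A\cup(X\cap B))$ when $|X\cap B|\ge 2$.

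The endgame (the rank argument on $A_G[B,A]$ when $|A|\ge 3$, and the $|A|=2$ observation) is fine \emph{granting} the core step, but as written the core step is an outline with the hard cases left open. It is not even clear that the high-level strategy---forcing every bad separation of $G\setminus v$ to be $(A-\{v\},B)$---is the right one; the proof in~\cite{Oum2020} may well be organised differently.
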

\begin{lemma}
\label{lem:gut}
Let $G$ be a sequentially $3$-rank-connected graph and $a_{1},a_{2},\ldots, a_{k}$ be distinct vertices of $G$ such that $k\geq 4$ and $\rho_{G}(\{a_{1},\ldots,a_{i}\})\leq 2$ for each $i\leq k$. For each $1\leq i\leq k$, if $G\setminus a_{i}$ is prime, then $G\setminus a_{i}$ is sequentially $3$-rank-connected.
\end{lemma}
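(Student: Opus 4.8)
The plan is to argue by contradiction. Suppose $G\setminus a_i$ is prime but not sequentially $3$-rank-connected, so there is a partition $(X,Y)$ of $V(G\setminus a_i)$ with $\rho_{G\setminus a_i}(X)\le 2$ and with neither $X$ nor $Y$ sequential in $G\setminus a_i$; then $|X|,|Y|\ge 4$ (a set of size at most $3$ and cut-rank at most $2$ is trivially sequential), and since $G\setminus a_i$ is prime, $\rho_{G\setminus a_i}(X)=\rho_{G\setminus a_i}(Y)=2$. Throughout I use two transfer principles, both immediate from Lemma~\ref{lem:delrank}: (a) a set $S\subseteq V(G)-\{a_i\}$ that is sequential in $G$ is sequential in $G\setminus a_i$, since deletion does not increase cut-rank; and (b) if $S\cup\{a_i\}$ is sequential in $G$, then deleting $a_i$ from a witnessing ordering shows $S$ is sequential in $G\setminus a_i$, since each prefix $P$ of the new ordering satisfies $\rho_{G\setminus a_i}(P)\le\rho_G(P\cup\{a_i\})\le 2$. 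Applying (b) with $S=B:=\{a_1,\dots,a_k\}\setminus\{a_i\}$ shows $B$ is sequential in $G\setminus a_i$; fix a witnessing ordering and let $\emptyset=P_0\subsetneq P_1\subsetneq\cdots\subsetneq P_{k-1}=B$ be its chain of prefixes, so $\rho_{G\setminus a_i}(P_j)\le 2$ for all $j$, consecutive terms differing by one vertex, and $|B|=k-1\ge 3$.

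Next I would absorb the chain into one side of the partition. The basic move: if $P_j\subseteq X$, $P_{j+1}\not\subseteq X$, and $|P_j|\ge 2$, then $\rho_{G\setminus a_i}(P_j)=2$ (by primeness, since $|P_j|\le|X|\le|V(G\setminus a_i)|-4$ leaves a large complement), so applying Lemma~\ref{lem:subeq} to $X$ and $P_{j+1}$, whose intersection is $P_j$, gives $\rho_{G\setminus a_i}(X\cup\{c\})\le 2$ where $c$ is the single vertex of $P_{j+1}\setminus P_j$; then Lemma~\ref{lem:basic_sequential} applied on both sides shows $(X\cup\{c\},\,Y\setminus\{c\})$ is again a partition with all the bad properties (or, if $|Y\setminus\{c\}|\le 3$, that $Y$ itself is sequential --- an immediate contradiction). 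Since $k\ge 4$, $P_3$ exists: if $P_3$ meets both sides, one side contains a pair of $P_3$ and the same argument (with that pair in place of $P_j$) absorbs the remaining vertex of $P_3$, so after at most one move, and relabelling if necessary, $P_3\subseteq X$. Iterating the basic move for $j=3,4,\dots,k-2$ then yields $B=P_{k-1}\subseteq X$, still with $(X,Y)$ bad. As $B$ is sequential in $G\setminus a_i$ while $X$ is not, $X\supsetneq B$; and since $|B|=k-1\ge 3$ while the complement of $B$ in $V(G\setminus a_i)$ contains $Y$ and so has at least $4$ vertices, $\rho_{G\setminus a_i}(B)=2$.

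To conclude, consider $\rho_G(X\cup\{a_i\})$, which lies in $\{2,3\}$ by Lemma~\ref{lem:delrank} (it is at least $\rho_{G\setminus a_i}(X)=2$ and at most $\rho_{G\setminus a_i}(X)+1=3$). If it equals $2$, then $(X\cup\{a_i\},Y)$ is a partition of $V(G)$ of cut-rank at most $2$, so by sequential $3$-rank-connectedness of $G$ one of $X\cup\{a_i\}$, $Y$ is sequential in $G$; by transfer principles (b), (a) this makes $X$ or $Y$ sequential in $G\setminus a_i$, a contradiction. Hence $\rho_G(X\cup\{a_i\})=3$. Now apply Lemma~\ref{lem:cor_s1} with $v=a_i$, smaller set $\{a_1,\dots,a_k\}=B\cup\{a_i\}$ and larger set $X\cup\{a_i\}$, where the inclusion $B\cup\{a_i\}\subseteq X\cup\{a_i\}$ holds because $B\subseteq X$: its hypothesis reads $\rho_{G\setminus a_i}(B)\ge\rho_G(\{a_1,\dots,a_k\})$, which holds since $\rho_{G\setminus a_i}(B)=2$ and $\rho_G(\{a_1,\dots,a_k\})\le 2$; the conclusion is $\rho_{G\setminus a_i}(X)=\rho_G(X\cup\{a_i\})=3$, contradicting $\rho_{G\setminus a_i}(X)=2$. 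This contradiction completes the proof.

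The delicate point is the absorption step: at each move one must verify that the enlarged partition still separates $V(G\setminus a_i)$ into two non-sequential parts of size at least $4$ and that the cut-rank of the growing side stays exactly $2$, so that Lemma~\ref{lem:basic_sequential} applies; this is where primeness, the bounds $|X|,|Y|\ge 4$, and the hypothesis $k\ge 4$ (which provides $P_3$ to start the absorption) are all essential. Everything else is a bookkeeping application of Lemmas~\ref{lem:delrank}, \ref{lem:subeq}, \ref{lem:basic_sequential}, and \ref{lem:cor_s1}.
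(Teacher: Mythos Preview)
Your argument is correct and follows the same strategy as the paper: absorb the $a_j$'s into one side $X$ of a bad partition of $V(G\setminus a_i)$, then use the resulting inclusion to force $\rho_G(X\cup\{a_i\})\le 2$ and invoke sequential $3$-rank-connectedness of~$G$. The differences are purely organizational. The paper first calls Lemma~\ref{lem:contain_triple} to obtain $X$ with $\{a_1,a_2,a_3\}\subseteq X$ already, takes $X$ maximal, and then absorbs only $a_4,\ldots,a_{i-1}$; the final submodular step uses~\ref{item:s1} with the prefix $\{a_1,\ldots,a_i\}$, which requires $i>3$, so the case $i\le 3$ is handled separately via the reordering $a_1,a_2,a_4,a_3$. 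You instead absorb all of $B=\{a_1,\ldots,a_k\}\setminus\{a_i\}$ by hand and close with Lemma~\ref{lem:cor_s1} applied to the full set $\{a_1,\ldots,a_k\}\subseteq X\cup\{a_i\}$; this treats every $i$ uniformly and removes the case split, at the cost of carrying out the absorption yourself rather than quoting Lemma~\ref{lem:contain_triple}. One small point of phrasing: your transfer principle~(b) is justified by Lemma~\ref{lem:delrank}(i) for prefixes preceding $a_i$ in the witnessing order and by Lemma~\ref{lem:delrank}(ii) for prefixes following it, not by the single inequality $\rho_{G\setminus a_i}(P)\le\rho_G(P\cup\{a_i\})$ you wrote; the conclusion is unaffected.
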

\begin{proof}
Since $G$ is prime, we know that $\rho_{G}(\{a_{1},\ldots,a_{j}\})=\min\{2,|V(G)|-j\}$ for each $2\leq j\leq k$. So $\rho_{G}(\{a_{1},\ldots,a_{j-1}\})\geq\rho_{G}(\{a_{1},\ldots,a_{j}\})$ for each $2\leq j\leq k$. For each $3\leq j\leq i-1$, by~\ref{item:s2} of Lemma~\ref{lem:subtool}, we have
\[
\rho_{G}(\{a_{1},\ldots,a_{j}\})+\rho_{G\setminus a_{i}}(\{a_{1},\ldots,a_{j-1}\})\geq\rho_{G}(\{a_{1},\ldots,a_{j-1}\})+\rho_{G\setminus a_{i}}(\{a_{1},\ldots,a_{j}\})
\]
and therefore $\rho_{G\setminus a_{i}}(\{a_{1},\ldots,a_{j-1}\})\geq\rho_{G\setminus a_{i}}(\{a_{1},\ldots,a_{j}\})$.

Suppose that $G\setminus a_{i}$ is prime and not sequentially $3$-rank-connected. 

Let us first consider the case when $i>3$. By Lemma~\ref{lem:contain_triple}, there is a subset $X$ of $V(G\setminus a_{i})$ such that $\rho_{G\setminus a_{i}}(X)\leq 2$, neither $X$ nor $V(G\setminus a_{i})-X$ is sequential in $G\setminus a_{i}$, and $\{a_{1},a_{2},a_{3}\}\subseteq X$. 
We may assume that $X$ is maximal among all such sets. 

We claim that $\{a_{1},\ldots,a_{i-1}\}\subseteq X$. Suppose not. Let $j\leq i-1$ be the minimum index such that $a_{j}\notin X$. Then $\{a_{1},\ldots,a_{j-1}\}\subseteq X$. Note that $j\geq 4$.
Let $Y=V(G\setminus a_{i})-X$. Since neither $X$ nor $Y$ is sequential in $G\setminus a_{i}$, we have $|X|,|Y|\geq 4$.
Since $\rho_{G\setminus a_{i}}(\{a_{1},\ldots,a_{j-1}\})\geq\rho_{G\setminus a_{i}}(\{a_{1},\ldots,a_{j}\})$, by Lemma~\ref{lem:subeq},
\[
\rho_{G\setminus a_{i}}(X)+\rho_{G\setminus a_{i}}(\{a_{1},\ldots,a_{j}\})\geq
\rho_{G\setminus a_{i}}(X\cup\{a_{j}\})+\rho_{G\setminus a_{i}}(\{a_{1},\ldots,a_{j-1}\}),
\]
and therefore $\rho_{G\setminus a_{i}}(X\cup\{a_{j}\})\leq\rho_{G\setminus a_{i}}(X)\leq 2$. Since $G\setminus a_{i}$ is prime and $|Y-\{a_{i}\}|\geq 3$, we have $\rho_{G\setminus a_{i}}(X\cup\{a_{j}\})=\rho_{G\setminus a_{i}}(X)=2$. Hence by Lemma~\ref{lem:basic_sequential}, neither $X\cup\{a_{j}\}$ nor $Y-\{a_{j}\}$ is sequential in $G\setminus a_{i}$, contradicting the maximality of $X$.
Hence $\{a_{1},\ldots,a_{i-1}\}\subseteq X$. 

Then by~\ref{item:s1} of Lemma~\ref{lem:subtool}, 
\[
\rho_{G\setminus a_{i}}(X)+\rho_{G}(\{a_{1},\ldots,a_{i}\})\geq\rho_{G}(X\cup\{a_{i}\})+\rho_{G\setminus a_{i}}(\{a_{1},\ldots,a_{i-1}\}).
\]
Since $G\setminus a_{i}$ is prime and $i>3$, we have $\rho_{G\setminus a_{i}}(\{a_{1},\ldots,a_{i-1}\})\geq\min\{2,|V(G)|-i\}=\rho_{G}(\{a_{1},\ldots,a_{i}\})$. So $\rho_{G}(X\cup\{a_{i}\})\leq\rho_{G\setminus a_{i}}(X)\leq 2$. Since $G$ is sequentially $3$-rank-connected, $X\cup\{a_{i}\}$ or $Y$ is sequential in $G$. Then by (i), (ii) of Lemma~\ref{lem:delrank}, $X$ or $Y$ is sequential in $G\setminus a_{i}$, contradicting our assumption.

Now we consider the case when $i\leq 3$. By permuting $a_{1}$, $a_{2}$, $a_{3}$, we can assume that $i=3$. Suppose that $G\setminus a_{3}$ is prime. By Lemma~\ref{lem:delrank}(ii), we have $\rho_{G\setminus a_{3}}(\{a_{1},a_{2},a_{4}\})\leq\rho_{G}(\{a_{1},a_{2},a_{3},a_{4}\})\leq 2$. Since $a_{1},a_{2},a_{4},a_{3}$ is another sequence satisfying all the requirements, we conclude that $G\setminus a_{3}$ is sequentially $3$-rank-connected because we proved the statement for $i>3$.
\end{proof}
\begin{lemma}
\label{lem:fullyclosed}
Let $G$ be a sequentially $3$-rank-connected graph with $|V(G)|\geq 8$ and $a_{1},a_{2},\ldots, a_{k}$ be distinct vertices of $G$ such that $k\geq 4$, $k\neq |V(G)|-1$, and $\rho_{G}(\{a_{1},\ldots,a_{i}\})\leq 2$ for each $i\leq k$. If $\{a_{1},\ldots,a_{k}\}$ is a fully closed set of $G$, then there exists $i\in\{1,\ldots,k\}$ such that $G\setminus a_{i}$ or $G/a_{i}$ is sequentially $3$-rank-connected.
\end{lemma}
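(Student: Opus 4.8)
The plan is to derive the statement from Lemma~\ref{lem:getprime} (which produces, inside a fully closed set, a vertex whose deletion or contraction is prime) and Lemma~\ref{lem:gut} (which upgrades ``prime'' to ``sequentially $3$-rank-connected'' for deletions along the prescribed sequence). First I would record that $\rho_G(A)=2$. Since $k\geq 4$ we have $|A|\geq 4$, and as $G$ is prime this forces $\rho_G(A)\geq 2$ as soon as $|V(G)-A|\geq 2$; the case $|V(G)-A|=1$ is ruled out by the hypothesis $k\neq|V(G)|-1$, and the remaining possibility $A=V(G)$ is disposed of by a separate argument which I do not pursue here. Combined with $\rho_G(A)=\rho_G(\{a_1,\dots,a_k\})\leq 2$ this gives $\rho_G(A)=2$. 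Now Lemma~\ref{lem:getprime} applies, since $|V(G)|\geq 8$ and $A$ is fully closed with $\rho_G(A)\geq 2$: there is a vertex $v\in A$, say $v=a_j$ with $1\leq j\leq k$, such that $G\setminus a_j$ or $G/a_j$ is prime. It therefore suffices to show that whenever $G\setminus a_j$ or $G/a_j$ is prime, it is in fact sequentially $3$-rank-connected.

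Suppose first that $G\setminus a_j$ is prime. Since $G$ is sequentially $3$-rank-connected, $k\geq 4$, and $\rho_G(\{a_1,\dots,a_i\})\leq 2$ for each $i\leq k$, Lemma~\ref{lem:gut} applies to $G$ and the sequence $a_1,\dots,a_k$; taking $i=j$ it yields that $G\setminus a_j$ is sequentially $3$-rank-connected. Suppose instead that $G/a_j$ is prime. The point now is to rewrite $G/a_j$ as a vertex deletion of a graph locally equivalent to $G$, so that Lemma~\ref{lem:gut} can again be invoked. Since $G$ is prime, hence connected, the vertex $a_j$ has a neighbor $w\neq a_j$. Put $H=G*w*a_j$. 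By part~(2) of Lemma~\ref{lem:onetoone}, applied with the edge $wa_j$ of $G$, the graph $H\setminus a_j=(G*w*a_j)\setminus a_j$ is locally equivalent to $G/a_j$; in particular $H\setminus a_j$ is prime. Moreover $H$ is locally equivalent to $G$, so by Lemma~\ref{lem:local} $\rho_H=\rho_G$; hence $H$ is sequentially $3$-rank-connected and $\rho_H(\{a_1,\dots,a_i\})=\rho_G(\{a_1,\dots,a_i\})\leq 2$ for each $i\leq k$. Applying Lemma~\ref{lem:gut} to $H$ and the sequence $a_1,\dots,a_k$ with $i=j$, we conclude that $H\setminus a_j$, and therefore the locally equivalent graph $G/a_j$, is sequentially $3$-rank-connected.

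Once one notices that the vertex produced by Lemma~\ref{lem:getprime} automatically lies in the prescribed sequence $a_1,\dots,a_k$ (because it lies in $A$), the argument is a short assembly of the two cited lemmas; the only genuinely new ingredient is the local-equivalence trick $G/a_j=H\setminus a_j$ with $H=G*w*a_j$ used to handle the contraction case, together with the routine check, via Lemma~\ref{lem:local}, that the hypotheses of Lemma~\ref{lem:gut} (sequential $3$-rank-connectedness, and the rank bound along the sequence, both being cut-rank conditions) survive the passage from $G$ to $H$. I expect the only mildly delicate point to be the preliminary reduction to $\rho_G(A)=2$, i.e.\ disposing of the degenerate possibility $A=V(G)$; this is precisely where the hypotheses $k\neq|V(G)|-1$ and $|V(G)|\geq 8$ (the latter also feeding Lemma~\ref{lem:getprime}) are used.
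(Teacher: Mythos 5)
Your treatment of the main case is essentially the paper's own argument: once $\rho_G(\{a_1,\dots,a_k\})=2$, Lemma~\ref{lem:getprime} produces $a_j$ with $G\setminus a_j$ or $G/a_j$ prime, and Lemma~\ref{lem:gut} upgrades primality to sequential $3$-rank-connectivity, with the contraction case handled by passing to the locally equivalent graph $G*w*a_j$ (this is exactly the paper's ``by pivoting, we may assume $G\setminus a_i$ is prime'' step). That part is correct, including the check that sequential $3$-rank-connectivity and the rank bounds along the sequence survive local equivalence.

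The genuine gap is the case you explicitly set aside, namely $\{a_1,\dots,a_k\}=V(G)$. This case is not degenerate and cannot be waved off: in the proof of Theorem~\ref{thm:main} the lemma is applied to a maximum sequential set $X$, and the hypothesis $k\neq|V(G)|-1$ rules out only the codimension-one possibility, not $X=V(G)$. When $k=|V(G)|$ we have $\rho_G(A)=0$, so Lemma~\ref{lem:getprime} is inapplicable and your entire mechanism for producing a prime elementary vertex-minor never gets started. The paper supplies the missing ingredient here: since $G$ is prime and $|V(G)|\geq 8>5$, Bouchet's chain theorem (Theorem~\ref{thm:bouchet}) yields a prime vertex-minor on $|V(G)|-1$ vertices, which by Lemma~\ref{lem:elt_minor} is locally equivalent to $G\setminus v$, $G*v\setminus v$, or $G/v$ for some $v$; since every vertex is some $a_i$ in this case, one can then apply Lemma~\ref{lem:gut} to the appropriate graph locally equivalent to $G$ exactly as you do for the contraction case. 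Without invoking Theorem~\ref{thm:bouchet} (or something equivalent) there is no way to begin, so what you are missing is an actual idea, not a routine verification.
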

\begin{proof}
By Theorem~\ref{thm:bouchet} and Lemma~\ref{lem:gut}, we may assume that $k\neq |V(G)|$ and therefore $k\leq |V(G)|-2$.
Since $G$ is prime, we have $\rho_{G}(\{a_{1},\ldots,a_{k}\})=2$ and so, by Lemma~\ref{lem:getprime}, there is a vertex $a_{i}$ of $G$ such that $G\setminus a_{i}$ or $G/a_{i}$ is prime. By pivoting, we may assume that $G\setminus a_{i}$ is prime. Then, by Lemma~\ref{lem:gut}, $G\setminus a_{i}$ is sequentially $3$-rank-connected.
\end{proof}

\begin{proof}[Proof of Theorem~\ref{thm:main}]
By Proposition~\ref{prop:3rank_conn}, we may assume that $G$ is not $3$-rank-connected. So there is a subset $A$ of $V(G)$ such that $\rho_{G}(A)\leq 2$, $|A|\geq 3$, and $|V(G)-A|\geq 3$. If $G$ is internally $3$-rank-connected, then we may assume that $|A|=3$. By Lemma~\ref{lem:triplet}, we can assume that $A$ is a triplet of $G$ by pivoting. By Proposition~\ref{prop:internal}, there is a vertex $a\in A$ such that $G\setminus a$ is sequentially $3$-rank-connected. Hence we may assume that $G$ is not internally $3$-rank-connected.

Therefore, we may assume that $|A|\geq 4$ and $|V(G)-A|\geq 4$.
Since $G$ is sequentially $3$-rank-connected, $A$ or $V(G)-A$ is sequential in $G$. Therefore there exists a sequential set with at least $4$ elements. 

Let $X$ be a maximum sequential set of $G$. Then $X$ is a fully closed set of $G$. Furthermore, $|X|\neq |V(G)|-1$ because otherwise $V(G)$ is sequential in $G$.
Since $|X|\geq 4$, we conclude the proof by Lemma~\ref{lem:fullyclosed}.
\end{proof}

\paragraph{Acknowledgements}
The authors would like to thank the anonymous reviewers for their careful reading and useful comments. In particular, the paragraph following Theorem~\ref{thm:seq_matroid} was suggested by one of the anonymous reviewers.

\providecommand{\bysame}{\leavevmode\hbox to3em{\hrulefill}\thinspace}
\providecommand{\MR}{\relax\ifhmode\unskip\space\fi MR }
% \MRhref is called by the amsart/book/proc definition of \MR.
\providecommand{\MRhref}[2]{%
  \href{http://www.ams.org/mathscinet-getitem?mr=#1}{#2}
}
\providecommand{\href}[2]{#2}

\end{document}